\pgfplotsset{width=7cm,compat=1.8}
\definecolor{db}{RGB}{0, 0, 130}
\definecolor{rp}{rgb}{0.25, 0, 0.75}
\definecolor{dg}{rgb}{0, 0.5, 0}
\newcommand{\red}{\textcolor{black}}
\newcommand{\R}{\mathbb{R}}
\newcommand{\N}{\mathbb{N}}
\newcommand{\EE}{\mathbb{E}}
\newcommand{\PP}{\mathbb{P}}
\newcommand{\vep}{\varepsilon}
\newcommand{\bqn}{\begin{equation}}
\newcommand{\eqn}{\end{equation}}
\newcommand{\bqne}{\begin{equation*}}
\newcommand{\eqne}{\end{equation*}}
\newcommand{\VCeta}{\mathcal{C}^{p\text{-}\text{var}}_{[0,T]}(\mathcal{C}^\eta)}
\newcommand{\COO}[4]{\mathcal{C}^{#1}_{#4}(L^{#2,#3})}
\newcommand{\Vn}[2]{[#1]_{\mathcal{C}_{#2}^{1\text{-}\text{var}}}}
\def \limbashaut#1#2#3{\mathrel{\mathop{\kern 0pt#1}\limits_{#2}^{#3}}}
\DeclareMathOperator{\var}{Var} 
\DeclareMathOperator{\supp}{supp}
\DeclareMathOperator{\Max}{Max}
\newcommand{\VCneta}[3]{[#1]_{\mathcal{C}_{#2}^{{#3}\text{-}\text{var}} 
(\mathcal{C}^\eta)}}
\newcommand{\VCnReta}[3]{[#1]_{\mathcal{C}_{#2}^{{#3}\text{-}\text{var}} 
(\mathcal{C}^\eta_{D_R})}}
\numberwithin{equation}{section}
\begin{document}

\section{Introduction}

We are interested in the well-posedness of the one-dimensional equation
\begin{align} \label{eq:skew}
dX_t=b(X_t)dt + dB_t,
\end{align}
when $b$ is a distribution in some Besov space and $(B_t)_{t \geqslant 0}$ is a 
fractional Brownian motion of Hurst parameter $H$. We will explain in which sense this 
equation can be solved when $b$ is a genuine distribution. It is noteworthy that, even when 
$b$ is a function, this equation can be well-posed while the corresponding equation without 
noise is not. This effect is often called regularisation by noise. We refer to 
\cite{FlandoliStFlour} for a thorough presentation of this phenomenon, in particular on PDE 
models of fluid mechanics. This equation encompasses at least two classes of equations 
 which have frequently been studied in the literature. 
 
First, when $B$ is the standard Brownian motion, there is an extensive literature which we will 
not attempt to describe thoroughly. Let us mention the early work of 
Veretennikov~\cite{Veretennikov} for bounded measurable drifts, then the more general 
$L^p-L^q$ criterion 
of Krylov and R\"ockner~\cite{KrylovRockner} for which the authors proved strong existence 
and uniqueness (both works allowing for time inhomogeneous drifts in dimension 
$d\geqslant1$). Flandoli, Russo and Wolf~\cite{FlandoliRussoWolf} developed a weak 
well-posedness theory while Bass and 
Chen~\cite{BassChen} proved existence and uniqueness of strong solutions with the drift being 
the distributional derivative of a $\gamma$-H\"older function for 
$\gamma>1/2$. Then Davie~\cite{Davie} provided conditions for path-by-path existence and 
uniqueness, which is a stronger form of uniqueness, for time inhomogeneous bounded 
measurable drift. 
Using rough path methods, Delarue and Diel~\cite{DelarueDiel} proved weak existence and 
uniqueness in dimension $1$ when the drift is the distributional derivative 
of a $\gamma$-H\"older function for $\gamma>1/3$. In higher dimension, Flandoli, Issoglio and Russo~\cite{FIR} 
identified a class of SDEs with distributional drifts in Bessel spaces such that there exists a solution that is unique in law. In addition, when the drift is random, another well-posedness result is given 
by Duboscq and R\'eveillac~\cite{DuboscqReveillac}. We also point out the work 
\cite{HarShepp}, with extensions in \cite{LeGall} on an SDE involving 
the local time at $0$ of the solution, which formally corresponds to a drift 
$b=a\delta_{0}$, for some 
$a\in [-1,1]$ and $\delta_{0}$ being the Dirac distribution. This setting corresponds to the 
so-called skew Brownian motion, see  \cite{Lejay} for more 
details and various constructions.

This leads to a second class of interesting problems, namely solving Equation \eqref{eq:skew} 
when $b$ is a distribution and $B$ is a fractional Brownian motion with sufficiently small Hurst 
parameter $H$. A first attempt in this direction seems to be due to Nualart and Ouknine 
\cite{NualartOuknine}, who proved existence and uniqueness for some non-Lipschitz drifts.
When 
$b=a\delta_{0}$ with $a\in \R$, the well-posedness of this equation was established for $H<1/4$ 
by Catellier and Gubinelli \cite{CatellierGubinelli} (who also consider multidimensional drifts in negative H\"older 
spaces) and 
independently for  $H<1/6$ in \cite{AmineEtAl,Banos} with extensions to dimension 
$d\geqslant 1$ in the three papers \cite{AmineEtAl,Banos,CatellierGubinelli}. The solution is 
generally referred to as skew fractional Brownian motion. We observe a gap between the 
one-dimensional Brownian case ($H=1/2$), with well posedness for $|a|\leqslant 1$ proven in 
\cite{LeGall}, and the aforementioned result for fractional Brownian motion with $H$ smaller 
than $1/4$. The intent of this paper is to partially close this gap. Note also that the case 
$a=1$ corresponds to reflection above $0$ in the Brownian case. The well-posedness of reflected equations was established even for multiplicative rough noises in case $X$ is one-dimensional \cite{DGHT,RTT}, while uniqueness might fail as soon as the dimension is greater than $2$ (see \cite{Gassiat}).

Finally, let us mention that regularisation by noise was also investigated  for other types of noise, for instance $\alpha$-stable noises \cite{CdRMenozziPub}, regular noises \cite{Gerencser} and other classes of rough processes   
\cite{HarangLing,HarangPerkowski}. Recently, the regularisation phenomenon was studied for SDEs with multiplicative noise (whether fractional Brownian motion \cite{DareiotisGerencser}, or more general rough paths \cite{CatellierDuboscq}).

~

In this paper, the drift $b$ is in some Besov space $\mathcal{B}^\beta_{p,\infty}$ (denoted 
$\mathcal{B}^\beta_{p}$ hereafter). The solutions we consider are stochastic processes of 
the form 
\begin{equation}
\label{eq:decompositionXKB}
X_{t} = X_{0} + K_{t} + B_{t},
\end{equation} 
where $K_{t}$ is the limit in probability of $\int_{0}^t 
b^{n}(X_{s})\, ds$ for any sequence $(b^n)$ of smooth approximations of $b$ (in line with \cite{Atetal,BassChen}). Roughly, 
when such a solution exists and $X$ and $B$ are adapted to the filtration of the underlying 
probability space, we call it a weak solution. When $X$ is adapted to the natural filtration of 
$B$, it is called a strong solution. We refer to Definition \ref{def:solution} for more 
details.

Our first main result, Theorem \ref{prop:existence}, gives conditions on $\beta$, $p$ and $H$ 
that ensure the existence of a weak solution to \eqref{eq:skew} when $b$ is measure. In 
particular when $b = a\delta_{0}$, for $a\in \R$, we obtain the existence of weak solutions to 
\eqref{eq:skew} for any $H<\sqrt{2}-1$. In the standard Brownian case, Theorem 
\ref{prop:existence} provides weak solutions when the drift is in 
$\mathcal{B}^{\frac{1}{4}+}_{1}$. This space contains functions which, to the best of our 
knowledge,  are not covered by the existing literature (see Remark \ref{rk:weakExBrownian}).

To prove Theorem \ref{prop:existence}, we consider another approach to study Equation \eqref{eq:skew} which is via nonlinear Young 
integrals as introduced in \cite{CatellierGubinelli}, extending Young's theory of integration 
\cite{Young}. Consider the more general equation
\begin{equation} \label{eq:skewZ}
dX_{t} = b(X_{t}) \, dt + dZ_t,
\end{equation}
where $Z$ is a continuous stochastic process. The idea is to define path-by-path solutions to 
\eqref{eq:skewZ}, that is, to solve this equation  for a fixed realisation of the noise \((Z_t(\omega))_t\). In order to do this one rewrites the equation as a random ODE: $dY_{t} = 
b(Y_{t} + Z_{t}) \, dt$, with $Y_{t} = X_{t} - Z_{t}$ and studies the regularity of the averaging 
operator $T_{t}^{Z}b: y\mapsto \int_{0}^t b(y + Z_{s})\, ds$.
In some interesting cases, $T_{t}^{Z}b$ is more regular than $b$ itself, which permits to have solutions of the form $Y_{t} = Y_{0} + \int_{0}^t T_{ds}^{Z}b(Y_{s})$, where the integral is a so-called nonlinear Young integral. We refer to \cite{Galeati} for a 
review of nonlinear Young integrals in the H\"older setting and also to the recent work \cite{GaleatiGubinelli}. 

In Theorem~\ref{solutionsagree} we give conditions such that solutions w.r.t. the probabilistic 
approach via approximation of the drift and w.r.t. the approach via nonlinear Young integral 
theory are equivalent (i.e. weak solutions coincide). Therefore, in order to show existence of a 
weak solution to Equation \eqref{eq:skew}, it is sufficient to construct a solution to the 
corresponding nonlinear Young integral equation that is additionally adapted to a small enough 
filtration.

To do so, we construct nonlinear Young integrals in the $p$-variation setting 
(instead of H\"older). In our one-dimensional setting this allows us to exploit the nonnegativity of $b$ to get existence of 
solutions to \eqref{eq:skewZ} under some milder conditions on the regularity of $b$. To 
obtain the regularity of the averaging operator, we proceed similarly to 
Harang and Perkowski
\cite{HarangPerkowski} by rewriting it as a convolution between $b$ 
and the local time of $Z$. We are then able to deduce the H\"older regularity of the 
averaging operator from the Besov regularity of the local time of $Z$ (see 
Lemma~\ref{lem:regTL}). These two ingredients permit to construct path-by-path solutions 
by convergence of the Euler scheme associated to the equation, see Theorem 
\ref{generalsolution}. Alternatively, if the local time of $Z$ has some probabilistic properties, as in the case $Z=B$, we are able to show that the averaging operator has a certain tightness property, see Lemma~\ref{lem:LT2}. This permits to prove that the sequence of (random) Euler schemes which 
approximate the nonlinear Young solution is compact in the space of continuous adapted stochastic processes, from which adaptedness of $X$ is deduced.


Our second main result, Theorem~\ref{thm:uniqueness}, states pathwise uniqueness of weak solutions in a class of processes which have some H\"older regularity. 
The main condition is that $b$ is in $\mathcal{B}^\beta_{p}$ with $\beta$ and $p$ satisfying 
$\beta-\frac{1}{p} \geqslant 1-\frac{1}{2H}$. As with the Yamada-Watanabe Theorem, weak 
existence and pathwise uniqueness also lead here, under the same conditions on $\beta$, $p$ 
and $H$, to the existence of a strong solution. Moreover, when $b$ is a measure, any strong solution is proven to have sufficient H\"older regularity, thus 
ensuring pathwise uniqueness. For instance, we get strong existence and pathwise 
uniqueness to \eqref{eq:skew} when $b=a\delta_{0}$ for $a\in \R$ and any $H\leqslant 1/4$. 
This extends the previously known condition $H<1/4$ from \cite{CatellierGubinelli,GaleatiGubinelli} to 
$H=1/4$.

To prove this theorem, we follow the recent approach of Athreya et al.~\cite{Atetal}. In this 
paper, the 
authors proved the strong well-posedness of the one-dimensional stochastic heat equation 
with Besov drift by a tightness-stability argument. The main regularity estimates are obtained 
\emph{via} the recent and powerful stochastic sewing Lemma introduced by L\^{e}~\cite{Le}. 
To 
adapt this argument to our setting, we control the H\"older norm of the conditional 
expectation of $x\mapsto \int_{s}^t f(x+B_{r})\, dr$ in terms of the Besov norm of $f$ (see 
Lemma~\ref{lem:mainregularisation}). One difficulty that arises here is the non-Markovian 
nature of $B$, which we could compensate by using a slightly adapted version of the local 
nondeterminism property of the fractional Brownian motion.

In the development of the proof of Theorem~\ref{thm:uniqueness}, we obtain that weak solutions to \eqref{eq:skew} are limits of strong solutions to \eqref{eq:skew} with $b$ replaced by a smooth bounded drift $b^n$, where the sequence $(b^n)$ converges to $b$ in Besov norm. This result is detailed in Theorem~\ref{th:approx} and can be of independent interest in view of numerical applications.

~

\paragraph{Structure of the paper.}
In Section~\ref{sec:main}, the main definitions and results are stated. We also present 
the organisation of the proofs in the paper.
In Section~\ref{nonlinearyoung}, we develop the construction of nonlinear Young integrals in 
$p$-variation (Theorem~\ref{int}) and use it to find solutions to nonlinear Young integral 
equations with nonnegative (or nonpositive) drifts, see Theorem~\ref{thm:existence}. 
Then in Section~\ref{sec:existWeak}, we prove successively Theorem~\ref{solutionsagree} (relation between different notions of solution) and Theorem~\ref{generalsolution} (existence of 
path-by-path solutions).  Then we  conclude with the proof of 
Theorem~\ref{prop:existence} about the existence of weak solutions.
 The regularity of weak 
solutions is studied in Section~\ref{sec:RegWeak}. 
The uniqueness part of Theorem~\ref{thm:uniqueness} is proven in Section~\ref{uniqueness}.  The tightness-stability argument which leads to the existence of strong solutions is in Section~\ref{thmproof}.

Besides, we recall some useful results on Besov spaces in Appendix \ref{app:Besov}. The local nondeterminism property of the fBm is stated and proven in Appendix~\ref{app:LND}, jointly with the proof of the important regularity estimates of the conditional expectation of the fBm (Lemma~\ref{lem:Cs}). Finally, we recall the stochastic sewing Lemma in Appendix~\ref{app:sewing} and use it to derive several H\"older bounds on the integrals of fBm.

\subsection{Notations and definitions.}\label{sec:notations}

\paragraph{Various notations.}
Throughout the paper, we use the following notations and conventions:
\begin{compactitem}
	\item Constants $C$ might vary from line to line. 
\item For $p \in [1,\infty]$, $p^\prime \in [1,\infty]$ is such that $1/p+1/{p^\prime}=1$.
\item
For topological spaces $X,Y$ we denote the set of continuous functions  
from $X$ to $Y$ by $\mathcal{C}_X(Y)$. 
\item For a Banach space $E$, the ball of radius $R>0$ is denoted by $D_R:=\{x \in 
E: \|x\|\leqslant R\}$.
\item 
Let \(s < t\) be two real numbers and \(\Pi = (s = t_0 < t_1 < \cdots < t_n =t)\) be a 
partition of \([s,t]\), we denote $|\Pi| = \sup_{i=1,\cdots,n}(t_{i} - t_{i-1})$ the mesh 
of $\Pi$. 
\item For $s,t \in \mathbb{R}$ with $s\leqslant t$, we denote $\Delta_{[s,t]}:=\{(u,v):s\leqslant u \leqslant v \leqslant t\}$.
\item For any function $f$ defined on $[s,t]$, we denote $f_{u,v}:=f_v-f_u$ for $(u,v) \in \Delta_{[s,t]}$. 
\item For any function \(g\) defined on $\Delta_{[s,t]}$ and 
$s\leqslant r\leqslant u \leqslant v\leqslant t$, we denote $\delta 
g_{r,u,v}:=g_{r,v}-g_{r,u}-g_{u,v}$.
\item 
For a probability space $\Omega$ and $p \in [1,\infty]$, the norm on $L^p(\Omega)$ is denoted by $\|\cdot\|_{L^p}$.
\item 
We denote by $(B_t)_{t \geqslant 0}$ a fractional Brownian motion 
with  Hurst parameter $H\leqslant 1/2$. 
\item 
The filtration  \((\mathcal{F}_t)_{t \geqslant 
	0}\) is denoted by \(\mathbb{F}\). 
\item The filtration generated by a process \((Z_t)_{t \geqslant 
	0}\) is 
denoted by $\mathbb{F}^Z$. 
\item All filtrations are assumed to fulfill  the usual conditions. 
\item 
Let $\mathbb{F}$ be a filtration. We call $(W_t)_{t \geqslant 
	0}$ a $\mathbb{F}$-Brownian 
motion if $(W_t)_{t \geqslant 
	0}$ is a Brownian motion adapted to $\mathbb{F}$ and for $0 
\leqslant s \leqslant t$, $W_t-W_s$ is independent of $\mathcal{F}_s$. For such a filtration, the conditional expectation $\EE[\cdot \mid \mathcal{F}_s]$ is denoted by 
$\EE^s[\cdot]$.
\end{compactitem}

\paragraph{Gaussian semigroup.} 
For any $t>0$ and $x \in \mathbb{R}$, let $g_t(x):=\tfrac{1}{\sqrt{2\pi 
t}}e^{-\tfrac{x^2}{2t}}$. For a tempered distribution $\phi$ on 
\(\mathbb{R}\),
let
\begin{align} \label{Gaussiansemigroup}
    G_t\phi(x):=(g_t * \phi) (x).
\end{align}

\paragraph{The occupation time formula.} 
Let $T>0$, $\mathit{w}: [0,T] \rightarrow \mathbb{R}$ be a measurable function
and let $\lambda$ 
denote the Lebesgue measure on $\mathbb{R}$. 
For $A \in \mathcal{B}([0,T])$, let $\mu_A$ be the occupation measure defined by
$\mu_A(\Lambda):=\lambda(\{t \in A:\mathit{w}_t \in \Lambda\})$ for $\Lambda \in 
\mathcal{B}(\mathbb{R})$. If $\mu_{[0,T]} \ll \lambda$ 
then 
there exists a measurable map $\ell: \mathcal{B}([0,T])\times \mathbb{R} \to \R_{+}$ 
such that for $A \in \mathcal{B}([0,T])$, $\mu_A(dx) = \ell(A,x) \lambda(dx)$. For any 
bounded measurable function $f$, the occupation time formula reads (see 
\cite{Geman} for more details)
\begin{align} \label{eq:kernel}
    \int_A f(\mathit{w}_r) dr = \int_{\mathbb{R}}f(x)\, \ell(A,x)\, dx.
\end{align}
We define a local time $L:[0,T] \times \mathbb{R}\rightarrow \mathbb{R}$ by $L_t(x):=\ell([0,t],x)$. By \eqref{eq:kernel}, it comes that for any bounded measurable $f$ and $t \in [0,T]$,
\begin{align} \label{occupation}
    \int_0^t f(\mathit{w}_r) \, dr = \int_{\mathbb{R}} f(x)\, L_t(x)\, dx.
\end{align}
Note that if $\mathit{w}:[0,T]\rightarrow K \subset \mathbb{R}$ for some compact $K$, then $L_t(\cdot)$ vanishes on $K^{\mathsf{c}}$ for all $t \in [0,T]$.

\paragraph{Finite variations spaces.} 

Let $p \in [1,\infty)$ and \((F, \|~\|_F)\) be a Banach space. Define the $p$-variation seminorm of a 
function $x: [s,t] \rightarrow F$ as
\begin{align*}
   [x]_{\mathcal{C}_{[s,t]}^{p\text{-}\text{var}} (F)}:= \sup_{\{t_i\}}\left(\sum_{i=0}^{N-1} 
   \|x_{t_{i},t_{i+1}}\|^p_F\right)^{\frac{1}{p}},
\end{align*}
where the supremum runs over all partitions $\{t_i\}_{i=0}^N$, $N \in \mathbb{N}$, of 
$[s,t]$. We denote by $\mathcal{C}^{p\text{-}\text{var}}_{[s,t]}(F)$ the set of such continuous 
functions with 
finite $p$-variation.

 If $F$ is just a Fr\'echet space, we say that a function mapping from $[s,t]$ to $F$ 
 has finite $p$-variation if its $p$-variation is finite with respect to any continuous 
 seminorm and we also use the notation $\mathcal{C}^{p\text{-}\text{var}}_{[s,t]}(F)$. If $F=\mathbb{R}$, we use the alleviate notations  $\mathcal{C}^{p\text{-}\text{var}}_{[s,t]}$ 
and  $[x]_{\mathcal{C}_{[s,t]}^{p\text{-}\text{var}}}$. 

A continuous function $\varkappa:\Delta_{[s,t]}\rightarrow [0,\infty)$ is a control function if, for 
$s\leqslant r \leqslant u \leqslant v \leqslant t$,
\begin{align} \label{eq:superadditive}
    \varkappa(r,u)+\varkappa(u,v) \leqslant \varkappa(r,v),
\end{align}
and $\varkappa(r,r)=0$ for all $r \in [s,t]$. A typical example of a control function is 
$[x]^p_{\mathcal{C}_{[s,t]}^{p\text{-}\text{var}}(F)}$ (see \cite[Prop.~5.8]{FrizVictoir}).

~

\paragraph{Besov and H\"older spaces.}

\begin{definition}
For $s \in \mathbb{R}$ and $1\leqslant p,q \leqslant \infty$, we denote the nonhomogeneous Besov 
space with these parameters by $\mathcal{B}_{p,q}^s$. For a precise definition see Appendix~\ref{app:Besov}.

Besides, for a bounded open interval $\mathcal{I}\subset \mathbb{R}$, we denote by 
$\mathcal{B}^s_{p,q}(\mathcal{I})$ the space of all distributions $u$ on $\mathcal{I}$ for 
which there exists $v \in \mathcal{B}^s_{p,q}$ such that $\left.u=v\right|_\mathcal{I}$ (see 
\cite[Def. 1, p.192]{Triebel}).
\end{definition}

If $q=\infty$, we  write $\mathcal{B}_p^s$ instead of $\mathcal{B}_{p,\infty}^s$. 
We have the following important embeddings between Besov spaces.
\begin{remark} \label{embedding2}
Let $s \in \mathbb{R}$, $1\leqslant p_1 \leqslant p_2 \leqslant \infty$ and $1\leqslant q_1 \leqslant q_2 \leqslant \infty$.  From 
\cite[Prop.~2.71]{BaDaCh}, the space $\mathcal{B}_{p_1,q_1}^s$ continuously embeds into $\mathcal{B}^{s-(p_1^{-1}-p_2^{-1})}_{p_2,q_2}$, which we write as ${\mathcal{B}_{p_1,q_1}^s \hookrightarrow \mathcal{B}^{s-(p_1^{-1}-p_2^{-1})}_{p_2,q_2}}$.
\end{remark}

\vspace{0.1cm}

\begin{remark} \label{embedding3}
Let $\mathcal{I}$ be a bounded open interval. Let $p_1,p_2,q_1,q_2 \in [1,\infty]$ and 
$-\infty<s_2<s_1<\infty$. If $s_1-1/{p_1}>s_2-1/{p_2}$, then 
from \cite[Th. p.196]{Triebel} we 
have $\mathcal{B}^{s_1}_{p_1,q_1}(\mathcal{I})\hookrightarrow 
\mathcal{B}^{s_2}_{p_2,q_2}(\mathcal{I})$.
\end{remark}

\vspace{0.1cm}

For $s \in \mathbb{R}_{+}\setminus \mathbb{N}$ and $p=q=\infty$, Besov spaces 
coincide with H\"older spaces (see \cite[p.99]{BaDaCh}). 
We now give a definition of H\"older spaces in space domain for $s\in(0,1]$.
\begin{definition}\label{def:Holder}
Let $E,F$ be Banach spaces, $U\subset E$ and $\beta \in (0,1]$.
\begin{itemize}

\item We denote the supremum norm of $f\in \mathcal{C}_{U}(F)$ by 
$\|f\|_{\mathcal{C}_{U}(F)} = \sup_{x\in U} \|f(x)\|_{F}$. 
When $U$ and $F$ are clear from the context, we might also denote $ \|f\|_{\infty} = \|f\|_{\mathcal{C}_{U}(F)}$.

\item  The H\"older space $\mathcal{C}^\beta_U(F)$ is the collection of all $f \in \mathcal{C}_U(F)$ such that $\|f\|_{\mathcal{C}_U^\beta (F)}$ is finite, where
\begin{align*}
    \|f\|_{\mathcal{C}_U^\beta (F)}:= [f]_{\mathcal{C}_U^\beta (F)} + \|f\|_{\mathcal{C}_{U}(F)}
    \quad\text{ with }\quad [f]_{\mathcal{C}_U^\beta (F)}:= \sup_{x \neq y \in U} 
    \frac{\|f(x)-f(y)\|_F}{\|x-y\|^\beta_E}.
\end{align*}
If $U=E$, we alleviate the notations and write  \(\|f\|_{\mathcal{C}^\beta (F)}\) and 
\([f]_{\mathcal{C}^\beta (F)}\). Similarly, if $F=\mathbb{R}$ or if \(F\) is clear from the context,  
we write  \(\|f\|_{\mathcal{C}_U^\beta}\) and \([f]_{\mathcal{C}_U^\beta}\).
Finally,  if $U=F=\mathbb{R}$ we just write $\mathcal{C}^\beta$. 
\item The space $\mathcal{C}^\beta_{E,loc}(F)$ of locally H\"older continuous functions is the collection of all $f \in \mathcal{C}_E(F)$ such that $ \|f\|_{\mathcal{C}_{D_R}^\beta (F)}$ is finite for all $R>0$.

\end{itemize}
\end{definition}

\begin{remark} \label{rem:measure}
	In some results, we assume that the drift is a (nonnegative) measure in some 
	$\mathcal{B}^\beta_p$. 
	This is actually equivalent to the assumption that \(b\) is a nonnegative distribution in $\mathcal{B}^\beta_p$. Indeed, thanks 
	to  \cite[Exercise 22.5]{Treves}, any nonnegative distribution is given by a
	Radon measure (i.e. a locally finite, complete measure fulfilling regularity conditions). Hence, it is sufficient to consider Radon measures lying in Besov spaces instead of considering general nonnegative distributions. Throughout the paper all measures are assumed to be Radon.
\end{remark}

\section{Main results}\label{sec:main}

\subsection{Definitions of solution}
We define here weak and strong solutions to \eqref{eq:skew}. 
\begin{definition} \label{def:beta-}
Let $\beta \in \mathbb{R}$, $p \in [1,\infty]$. We say that $(f_n)_{n \in \mathbb{N}}$ converges to $f$ in $\mathcal{B}_p^{\beta-}$ as $n \rightarrow \infty$ if $\sup_{n \in \mathbb{N}} \|f_n\|_{\mathcal{B}_p^\beta}<\infty$ and 
\begin{align*}
\forall \beta^\prime<\beta, \quad    \lim_{n \rightarrow 
\infty}\|f_n-f\|_{\mathcal{B}_p^{\beta^\prime}}=0.
\end{align*}
\end{definition}

\begin{remark} \label{rem:approximation}
For any  $f\in \mathcal{B}_p^\beta$, there exists a sequence \((f_n)_n\) of bounded smooth 
functions  converging to \(f\) in the sense of Definition~\ref{def:beta-}:
 \textit{e.g.} $f_n:=G_{\frac{1}{n}} f$, where \(G\) is the Gaussian 
semigroup introduced in \eqref{Gaussiansemigroup}. This can be seen to hold true by applying
Lemmas~\ref{A.3} and \ref{lem:heatkernel}. 
\end{remark} 

We recall here a link between fractional Brownian motion and Brownian motion.
For each $H\in (0,\tfrac{1}{2})$, there exist operators $\mathcal{A}$ and 
$\bar{\mathcal{A}}$, where both can be given in terms of 
fractional integrals and derivatives (see \eqref{operator} and \cite[Th. 11]{Picard}), such that
\begin{align}
\text{if $B$ is a fractional Brownian motion, } W=\mathcal{A}(B) \text{ is a Brownian motion}, 
\label{operatortildeA}\\
\text{if $W$ is a Brownian motion, } B=\bar{\mathcal{A}}(W) \text{ is a fractional Brownian 
motion}. \label{operatorA}
\end{align}
Besides, $B$ and $W$ generate the same filtration. 

We give here the definition of a $\mathbb{F}$-fractional Brownian 
motion, for a given filtration \(\mathbb{F}\). 
\begin{definition}
	Let  $\mathbb{F}$ be a filtration. We say that $B$ is a $\mathbb{F}$-fractional Brownian 
	motion if 
$W=\mathcal{A}(B)$ is a $\mathbb{F}$-Brownian motion.
\end{definition}

\begin{definition} \label{def:solution}
Let $\beta \in \mathbb{R}$, $p \in [1,\infty]$, $b \in \mathcal{B}_p^\beta$, $T>0$ and $X_0 \in \mathbb{R}$.
\begin{itemize}

\item \emph{Weak solution:} 
We call a couple $((X_t)_{t \in [0,T]},(B_t)_{t \in 
[0,T]})$ defined on some filtered probability space  
$(\Omega,\mathcal{F},\mathbb{F},\mathbb{P})$ a weak solution to \eqref{eq:skew} on 
$[0,T]$, with 
initial condition $X_0$, if 
\begin{compactitem}
\item $B$ is a $\mathbb{F}$-fBm;

\item $X$ is adapted to $\mathbb{F}$;

\item there exists a process $(K_t)_{t 
\in [0,T]}$ such that, a.s.,
\begin{equation}\label{solution1}
X_t=X_0+K_t+B_t\text{ for all } t \in [0,T] ;
\end{equation}

\item for every sequence $(b^n)_{n\in \mathbb{N}}$ of smooth bounded functions converging to $b$ in $\mathcal{B}^{\beta-}_p$, we have that
\begin{equation}\label{approximation2}
       \sup_{t\in [0,T]}\left|\int_0^t b^n(X_r) dr 
       -K_t\right|\limbashaut{\longrightarrow}{n\rightarrow \infty}{\mathbb{P}} 0.
\end{equation}
\end{compactitem}
If the couple is clear from the context, we simply say that  $(X_t)_{t \in [0,T]}$ is a weak 
solution.

\item \emph{Pathwise uniqueness:} As in the classical literature on SDEs, we say that pathwise uniqueness holds if for any two solutions $(X,B)$ and $(Y,B)$ defined on the same filtered probability space with the same
fBm $B$ and same initial condition $X_0 \in \mathbb{R}$, $X$ and $Y$
 are indistinguishable.
\item \emph{Strong solution:} A weak solution $(X,B)$ such that $X$ is 
$\mathbb{F}^B$-adapted is called a strong solution.

\end{itemize}
\end{definition}

\subsection{Existence and uniqueness results}

\begin{theorem} \label{prop:existence}
Let $\beta \in \mathbb{R}$, $p \in [1,\infty]$ and $H\in(0,\frac{1}{2}]$. Let $b \in \mathcal{B}^{\beta}_p$ be a measure. Assume that one of the following conditions holds:
\begin{enumerate}[label=(\roman*)]
\item \label{ex:onebis} $H\geqslant\frac{1}{3}$ and $\beta > 1+\frac{H}{2}-\frac{1}{2H}$;
\item \label{ex:twobis} $H<\frac{1}{3}$ and $\beta>2H-1$;
\item \label{ex:fourbis} $p \in [2,\infty]$ and $\beta>-\frac{1}{2H}+1$.
\end{enumerate}

Then, 
\begin{enumerate}[label=(\alph*)]
\item \label{en:weak1} there exists a weak solution $X$ to \eqref{eq:skew} such that the convergence in \eqref{approximation2} holds a.s.
\item \label{en:weak2} Additionally, $X-B \in \mathcal{C}^\kappa_{[0,T]}(L^m)$ for any 
$\kappa \in (0,1+H(\beta-\frac{1}{p})\wedge 0]\setminus \{1\}$ and $m\geqslant2$.
\end{enumerate}
\end{theorem}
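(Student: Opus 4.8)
The plan is to prove Theorem~\ref{prop:existence} by reducing the problem, via Theorem~\ref{solutionsagree}, to the construction of an adapted path-by-path solution to the nonlinear Young equation associated with \eqref{eq:skew}, and then to invoke the general existence results of Sections~\ref{nonlinearyoung} and~\ref{sec:existWeak}. More precisely, I would first recall that the averaging operator $T^B_t b(y) = \int_0^t b(y+B_s)\,ds$ can be written as a convolution of $b$ with the local time $L^B$ of the fractional Brownian motion, so that its spatial H\"older (or $p$-variation-in-time, H\"older-in-space) regularity follows from the Besov regularity of $L^B$ via Lemma~\ref{lem:regTL}. The three cases \ref{ex:onebis}--\ref{ex:fourbis} should correspond exactly to the three regimes in which one has enough regularity of $T^B b$: for $H \geqslant 1/3$ one uses the sharp space regularity of $L^B$ together with the $p$-variation construction of the nonlinear Young integral (Theorem~\ref{int}); for $H<1/3$ one uses a cruder bound that is uniform enough to still close the estimate; and case \ref{ex:fourbis} exploits $p\geqslant 2$, i.e. the duality between $b$ and $L^B$ in $L^{p'}$ with $p'\leqslant 2$, which is where the $\ell^2$-type structure of the fBm local time helps. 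In each case the key point is that $b$ being a \emph{nonnegative measure} lets one use the monotonicity/one-sided estimates of Section~\ref{nonlinearyoung}, so that the Euler scheme for the equation $dY_t = T^B_{dt}b(Y_t)$ converges (Theorem~\ref{thm:existence}/Theorem~\ref{generalsolution}), producing a path-by-path solution $Y$, hence $X = Y + B$.

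Second, to obtain item~\ref{en:weak1}, i.e.\ that $X$ is actually a \emph{weak} solution and not merely a path-by-path one, I would use the probabilistic tightness property of the averaging operator, Lemma~\ref{lem:LT2}: the random Euler schemes $X^n$ are adapted to $\mathbb{F}^B$ (each step only uses increments of $B$), and Lemma~\ref{lem:LT2} gives uniform moment bounds that make the sequence $(X^n - B)$ tight in $\mathcal{C}_{[0,T]}$; moreover these bounds are strong enough to be uniform over the relevant H\"older scale. Passing to a subsequential limit and using that the path-by-path solution is the limit of \emph{every} subsequence (uniqueness of the Euler limit at the level of a fixed realisation of $B$), one gets that the limit $X$ is adapted to $\mathbb{F}^B$, in particular to the filtration generated by $B$, so $((X_t),(B_t))$ is a weak solution in the sense of Definition~\ref{def:solution}. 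The a.s.\ convergence in \eqref{approximation2} then follows from Theorem~\ref{solutionsagree}, which identifies $K_t = \int_0^t T^B_{dr}b(X_r)$ with the a.s.\ limit of $\int_0^t b^n(X_r)\,dr$; one has to check that the Besov assumptions on $b$ in each of \ref{ex:onebis}--\ref{ex:fourbis} imply the hypotheses of Theorem~\ref{solutionsagree}, which is a matter of comparing the exponents.

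Third, for item~\ref{en:weak2}, the regularity $X - B = Y \in \mathcal{C}^\kappa_{[0,T]}(L^m)$ is essentially built into the construction: $Y$ is a nonlinear Young integral $t \mapsto \int_0^t T^B_{dr}b(Y_r)$, and by the estimates for such integrals the $p$-variation/H\"older regularity of $Y$ in time is inherited from that of the germ, which for $T^B b$ is governed by the time-regularity of $L^B_t$, namely H\"older of order close to $1$ when $b$ is smooth and degraded by $H(\beta - 1/p)\wedge 0$ when $b \in \mathcal B^\beta_p$ is only a measure. So I would estimate $\|Y_{s,t}\|_{L^m} = \|\int_s^t T^B_{dr}b(Y_r)\|_{L^m}$ using the nonlinear Young bound together with the probabilistic bound on $\sup_y \|T^B_{s,t}b(y)\|_{L^m}$ from Lemma~\ref{lem:LT2}, obtaining a bound of order $|t-s|^{\kappa}$ for $\kappa$ in the stated range, uniformly in $m\geqslant 2$ after adjusting constants; the exclusion of $\kappa = 1$ is the usual artefact of the control-function/$p$-variation machinery.

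The main obstacle I expect is the \emph{adaptedness} step in \ref{en:weak1}: showing that the subsequential limit of the random Euler schemes is $\mathbb{F}^B$-adapted requires that the \emph{whole} sequence (not just a subsequence) converges for a.e.\ fixed $\omega$, which in turn needs the path-by-path uniqueness of the Euler limit for a.e.\ realisation of $B$ — this is where the nonnegativity of $b$ and the fine regularity of $L^B$ (through Lemma~\ref{lem:LT2} and the deterministic existence theorem) must be combined carefully, and where the precise thresholds $\sqrt 2 - 1$, $1/3$, etc.\ enter. The verification that each of the three exponent conditions is sufficient for both the deterministic construction and the probabilistic tightness — i.e.\ tracking the interplay between $\beta$, $p$, $H$ and the regularity of $L^B$ in the Besov/$p$-variation scale — is the other place where care is needed, though it should be routine once the relevant lemmas are in hand.
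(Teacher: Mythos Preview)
Your outline for part~\ref{en:weak1} follows the paper closely up to the adaptedness step, but there is a genuine gap there. You propose to obtain adaptedness by arguing that the path-by-path Euler scheme converges along the \emph{whole} sequence (not just a subsequence) for a.e.\ fixed $\omega$, invoking ``uniqueness of the Euler limit at the level of a fixed realisation of $B$''. No such uniqueness is available: Theorem~\ref{thm:existence} extracts the solution via Arzel\`a--Ascoli, so only subsequential convergence is known, and the subsequence may a priori depend on $\omega$. The paper circumvents this entirely: it passes to a new probability space via Skorokhod's representation theorem (applied to the triple $(X^n,B,L)$), obtains a.s.\ convergence there, identifies the limit as a solution to the nonlinear Young equation driven by the limiting fBm $\hat B$, and then proves that $\hat B$ is an $\hat{\mathbb{F}}$-fBm (for $\hat{\mathcal F}_t=\sigma(Y_s,\hat B_s,\, s\leqslant t)$) by showing that $\mathcal{A}(B^n)\to\mathcal{A}(\hat B)$ thanks to the continuity of the operator $\mathcal{A}$ (Lemma~\ref{lem:operatorcontinuity}). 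So the weak solution lives on a new space and is adapted to a filtration larger than $\mathbb{F}^{\hat B}$; it is not claimed to be $\mathbb{F}^B$-adapted on the original space.

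Your plan for part~\ref{en:weak2} is a different route from the paper and, as stated, does not close. You want to read off $\|K_{s,t}\|_{L^m}\lesssim |t-s|^{1+H(\beta-1/p)\wedge 0}$ from the nonlinear Young bound combined with an $L^m$ bound on $\sup_y |T^B_{s,t}b(y)|$ coming from Lemma~\ref{lem:LT2}. But Lemma~\ref{lem:LT2} is a tightness statement, not an $L^m$ moment bound, and the time exponent it delivers (coming from the H\"older regularity of $L$) is in general strictly worse than $1+H(\beta-1/p)$. The paper instead proves~\ref{en:weak2} via Proposition~\ref{prop:regularityALT}, which uses the \emph{stochastic} sewing lemma with random controls: one takes the germ $A^k_{s,t}=\int_s^t b^k(B_r+K_s)\,dr$, bounds $\|A^k_{s,t}\|_{L^m}$ by Lemma~\ref{regulINT} and $|\EE^u[\delta A^k_{s,u,t}]|$ by Lemma~\ref{lem:Cs}\ref{(C.8)} together with the monotonicity of $K$ (this is where the measure hypothesis on $b$ enters), and obtains the sharp exponent $1+H(\beta-1/p)$ directly. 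The exclusion of $\kappa=1$ is not a $p$-variation artefact but simply the observation that when $\beta-1/p\geqslant 0$ the exponent is capped at values strictly below $1$.
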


\vspace{0.1cm}

\begin{corollary} \label{cor:existence}
For any finite measure $b$, there exists a weak solution to 
\eqref{eq:skew} for $H<\sqrt{2}-1$. If $b= a \delta_0$, for some $a \in \mathbb{R}$, we call it 
an \emph{$a$-skew fractional Brownian motion}.
\end{corollary}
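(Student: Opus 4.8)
The plan is to derive Corollary~\ref{cor:existence} directly from Theorem~\ref{prop:existence} by checking that any finite measure $b$ satisfies one of the three alternative hypotheses for every $H<\sqrt 2-1$. The first step is to locate $b$ in an appropriate Besov scale: a finite (Radon) measure on $\mathbb{R}$ belongs to $\mathcal{B}^{0}_{1,\infty}=\mathcal{B}^0_1$, which is a standard fact (the Littlewood--Paley blocks $\Delta_j b$ satisfy $\|\Delta_j b\|_{L^1}\lesssim \|b\|_{TV}$ uniformly in $j$); alternatively one can cite the embedding of finite measures into $\mathcal{B}^0_{1,\infty}$ used elsewhere in the paper. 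Then, via the Besov embedding of Remark~\ref{embedding2} (with $p_1=1$), $b\in\mathcal{B}^0_1$ also lies in $\mathcal{B}^{-(1-1/p)}_p$ for any $p\in[1,\infty]$; in particular $b\in\mathcal{B}^{-1/2}_2$, which is the relevant statement for condition~\ref{ex:fourbis}.

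Next I would verify condition~\ref{ex:fourbis} of Theorem~\ref{prop:existence}: taking $p=2\in[2,\infty]$, the requirement is that $b\in\mathcal{B}^\beta_2$ for some $\beta>1-\tfrac{1}{2H}$. Since $b\in\mathcal{B}^{-1/2}_2$ and the Besov spaces are nested ($\mathcal{B}^{s}_2\hookrightarrow\mathcal{B}^{s'}_2$ for $s'\le s$), it suffices to have $-\tfrac12>1-\tfrac{1}{2H}$, i.e. $\tfrac{1}{2H}>\tfrac32$, i.e. $H<\tfrac13$. For $H<\tfrac13$ one may alternatively just use condition~\ref{ex:twobis}: $b\in\mathcal{B}^{0}_1$ and $0>2H-1$ exactly when $H<\tfrac12$, so~\ref{ex:twobis} already covers all $H<\tfrac13$. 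Either way the range $H<\tfrac13$ is handled. Since $\sqrt2-1\approx 0.414>\tfrac13$, this is not yet enough, and the remaining case $\tfrac13\le H<\sqrt2-1$ must be treated with condition~\ref{ex:onebis}.

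The main (and only real) obstacle is condition~\ref{ex:onebis}, which for $H\ge\tfrac13$ demands $b\in\mathcal{B}^\beta_p$ with $\beta>1+\tfrac H2-\tfrac{1}{2H}$ for a suitable $p$. The natural choice is again $p=1$, where $b\in\mathcal{B}^0_1$, so I need $0>1+\tfrac H2-\tfrac{1}{2H}$, equivalently $\tfrac{1}{2H}>1+\tfrac H2$, equivalently (multiplying by $2H>0$) $1>2H+H^2$, i.e. $H^2+2H-1<0$. The positive root of $H^2+2H-1=0$ is exactly $H=\sqrt2-1$, so the inequality holds precisely for $H<\sqrt2-1$. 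Hence for every $H\in[\tfrac13,\sqrt2-1)$ condition~\ref{ex:onebis} applies with $p=1$, $\beta=0$ (or any small $\beta'<0$ with $\beta'>1+\tfrac H2-\tfrac{1}{2H}$, which exists by strictness). Combining the two ranges, Theorem~\ref{prop:existence}\ref{en:weak1} yields a weak solution to~\eqref{eq:skew} for all $H<\sqrt2-1$. Finally, the statement that for $b=a\delta_0$ the solution is named an $a$-skew fractional Brownian motion is a definition, requiring nothing beyond the observation that $a\delta_0$ is a finite measure (of total variation $|a|$), so the existence part applies verbatim.
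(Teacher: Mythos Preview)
Your proposal is correct and follows essentially the same route as the paper: place the finite measure in $\mathcal{B}^0_1$, then verify condition~\ref{ex:twobis} for $H<\tfrac13$ and condition~\ref{ex:onebis} for $\tfrac13\le H<\sqrt2-1$ via the quadratic inequality $H^2+2H-1<0$. Your detour through condition~\ref{ex:fourbis} with $p=2$ is an unnecessary alternative for the range $H<\tfrac13$ (as you yourself note), but it does no harm.
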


\vspace{0.1cm}

\begin{remark}\label{rk:weakExBrownian}
In the Brownian motion case (\(H = 1/2\)), we obtain existence of a weak solution in new cases. 
For instance, consider 
\begin{equation*}
b(x)=|x|^{-3/4+\varepsilon} a(x),
\end{equation*}
 where $a$ is a smooth, compactly supported, nonnegative function equal to $1$ on $[-1,1]$. We have that 
 $b\in \mathcal{B}^{\frac{1}{4}+}_{1}$ (see~\cite[Prop. 2.21]{BaDaCh} for similar 
 computations), and the space $\mathcal{B}^{\frac{1}{4}+}_{1}$ is covered by 
 Assumption~\ref{ex:onebis} of Theorem~\ref{prop:existence}. Since $b$ is neither in 
 $\bigcup_{p \geqslant 2} L^p(\R)$ nor in $\mathcal{C}^{-\frac{2}{3}+}$, we cannot directly 
 apply results from \cite{KrylovRockner} or \cite{DelarueDiel}. 
\end{remark}

\vspace{0.1cm}

The following theorem gives, as Theorem~\ref{prop:existence} does, conditions on the drift and the Hurst parameter such that \eqref{eq:skew} has a weak solution. Note that this time, there is no nonnegativity assumption on $b$. Even in the case of considering $b$ to be a measure, none of the two theorems is stronger than the other. However, if $b$ is a finite measure, Theorem~\ref{prop:existence} allows for a wider range of Hurst parameters to get existence of weak solutions to \eqref{eq:skew}. Moreover, even though Theorem~\ref{prop:existence}\ref{ex:fourbis} is fully covered by \eqref{eq:assumptionweak}, it still adds value as it gives a.s. convergence in \eqref{approximation2}.

\begin{theorem} \label{th:approx}
Let $\beta \in \mathbb{R}$, $p \in [1,\infty]$, $b \in \mathcal{B}_p^\beta$ and 
$X_0 \in \mathbb{R}$. 
Let $(b^n)_{n \in \mathbb{N}}$ be a sequence of smooth bounded functions converging 
to $b$ in $\mathcal{B}_p^{\beta-}$.
 Let $X^n$ be the unique strong solution to
 \eqref{eq:skew} with drift $b^n$. Assume
\begin{equation} \label{eq:assumptionweak}
\beta-\frac{1}{p}>-\frac{1}{2H}+ \frac{1}{2}.
\end{equation}
Then, there exists a subsequence $(n_k)_{k \in \mathbb{N}}$ such that $(X^{n_k})_{k \in 
\mathbb{N}}$ converges in law w.r.t. $\|\cdot\|_\infty$ to a process $X$ which is a weak 
solution to \eqref{eq:skew} with drift $b$. Furthermore, $X-B \in 
\mathcal{C}^\kappa_{[0,T]}(L^m)$ for any $\kappa \in (0,1+H(\beta-\frac{1}{p})\wedge 
0]\setminus \{1\}$ and $m\geqslant2$.
\end{theorem}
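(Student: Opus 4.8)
The plan is to prove Theorem~\ref{th:approx} by a tightness-stability argument in the spirit of Athreya, Butkovsky, L\^e and Mytnik~\cite{Atetal}, treating the approximating strong solutions $X^n$ as a sequence whose "drift part" $K^n_t := \int_0^t b^n(X^n_r)\,dr = X^n_t - X_0 - B_t$ must be controlled uniformly in $n$. First I would establish the key a priori estimate: using Lemma~\ref{lem:mainregularisation} (the bound on the H\"older norm of the conditional expectation of $x\mapsto \int_s^t f(x+B_r)\,dr$ in terms of $\|f\|_{\mathcal{B}_p^\beta}$) together with the stochastic sewing Lemma as recalled in Appendix~\ref{app:sewing}, I would show that for any $m\geqslant 2$ and any $\kappa\in(0,1+H(\beta-\tfrac1p)\wedge 0]\setminus\{1\}$,
\begin{equation*}
\sup_{n}\ \big\|K^n_t - K^n_s\big\|_{L^m} \leqslant C\,(t-s)^\kappa,
\end{equation*}
where $C$ depends only on $\sup_n\|b^n\|_{\mathcal{B}_p^\beta}$, $T$, $m$, $\kappa$ (and the initial value). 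The point is that condition~\eqref{eq:assumptionweak}, $\beta-\tfrac1p>-\tfrac1{2H}+\tfrac12$, is exactly what makes the relevant sewing exponent exceed $1/2$ so that the stochastic sewing Lemma applies with a genuine H\"older gain; here one should work conditionally given $\mathcal{F}_s$, exploiting the local nondeterminism of the fBm (Appendix~\ref{app:LND}, Lemma~\ref{lem:Cs}) to handle the non-Markovianity, as is done for Theorem~\ref{thm:uniqueness}. A subtlety is that this bound must be made uniform along the approximating sequence, which is why one keeps track of $\sup_n\|b^n\|_{\mathcal{B}_p^\beta}<\infty$ from Definition~\ref{def:beta-} throughout.

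Given the uniform estimate, tightness follows: since $X^n_t = X_0 + K^n_t + B_t$ with $B$ fixed, and $K^n$ has uniformly bounded $\mathcal{C}^\kappa_{[0,T]}(L^m)$-norm for some $\kappa>0$, Kolmogorov's continuity criterion gives equicontinuity estimates on the law of $K^n$ (hence of $X^n$) in $\mathcal{C}([0,T])$ with the sup norm, so the laws $\{\mathrm{Law}(X^n,B)\}_n$ are tight on $\mathcal{C}([0,T])^2$. Extract a subsequence $(n_k)$ converging in law; by Skorokhod's representation theorem I may assume (on a new probability space, relabelling) that $(X^{n_k},B^{n_k})\to(X,B)$ almost surely in sup norm, where $B^{n_k}\overset{d}{=}B$ is an fBm and the limit $X$ satisfies $X_t = X_0 + K_t + B_t$ with $K = \lim K^{n_k}$ inheriting the same $\mathcal{C}^\kappa_{[0,T]}(L^m)$-bound (Fatou / uniform integrability from the $L^m$-moment control), which yields the last assertion of the theorem.

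The main obstacle — and the heart of the argument — is the \emph{identification of the limit}: one must show that $K_t$ equals the limit in probability of $\int_0^t \tilde b^n(X_r)\,dr$ for \emph{every} sequence $\tilde b^n\to b$ in $\mathcal{B}_p^{\beta-}$, i.e. that $(X,B)$ is a weak solution in the sense of Definition~\ref{def:solution}. The strategy is a stability estimate: for smooth bounded $f,g$ one controls $\big\|\int_s^t f(X^{n_k}_r)\,dr - \int_s^t g(X_r)\,dr\big\|_{L^m}$ in terms of $\|f-g\|_{\mathcal{B}_p^{\beta'}}$ for $\beta'<\beta$ (again via stochastic sewing and Lemma~\ref{lem:mainregularisation}, now applied to differences and to the "perturbation" of the path) plus a term measuring $\|X^{n_k}-X\|_\infty$, which goes to $0$ a.s.\ and in $L^m$. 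Applying this with $f=b^{n_k}$ (so that $\int_0^\cdot b^{n_k}(X^{n_k}_r)\,dr = K^{n_k}\to K$) and $g=\tilde b^n$, then sending $k\to\infty$ and $n\to\infty$ using $\|b^{n_k}-\tilde b^n\|_{\mathcal{B}_p^{\beta'}}\to 0$, identifies $K_t = \lim_n \int_0^t \tilde b^n(X_r)\,dr$ in probability, uniformly in $t$. One also checks the filtration requirement — $B$ remains an $\mathbb{F}$-fBm for the natural filtration $\mathbb{F}$ of $(X,B)$ on the limit space — which is automatic from the a.s. convergence since each $B^{n_k}$ is an $\mathbb{F}^{X^{n_k},B^{n_k}}$-fBm and this property passes to the limit (the increment $B_t-B_s$ being independent of $\mathcal{F}_s$ is preserved under a.s. convergence of the pair). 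Finally, since Theorem~\ref{prop:existence}\ref{ex:fourbis} and \eqref{eq:assumptionweak} overlap, no separate work is needed to relate the two results; the convergence statement and the regularity conclusion $X-B\in\mathcal{C}^\kappa_{[0,T]}(L^m)$ complete the proof.
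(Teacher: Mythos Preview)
Your overall architecture --- uniform a priori estimate on $K^n$ via Lemma~\ref{lem:mainregularisation} and stochastic sewing, then tightness, Skorokhod representation, and a stability argument to identify the limit --- matches the paper's proof (Propositions~\ref{prop:tightness} and~\ref{prop:stability}, then Section~\ref{mainresults}) closely. The regularity claim via Fatou is also how the paper obtains~\eqref{eq:regularity}.

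There is, however, a genuine gap in the adaptedness step. You write that ``the increment $B_t-B_s$ being independent of $\mathcal{F}_s$ is preserved under a.s.\ convergence of the pair'', but this is \emph{not} the definition of an $\mathbb{F}$-fractional Brownian motion: for $H<1/2$ the increments of $B$ are not independent of the past. By Definition~2.3 in the paper, $B$ is an $\mathbb{F}$-fBm if and only if the associated Brownian motion $W=\mathcal{A}(B)$ (see~\eqref{operatortildeA}) is an $\mathbb{F}$-Brownian motion. What has to pass to the limit is therefore the independence of $W_t-W_s=\mathcal{A}(\hat B)_t-\mathcal{A}(\hat B)_s$ from $\hat{\mathcal{F}}_s$, and to deduce this from the a.s.\ convergence $B^{n_k}\to\hat B$ one needs that the operator $\mathcal{A}$ is continuous on $(\mathcal{C}_{[0,T]},\|\cdot\|_\infty)$. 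This is exactly Lemma~\ref{lem:operatorcontinuity}, proved in the paper for this purpose, and it is not automatic (the kernel $K_H$ has a singularity). Once you invoke Lemma~\ref{lem:operatorcontinuity} to get $W^{n_k}\to W$ in sup norm, the independence of $W^{n_k}_t-W^{n_k}_s$ from $\sigma(Y^{n_k}_r,B^{n_k}_r,\,r\le s)$ does pass to the limit as you intended, and the rest of your argument goes through.
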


\begin{remark}
Without loss of generality, the previous theorem can directly be formulated for H\"older spaces by fixing $p=\infty$, using the embedding from Remark~\ref{embedding2}. However, this is not the case for Theorem~\ref{thm:uniqueness} below. Hence we keep working in general Besov spaces for a better comparison of the results.
\end{remark}

Under slightly stronger assumptions than \eqref{eq:assumptionweak}, the following theorem states strong existence and pathwise uniqueness. In particular, it implies that under this stronger condition, convergence in probability of the approximation scheme in Theorem~\ref{th:approx} holds without passing to a subsequence.

\begin{theorem} \label{thm:uniqueness}
Let $H<1/2$, $\beta \in \mathbb{R}$, $p \in [1,\infty]$, $b \in \mathcal{B}_p^\beta$ and $X_0 \in \mathbb{R}$. 
Assume
\begin{equation} \label{eq:assumptionstrong}
\beta>- \frac{1}{2H}+1 ~\mbox{ and }  ~ \beta- \frac{1}{p} \geqslant - \frac{1}{2H}+1 .
\end{equation}
Then,
\begin{enumerate}[label=(\alph*)]

    \item \label{uniqueness(1)} there exists a strong solution $X$ to \eqref{eq:skew} such that $X-B \in \mathcal{C}^{\frac{1}{2}+H}_{[0,T]}(L^m)$ for any $m\geqslant2$;

    \item \label{uniqueness(1-2)} pathwise uniqueness holds in the class of all solutions $X$ such that $X-B \in \mathcal{C}^{\frac{1}{2}+H}_{[0,T]}(L^2)$;
    \item \label{uniquelimit}for any sequence $(b^n)_{n \in \mathbb{N}}$ of smooth bounded functions converging to $b$ in $\mathcal{B}^{\beta-}_p$, the corresponding sequence of strong solutions $(X^n)_{n \in \mathbb{N}}$ to \eqref{eq:skew} with drift $b^n$ converges in probability to the unique strong solution for which $X-B \in \mathcal{C}^{\frac{1}{2}+H}_{[0,T]}(L^2)$. In particular $X$ is independent of the chosen sequence of approximations.
    \item if $b$ is a measure, there exists a pathwise unique 
    strong solution to \eqref{eq:skew}. \label{uniqueness(2)}
\end{enumerate}
\end{theorem}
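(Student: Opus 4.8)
The heart of the matter is the pathwise uniqueness statement~\ref{uniqueness(1-2)}, which I would prove by a stochastic sewing argument following the approach of Athreya et al.~\cite{Atetal}. The strong existence~\ref{uniqueness(1)}, the stability~\ref{uniquelimit} and the measure case~\ref{uniqueness(2)} then follow by combining it with results already at hand: Theorem~\ref{th:approx} (whose hypothesis \eqref{eq:assumptionweak} is implied by \eqref{eq:assumptionstrong}) produces a weak solution as a limit in law of strong solutions with mollified drifts, and a Gy\"ongy--Krylov argument upgrades this to convergence in probability towards a strong solution once uniqueness is known; the measure case additionally uses the regularity analysis of Section~\ref{sec:RegWeak}. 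The common thread is that under \eqref{eq:assumptionstrong} one has $1+H(\beta-\tfrac1p)\wedge 0\geqslant \tfrac12+H$ (and $\tfrac12+H\neq 1$ since $H<1/2$), so the regularity $X-B\in\mathcal{C}^{1/2+H}_{[0,T]}(L^m)$ appearing throughout the statement is both produced by the construction and strong enough to place the solutions inside the uniqueness class.

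\textbf{Pathwise uniqueness (part~\ref{uniqueness(1-2)}).}
Let $(X,B)$ and $(Y,B)$ be two solutions on a common filtered space with $X_0=Y_0$ and $X-B,\,Y-B\in\mathcal{C}^{1/2+H}_{[0,T]}(L^2)$; set $\Psi:=X-Y$, so that $\Psi_0=0$ and $\Psi$ enjoys the same H\"older regularity. Fix bounded smooth drifts $b^n\to b$ in $\mathcal{B}_p^{\beta-}$. On each $[s,t]$ one has
\[
\Psi_{s,t}=\int_s^t\big(b^n(X_r)-b^n(Y_r)\big)\,dr+R^n_{s,t},
\]
the remainder $R^n$ vanishing in $L^2$ as $n\to\infty$ by \eqref{approximation2} and the regularisation estimates. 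I then apply the stochastic sewing Lemma (Appendix~\ref{app:sewing}) to the germ $A^n_{s,t}:=\EE^s\big[\int_s^t(b^n(X_r)-b^n(Y_r))\,dr\big]$, whose sewn process must coincide with $\int_s^\cdot(b^n(X_r)-b^n(Y_r))\,dr$ by additivity. To verify the two sewing bounds I freeze the slowly varying parts, replacing $X_r$ by $X_s+B_{s,r}$ and $Y_r$ by $Y_s+B_{s,r}$; the freezing errors, being increments of the $\mathcal{C}^{1/2+H}$ drift parts, are of higher order than needed. The resulting quantity $\EE^s\big[\int_s^t(b^n(X_s+B_{s,r})-b^n(Y_s+B_{s,r}))\,dr\big]$ is controlled by Lemma~\ref{lem:mainregularisation}: by the local nondeterminism of $B$ (Appendix~\ref{app:LND}, Lemma~\ref{lem:Cs}) the conditional expectation becomes a heat-kernel smoothing of $b^n$ at scale $\asymp(r-s)^{2H}$ evaluated at an $\mathcal{F}_s$-measurable point, and an $n$-uniform spatial H\"older-modulus estimate for that smoothing yields a bound of order $\|\Psi_s\|_{L^2}\,|t-s|^{\,1-H+H(\beta-1/p)}\|b^n\|_{\mathcal{B}^\beta_p}$, while $\delta A^n_{s,u,t}$, being a difference of conditional expectations, carries an extra power of the overlap length. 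Condition \eqref{eq:assumptionstrong} makes the exponent $1-H+H(\beta-\tfrac1p)$ at least $\tfrac12$, with equality only in the borderline case $p<\infty$, $\beta-\tfrac1p=1-\tfrac1{2H}$, which is absorbed precisely by the extra decay of $\delta A^n$; hence the sewing Lemma applies and gives, on every $[S,T']\subset[0,T]$, an inequality of the form $\sup_{S\leqslant t\leqslant T'}\|\Psi_t\|_{L^2}\leqslant C\varphi(T'-S)\sup_{S\leqslant t\leqslant T'}\|\Psi_t\|_{L^2}+o_n(1)$ with $\varphi(h)\to0$ as $h\to0$. Letting $n\to\infty$, choosing the length so that $C\varphi<1$, and covering $[0,T]$ by finitely many such intervals starting from $\Psi_0=0$, one obtains $\Psi\equiv0$; continuity then gives indistinguishability.

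\textbf{Strong existence and stability (parts~\ref{uniqueness(1)} and~\ref{uniquelimit}).}
By Theorem~\ref{th:approx} a subsequence of the strong solutions $X^n$ to \eqref{eq:skew} with drift $b^n$ converges in law for $\|\cdot\|_\infty$ to a weak solution $X$, and since \eqref{eq:assumptionstrong} gives $1+H(\beta-\tfrac1p)\wedge 0\geqslant\tfrac12+H$, one has $X-B\in\mathcal{C}^{1/2+H}_{[0,T]}(L^m)$ for all $m\geqslant2$. To pass from convergence in law to convergence in probability I run a Gy\"ongy--Krylov argument: given two subsequences, the pair $(X^{n_k},X^{m_l})$ built on a common space carrying the same $B$ is tight, every subsequential limit is a pair of weak solutions driven by the same $B$ lying in the class of part~\ref{uniqueness(1-2)}, hence a.s.\ equal by pathwise uniqueness; consequently $(X^n)$ converges in probability. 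Since each $X^n$ is a measurable functional of $B$, so is the limit $X$, which is therefore a strong solution, still with $X-B\in\mathcal{C}^{1/2+H}_{[0,T]}(L^m)$. This proves~\ref{uniqueness(1)}; statement~\ref{uniquelimit} is the convergence in probability just obtained, and the limit is independent of the approximating sequence $(b^n)$ because any other choice produces, by the same argument, a weak solution in the uniqueness class, hence the same process.

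\textbf{The measure case (part~\ref{uniqueness(2)}) and the main obstacle.}
When $b$ is a measure, by Remark~\ref{rem:measure} we may take it nonnegative (a signed measure being split into its Jordan parts), and existence of a weak solution follows either from part~\ref{uniqueness(1)} or, after the embedding $\mathcal{B}^\beta_p\hookrightarrow\mathcal{B}^{\beta-1/p+1/2}_2$ (Remark~\ref{embedding2}) combined with \eqref{eq:assumptionstrong}, from Theorem~\ref{prop:existence}\ref{ex:fourbis}. The extra input is the regularity analysis of Section~\ref{sec:RegWeak}: for a measure drift \emph{every} weak solution satisfies $X-B\in\mathcal{C}^{1+H(\beta-1/p)\wedge 0}_{[0,T]}(L^m)$, which by \eqref{eq:assumptionstrong} is at least $\mathcal{C}^{1/2+H}_{[0,T]}(L^m)$, so every weak solution belongs to the uniqueness class of part~\ref{uniqueness(1-2)}; hence~\ref{uniqueness(2)} follows from parts~\ref{uniqueness(1)} and~\ref{uniqueness(1-2)} without any a priori regularity assumption. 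The main obstacle is the uniqueness estimate of part~\ref{uniqueness(1-2)}: one needs a sharp, $n$-uniform control of the spatial H\"older modulus of $\EE^s\big[\int_s^t b^n(x+B_r)\,dr\big]$ — with H\"older exponent up to $1$ and the correct time exponent — precise enough that the stochastic sewing Lemma produces a genuinely self-improving bound for $\Psi$; since $B$ is not Markovian this must go through the local nondeterminism property rather than a transition semigroup, and keeping track of the freezing errors and of the critical case $\beta-\tfrac1p=1-\tfrac1{2H}$ (handled via the extra decay of $\delta A^n$) is where the argument is most delicate.
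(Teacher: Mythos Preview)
Your overall architecture --- uniqueness via stochastic sewing, then strong existence via Gy\"ongy--Krylov, then the measure case via Section~\ref{sec:RegWeak} --- matches the paper, but the uniqueness argument has two genuine gaps.

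\emph{The critical case does not give a contraction.} You claim the sewing Lemma yields $\sup\|\Psi_t\|_{L^2}\leqslant C\varphi(h)\sup\|\Psi_t\|_{L^2}$ with $\varphi(h)\to 0$, from which uniqueness would follow by shrinking intervals. This fails precisely in the borderline case $\beta-\tfrac1p=1-\tfrac1{2H}$: there the leading time exponent is exactly $1$, and the ``extra decay of $\delta A^n$'' does not absorb it in the naive way. What the paper actually does is invoke the stochastic sewing Lemma \emph{with critical exponent} (Theorem~4.5 in \cite{Atetal}), which produces a logarithmic correction; the resulting estimate is of the Osgood form
\[
\|Z_{s,t}\|_{L^2}\leqslant C\|Z\|_{\mathcal{C}_{[s,t]}(L^2)}(t-s)^{1/2}+C\|Z\|_{\mathcal{C}_{[s,t]}(L^2)}\big|\log\|Z\|_{\mathcal{C}_{[s,t]}(L^2)}\big|\,(t-s)
\]
(Lemma~\ref{zst}), and concluding $Z\equiv 0$ requires a Davie-type argument on the level sets $t_k=\inf\{t:\|Z_t\|_{L^2}\geqslant 2^{-k}\}$, not a contraction. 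Moreover, the sewing is not applied to $Z$ alone but jointly to $Z$ and a remainder $R$ (Lemma~\ref{Hstzst}), which must first be absorbed.

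\emph{One solution needs stronger regularity.} Your direct comparison of two solutions with $X-B,\,Y-B\in\mathcal{C}^{1/2+H}_{[0,T]}(L^2)$ is not what the paper can do. In the proof of Lemma~\ref{Hstzst} one must control terms like $\|\EE^s[|K_\theta-K_s|^2]^{1/2}\|_{L^\infty}$, so Proposition~\ref{unique} actually requires one of the two drifts to satisfy the stronger bound $[X-B]_{\COO{1/2+H}{2}{\infty}{[0,T]}}<\infty$ (condition~\eqref{eq:regXB}). This is why part~\ref{uniqueness(1-2)} is proved by comparing each of $Y^1,Y^2$ to the \emph{strong} solution from part~\ref{uniqueness(1)} (which has this regularity by Lemma~\ref{regularity3.2}), not to each other directly. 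Your freezing argument glosses over exactly the place where this asymmetry enters. A minor point: the Jordan decomposition in part~\ref{uniqueness(2)} is unnecessary, since in this paper ``measure'' already means nonnegative (Remark~\ref{rem:measure}), and Proposition~\ref{prop:regularityALT} uses the monotonicity of $K$ coming from nonnegativity.
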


\begin{remark}
\begin{itemize}
\item For $b$ being a finite measure (hence in $\mathcal{B}^0_{1}$), Theorem 
\ref{thm:uniqueness} gives existence of a unique strong 
solution to \eqref{eq:skew} for $H\leqslant 1/4$ and Theorem~\ref{th:approx} 
gives existence of a weak solution to \eqref{eq:skew} for $H<1/3$. 

Such a finite measure $b$ is also in $\mathcal{B}^{-1}_\infty$. In this space, the existence of a unique strong solution was shown in \cite{HarangGaleati} for $H<1/4$ (elaborating on the path-by-path result of \cite{CatellierGubinelli}).
 Hence in this case, Theorem~\ref{thm:uniqueness} extends this result to $H=1/4$.

\item 
In the Brownian motion case, Theorem~\ref{th:approx} gives existence of weak 
solutions for $b \in \mathcal{B}^\beta_p$ when $\beta-1/p>-1/2$. In this regime strong existence and pathwise uniqueness are already known by \cite{BassChen}.
\item Note that $H=1/2$ is excluded from Theorem \ref{thm:uniqueness}. Strong existence and uniqueness result are already known under our assumptions (see \cite{KrylovRockner}).
\end{itemize}
\end{remark}

The following diagrams display for which Besov-valued distributions $b$ we have well-posedness for Equation \eqref{eq:skew}. 
The black-hatched area and the turquoise area correspond to the result obtained in 
Theorem~\ref{prop:existence}. The graphics visualize that the 
weak solution constructed in Theorem~\ref{prop:existence} is a solution that, in some cases, does not arise from the weak solution constructed in 
Theorem~\ref{th:approx}.

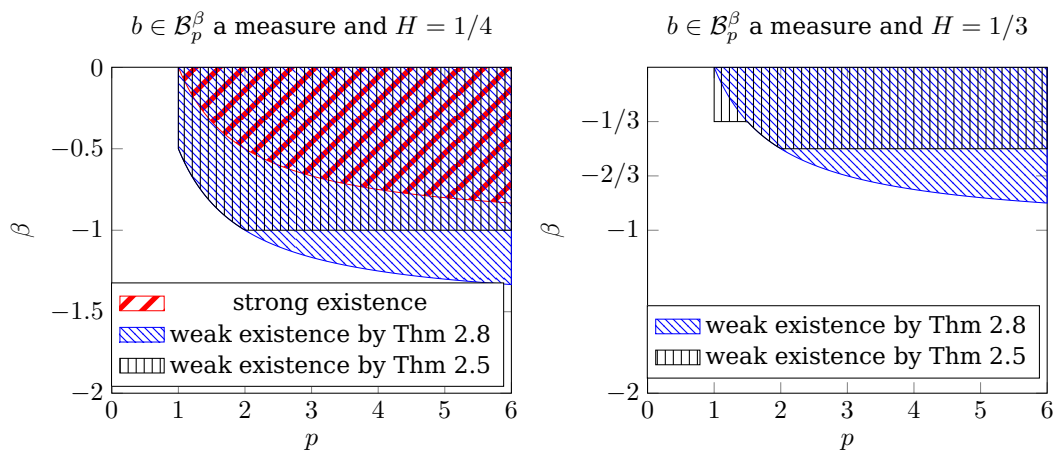
\begin{figure}[H]
\centering
\begin{tikzpicture}[scale=0.97,
        hatch distance/.store in=\hatchdistance,
        hatch distance=10pt,
        hatch thickness/.store in=\hatchthickness,
        hatch thickness=2pt,
                declare function={
    func2(\x)= (\x<2) * (-3/2+1/x) +
              and(\x >= 2, \x < 7) * (-1) +
              (\x >= 7) * (-1)
   ;
	}
    ]
    \makeatletter
    \pgfdeclarepatternformonly[\hatchdistance,\hatchthickness]{flexible hatch}
    {\pgfqpoint{0pt}{0pt}}
    {\pgfqpoint{\hatchdistance}{\hatchdistance}}
    {\pgfpoint{\hatchdistance-1pt}{\hatchdistance-1pt}}%
    {
        \pgfsetcolor{\tikz@pattern@color}
        \pgfsetlinewidth{\hatchthickness}
        \pgfpathmoveto{\pgfqpoint{0pt}{0pt}}
        \pgfpathlineto{\pgfqpoint{\hatchdistance}{\hatchdistance}}
        \pgfusepath{stroke}
    }

    \begin{axis}[
    	title style={align=center},
        title={ $b \in \mathcal{B}^\beta_p$ a measure and $H=1/4$},
        xmin=0,xmax=6,
        xlabel={$p$},
        xtick={0,1,2,3,4,5,6},
        ylabel={$\beta$},
        ymin=-2,ymax=0,
        legend style={at={(0.49,+0.34)},anchor=north,{cells={align=left}}}]
    \addlegendentry{strong existence}
    \addplot+[mark=none,
        domain=1:6,
        samples=100,
        pattern=flexible hatch,
        area legend,
        draw=red,
        pattern color=red]{-1+1/x} \closedcycle;
    \addlegendentry{weak existence by Thm~\ref{th:approx}}
    \addplot+[mark=none,
        domain=1:6,
        samples=100,
        pattern=north west lines,
        hatch distance=5pt,
        hatch thickness=6pt,
        draw=blue,
        pattern color=blue,
        area legend]{-1.5+1/x} \closedcycle;    
    \addlegendentry{weak existence by Thm~\ref{prop:existence}}
    \addplot+[mark=none,
        domain=1:6,
        samples=100,
        pattern=vertical lines,
        hatch distance=5pt,
        hatch thickness=6pt,
        draw=black,
        pattern color=black,
        area legend]{func2(x)} \closedcycle;    
    \end{axis}
    
\end{tikzpicture}
\begin{tikzpicture}[scale=0.97,
        hatch distance/.store in=\hatchdistance,
        hatch distance=10pt,
        hatch thickness/.store in=\hatchthickness,
        hatch thickness=2pt,
        declare function={
    func(\x)= (\x<3/2) * (-1/3) + and(\x>=3/2, \x<2) * (-1+1/x)   +
              and(\x >= 2, \x < 7) * (-1/2)     +
              (\x >= 7) * (-1/2)
   ;
	}
    ]
    \makeatletter
    \begin{axis}[
    	title style={align=center},
        title={ $b \in \mathcal{B}^\beta_p$ a measure and $H=1/3$},
        xmin=0,xmax=6,
        xlabel={$p$},
        xtick={0,1,2,3,4,5,6},
        ylabel={$\beta$},
        ytick={-0.33333,-0.6666,-1,-2},
        yticklabels={$-1/3$,$-2/3$,$-1$,$-2$},
        ymin=-2,ymax=0,
        legend style={at={(0.49,+0.27)},anchor=north,{cells={align=left}}}]
    \addlegendentry{weak existence by Thm~\ref{th:approx}}
    \addplot+[mark=none,
        domain=1:6,
        samples=100,
        pattern=north west lines,
        hatch distance=5pt,
        hatch thickness=6pt,
        draw=blue,
        pattern color=blue,
        area legend]{-1+1/x} \closedcycle;    
    \addlegendentry{weak existence by Thm~\ref{prop:existence}}
    \addplot+[mark=none,
        domain=1:6,
        samples=100,
        pattern=vertical lines,
        hatch distance=5pt,
        hatch thickness=6pt,
        draw=black,
        pattern color=black,
        area legend]{func(x)} \closedcycle;  
    \end{axis}
    
\end{tikzpicture}
\caption{Existence (and uniqueness) for $b$ a measure and $H$ fixed}
\end{figure}
\begin{figure}[H]
\centering
\begin{tikzpicture}[scale=0.94,
        hatch distance/.store in=\hatchdistance,
        hatch distance=10pt,
        hatch thickness/.store in=\hatchthickness,
        hatch thickness=2pt
    ]
    \makeatletter

    \begin{axis}[
    	title style={align=center},
        title={Existence and uniqueness for $b \in \mathcal{B}^\beta_p$},
        xmin=0,xmax=5,
        xlabel={$H$},
        xtick={0,1,2,3,4,5},
        xticklabels = {$0$,$0.1$,$0.2$,$0.3$,$0.4$,$0.5$},
        ylabel={$\beta-1/p$},
        ymin=-9,ymax=0,
        legend style={at={(0.63,+0.37)},anchor=north,{cells={align=left}}}]
    \addlegendentry{strong existence \\ and uniqueness}
    \addplot+[mark=none,
        domain=0:5,
        samples=100,
        pattern=flexible hatch,
        area legend,
        draw=red,
        pattern color=red]{1-1/(0.2*x)} \closedcycle;   
    \addlegendentry{weak existence}
    \addplot+[mark=none,
        domain=0:5,
        samples=100,
        pattern=north west lines,
        hatch distance=5pt,
        hatch thickness=6pt,
        draw=blue,
        pattern color=blue,
        area legend]{1/2-1/(0.2*x)} \closedcycle;    
    \end{axis}
   
\end{tikzpicture}
\begin{tikzpicture}[
        hatch distance/.store in=\hatchdistance,
        hatch distance=10pt,
        hatch thickness/.store in=\hatchthickness,
        hatch thickness=2pt
    ]
    \makeatletter

    \begin{axis}[
    	title style={align=center},
        title={Weak existence \\ for a measure $b \in \bigcup_{p\in [1,\infty]}\mathcal{B}^{\beta}_p$},
        xmin=0,xmax=5,
        xlabel={$H$},
        xtick={0,1,2,3,4,5},
        xticklabels = {$0$,$0.1$,$0.2$,$0.3$,$0.4$,$0.5$},
        ylabel={$\beta$},
        ymin=-2,ymax=1,
        ytick={1,1/4,0,-1/3,-1,-2},
        yticklabels={$1$,$1/4$,$0$,$-1/3$,$-1$,$-2$},
        legend style={at={(0.63,+0.37)},anchor=north,{cells={align=left}}}]
\addplot [name path = A,opacity=0,
    domain = 0:5,
    samples = 1000] {1} ;

\addplot [name path = B,opacity=0,
    domain = 3.3333:5] {1+(0.05*\x)-1/(0.2*\x)} ;
\addplot [name path = D,opacity=0,
    domain = 0.25:3.3333] {0.2*\x-1}
;
\addplot [teal!30] fill between [of = A and D, soft clip={domain=0:5}];
\addplot [teal!30] fill between [of = A and B, soft clip={domain=3.3333:5}];
\addplot [name path = C,opacity=0,
    domain = 0:3.3333] {-1/(0.2*\x)+3/2} ;
\addplot [teal!30] fill between [of = A and C, soft clip={domain=0:2.5}];

    \end{axis}
    
\end{tikzpicture}
\caption{Existence (and uniqueness) for variable $H$}
\end{figure}
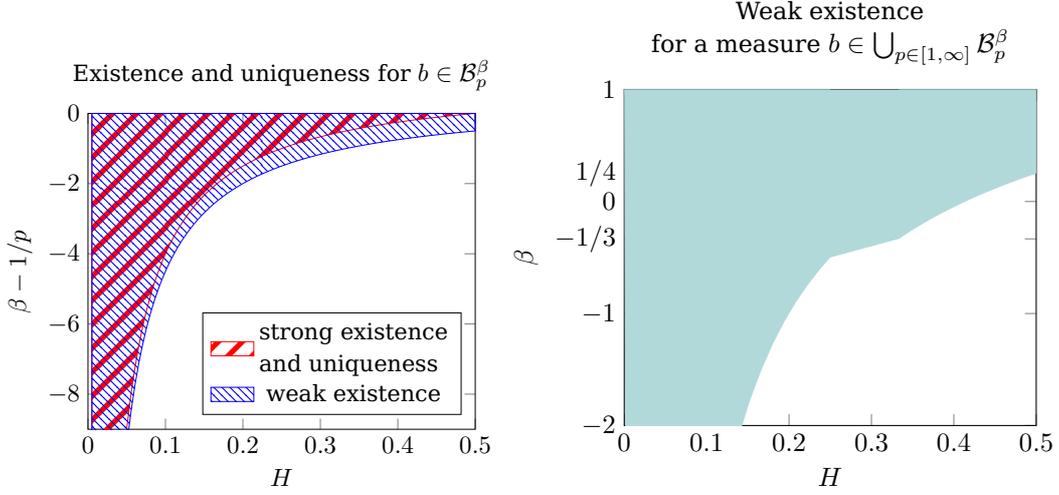

\subsection{Reformulation as a nonlinear Young equation} \label{reformulate}

Let $\mathit{w} \in \mathcal{C}_{[0,T]}$. Rewriting equation \eqref{eq:skewZ} with $Z\equiv \mathit{w}$ and $\tilde{X}_t=X_t-\mathit{w}_t$, we formally obtain
\begin{align} \label{eq:rewritten}
    \tilde{X}_t=\tilde{X}_0 + \int_0^t b(\tilde{X}_r+\mathit{w}_r) \, dr. 
\end{align}
For a bounded measurable function $b$, we define the averaging operator $T$ by 
\begin{align} \label{traditionalaveraging}
    T^\mathit{w}_t b(x):= \int_0^t b(x+\mathit{w}_r) \, dr, ~ \mbox{ for } (t,x) \in [0,T]\times \mathbb{R}.
\end{align} 
Assuming that $\mathit{w}$ has a local time $L$ and using the occupation time formula 
\eqref{occupation}, it comes that
\begin{align} \label{eq:motivation}
     T_{s,t}^\mathit{w} b(x)=\int_s^t b(x+ \mathit{w}_r) \, dr&=\int_\mathbb{R} b(x+z) L_{s,t}(z) \, dz 
    = b*\check{L}_{s,t}(x), \ \forall x \in 
    \mathbb{R},
\end{align}
where \(\check{L}_{s,t}(x) = L_{s,t}(-x)\).
This operator and its connection to the local time was already considered in  \cite{CatellierGubinelli}. 
In view of the expression of $T^\mathit{w} b$ as a convolution, one can expect that for 
$\mathit{w}$ with a sufficiently regular local time, the definition of $T^\mathit{w} b$ will 
extend to $b$ with lower regularity (we will consider suitable Besov spaces, see 
Section~\ref{sec:existWeak}). 
This idea was exploited in \cite{HarangPerkowski}, in the case of noises with infinitely differentiable local times.

Using \eqref{eq:rewritten}, we get that for $b$ continuous and bounded, $\tilde{X} \in \mathcal{C}_{[0,T]}$ and a sequence of partitions $\{t_i^n\}_{i=1}^{N_n}$ of $[0,t]$ with mesh size converging to $0$, 
\begin{align} \label{eq:reformulation}
    \int_0^t b(\tilde{X}_r+\mathit{w}_r)\, dr &= \lim_{n \rightarrow \infty} \sum_{i=1}^{N_n-1} 
    \int_{t_i^n}^{t_{i+1}^n} b(\tilde{X}_{t_i^n}+ \mathit{w}_r) \, dr \nonumber\\
    &= \lim_{n \rightarrow \infty} \sum_{i=1}^{N_n-1} T^{\mathit{w}}_{t_i^n,t_{i+1}^n} 
    b(\tilde{X}_{t_i}^n) \nonumber\\
    &= \int_0^t T_{dr}^{\mathit{w}} b(\tilde{X}_r) ,
\end{align}
where the final equality is only formal at this point. We give a rigorous definition of this 
integral in 
Section~\ref{nonlinearyoung} and call it a nonlinear Young integral. This also suggests to 
rewrite Equation \eqref{eq:skewZ} as a nonlinear Young integral equation. Thus we give 
another definition of a solution to \eqref{eq:skewZ} and \eqref{eq:skew}. Combining the 
theory of nonlinear Young integrals from Section~\ref{nonlinearyoung} and the extension of 
the averaging operator $T$ applied to distributions (see Section~\ref{sec:existWeak}) will give sense to 
the following definition.

\begin{definition} \label{def:solution2}
Let $Z$ be a stochastic process. We call $X:\Omega \rightarrow 
\mathcal{C}_{[0,T]}$ a path-by-path solution to \eqref{eq:skewZ} if there exists a null-set 
$\mathcal{N}$,  $\eta \in (0,1]$ and  $p,q \geqslant 1$  such that \(1/p 
+\eta/q>1\) and for any $\omega \notin 
\mathcal{N}$, $T^{Z(\omega)}b \in 
\mathcal{C}^{p\text{-}\text{var}}_{[0,T]}(\mathcal{C}^\eta)$, $X(\omega)-Z(\omega) 
\in \mathcal{C}^{q\text{-}\text{var}}_{[0,T]}$ and
\begin{align} \label{eq:definitionNLY}
    X_t(\omega)=X_0+\int_0^t T_{dr}^{Z(\omega)} b(X_r(\omega)-Z_r(\omega)) + Z_{t}(\omega),\text{ for all } t \in [0,T].
\end{align}
\end{definition}
The assumption involving $\eta \in (0,1]$ and $p,q \geqslant 1$ is simply the (sufficient) condition formulated in Theorem \ref{int} that ensures the existence of the nonlinear Young integral. Note that in the above definition no measurability or adaptedness of $X$ is required.

Theorem~\ref{generalsolution} gives conditions on the local time of the process $Z$ such that there exists a path-by-path solution to \eqref{eq:skewZ}. These conditions will be needed multiple times thoughout the paper, hence we give them here. 

\begin{assumption}\label{assumption21}
Let $\gamma, \eta \in (0,1), \beta \in \mathbb{R}$ and $p \in [1,\infty]$. Let $b \in \mathcal{B}^\beta_p$ and $\mathit{w} \in \mathcal{C}_{[0,T]}$ with local time $L$.
\begin{enumerate}[label=(\Roman*)]
\item \label{ass:1} There exists $\tilde{p} \in [1,\infty]$ with $1/p + 1/{\tilde{p}}>1$ and 
$L \in \mathcal{C}^\gamma_{[0,T]}(\mathcal{B}^{-\beta + \eta 
	+1/p+1/{\tilde{p}}-1}_{\tilde{p}})$;

\item \label{ass:2}There exists $\tilde{p} \in [1,\infty]$ with $1/p + 1/{\tilde{p}} \leqslant 
1$ and $L \in \mathcal{C}^\gamma_{[0,T]}(\mathcal{B}^{-\beta+\eta}_{\tilde{p}})$.
\end{enumerate}
\end{assumption}

\begin{theorem} \label{generalsolution}
Let $\gamma, \eta \in (0,1)$ with $\gamma + \eta>1$. 
Let $b \in \mathcal{B}^\beta_p$ be a measure with $\beta \in \mathbb{R}$, 
$p\in[1,\infty]$. Let $Z: \Omega \rightarrow \mathcal{C}_{[0,T]}$ be a continuous stochastic 
process with a local time $L^Z:\Omega \times [0,T]\times \mathbb{R} \rightarrow \mathbb{R}$. Assume that there exists a null-set $\mathcal{N}$ such that for any $\omega \notin 
\mathcal{N}$, $L^Z(\omega)$ satisfies \ref{ass:1} or \ref{ass:2}. Then there exists a 
path-by-path solution to Equation~\eqref{eq:skewZ}.
\end{theorem}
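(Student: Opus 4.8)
The plan is to prove this realisation by realisation, reducing it to the deterministic existence theorem for nonlinear Young equations with a drift of constant sign, Theorem~\ref{thm:existence}, whose hypothesis on the regularity of the averaging operator is in turn supplied by Lemma~\ref{lem:regTL}. Concretely, I would fix $\omega\notin\mathcal{N}$, write $\mathit{w}=Z(\omega)$ and $L=L^Z(\omega)$ (so $\mathit{w}\in\mathcal{C}_{[0,T]}$ has local time $L$ satisfying \ref{ass:1} or \ref{ass:2}), construct a path $Y(\omega)$ solving the random ODE $Y_t=X_0+\int_0^t T^{\mathit{w}}_{dr}b(Y_r)$, and then set $X(\omega):=Y(\omega)+\mathit{w}$. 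On the null set $\mathcal{N}$ the value of $X$ is irrelevant, since Definition~\ref{def:solution2} requires no measurability. It then remains to verify the two inputs of Theorem~\ref{thm:existence} and to check that $X$ meets Definition~\ref{def:solution2}.

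\textbf{Step 1 (regularity of the averaging operator).} Starting from the convolution identity $T^{\mathit{w}}_{s,t}b=b*\check L_{s,t}$ of \eqref{eq:motivation}, I would apply Lemma~\ref{lem:regTL}: combining Young-type convolution estimates in Besov spaces with the embedding $\mathcal{B}^{\eta+1/r}_{r}\hookrightarrow\mathcal{C}^\eta$ from Remark~\ref{embedding2}, that lemma converts the Besov regularity of $L$ assumed in \ref{ass:1} (the Young regime $1/p+1/\tilde p>1$) or \ref{ass:2} (the regime $1/p+1/\tilde p\leqslant1$) into a bound of the form $\|T^{\mathit{w}}_{s,t}b\|_{\mathcal{C}^\eta}\leqslant C\,[L]_{\mathcal{C}^\gamma_{[0,T]}}\,|t-s|^\gamma$ on $\Delta_{[0,T]}$. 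Summing over a partition yields $T^{\mathit{w}}b\in\mathcal{C}^{p\text{-}\text{var}}_{[0,T]}(\mathcal{C}^\eta)$ with $p=1/\gamma$ and a control function bounded by a power of $[L]_{\mathcal{C}^\gamma_{[0,T]}}$.

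\textbf{Step 2 (solving the ODE and concluding).} Because $b$ is a measure, for every fixed $x$ the map $t\mapsto T^{\mathit{w}}_{0,t}b(x)=\int_0^t b(x+\mathit{w}_r)\,dr$ has bounded variation on $[0,T]$ (nondecreasing when $b\geqslant0$, a difference of two nondecreasing maps in general), with total variation at most $\|T^{\mathit{w}}_{0,T}|b|\|_\infty<\infty$. Together with Step~1 and the standing assumption $\gamma+\eta>1$, this is exactly what Theorem~\ref{thm:existence} requires, so I would apply it to get a path $Y=Y(\omega)\in\mathcal{C}^{1\text{-}\text{var}}_{[0,T]}$ with $Y_t=X_0+\int_0^t T^{\mathit{w}}_{dr}b(Y_r)$ for all $t$; the finite $1$-variation (indeed monotonicity when $b\geqslant0$) is inherited from the sign of $b$ through the Euler scheme used to construct $Y$ and a Helly-type compactness argument. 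Setting $X(\omega)=Y(\omega)+\mathit{w}$, one gets $X(\omega)-Z(\omega)=Y(\omega)\in\mathcal{C}^{1\text{-}\text{var}}_{[0,T]}\subset\mathcal{C}^{q\text{-}\text{var}}_{[0,T]}$ with $q=1$, $T^{Z(\omega)}b\in\mathcal{C}^{p\text{-}\text{var}}_{[0,T]}(\mathcal{C}^\eta)$ with $p=1/\gamma$, hence $1/p+\eta/q=\gamma+\eta>1$, and \eqref{eq:definitionNLY} holds by construction; so $X$ is a path-by-path solution to \eqref{eq:skewZ}.

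\textbf{Main obstacle.} The hard part is Step~1: making sense of $T^{\mathit{w}}b$ for a distributional (here, measure-valued) $b$ and converting the spatial Besov regularity together with the $\gamma$-Hölder-in-time regularity of the local time into the required $\mathcal{C}^{p\text{-}\text{var}}_{[0,T]}(\mathcal{C}^\eta)$ control — which is precisely where the dichotomy \ref{ass:1}/\ref{ass:2} of Assumption~\ref{assumption21} and the exact balance among $\beta,p,\tilde p,\eta,\gamma$ come from; this is the content of Lemma~\ref{lem:regTL}, assumed here. The remaining subtlety is confirming that the solution delivered by Theorem~\ref{thm:existence} has finite $1$-variation, so that the compatibility condition $1/p+\eta/q>1$ of Definition~\ref{def:solution2} reduces precisely to the hypothesis $\gamma+\eta>1$; this is where the sign of the drift is genuinely used.
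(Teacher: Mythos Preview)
Your approach is essentially the paper's: fix $\omega\notin\mathcal{N}$, use Lemma~\ref{lem:regTL} to get $T^{Z(\omega)}b\in\mathcal{C}^\gamma_{[0,T]}(\mathcal{C}^{\tilde\eta})$ (the lemma actually gives $\tilde\eta=\eta-\varepsilon$, not $\eta$ itself, but $\gamma+\eta>1$ leaves room to absorb this), check nonnegativity of the increments, and then invoke Theorem~\ref{thm:existence}. The structure and the identification of the ``hard part'' are right.

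There is one genuine gap in Step~2. You write $T^{\mathit w}_{0,t}b(x)=\int_0^t b(x+\mathit w_r)\,dr$ and read off monotonicity directly, but for a distributional $b$ this integral is only formal: by Definition~\ref{averagingoperator}, $T^{\mathit w}b$ is \emph{defined} as the limit of $T^{\mathit w}b^n$ for smooth approximations $b^n$, and nothing guarantees a priori that the limit of nonnegative quantities is computed via the occupation-time formula. The paper closes this by a double approximation: choose nonnegative $b^m\to b$ in $\mathcal{B}_p^{\beta-}$ and nonnegative $\phi^x_n=G_{1/n}L^Z_{s,t}(\cdot-x)\to L^Z_{s,t}(\cdot-x)$ in $\mathcal{B}^{-\beta}_{p',1}$, so that $T^{Z}_{s,t}b(x)=\lim_m\lim_n\langle b^m,\phi^x_n\rangle\geqslant0$ by continuity of the duality bracket (Remark~\ref{rem:bracket}). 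This is exactly the missing ingredient in your argument.

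A side remark: your parenthetical about signed measures (``a difference of two nondecreasing maps in general'') is off target. In the paper ``measure'' means nonnegative Radon measure (Remark~\ref{rem:measure}), and Theorem~\ref{thm:existence} genuinely requires $A_{s,t}(y)\geqslant0$, not merely bounded variation in $t$; a Jordan decomposition of $b$ does not linearly decompose the nonlinear Young equation, so the signed case is not covered by this route.
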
 

Recall that Theorem~\ref{generalsolution} does not imply existence of a measurable/adapted solution. However in the case of fBm, using properties of its local time, additionally adaptedness of the path-by-path solution can be proven (see Theorem~\ref{prop:existence}).

The following theorem provides a comparison between solutions constructed by approximation with a smooth drift and solutions in the nonlinear Young sense. More precisely, it 
shows that being a solution to \eqref{eq:skewZ} (i.e. for a noise $Z$) in the sense of 
Definition~\ref{def:solution2} implies being a solution in the sense of Definition~\ref{def:solution}. Under some regularity restrictions, the reverse implication holds 
as well. We rephrase \eqref{solution1} and \eqref{approximation2} (without specifying the mode of convergence yet) for a noise $Z$ instead of $B$:
\begin{align}
X_t=X_0+K_t+Z_t\text{ for all } t \in [0,T] ; \label{solution1Z} \\
\sup_{t\in [0,T]}\left|\int_0^t b^n(X_r) dr -K_t\right|
\underset{n\rightarrow \infty}{\longrightarrow} 0  \label{approximation2Z}
\end{align}
for every sequence $(b^n)_{n \in \mathbb{N}}$ of smooth bounded functions converging to 
\(b\) in $\mathcal{B}^{\beta-}_p$.
\begin{theorem} \label{solutionsagree}
Let $\gamma, \eta \in (0,1)$ and $q\geqslant 1$ with $\gamma+\eta/q>1$, $X_0 \in 
\mathbb{R}$, $p\in[1,\infty]$, $\beta\in \R$, $b \in \mathcal{B}_p^{\beta}$ and $X:\Omega 
\rightarrow \mathcal{C}_{[0,T]}$. Let $Z: \Omega \rightarrow \mathcal{C}_{[0,T]}$ be a 
continuous stochastic 
process and $L^Z:\Omega \times [0,T]\times \mathbb{R} \rightarrow \mathbb{R}$ its local 
time.  Assume that there exists a null-set $\mathcal{N}$ such that for any $\omega \notin 
\mathcal{N}$, $L^Z(\omega)$ satisfies \ref{ass:1} or \ref{ass:2}.
\begin{enumerate}[label=(\alph*)]
\item \label{theorem:eq.a} Assume that $X$ is a path-by-path solution to \eqref{eq:skewZ} and that $X-Z \in \mathcal{C}^{q\text{-}\text{var}}$. Then for any sequence of smooth bounded functions $(b^{n})_{n\in \N}$ that converges to $b$ in $\mathcal{B}_{p}^{\beta-}$, $X_{\cdot}(\omega)-Z_{\cdot}(\omega)-X_{0} = K_{\cdot}(\omega)=\int_0^{\cdot} T^{Z(\omega)}_{dr}b(X_r(\omega)-Z_r(\omega))$ is the uniform limit of $\int_{0}^\cdot b^n(X_{r}(\omega))\, dr$ for all $\omega \notin \mathcal{N}$ (i.e. \eqref{approximation2Z} holds on the set of full measure $\mathcal{N}^{\mathsf{c}}$).
\item \label{theorem:eq.b} Assume that there exists a process $K: \Omega \rightarrow 
\mathcal{C}_{[0,T]}$ such that \eqref{solution1Z} and \eqref{approximation2Z} hold, where the convergence in \eqref{approximation2Z} is in probability. 
Assume further that a.s., $X(\omega)-Z(\omega) 
\in \mathcal{C}^{q\text{-}\text{var}}_{[0,T]}$. Then $X$ is a path-by-path solution to \eqref{eq:skewZ}.
\end{enumerate}
\end{theorem}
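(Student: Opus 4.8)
The plan is to reduce both implications to three ingredients, two of which are established in Section~\ref{nonlinearyoung}. (i) For a smooth bounded drift $f$ and any $\omega$ such that $X(\omega)-Z(\omega)$ has finite $q$-variation, the Riemann integral and the nonlinear Young integral agree, $\int_0^t f(X_r(\omega))\,dr=\int_0^t T^{Z(\omega)}_{dr}f\big(X_r(\omega)-Z_r(\omega)\big)$ for all $t$; this is the rigorous content of the formal identity \eqref{eq:reformulation} (for smooth $f$, $T^{Z(\omega)}f$ is Lipschitz in time and $\mathcal C^1$ in space, so the integral is unambiguously defined). (ii) By the regularity estimate for the averaging operator (Lemma~\ref{lem:regTL}), under Assumption~\ref{ass:1} or \ref{ass:2} the map $g\mapsto T^{Z(\omega)}g$ is linear and bounded from $\mathcal B_p^\beta$ into $\mathcal C^\gamma_{[0,T]}(\mathcal C^\eta)\subset\mathcal C^{(1/\gamma)\text{-}\text{var}}_{[0,T]}(\mathcal C^\eta)$; hence if $b^n\to b$ in $\mathcal B_p^{\beta-}$, then $\sup_n\|T^{Z(\omega)}b^n\|_{\mathcal C^{(1/\gamma)\text{-}\text{var}}_{[0,T]}(\mathcal C^\eta)}<\infty$ and $T^{Z(\omega)}b^n\to T^{Z(\omega)}b$ in $\mathcal C^{(1/\gamma)\text{-}\text{var}}_{[0,T]}(\mathcal C^{\eta'})$ for every $\eta'<\eta$ (one applies Lemma~\ref{lem:regTL} at lower regularities $\beta'<\beta$ and $\eta'<\eta$ with $\eta-\beta=\eta'-\beta'$, which is legitimate since the Besov space of $L^Z$ in Assumption~\ref{ass:1}/\ref{ass:2} depends on $(\beta,\eta)$ only through $\eta-\beta$). (iii) Continuity of the nonlinear Young integral with respect to its driver (Section~\ref{nonlinearyoung}): as the $T^{Z(\omega)}b^n$ are dominated by a common control function and $X(\omega)-Z(\omega)\in\mathcal C^{q\text{-}\text{var}}_{[0,T]}$ with $\gamma+\eta/q>1$ (the condition of Theorem~\ref{int}, taking $p=1/\gamma$), one gets $\int_0^\cdot T^{Z(\omega)}_{dr}b^n(X_r(\omega)-Z_r(\omega))\to\int_0^\cdot T^{Z(\omega)}_{dr}b(X_r(\omega)-Z_r(\omega))$ uniformly on $[0,T]$.

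Part~\ref{theorem:eq.a} then follows at once. Fix $\omega\notin\mathcal N$, enlarged so that also $X(\omega)-Z(\omega)\in\mathcal C^{q\text{-}\text{var}}_{[0,T]}$ and \eqref{eq:definitionNLY} holds, and take a sequence $(b^n)$ of smooth bounded functions converging to $b$ in $\mathcal B_p^{\beta-}$. By (i), $\int_0^t b^n(X_r(\omega))\,dr=\int_0^t T^{Z(\omega)}_{dr}b^n(X_r(\omega)-Z_r(\omega))$, and by (ii)--(iii) this converges uniformly in $t$ to $\int_0^t T^{Z(\omega)}_{dr}b(X_r(\omega)-Z_r(\omega))$, which by \eqref{eq:definitionNLY} equals $X_t(\omega)-Z_t(\omega)-X_0=:K_t(\omega)$. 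This is precisely the (uniform, hence almost sure) convergence \eqref{approximation2Z}, while \eqref{solution1Z} holds by the definition of $K$.

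For part~\ref{theorem:eq.b}, by (ii) one has, for a.e.\ $\omega$, $T^{Z(\omega)}b\in\mathcal C^{(1/\gamma)\text{-}\text{var}}_{[0,T]}(\mathcal C^\eta)$; together with the standing hypotheses $X-Z\in\mathcal C^{q\text{-}\text{var}}_{[0,T]}$ a.s.\ and $\gamma+\eta/q>1$, all requirements of Definition~\ref{def:solution2} are met except the identity \eqref{eq:definitionNLY}, which reduces to proving $K_t=\int_0^t T^{Z}_{dr}b(X_r-Z_r)$ a.s.\ for every $t$. Pick $b^n:=G_{1/n}b$, converging to $b$ in $\mathcal B_p^{\beta-}$ by Remark~\ref{rem:approximation}. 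By hypothesis $\sup_t\big|\int_0^t b^n(X_r)\,dr-K_t\big|\to0$ in probability, whereas by (i)--(iii), $\int_0^\cdot b^n(X_r)\,dr=\int_0^\cdot T^{Z}_{dr}b^n(X_r-Z_r)\to\int_0^\cdot T^{Z}_{dr}b(X_r-Z_r)$ uniformly, almost surely. Passing to a subsequence along which the first convergence is also uniform almost surely, and intersecting the associated null-sets with $\mathcal N$, we obtain $K_t=\int_0^t T^{Z}_{dr}b(X_r-Z_r)$ for all $t\in[0,T]$ outside a null-set; hence $X$ is a path-by-path solution to \eqref{eq:skewZ}.

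The main obstacle is ingredient (iii): one must ensure that the convergence of the drivers supplied by Lemma~\ref{lem:regTL} is of a type that survives passage to the limit in the sewing estimates defining the nonlinear Young integral. The delicate point is that Lemma~\ref{lem:regTL} gives a uniform bound at spatial regularity $\eta$ but convergence only at regularity $\eta'<\eta$ and in the supremum norm; the remedy is that all the $T^{Z(\omega)}b^n$ are controlled by one and the same control function (built from the fixed norm of $L^Z(\omega)$), together with a short interpolation upgrading supremum convergence to $\mathcal C^{\eta'}$ convergence, which keeps the $p$-variation exponent equal to $1/\gamma$ and hence the Young condition $\gamma+\eta/q>1$ intact throughout the approximation. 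The remaining work is the bookkeeping of null-sets and of the Besov and $p$-variation parameters, which fit together precisely because $\gamma+\eta/q>1$.
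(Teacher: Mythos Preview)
Your proof is correct and follows essentially the same route as the paper: the identity \eqref{eq:Tsmooth} for smooth drifts, convergence $T^Zb^n\to T^Zb$ in $\mathcal{C}^\gamma_{[0,T]}(\mathcal{C}^{\eta-\varepsilon})$ via Lemma~\ref{lem:regTL}, and continuity of the nonlinear Young integral in the driver (Corollary~\ref{cor:approximation2}). Your final paragraph about interpolation is an unnecessary worry: Lemma~\ref{lem:regTL} already yields convergence in the full $\mathcal{C}^\gamma_{[0,T]}(\mathcal{C}^{\eta-\varepsilon})$ norm (not merely in supremum), so Corollary~\ref{cor:approximation2} applies directly once you pick $\varepsilon>0$ small enough that $\gamma+(\eta-\varepsilon)/q>1$.
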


\begin{remark}
In particular, statement \ref{theorem:eq.a} implies that if $X$ is an $\mathbb{F}$-adapted path-by-path solution, it is a weak solution. Moreover, for the weak solution constructed in Theorem~\ref{prop:existence}, the convergence in \eqref{approximation2} holds on a set of full measure instead of convergence in probability. 
\end{remark}

\subsection{Organisation of the proofs}\label{subsec:OrgaProof}

In Section~\ref{nonlinearyoung}, we construct nonlinear Young integrals in 
$p$-variation \red{via a classical sewing argument}. In particular, we establish existence of solutions to a nonlinear Young integral 
equation with monotone drift under milder regularity constraints than in the non-monotone 
case. 

Then we rigorously rewrite Equation \eqref{eq:skew} as a nonlinear Young integral equation. The existence of a 
solution to the more general Equation \eqref{eq:skewZ}, when $Z$ has a sufficiently regular local time, is stated in Theorem~\ref{generalsolution} and proven in Section~\ref{subsec:proofs-pbp}. We then establish that a solution to 
\eqref{eq:skew} in this nonlinear Young sense is also a solution in the sense of 
Definition~\ref{def:solution} (see Theorem~\ref{solutionsagree}, which is proven in Section~\ref{subsec:proofs-pbp}). For instance, when applied to the case of a fractional Brownian noise (after investigating 
the regularity properties of its local time), these theorems imply the existence of path-by-path solutions. However, in Theorem~\ref{prop:existence}\ref{en:weak1} one wants to prove the existence of solutions that are adapted to the filtration $\mathbb{F}$ of the underlying filtered probability space. Theorems~\ref{generalsolution} and \ref{solutionsagree} cannot be applied directly and a modified approach is developed using the tightness of the averaging operator of the fBm (Lemma \ref{lem:LT2}) and the 
continuity of the operator \(\mathcal{A}\) transforming a fBm to a Bm, see 
\eqref{operatortildeA} and 
Lemma~\ref{lem:operatorcontinuity}. 
These 
arguments are given in Section 
\ref{sec:existenceweak} and lead to the proof of Theorem~\ref{prop:existence}\ref{en:weak1}.

In Section~\ref{sec:RegWeak}, we use some new regularity estimates on conditional expectations of the fBm (Lemma~\ref{lem:Cs}) and the stochastic sewing Lemma with random control (see Lemma~\ref{lem:stsrandomcontrols}) to establish that any weak solution $X$ satisfies $X-B\in \mathcal{C}^\kappa_{[0,T]}(L^m)$ for any $\kappa \in (0,1+H(\beta-\frac{1}{p})\wedge 0]\setminus \{1\}$ and $m\geqslant2$
 when $b$ is a measure in $\mathcal{B}^\beta_{p}$. This proves Theorem~\ref{prop:existence}\ref{en:weak2} and Theorem~\ref{thm:uniqueness}\ref{uniqueness(2)}.

 In Section~\ref{uniqueness}, in order to establish pathwise uniqueness of weak solutions to \eqref{eq:skew} (see Proposition~\ref{unique}), we adapt an 
 approach developed recently for the stochastic heat equation with distributional drift, see 
 \cite{Atetal}. This requires several regularity estimates on solutions which are derived from 
 the crucial regularity Lemma~\ref{regulINT} and the stochastic sewing Lemma with critical exponent (Theorem 4.5 in \cite{Atetal}). The proof of Lemma~\ref{regulINT} relies on the
 stochastic sewing 
 Lemma and the aforementioned regularity estimates on conditional expectations of the fBm (Lemma~\ref{lem:Cs}).

Theorem~\ref{th:approx} is proven in Section~\ref{thmproof} by an approximation of the drift with smooth bounded 
functions. The corresponding sequence of strong solutions will be shown to be tight  and furthermore reveal a stability property, such that we can identify the limit as a solution to \eqref{eq:skew}, where continuity of the operator linking fBm to Brownian motion (see Lemma~\ref{lem:operatorcontinuity}) is needed to prove adaptedness. This works thanks to \emph{a priori} regularity estimates of solutions, see 
Lemma~\ref{regularity3.2} and Lemma~\ref{lem:KHoelder}. 

Strong existence will follow by a Yamada-Watanabe-type argument, using Gy\"ongy-Krylov's 
lemma (\cite[Lem. 1.1]{Krylov}) and requires the uniqueness result Proposition~\ref{unique}. 
The proofs of Theorems~\ref{th:approx} and  
\ref{thm:uniqueness} are then completed in Section~\ref{mainresults}.

\section{Nonlinear Young integrals and nonlinear Young equations in 
$\mathcal{C}^{p\text{-}\text{var}}$} \label{nonlinearyoung}

\subsection{Construction of nonlinear Young integrals and properties}

Throughout this subsection, $E$ and $F$ denote arbitrary Banach spaces. Theorem~\ref{int} provides conditions for the existence of a nonlinear Young integral 
in terms of $p$-variations (rather than H\"older continuity as in  
\cite[Th.~2.4]{CatellierGubinelli} and  \cite{Galeati}).

\begin{theorem} \label{int}
Let $\eta \in (0,1]$ and $p,q \in [1,\infty)$ such that \(\theta:=1/p 
+\eta/q>1\). Let
$A \in 
\mathcal{C}^{p\text{-}\text{var}}_{[0,T]}(\mathcal{C}_{E,loc}^\eta(F))$ and $x 
\in 
\mathcal{C}^{q\text{-}\text{var}}_{[0,T]}(E)$.
Then for $(s,t) 
\in \Delta_{[0,T]}$ and any sequence of partitions 
$(\Pi_{n})_n$ of $[s,t]$ with \(\lim_n |\Pi_n| = 0\), the sum $\sum_{\Pi_{n}} 
A_{t_i,t_{i+1}}(x_{t_i})$ converges.
Besides, the 
limit is independent of the sequence of partitions \((\Pi_n)_{n}\). We denote it by $\int_s^t A_{dr}(x_r)$ and call it the nonlinear Young integral with respect to $A$ and $x$.

In addition, there exists $C(\theta)>0$ such that for $(s,t) \in \Delta_{[0,T]}$ and \(R > 0\) with $\|x\|_\infty \leqslant R$,  
one has
\begin{align}\label{eq:YoungEstimate}
    \left\|\int_s^t A_{dr}(x_r)-A_{s,t}(x_s)\right\|_F \leqslant C(\theta)
    \VCnReta{A}{[s,t]}{p}[x]_{\mathcal{C}_{[s,t]}^{q\text{-}\text{var}}}^\eta.
\end{align}
\end{theorem}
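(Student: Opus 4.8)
The plan is to run the classical (Young/sewing) argument adapted to the nonlinear setting and to $p$-variation control functions. Fix $(s,t)\in\Delta_{[0,T]}$ and a partition $\Pi=(s=t_0<\cdots<t_N=t)$, and set $I(\Pi):=\sum_{i} A_{t_i,t_{i+1}}(x_{t_i})$. First I would show the standard ``coarsening'' estimate: given a partition $\Pi$ with at least two interior points, one can always find an index $i$ such that removing $t_i$ changes the Riemann-type sum by a controlled amount. The point to remove at step is chosen so that $\varkappa(t_{i-1},t_{i+1})\leqslant \frac{2}{N-1}\varkappa(s,t)$, where $\varkappa$ is a control dominating both $[A]^p_{\mathcal{C}^{p\text{-}\text{var}}_{[\cdot,\cdot]}(\mathcal{C}^\eta_{D_R})}$ and $[x]^q_{\mathcal{C}^{q\text{-}\text{var}}_{[\cdot,\cdot]}}$ (such a $\varkappa$ exists by superadditivity, e.g. the sum of the two); this uses the usual pigeonhole argument for superadditive functions. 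The elementary bound is
\begin{align*}
\big\|A_{t_{i-1},t_{i+1}}(x_{t_{i-1}})-A_{t_{i-1},t_i}(x_{t_{i-1}})-A_{t_i,t_{i+1}}(x_{t_i})\big\|_F
&=\big\|A_{t_i,t_{i+1}}(x_{t_{i-1}})-A_{t_i,t_{i+1}}(x_{t_i})\big\|_F\\
&\leqslant [A_{t_i,t_{i+1}}]_{\mathcal{C}^\eta_{D_R}}\,|x_{t_{i-1},t_i}|^\eta,
\end{align*}
which by the choice of $\varkappa$ is at most $\varkappa(t_{i-1},t_i)^{1/p}\varkappa(t_{i-1},t_i)^{\eta/q}\leqslant \big(\tfrac{2}{N-1}\varkappa(s,t)\big)^{\theta}$, since $\theta=1/p+\eta/q$.

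Next I would iterate this coarsening down to the trivial partition $\{s,t\}$. Summing the errors over the $N-1$ removal steps gives
\begin{align*}
\|I(\Pi)-A_{s,t}(x_s)\|_F\leqslant \sum_{k\geqslant 1}\Big(\tfrac{2}{k}\Big)^{\theta}\varkappa(s,t)^{\theta}
= 2^{\theta}\zeta(\theta)\,\varkappa(s,t)^{\theta},
\end{align*}
and the series converges precisely because $\theta>1$; this is where the hypothesis $1/p+\eta/q>1$ is used in an essential way, and it already yields the a~priori bound \eqref{eq:YoungEstimate} with $C(\theta)=2^\theta\zeta(\theta)$ (after bounding $\varkappa(s,t)$ by the product of the two seminorms via Young's inequality for products, i.e. $\varkappa\leqslant \varkappa_A+\varkappa_x$ with $\varkappa_A^{1/p}\varkappa_x^{\eta/q}\leqslant \varkappa_A+\varkappa_x$ up to a constant, or more cleanly by taking $\varkappa$ a suitable geometric-type combination). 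Convergence of $I(\Pi_n)$ along any sequence with $|\Pi_n|\to 0$ then follows by a Cauchy argument: given two partitions, compare both to their common refinement, applying the coarsening estimate on each subinterval $[t_i,t_{i+1}]$ of the coarser one; the contribution from $[t_i,t_{i+1}]$ is bounded by $C(\theta)\,\varkappa(t_i,t_{i+1})^\theta\leqslant C(\theta)\,\varkappa(t_i,t_{i+1})\,\big(\sup_j\varkappa(t_j,t_{j+1})\big)^{\theta-1}$, and summing over $i$ and using superadditivity of $\varkappa$ plus continuity (so that $\sup_j\varkappa(t_j,t_{j+1})\to 0$ as the mesh goes to $0$, using that a control vanishing on the diagonal together with continuity is uniformly small on small intervals) shows the sums form a Cauchy sequence; the limit is independent of $(\Pi_n)_n$ by interleaving two sequences.

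There are two technical points I would be careful about. The first is the \emph{localisation}: $A$ takes values in $\mathcal{C}^\eta_{E,loc}(F)$ only, so all H\"older seminorms must be taken on the ball $D_R$ with $\|x\|_\infty\leqslant R$; since $x\in\mathcal{C}^{q\text{-}\text{var}}_{[0,T]}(E)$ is continuous on a compact interval it is bounded, so such an $R$ exists and is fixed once and for all — this is why the constant in \eqref{eq:YoungEstimate} is allowed to depend on $R$ through $\VCnReta{A}{[s,t]}{p}$ and no circularity arises (unlike in the ODE existence proof later, here $x$ is given). The second, and the step I expect to be the main obstacle to write cleanly, is the \emph{bookkeeping of the control function}: one must exhibit a single control $\varkappa$ on $\Delta_{[0,T]}$ with $[A_{u,v}]_{\mathcal{C}^\eta_{D_R}}\leqslant \varkappa(u,v)^{1/p}$ and $|x_{u,v}|\leqslant\varkappa(u,v)^{1/q}$ simultaneously, so that the mixed term collapses to $\varkappa^{\theta}$; the natural choice $\varkappa:=[A]^p_{\mathcal{C}^{p\text{-}\text{var}}_{[\cdot,\cdot]}(\mathcal{C}^\eta_{D_R})}+[x]^q_{\mathcal{C}^{q\text{-}\text{var}}_{[\cdot,\cdot]}}$ works since each summand is a control (by \cite[Prop.~5.8]{FrizVictoir}) and sums of controls are controls, and then $[A_{u,v}]_{\mathcal{C}^\eta_{D_R}}^{1/p}|x_{u,v}|^{\eta/q}\leqslant \varkappa(u,v)^{1/p}\varkappa(u,v)^{\eta/q}=\varkappa(u,v)^{\theta}$. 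With this in hand the rest is the routine Young-integral machinery; the only genuinely new feature compared to \cite{Young,FrizVictoir} is that $A$ is function-valued and is evaluated at the moving point $x_r$, which is handled exactly by the elementary inequality displayed above.
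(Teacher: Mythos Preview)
Your proposal is correct and is essentially the same argument as the paper's, just unpacked: the paper obtains the identical $\delta$-estimate $\|A_{u,w}(x_u)-A_{u,v}(x_u)-A_{v,w}(x_v)\|_F\leqslant [A]_{\mathcal{C}^{p\text{-}\text{var}}_{[u,w]}(\mathcal{C}^\eta_{D_R})}[x]^\eta_{\mathcal{C}^{q\text{-}\text{var}}_{[u,w]}}$ and then invokes the sewing lemma with controls from \cite[Th.~2.2 and Rem.~2.3]{Frizzhang} as a black box, noting via \cite[Ex.~1.9]{FrizVictoir} that this right-hand side is a control raised to the power $\theta$. Your coarsening/point-removal scheme is precisely the proof of that sewing lemma, and your sum-control $\varkappa=[A]^p+[x]^q$ works but is slightly less clean than the paper's direct product form (as you yourself note); otherwise the two arguments coincide.
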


\begin{proof}
We apply the sewing lemma \cite[Theorem 2.2 and Remark 2.3]{Frizzhang} formulated with controls. Let $0 \leqslant u \leqslant v \leqslant w \leqslant T$. Then, for $R>0$ such that $\|x\|_{\infty}\leqslant R$,
\begin{align*}
|A_{u,w}(x_u)-A_{u,v}(x_u)-A_{v,w}(x_v)|&=|A_{v,w}(x_u)-A_{v,w}(x_v)|\\
&\leqslant [A_{v,w}]_{\mathcal{C}^\beta_{D_{R}}} [x]_{\mathcal{C}_{[u,w]}^{q\text{-}\text{var}}}^\eta\\
&\leqslant \VCnReta{A}{[u,w]}{p}[x]_{\mathcal{C}_{[u,w]}^{q\text{-}\text{var}}}^\eta,
\end{align*}
which gives the result as the last expression in the above  defines a control raised to the power $\theta$ by \cite[Exercise 1.9]{FrizVictoir}.
\end{proof}

As a corollary we obtain the following result.
\begin{corollary} \label{cor:approximation2}
Let $T>0$. Let $p,q \in [1,\infty)$ and $\eta \in (0,1]$ with $\theta := 1/p +\eta/q>1$. Let 
$x \in \mathcal{C}^{q\text{-}\text{var}}_{[0,T]}$ and $A \in \VCeta$. If a sequence 
$A^n$ converges to $A$ in $\VCeta$, then
\begin{align*}
    \sup_{t \in [0,T]}\left|\int_0^t (A^n-A)_{dr}(x_r)\right|\underset{n \rightarrow 
    \infty}{\longrightarrow} 0.
\end{align*}
\end{corollary}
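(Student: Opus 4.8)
The plan is to reduce this statement directly to the quantitative estimate \eqref{eq:YoungEstimate} of Theorem~\ref{int}. The key observation is that $A^n - A$ is itself an element of $\mathcal{C}^{p\text{-}\text{var}}_{[0,T]}(\mathcal{C}^\eta)$ (the space $\VCeta$ is a vector space, and the difference of two elements lies in it), and that the nonlinear Young integral is linear in its integrand: $\int_0^t (A^n)_{dr}(x_r) - \int_0^t A_{dr}(x_r) = \int_0^t (A^n - A)_{dr}(x_r)$. This linearity follows immediately from the definition of the integral as a limit of Riemann-type sums $\sum_{\Pi} (A^n - A)_{t_i,t_{i+1}}(x_{t_i})$ along a fixed sequence of partitions, since finite sums are linear and the limits exist by Theorem~\ref{int}.

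First I would fix $t \in [0,T]$ and apply \eqref{eq:YoungEstimate} on the interval $[0,t]$ with $A$ replaced by $A^n - A$, choosing $R = \|x\|_{\mathcal{C}_{[0,T]}}$ (finite since $x$ is continuous on a compact interval). Since $(A^n - A)_{0,0}(x_0) = 0$, the left-hand side of \eqref{eq:YoungEstimate} is exactly $\left|\int_0^t (A^n - A)_{dr}(x_r)\right|$, so we obtain
\begin{align*}
\left|\int_0^t (A^n - A)_{dr}(x_r)\right| \leqslant C(\theta)\, \VCnReta{A^n - A}{[0,t]}{p}\, [x]_{\mathcal{C}_{[0,t]}^{q\text{-}\text{var}}}^\eta \leqslant C(\theta)\, [x]_{\mathcal{C}_{[0,T]}^{q\text{-}\text{var}}}^\eta\, \VCneta{A^n - A}{[0,T]}{p},
\end{align*}
where in the last step I use that the $p$-variation seminorm over $[0,t]$ is dominated by the one over $[0,T]$, and that the $\mathcal{C}^\eta$-norm is taken over all of $\mathbb{R}$ which dominates the $\mathcal{C}^\eta_{D_R}$-norm. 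The right-hand side no longer depends on $t$, so taking the supremum over $t \in [0,T]$ gives
\begin{align*}
\sup_{t \in [0,T]}\left|\int_0^t (A^n - A)_{dr}(x_r)\right| \leqslant C(\theta)\, [x]_{\mathcal{C}_{[0,T]}^{q\text{-}\text{var}}}^\eta\, \VCneta{A^n - A}{[0,T]}{p}.
\end{align*}
By hypothesis $\VCneta{A^n - A}{[0,T]}{p} \to 0$ as $n \to \infty$ (together with the supremum norm, but only the seminorm enters the bound — or one simply uses the full norm as an upper bound), and $[x]_{\mathcal{C}_{[0,T]}^{q\text{-}\text{var}}}^\eta$ is a fixed finite constant, so the right-hand side tends to $0$. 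This concludes the proof.

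I do not anticipate a serious obstacle here; this is a short corollary. The only point requiring mild care is the justification of linearity of the nonlinear Young integral in the integrand, which should be recorded explicitly (it is immediate from the construction in Theorem~\ref{int} since one works along a single sequence of partitions with vanishing mesh and the limit is partition-independent). A secondary bookkeeping point is to make sure the constant $R$ is chosen uniformly in $n$ and $t$ — taking $R = \|x\|_{\mathcal{C}_{[0,T]}}$ works since it depends only on $x$. One should also note the implicit restriction $q \in [1,\infty)$ so that $x \in \mathcal{C}^{q\text{-}\text{var}}_{[0,T]}$ indeed has finite $q$-variation on every subinterval, matching the hypotheses of Theorem~\ref{int}.
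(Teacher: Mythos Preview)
Your approach is the same as the paper's, but there is a slip in reading estimate \eqref{eq:YoungEstimate}. On the interval $[0,t]$ that estimate controls
\[
\Big|\int_0^t (A^n-A)_{dr}(x_r) - (A^n-A)_{0,t}(x_0)\Big|,
\]
not $\big|\int_0^t (A^n-A)_{dr}(x_r)\big|$. The subscript on the correction term is $_{0,t}$, not $_{0,0}$; the increment $(A^n-A)_{0,t}(x_0)$ has no reason to vanish. Your bound therefore does not follow as written.

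The fix is exactly what the paper does: add and subtract this term and bound it separately. Since $\{0,t\}$ is itself a partition of $[0,t]$,
\[
|(A^n-A)_{0,t}(x_0)| \leqslant \|(A^n-A)_{0,t}\|_{\mathcal{C}^\eta} \leqslant \VCneta{A^n-A}{[0,T]}{p},
\]
which also tends to $0$ by hypothesis. With this extra term inserted, your argument goes through and coincides with the paper's proof. Your remarks on linearity of the integral in the integrand and on the choice of $R$ are fine.
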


\begin{proof}
By \eqref{eq:YoungEstimate}, we have 
\begin{align*}
    \left|\int_0^t (A^n-A)_{dr}(x_r)\right|&\leqslant \left|\int_0^t (A^n-A)_{dr}(x_r)-(A^n-A)_{0,t}(x_0)\right| + \left|(A^n-A)_{0,t}(x_0)\right|\\
    &\leqslant C(\theta) 
    \VCneta{A^n-A}{[0,T]}{p}[x]_{\mathcal{C}_{[0,T]}^{q\text{-}\text{var}}}^\eta + 
    \VCneta{A^n-A}{[0,T]}{p}.
\end{align*}
Taking the supremum over all $t \in [0,T]$ and letting $n$ go to $\infty$ gives the result.
\end{proof}

The next lemma establishes the sensitivity in the ``$x$'' variable of the nonlinear Young integral. See  \cite[Th. 2.7]{Galeati} for its counterpart in the H\"older setting.

\begin{lemma} \label{intineq}
Let $\eta \in (0,1]$ and $p,q \in [1,\infty)$ with $\theta:=1/p+\eta/q>1$. 
Let $A\in 
\mathcal{C}^{p\text{-}\text{var}}_{[0,T]}(\mathcal{C}_{E,loc}^\eta(F))$,  $x, y \in 
\mathcal{C}^{q\text{-}\text{var}}_{[0,T]}(E)$. We denote \(R = \Max(\|x\|_\infty, \|y\|_\infty)\). Then, for $\delta\in (q(1-\frac{1}{p}), \eta)$ and $(s,t) \in \Delta_{[0,T]}$, one has
\begin{align} \label{eq:intineq}
 \left\| \int_s^t A_{dr}(x_r) - \int_s^t A_{dr}(y_r)\right\|_F \leqslant
 C(\theta)  \VCnReta{A}{[s,t]}{p} 
 &\left([x]_{\mathcal{C}_{[s,t]}^{q\text{-}\text{var}}} +  
 [y]_{\mathcal{C}_{[s,t]}^{q\text{-}\text{var}}}\right)^{\delta}\|x-y\|_\infty^{\eta-\delta} \nonumber\\
 &+ 
 \|A_{s,t}\|_{\mathcal{C}^\eta_{D_R}} \|x-y\|_\infty^\eta.
\end{align}
\end{lemma}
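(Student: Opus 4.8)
The plan is to mimic the proof of Theorem~\ref{int}. Consider the germ
\[
\Xi_{u,v}:=A_{u,v}(x_u)-A_{u,v}(y_u),\qquad (u,v)\in\Delta_{[0,T]} .
\]
Since the Riemann sums $\sum_{\Pi_n}\Xi_{t_i,t_{i+1}}=\sum_{\Pi_n}A_{t_i,t_{i+1}}(x_{t_i})-\sum_{\Pi_n}A_{t_i,t_{i+1}}(y_{t_i})$ converge, by Theorem~\ref{int}, to $\int_s^tA_{dr}(x_r)-\int_s^tA_{dr}(y_r)$, it suffices to show that $\delta\Xi$ is dominated by a control raised to a power strictly larger than $1$ (up to the multiplicative constant $\|x-y\|_\infty^{\eta-\delta}$) and then invoke the sewing lemma with controls \cite[Theorem 2.2 and Remark 2.3]{Frizzhang}; the term $\Xi_{s,t}$ will produce the last summand of \eqref{eq:intineq}. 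The strategy for bounding $\delta\Xi$ is to interpolate between an estimate in which the increments of $A_{u,v}$ along $x$ and along $y$ are kept separate and one in which they are grouped along $x-y$.

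First I would compute, for $s\le r\le u\le v\le t$, using $A_{r,v}-A_{r,u}=A_{u,v}$,
\[
\delta\Xi_{r,u,v}=\bigl(A_{u,v}(x_r)-A_{u,v}(y_r)\bigr)-\bigl(A_{u,v}(x_u)-A_{u,v}(y_u)\bigr).
\]
Since $x_r,x_u,y_r,y_u\in D_R$ and $A_{u,v}\in\mathcal{C}^\eta_{D_R}(F)$, this quantity admits the two bounds
\[
\|\delta\Xi_{r,u,v}\|_F\le [A_{u,v}]_{\mathcal{C}^\eta_{D_R}}\bigl(\|x_r-x_u\|_E^\eta+\|y_r-y_u\|_E^\eta\bigr)
\]
(obtained by regrouping $\delta\Xi$ as $(A_{u,v}(x_r)-A_{u,v}(x_u))-(A_{u,v}(y_r)-A_{u,v}(y_u))$), and
\[
\|\delta\Xi_{r,u,v}\|_F\le [A_{u,v}]_{\mathcal{C}^\eta_{D_R}}\bigl(\|x_r-y_r\|_E^\eta+\|x_u-y_u\|_E^\eta\bigr)\le 2\,[A_{u,v}]_{\mathcal{C}^\eta_{D_R}}\,\|x-y\|_\infty^\eta .
\]

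Next I would interpolate: raising the first bound to the power $\delta/\eta$ and the second to the power $1-\delta/\eta$ (licit since $0<\delta<\eta$), multiplying, and using $a^\theta+b^\theta\le 2(a+b)^\theta$ for $\theta\in[0,1]$, yields
\[
\|\delta\Xi_{r,u,v}\|_F\le C(\delta,\eta)\,[A_{u,v}]_{\mathcal{C}^\eta_{D_R}}\bigl(\|x_r-x_u\|_E+\|y_r-y_u\|_E\bigr)^{\delta}\,\|x-y\|_\infty^{\eta-\delta}.
\]
Then I would use $[A_{u,v}]_{\mathcal{C}^\eta_{D_R}}\le\VCnReta{A}{[u,v]}{p}$ together with the trivial-partition bounds $\|x_r-x_u\|_E\le[x]_{\mathcal{C}_{[r,u]}^{q\text{-}\text{var}}}$ and $\|y_r-y_u\|_E\le[y]_{\mathcal{C}_{[r,u]}^{q\text{-}\text{var}}}$, and recall that $(a,b)\mapsto\VCnReta{A}{[a,b]}{p}^{p}$, $(a,b)\mapsto[x]^{q}_{\mathcal{C}_{[a,b]}^{q\text{-}\text{var}}}$, $(a,b)\mapsto[y]^{q}_{\mathcal{C}_{[a,b]}^{q\text{-}\text{var}}}$ are control functions (and that a product of powers of controls with exponents summing to $1$ is again a control, by Hölder). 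This shows that $\delta\Xi$ is dominated by $\|x-y\|_\infty^{\eta-\delta}\,\varkappa(r,v)^{1/p+\delta/q}$ for a suitable control $\varkappa$. The hypothesis $\delta\in(q(1-1/p),\eta)$ is exactly what makes the exponent $1/p+\delta/q$ exceed $1$ while keeping $\eta-\delta>0$; note that this interval is nonempty precisely because $\theta=1/p+\eta/q>1$.

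Finally I would apply the sewing lemma with controls to $\Xi$, which gives (after using $(a^q+b^q)^{\delta/q}\le(a+b)^\delta$ to restore $\VCnReta{A}{[s,t]}{p}$ and $([x]_{\mathcal{C}_{[s,t]}^{q\text{-}\text{var}}}+[y]_{\mathcal{C}_{[s,t]}^{q\text{-}\text{var}}})^\delta$)
\[
\Bigl\|\int_s^t A_{dr}(x_r)-\int_s^t A_{dr}(y_r)-\Xi_{s,t}\Bigr\|_F\le C(\theta)\,\VCnReta{A}{[s,t]}{p}\bigl([x]_{\mathcal{C}_{[s,t]}^{q\text{-}\text{var}}}+[y]_{\mathcal{C}_{[s,t]}^{q\text{-}\text{var}}}\bigr)^{\delta}\|x-y\|_\infty^{\eta-\delta}.
\]
It then only remains to bound the germ itself, $\|\Xi_{s,t}\|_F=\|A_{s,t}(x_s)-A_{s,t}(y_s)\|_F\le[A_{s,t}]_{\mathcal{C}^\eta_{D_R}}\|x_s-y_s\|_E^\eta\le\|A_{s,t}\|_{\mathcal{C}^\eta_{D_R}}\|x-y\|_\infty^\eta$, and adding the two contributions yields \eqref{eq:intineq}. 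I do not expect a serious obstacle: the argument is structurally identical to the proof of Theorem~\ref{int}, and the only point requiring care is the bookkeeping with control functions in the interpolation step, i.e.\ checking that the mixed exponent $1/p+\delta/q$ can legitimately be fed into the sewing lemma.
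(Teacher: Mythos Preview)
Your proposal is correct and follows essentially the same approach as the paper: the same germ $\Gamma_{u,w}=A_{u,w}(x_u)-A_{u,w}(y_u)$, the same pair of bounds on $\delta\Gamma$ (one via increments of $x$ and $y$, one via $\|x-y\|_\infty$), the same interpolation with exponent $\delta/\eta$ (the paper phrases it as $a\wedge b\le a^{\xi}b^{1-\xi}$), and the same appeal to the sewing lemma with controls. The paper packages the resulting control slightly differently by introducing $\hat\varkappa(u,w)^{\delta/q+1/p}:=(\varkappa_x+\varkappa_y)^{(\delta/\eta)\theta}\bigl(\VCnReta{A}{[u,w]}{p}\bigr)^{1-\delta/\eta}$, but this is only a cosmetic difference from your bookkeeping.
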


\begin{proof} 
For $(u,w) \in \Delta_{[0,T]}$, let
\begin{align*}
    \Gamma_{u,w}\coloneqq(A_{u,w}(x_u) -A_{u,w}(y_u)).
\end{align*}
Again, we aim to apply the sewing lemma. For any $z \in 
\mathcal{C}^{q\text{-}\text{var}}_{[0,T]}(E)$, let 
$\varkappa_z(u,w)^\theta:=\VCnReta{A}{[u,w]}{p}[z]_{\mathcal{C}_{[u,w]}^{q\text{-}\text{var}}}^\eta$. Furthermore, let
\begin{align*}
		\hat{\varkappa}(u,w)^{\delta/q+p^{-1}}:=(\varkappa_x(u,w)+\varkappa_y(u,w))^{\tfrac{\delta}{\eta} 
		\theta}\Big(\VCnReta{A}{[u,w]}{p}\Big)^{1-\frac{\delta}{\eta}}.
\end{align*}
By \cite[Exercise 1.9]{FrizVictoir} and by definition of a control, we get that $\varkappa_x,\,  \varkappa_y,\, \varkappa_x+\varkappa_y$ and $\hat{\varkappa}$ are control functions. %
Note that, for $v \in [u,w]$,
\begin{align*}
    \left\|\Gamma_{u,w}-\Gamma_{u,v}-\Gamma_{v,w}\right\|_F&\leqslant \|A_{v,w}(x_v)-A_{v,w}(x_u)\|_F + \|A_{v,w}(y_v)-A_{v,w}(y_u)\|_F\\
    &\leqslant \VCnReta{A}{[v,w]}{p}([x]_{\mathcal{C}_{[u,v]}^{q\text{-}\text{var}}}^\eta 
    +[y]_{\mathcal{C}_{[u,v]}^{q\text{-}\text{var}}}^\eta)\\
    &\leqslant (\varkappa_x(u,w)+\varkappa_y(u,w))^\theta .
\end{align*}
Furthermore, we also have that
\begin{align*}
    \left\|\Gamma_{u,w}-\Gamma_{u,v}-\Gamma_{v,w}\right\|_F&\leqslant \|A_{v,w}(x_v)-A_{v,w}(y_v)\|_F + \|A_{v,w}(x_u)-A_{v,w}(y_u)\|_F\\
    &\leqslant 2 \VCnReta{A}{[v,w]}{p} \|x-y\|_\infty^\eta.
\end{align*} 
It clearly holds true that $a\wedge b\leqslant a^\xi b^{1-\xi}$ for $a,b\geqslant 0$ and $\xi 
\in [0,1]$. Hence, for $\xi = \delta/\eta \in (0,1)$, it comes
	\begin{align*}
		\left\|\Gamma_{u,w}-\Gamma_{u,v}-\Gamma_{v,w}\right\|_F
		\leqslant C \hat{\varkappa}(u,w)^{\frac{\delta}{q}+p^{-1}}\, \|x-y\|_\infty^{\eta 
		(1-\frac{\delta}{\eta})}.
	\end{align*}
Applying the sewing lemma and the inequality
\begin{align*}
\|\Gamma_{u,w}\|_F \leqslant \|A_{u,w}\|_{\mathcal{C}^\eta_{D_R}} \|x-y\|_\infty^\eta%
\end{align*}
gives \eqref{eq:intineq}.
\end{proof}

\subsection{Solving nonlinear Young equations}

We are now ready to state a result of existence of solutions to nonlinear Young
 integral equations with a positive drift. From now on we will work with real-valued functions and vector fields. 
 The proof resembles the one in the H\"older setting, see~\cite[Th. 3.2]{Galeati}.
However, there is one crucial step where the $p$-variation permits to take into account the nonnegative drift and which then allows to assume milder regularity conditions than in the H\"older setting.

\begin{theorem} \label{thm:existence}
Let $\eta \in (0,1]$ and $p\geqslant 1$ with $1/p + \eta >1$. Let $A\in \VCeta$ 
with $A_{s,t}(y)\geqslant 0$
for all $y\in \mathbb{R}$ and all $(s,t) \in \Delta_{[0,T]}$. Then for any $x_0\in 
\mathbb{R}$, there exists a solution $x\in \mathcal{C}^{1\text{-}\text{var}}_{[0,T]}$ to 
the nonlinear 
Young equation
\begin{align}\label{eq:NYE}
    x_t=x_0+\int_0^t A_{dr}(x_r), \ \forall t \in [0,T].
\end{align}
\end{theorem}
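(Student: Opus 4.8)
The plan is to construct a solution by an Euler-type scheme: fix a sequence of partitions $\Pi_n$ of $[0,T]$ with mesh tending to $0$, define $x^n$ piecewise by $x^n_0 = x_0$ and $x^n_t = x^n_{t_i^n} + A_{t_i^n, t}(x^n_{t_i^n})$ for $t \in [t_i^n, t_{i+1}^n]$, then pass to the limit. First I would record two a priori properties of the scheme. Since $A_{s,t}(y) \geqslant 0$, the map $t \mapsto x^n_t$ is nondecreasing on each subinterval, hence nondecreasing on $[0,T]$; this is the crucial point where nonnegativity of the drift enters, because it forces the increments of $x^n$ to have constant sign, so that $[x^n]_{\mathcal{C}^{1\text{-}\text{var}}_{[0,T]}} = x^n_T - x^n_0 = \sum_i A_{t_i^n, t_{i+1}^n}(x^n_{t_i^n})$ telescopes rather than requiring a $p$-variation bound on $A$ summed along the partition. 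Second, I would establish a uniform bound: using $A \in \VCeta$ and Young-type control estimates (together with the control function property of $[A]^p_{\mathcal{C}^{p\text{-}\text{var}}(\mathcal{C}^\eta)}$), one bounds $\|x^n\|_\infty$ and $[x^n]_{\mathcal{C}^{1\text{-}\text{var}}_{[0,T]}}$ by a constant depending only on $x_0$, $T$, $\eta$, $p$ and $\VCneta{A}{[0,T]}{p}$. The key inequality is that on $[t_i^n, t_{i+1}^n]$, $|x^n_{t_{i+1}^n} - x^n_{t_i^n}| \leqslant \|A_{t_i^n, t_{i+1}^n}\|_{\mathcal{C}^\eta_{D_R}}(1 + R)$ once $\|x^n\|_\infty \leqslant R$, and one closes the estimate via a Gr\"onwall-type argument adapted to controls.

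Next I would extract a convergent subsequence. The uniform $1$-variation bound plus continuity gives equicontinuity in a suitable sense (the modulus of continuity of $x^n$ on $[s,t]$ is controlled by $\omega_A(t-s)(1+R)$, where $\omega_A$ is the modulus associated to the control $[A]^p_{\mathcal{C}^{p\text{-}\text{var}}}$, uniformly in $n$), so by Arzel\`a--Ascoli there is a subsequence converging uniformly to some $x \in \mathcal{C}_{[0,T]}$. Lower semicontinuity of $p$-variation (here $1$-variation) under uniform convergence gives $x \in \mathcal{C}^{1\text{-}\text{var}}_{[0,T]}$. It remains to show $x$ solves \eqref{eq:NYE}. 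For this I would write, along the subsequence,
\begin{align*}
x^n_t - x_0 = \sum_{i : t_{i+1}^n \leqslant t} A_{t_i^n, t_{i+1}^n}(x^n_{t_i^n}) + (\text{endpoint term}),
\end{align*}
and compare this Riemann-type sum with $\int_0^t A_{dr}(x_r)$. The comparison has two pieces: replacing $x^n_{t_i^n}$ by $x_{t_i^n}$, controlled by $\sup_i |x^n_{t_i^n} - x_{t_i^n}| \to 0$ together with the uniform $\mathcal{C}^\eta$-bound on $A_{t_i^n, t_{i+1}^n}$ summed along the partition (finite by the control property since $\eta \cdot 1 \cdot$ wait—rather, $\sum_i \|A_{t_i^n,t_{i+1}^n}\|_{\mathcal{C}^\eta} $ is $O(1)$ because $1/p > 1 - \eta \geqslant 0$ forces $p < \infty$ and the sum of a superadditive-control to a power $>1$... more precisely one uses $\sum_i [A_{t_i^n,t_{i+1}^n}]^p \leqslant [A]^p_{\mathcal{C}^{p\text{-}\text{var}}}$ and H\"older in $i$); and recognising that $\sum_i A_{t_i^n, t_{i+1}^n}(x_{t_i^n}) \to \int_0^t A_{dr}(x_r)$ by the very definition of the nonlinear Young integral in Theorem~\ref{int}, which applies since $x \in \mathcal{C}^{1\text{-}\text{var}}_{[0,T]} = \mathcal{C}^{q\text{-}\text{var}}_{[0,T]}$ with $q=1$ and $\theta = 1/p + \eta/q = 1/p + \eta > 1$ by hypothesis.

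The main obstacle I expect is making the passage to the limit fully rigorous at the level of the Riemann sums: one must be careful that the partitions used to build $x^n$ are the ones along which the nonlinear Young integral converges, and that the "diagonal" replacement of $x^n_{t_i^n}$ by $x_{t_i^n}$ is uniform. A clean way around this is to fix, for the limiting path $x$, an arbitrary sequence of partitions with vanishing mesh to compute $\int_0^t A_{dr}(x_r)$ (legitimate by Theorem~\ref{int}, since the integral is partition-independent), and to take the Euler scheme along this same sequence; then the two sums differ only through the $x$-argument, and Lemma~\ref{intineq}-type sensitivity estimates — or more simply the uniform $\mathcal{C}^\eta_{D_R}$-bound on the increments of $A$ combined with $\|x^n - x\|_\infty \to 0$ — close the gap. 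Finally one checks the endpoint term $A_{t_{i(t)}^n, t}(x^n_{t_{i(t)}^n})$ vanishes as $n \to \infty$, again by the modulus-of-continuity bound on $A$, yielding $x_t = x_0 + \int_0^t A_{dr}(x_r)$ for all $t \in [0,T]$.
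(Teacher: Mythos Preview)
Your overall strategy---Euler scheme, monotonicity from $A_{s,t}\geqslant 0$, uniform $1$-variation bound, Arzel\`a--Ascoli, then pass to the limit---is exactly the paper's. But the execution has a real gap at the uniform-bound step. Your ``key inequality'' $|x^n_{t_{i+1}^n}-x^n_{t_i^n}|\leqslant \|A_{t_i^n,t_{i+1}^n}\|_{\mathcal{C}^\eta_{D_R}}(1+R)$, summed over $i$, produces $\sum_i \|A_{t_i^n,t_{i+1}^n}\|_{\mathcal{C}^\eta}$, and this sum is \emph{not} bounded uniformly in $n$ when $p>1$: from $\sum_i\|A_{t_i^n,t_{i+1}^n}\|_{\mathcal{C}^\eta}^p\leqslant \VCneta{A}{[0,T]}{p}^p$ you only get the $\ell^1$-sum $\leqslant N_n^{1-1/p}\VCneta{A}{[0,T]}{p}$ by H\"older, which blows up. A Gr\"onwall closing on top of this does not save it. The paper avoids the problem by working locally: it writes $x^n_t=x_0+\int_0^tA_{dr}(x^n_r)+\psi^n_t$, bounds both the Young integral (via \eqref{eq:YoungEstimate}) and $\psi^n_{s,u}$ by $C\,\VCneta{A}{[s,u]}{p}\,[x^n]_{\mathcal{C}^{1\text{-}\text{var}}_{[s,u]}}^\eta$ plus a term that becomes small for fine partitions, then closes via $a^\eta\leqslant 1+a$ on intervals $[s,u]$ with $\VCneta{A}{[s,u]}{p}$ small. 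This local closing is the missing ingredient; it is where the condition $1/p+\eta>1$ (rather than the stronger $1/p\geqslant 1$) is actually used.

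The same unbounded-sum issue reappears in your passage to the limit: replacing $x^n_{t_i^n}$ by $x_{t_i^n}$ in the Riemann sum costs $\|x^n-x\|_\infty^\eta\sum_i[A_{t_i^n,t_{i+1}^n}]_{\mathcal{C}^\eta}$, which again diverges for $p>1$. You correctly flag Lemma~\ref{intineq} as the alternative, and that is precisely what the paper does: once $\psi^n\to 0$ uniformly and $[x^n]_{\mathcal{C}^{1\text{-}\text{var}}_{[0,T]}}$ is uniformly bounded, Lemma~\ref{intineq} gives $\int_0^tA_{dr}(x^n_r)\to\int_0^tA_{dr}(x_r)$ directly from $\|x^n-x\|_\infty\to 0$. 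So the paper's decomposition $x^n=x_0+\int A_{dr}(x^n)+\psi^n$ is not merely cosmetic---it is what lets both the a priori bound and the limit passage go through without summing $\mathcal{C}^\eta$-norms along the partition.
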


\begin{proof}
Without loss of generality, let $T=1$. Let $\theta:=1/p+\eta$. 
For $n \in \mathbb{N}$ and $0 \leqslant k \leqslant n$, let $t_k^n:=k/n$, 
$\bar{x}_0^n:=x_0$ and define recursively
\begin{align*}
 \bar{x}_{k+1}^n=\bar{x}_k^n+ A_{t_k^n,t_{k+1}^n}(\bar{x}_k^n). 
\end{align*}
We embed $(\bar{x}_k^n)_{k=0}^n$ into $\mathcal{C}_{[0,1]}$ by setting
\begin{align*} 
    x_t^n=x_0 + \sum_{0\leqslant k \leqslant \lfloor nt \rfloor} A_{t_k^n,t \wedge 
    t_{k+1}^n}(\bar{x}_k^n),
\end{align*}
which we write as follows
 \begin{align} \label{drei}
     x_t^n=x_0 + \int_0^t A_{dr}(x_r^n) + \sum_{0\leqslant k \leqslant \lfloor nt \rfloor} \left( \int_{t_k^n}^{t\wedge t_{k+1}^n} A_{dr}(x_{t_k^n}^n)-A_{dr}(x_r^n)\right).
 \end{align}
Denote the sum in \eqref{drei} by $\psi_t^n$. Let us introduce the control function 
 \begin{align*}
    \varkappa^n(s,t):=\left(\VCneta{A}{[s,t]}{p}\Vn{x^n}{[s,t]}^\eta\right)^{\frac{1}{\theta}}.
\end{align*}
Then using \eqref{eq:YoungEstimate} and the superadditivity property \eqref{eq:superadditive} of a control,
\begin{align} \label{psi}
    |\psi_t^n|&=\left|\sum_{0\leqslant k \leqslant \lfloor nt \rfloor} \left( \int_{t_k^n}^{t\wedge t_{k+1}^n} A_{dr}(x_{t_k^n}^n)-A_{dr}(x_r^n)\right)\right| \nonumber\\
    &\leqslant C(\theta) \sum_{0\leqslant k \leqslant \lfloor nt \rfloor} \varkappa^n(t_k^n,t\wedge 
    t_{k+1}^n)^\theta 
    \nonumber\\ 
    &\leqslant C(\theta)\, \varkappa^n(0,t) \, \max_{0\leqslant k \leqslant \lfloor nt 
    \rfloor}\{\varkappa^n(t_k^n,t\wedge t_{k+1}^n)\}^{\theta -1} .
\end{align}

Let \(0\leqslant s\leqslant u \leqslant T =1\). We note \(i =  \lfloor ns \rfloor \) and 
\(i + j = \lfloor nu \rfloor\). We assume that $j>0$ as the case $j=0$ is simpler. Then
\begin{align*}
	|\psi_{s,u}^n|&\leqslant 
	\left|\int_s^{t_{i+1}^n}A_{dr}(x_{t_{i}^n}^n)-A_{dr}(x_r^n)\right|+\sum_{k=1}^{j}\left|
	\int_{t_{i+k}^n}^{u\wedge t_{i+k+1}^n} 
	A_{dr}(x^n_{t_{i+k}^n})-A_{dr}(x_r^n)\right|\\
	&\leqslant 	
\left|\int_s^{t_{i+1}^n}A_{dr}(x_s^n)-A_{dr}(x_r^n)\right|+|A_{s,t_{i+1}^n}(x_s^n)-A_{s,t_{i+1}^n}(x^n_{t_{i}^n})|\\
	 &\hspace{70pt}+\sum_{k=1}^{j}\left| \int_{t_{i+k}^n}^{u\wedge t_{i+k+1}^n} 
	A_{dr}(x^n_{t_{i+k}^n})-A_{dr}(x_r^n)\right|,
\end{align*}
so by the estimate \eqref{eq:YoungEstimate} 
it comes
\begin{align} \label{st}
    |\psi_{s,u}^n|    &\leqslant C(\theta) \left(\varkappa^n(s,u)^\theta+ \VCneta{A}{[t_i^n,t_{i+1}^n]}{p} \Vn{x^n}{[t_i^n,t_{i+1}^n]}^\eta \right).
\end{align}
We now look for a bound on the $1$-variation norm of $x^n$. Due to the non-negativity assumption on $A$, $x^n$ is non-decreasing. Hence
\begin{align*}
    \Vn{x^n}{[t^n_i,t^n_{i+1}]}=|x_{t^n_i,t^n_{i+1}}^n|&= |A_{t^n_i,t^n_{i+1}}(x^n_{t_i^n})|\leqslant \VCneta{A}{[t_i^n,t_{i+1}^n]}{p}\, .
\end{align*}
Then in view of \eqref{drei}, \eqref{st} and applying \eqref{eq:YoungEstimate} to $ \int_s^u A_{dr}(x_r^n)$, we get
\begin{align*}
    \Vn{x^n}{[s,u]}&\leqslant C(\theta)\left(\left(|A_{s,u}(x_s^n)| + \varkappa^n(s,u)^\theta \right) + \left(\varkappa^n(s,u)^\theta + \VCneta{A}{[t_i^n,t_{i+1}^n]}{p}^{1+\eta}\right)\right).
\end{align*}
Let now $\vep>0$ and choose $n$ such that 
$\VCneta{A}{[t_i^n,t_{i+1}^n]}{p}<\vep^{1/(1+\eta)}$ 
 for all $i=0,...,n-1$. 
 Then one obtains
\begin{align*}
    \Vn{x^n}{[s,u]}&\leqslant 2 C(\theta)\left(\VCneta{A}{[s,u]}{p} + \VCneta{A}{[s,u]}{p}\Vn{x^n}{[s,u]}^\eta + \vep\right).
\end{align*}
Using that $a^\eta\leqslant 1+a$ for $a\geqslant 0$ and $\eta \in [0,1]$, it follows that
\begin{align*}
    \Vn{x^n}{[s,u]}&\leqslant 2 C(\theta)\left( \VCneta{A}{[s,u]}{p}(1+\Vn{x^n}{[s,u]}) + \varepsilon\right).
\end{align*}
Hence, for $s\leqslant u$ such that $2 C(\theta)\VCneta{A}{[s,u]}{p}<1$, 
this leads to
\begin{align} \label{ineq:xn}
    \Vn{x^n}{[s,u]}\leqslant 2 C(\theta)\frac{\VCneta{A}{[s,u]}{p}+\varepsilon}{1-2C(\theta)\VCneta{A}{[s,u]}{p}},
\end{align}
which gives both uniform boundedness and equicontinuity.
In view of Equation \eqref{psi}, the uniform boundedness of $\Vn{x^n}{[0,t]}$ combined with 
the continuity of $\VCneta{A}{[0,t]}{p}$ give that $\psi^n$ converges to $0$ uniformly on 
$[0,1]$.  By the Arzel\`a-Ascoli Theorem, we deduce that $x^n$ converges uniformly along a 
subsequence to some non-decreasing $x \in \mathcal{C}_{[0,1]}$. Without loss of generality, 
we still denote by $x^n$ this subsequence. Then using the uniform boundedness of 
$\Vn{x^n}{[0,1]}$ and the uniform convergence of $x^n$ to $x$, one gets from 
Lemma~\ref{intineq} that for any $t$,
\begin{align*}
\int_0^t A_{dr}(x^n_r)\underset{n \rightarrow \infty}{\longrightarrow} \int_0^t A_{dr}(x_r) .
\end{align*}
Hence, passing to the limit in \eqref{drei}, we finally obtain that $x$ solves \eqref{eq:NYE}.
\end{proof}

\begin{remark}\label{rem:extendholder}
Theorem~\ref{thm:existence} extends to mappings $A$ which are only locally H\"older in 
space, giving existence possibly up to a blow-up time. Its proof can be done using typical 
localisation arguments.
\end{remark}

Since we are interested in differential equations perturbed by noise, it is natural to look for an 
extension of Theorem~\ref{thm:existence} in case $A$ is random, and to look for a measurable solution. %
Therefore we conclude this section with an extension of Theorem~\ref{thm:existence} for random $A$ . We omit the proof as it is similar to the first part of the proof of Theorem~\ref{prop:existence} which is presented in Section~\ref{sec:existenceweak} (i.e. showing tightness and then using Skorokhod's representation Theorem to pass to an almost sure limit).
\begin{corollary} \label{cor:measurable}
Let $\eta \in (0,1]$ and $p\geqslant 1$ with $1/p + \eta >1$. 
Let $A:\Omega \rightarrow \mathcal{C}^{p\text{-}\text{var}}_{[0,T]}(\mathcal{C}^\eta)$ 
be a random variable such that 
almost surely, $A_{s,t}(y)\geqslant 0$ 
for all $y\in \mathbb{R}$ and $(s,t) \in \Delta_{[0,T]}$. Furthermore, assume that for any $\lambda>0$, 
\begin{align} \label{Adelta}
\lim_{\delta\rightarrow 0}\mathbb{P}(\Omega_{\delta,\lambda})=1, \text{ where } 
\Omega_{\delta,\lambda}:=\{\omega:\sup_{|t-s|<\delta} 
[A]_{\mathcal{C}^{p\text{-}\text{var}}_{[s,t]}(\mathcal{C}^\eta)}<\lambda\}.
\end{align}
Then for any $Y_0\in \mathbb{R}$, there exists a probability space 
$(\tilde{\Omega},\tilde{\mathcal{E}},\tilde{\mathbb{P}})$, a measurable map 
$\tilde{A}$ which satisfies $\text{Law}(\tilde{A})=\text{Law}(A)$ and a measurable 
map $Y:\tilde{\Omega} \rightarrow \mathcal{C}^{1\text{-}\text{var}}_{[0,T]}$ such that 
almost surely,
\begin{align*}
    Y_t=Y_0+\int_0^t \tilde{A}_{dr}(Y_r), \ \forall t \in [0,T] .
\end{align*}
\end{corollary}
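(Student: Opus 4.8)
The plan is to adapt the deterministic construction of Theorem~\ref{thm:existence} to the random setting, using a compactness argument \emph{in law} rather than pathwise. First I would fix $Y_0 \in \mathbb{R}$ and, exactly as in the proof of Theorem~\ref{thm:existence}, construct for each $n$ the dyadic Euler scheme $\bar{x}^n_k$ associated to $A$ (which now depends on $\omega$), embedding it into $\mathcal{C}_{[0,T]}$ as a piecewise-defined path $Y^n_\cdot$. The key \emph{a priori} bound \eqref{ineq:xn} from that proof is pathwise, but its range of validity (the scale on which $2C(\theta)\VCneta{A}{[s,u]}{p}<1$ and on which $\VCneta{A}{[t^n_i,t^n_{i+1}]}{p}$ is small) is now random; this is precisely what the tightness hypothesis \eqref{Adelta} controls. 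On $\Omega_{\delta,\lambda}$, the argument of Theorem~\ref{thm:existence} gives a uniform (over $n$) bound on $\Vn{Y^n}{[s,u]}$ for $|u-s|<\delta$ together with equicontinuity of the moduli, and hence — patching finitely many such intervals — uniform boundedness of $\Vn{Y^n}{[0,T]}$ and equicontinuity of $(Y^n)_n$ on $\Omega_{\delta,\lambda}$.

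Next I would upgrade this to tightness of the laws of $(A, Y^n)$ on $\mathcal{C}^{p\text{-}\text{var}}_{[0,T]}(\mathcal{C}^\eta)\times \mathcal{C}_{[0,T]}$ (with the uniform topology on the second factor). Given $\epsilon>0$, by \eqref{Adelta} choose $\delta,\lambda$ so that $\mathbb{P}(\Omega_{\delta,\lambda})>1-\epsilon$; on that event the $(Y^n)_n$ live in a fixed equicontinuous, uniformly bounded — hence by Arzel\`a--Ascoli relatively compact — subset of $\mathcal{C}_{[0,T]}$, and the law of $A$ is itself tight (a single random variable). This yields tightness of the joint laws, so along a subsequence $(A, Y^{n_k}) \Rightarrow (\tilde A, Y)$ in distribution. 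By Skorokhod's representation theorem there is a probability space $(\tilde\Omega,\tilde{\mathcal E},\tilde{\mathbb P})$ carrying copies — still denoted $(\tilde A, Y^{n_k})$ and their limit $(\tilde A, Y)$ — with $\mathrm{Law}(\tilde A)=\mathrm{Law}(A)$, $Y^{n_k}\to Y$ a.s.\ uniformly, and $\tilde A$ fixed along the sequence. The a.s.\ bound $\Vn{Y}{[0,T]}<\infty$ passes to the limit (lower semicontinuity of $p$-variation under uniform convergence), so $Y$ takes values in $\mathcal{C}^{1\text{-}\text{var}}_{[0,T]}$, and the nonnegativity $\tilde A_{s,t}(y)\geqslant 0$ is preserved in law.

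Finally I would pass to the limit in the scheme identity
\begin{align*}
Y^{n_k}_t = Y_0 + \int_0^t \tilde A_{dr}(Y^{n_k}_r) + \psi^{n_k}_t,
\end{align*}
where $\psi^{n_k}$ is the error term from \eqref{drei}. On the good events, $\psi^{n_k}\to 0$ uniformly by the estimate \eqref{psi} combined with the uniform $1$-variation bound and continuity of $\VCneta{\tilde A}{[0,t]}{p}$, exactly as in Theorem~\ref{thm:existence}; using the sensitivity estimate of Lemma~\ref{intineq} together with the uniform convergence $Y^{n_k}\to Y$ and the uniform $1$-variation bound, $\int_0^t \tilde A_{dr}(Y^{n_k}_r)\to \int_0^t \tilde A_{dr}(Y_r)$ a.s. Hence a.s.\ $Y_t=Y_0+\int_0^t \tilde A_{dr}(Y_r)$ for all $t$. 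Measurability of $Y$ (and of $\tilde A$) is automatic from the Skorokhod construction. The main obstacle is the first step: the tightness bound \eqref{Adelta} only controls $p$-variation on \emph{small} random intervals, so one must carefully track the $n$-uniformity in \eqref{ineq:xn} across a $\delta$-mesh partition that itself depends on $\lambda$, and argue that the resulting constants do not blow up — the bookkeeping here is the delicate part, though it parallels the deterministic proof closely enough that the authors (rightly) omit it.
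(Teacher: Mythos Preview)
Your approach is the one the paper intends: the proof is omitted there with a pointer to the first part of the proof of Theorem~\ref{prop:existence}\ref{en:weak1}, namely tightness of the Euler scheme via \eqref{Adelta} and \eqref{ineq:xn}, Skorokhod representation, then passage to the limit using \eqref{psi} and Lemma~\ref{intineq}. One caveat worth flagging: the claim that $\tilde A$ can be taken \emph{fixed} along the Skorokhod sequence is not delivered by the standard theorem --- in the paper's parallel argument the driver varies along the sequence (there $B^n,L^{B^n}\to\hat B,\hat L$), and the limit in the nonlinear Young integral is taken using convergence in \emph{both} arguments (a Corollary~\ref{cor:approximation2}-type estimate for $\tilde A^k\to\tilde A$ together with Lemma~\ref{intineq} for $\tilde Y^k\to Y$); with that adjustment your sketch matches the paper.
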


\section{Existence of weak solutions with nonnegative drift}\label{sec:existWeak}

We prove Theorem~\ref{prop:existence}, Corollary 
\ref{cor:existence}, Theorem~\ref{generalsolution} and Theorem~\ref{solutionsagree} in this 
section.
First, we extend the averaging operator defined in \eqref{traditionalaveraging} to distributions
$b$ in Besov spaces. Obtaining some H\"older regularity properties for this object allows to 
prove 
Theorem~\ref{solutionsagree}, which states roughly that solutions to \eqref{eq:skew} 
in the sense of Definition~\ref{def:solution} are equivalent to solutions in the sense 
of Definition~\ref{def:solution2}. Hence, we then only consider solutions in the sense of Definition~\ref{def:solution2}. 

The remaining subsections are dedicated to the proof of Theorem~\ref{prop:existence}, 
using some fine results on the joint regularity in time and space of the local time of fractional 
Brownian motion and the results on nonlinear Young equations from Section 
\ref{nonlinearyoung}.

\subsection{Definition and properties of the averaging 
operator}\label{subsec:defofaverop}
In this section we give the definition of the averaging operator $T^{\mathit{w}} b$ for distributional $b$ and $\mathit{w} \in \mathcal{C}_{[0,T]}$, extending the construction of Section~\ref{reformulate}. Note that this was already done in a very general setup in \cite[Section 3.1]{GaleatiGubinelli}. We take a less general approach by directly mollifying $b$, which is in line with the definition in \cite{GaleatiGubinelli} by Lemma 3.9 therein. %
\begin{definition}\label{averagingoperator}
	Let $\mathit{w} \in \mathcal{C}_{[0,T]}$. Let $\beta \in \mathbb{R}$ and $p \in [1,\infty]$,  $b \in \mathcal{B}_{p}^\beta$.
	The averaging operator is defined by
	\begin{equation*}
		T^\mathit{w}_t b(x):=\lim_{n\to \infty} T^{\mathit{w}}_t b^n(x),
	\end{equation*}
	if the limit exists for any sequence {\color{black} \((b^n)\)} of smooth bounded functions 
	converging to $b$ in $\mathcal{B}^{\beta-}_p$ and is independent of the choice of the 
	sequence.
\end{definition}

\begin{lemma} \label{lem:mollification}
Consider $\beta,\tilde{\beta} \in \mathbb{R}$ with $\beta-\tilde{\beta} \in (0,1]$, $\gamma \in (0,1]$, $p \in [1,\infty]$,  $b \in \mathcal{B}_{p}^\beta$ and $\mathit{w} \in \mathcal{C}_{[0,T]}$. Assume that  $\mathit{w}$ has a local time $L \in 
	\mathcal{C}^\gamma_{[0,T]}(\mathcal{B}^{-\tilde{\beta}}_{p^\prime})$.
For the above choice of $b$ and ${\mathit{w}}$, $T^{\mathit{w}} b$ is well-defined in 
$\mathcal{C}^\gamma(\mathcal{C}^{\rho})$ for any $\rho \in (0,\beta-\tilde{\beta})$.
\end{lemma}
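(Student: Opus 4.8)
The plan is to reduce the statement to the convolution representation \eqref{eq:motivation} combined with a Young-type inequality in Besov spaces. \textbf{Step 1 (a quantitative bound for smooth drifts).} First I would fix a sequence $(b^n)_n$ of smooth bounded functions converging to $b$ in $\mathcal{B}_p^{\beta-}$. For each $n$ and each $(s,t)\in\Delta_{[0,T]}$, the occupation time formula \eqref{occupation} gives the identity $T^\mathit{w}_{s,t}b^n = b^n*\check L_{s,t}$ with $\check L_{s,t}(z)=L_{s,t}(-z)$, and reflection preserves the Besov norm, $\|\check L_{s,t}\|_{\mathcal{B}_{p'}^{-\tilde\beta}}=\|L_{s,t}\|_{\mathcal{B}_{p'}^{-\tilde\beta}}$. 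Since $1/p+1/p'=1$, a convolution (Young-type) inequality in Besov spaces then yields a constant $C$ with
\[
\|T^\mathit{w}_{s,t}b^n\|_{\mathcal{B}_\infty^{\beta-\tilde\beta}}\leqslant C\,\|b^n\|_{\mathcal{B}_p^\beta}\,\|L_{s,t}\|_{\mathcal{B}_{p'}^{-\tilde\beta}}\leqslant C\,\|b^n\|_{\mathcal{B}_p^\beta}\,[L]_{\mathcal{C}^\gamma_{[0,T]}(\mathcal{B}^{-\tilde\beta}_{p'})}\,|t-s|^\gamma.
\]
Because $\rho<\beta-\tilde\beta$, the Besov embedding $\mathcal{B}_\infty^{\beta-\tilde\beta}\hookrightarrow\mathcal{C}^\rho$ (recall $\rho\in(0,1)$, so $\mathcal{B}_\infty^\rho=\mathcal{C}^\rho$) converts the left-hand side into $\|T^\mathit{w}_{s,t}b^n\|_{\mathcal{C}^\rho}$; taking $s=0$ and using $T^\mathit{w}_0 b^n\equiv 0$ also controls $\sup_{t\in[0,T]}\|T^\mathit{w}_t b^n\|_{\mathcal{C}^\rho}$. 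Hence $T^\mathit{w}b^n\in\mathcal{C}^\gamma_{[0,T]}(\mathcal{C}^\rho)$ with a bound uniform in $n$, depending only on $\sup_n\|b^n\|_{\mathcal{B}_p^\beta}$, on $[L]_{\mathcal{C}^\gamma_{[0,T]}(\mathcal{B}^{-\tilde\beta}_{p'})}$ and on $T$.

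\textbf{Step 2 (passage to the limit and well-posedness of the definition).} I would then pick $\beta'\in(\rho+\tilde\beta,\beta)$ — possible precisely because $\rho<\beta-\tilde\beta$ — so that $\beta'-\tilde\beta\in(0,1)$ and still $\rho<\beta'-\tilde\beta$. Applying the estimate of Step~1 to the smooth function $b^n-b^m$, with $\beta$ replaced by $\beta'$, gives
\[
\bigl\|T^\mathit{w}(b^n-b^m)\bigr\|_{\mathcal{C}^\gamma_{[0,T]}(\mathcal{C}^\rho)}\leqslant C\,\|b^n-b^m\|_{\mathcal{B}_p^{\beta'}}\,[L]_{\mathcal{C}^\gamma_{[0,T]}(\mathcal{B}^{-\tilde\beta}_{p'})}\,(1+T^\gamma),
\]
which tends to $0$ as $n,m\to\infty$ by Definition~\ref{def:beta-}. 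Thus $(T^\mathit{w}b^n)_n$ is Cauchy in the Banach space $\mathcal{C}^\gamma_{[0,T]}(\mathcal{C}^\rho)$, hence converges there; I would call the limit $T^\mathit{w}b$. To see it does not depend on the approximating sequence, given a second such sequence I would interlace the two: the interlaced sequence still converges to $b$ in $\mathcal{B}_p^{\beta-}$ (its $\mathcal{B}_p^\beta$-norms stay bounded), so by the previous argument its image under $T^\mathit{w}$ converges, forcing the two candidate limits to agree. This is exactly the well-definedness required by Definition~\ref{averagingoperator}, and membership in $\mathcal{C}^\gamma_{[0,T]}(\mathcal{C}^\rho)$ is inherited from Step~1 by letting $n\to\infty$.

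\textbf{Main difficulty.} The substantive input is the Young-type convolution estimate $\|f*g\|_{\mathcal{B}_\infty^{s_1+s_2}}\leqslant C\|f\|_{\mathcal{B}_{p_1}^{s_1}}\|g\|_{\mathcal{B}_{p_2}^{s_2}}$ for $1/p_1+1/p_2=1$, used with $s_1=\beta$ (or $\beta'$) and $s_2=-\tilde\beta$; the main point to check is that it is available in the needed generality (Appendix~\ref{app:Besov}) and that the endpoint $\beta-\tilde\beta=1$, where $\mathcal{B}_\infty^1$ is a Zygmund space rather than $\mathcal{C}^1$, causes no trouble — and it does not, since the statement only ever asks for $\rho$ strictly below $\beta-\tilde\beta$ and we pass through the embedding into $\mathcal{C}^\rho$. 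Everything else (reflection invariance of the Besov norm, $T^\mathit{w}_0\equiv 0$, the choice of $\beta'$, the interlacing) is routine bookkeeping.
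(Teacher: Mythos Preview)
Your argument is correct and in fact slightly slicker than the paper's. The paper does not use a Young-type convolution inequality; instead it writes $T^{\mathit w}_t b^n(x)=\langle b^n,L_t(\cdot-x)\rangle$ via the bilinear duality of Remark~\ref{rem:bracket} (after spending $\varepsilon/2$ on each side to match the third Besov index, i.e.\ passing from $\mathcal B_{p',\infty}^{-\tilde\beta}$ to $\mathcal B_{p',1}^{-\tilde\beta-\varepsilon/2}$), proves uniform convergence of $T^{\mathit w}b^n$ that way, and then obtains the $\mathcal C^\rho$ regularity in $x$ by hand using the shift estimate Lemma~\ref{A.2}\ref{A.5} together with an $n=n(s,t,x,y)$ argument to pass from smooth $b^n$ to $b$ in each term of \eqref{eq:fourterms}. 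Your route packages all of this into the single convolution bound $\|b^n*\check L_{s,t}\|_{\mathcal B_\infty^{\beta-\tilde\beta}}\leqslant C\|b^n\|_{\mathcal B_p^\beta}\|L_{s,t}\|_{\mathcal B_{p'}^{-\tilde\beta}}$ followed by the embedding into $\mathcal C^\rho$, which gives the full $\mathcal C^\gamma_{[0,T]}(\mathcal C^\rho)$ bound and the Cauchy property simultaneously. One caveat: the Young-type convolution estimate you invoke is \emph{not} in Appendix~\ref{app:Besov} (only the heat semigroup, shift, and embedding lemmas are there); it is, however, standard---for $1/p+1/p'=1$ one checks directly from the Littlewood--Paley picture that $\Delta_j(f*g)$ involves only finitely many blocks $\Delta_k f * \Delta_l g$ with $|k-j|,|l-j|\leqslant 2$, and then classical Young gives $\|\Delta_k f*\Delta_l g\|_{L^\infty}\leqslant \|\Delta_k f\|_{L^p}\|\Delta_l g\|_{L^{p'}}$. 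What your approach buys is a one-line bound on the whole $\mathcal C^\rho$ norm of $T^{\mathit w}_{s,t}b^n$; what the paper's approach buys is that it stays entirely within the toolbox already set up in the appendix.
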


\begin{remark} \label{rem:bracket}
Recall from \cite[Prop. 2.76]{BaDaCh} that for $\eta \in \mathbb{R}$, $p,p^\prime,q,q^\prime \in [1,\infty]$ with 
$1/p+1/{p^\prime}=1$ and $1/q+1/{q^\prime}=1$, there is a continuous bilinear functional ${\langle\cdot,\cdot\rangle: \mathcal{B}_{p,q}^\eta \times 
\mathcal{B}^{-\eta}_{p^\prime,q^\prime} \rightarrow \mathbb{R}}$ extending the $L^2$ inner product.
\end{remark}

\begin{proof}[Proof of Lemma~\ref{lem:mollification}]

Let $\varepsilon=\beta-\tilde{\beta}$. After an embedding of Besov spaces (see Remark~\ref{embedding2}), we know that $b \in \mathcal{B}^{\beta-\varepsilon/2}_p$ and $L \in \mathcal{C}^\gamma_{[0,T]}(\mathcal{B}^{-\tilde{\beta}-\varepsilon/2}_{p^\prime,1})$. Let $(b^n)_{n \in \mathbb{N}}$ be any sequence of smooth bounded functions converging to $b$ in $\mathcal{B}^{\beta -}_p$. By the convolutional representation \eqref{eq:motivation} and Remark~\ref{rem:bracket}, we have that
\begin{align*}
T^{\mathit{w}}_t b^n(x)=\langle b^n,L_t(\cdot-x)\rangle.
\end{align*}
Using the continuity of the bilinear functional $\langle \cdot, \cdot \rangle$, we get, for $n,m \in \mathbb{N}$,
\begin{align*}
\|T_{\cdot}^{\mathit{w}} b^n(\cdot)-T_{\cdot}^{\mathit{w}} b^m(\cdot)\|_\infty\leqslant C 
\|L\|_{\mathcal{C}^\gamma (\mathcal{B}^{-\tilde{\beta}-\varepsilon/2}_{p^\prime,1})} 
\|b^n-b^m\|_{\mathcal{B}^{\beta-\varepsilon/2}_p}.
\end{align*}
Therefore $T^{\mathit{w}} b^n$ forms a Cauchy sequence and is uniformly convergent. Hence, $T^{\mathit{w}} b$ is well defined and is easily seen to be independent of the approximating sequence. 

Let $\rho \in (0,\beta-\tilde{\beta})$. In order to show that $T^{\mathit{w}} b$ actually has the 
required regularity, we have to check that 
\begin{align} \label{eq:fourterms}
\|T^\mathit{w}b\|_{\mathcal{C}_{[0,T]}^{\gamma}(\mathcal{C}^{\rho})}&= \sup_{s\neq t, x \neq y}\frac{|T^\mathit{w}_{s,t}b(y)-T^\mathit{w}_{s,t}b(x)|}{|t-s|^{\gamma} |x-y|^{\rho}} + \sup_{s \neq t}\sup_{x} \frac{|T^\mathit{w}_{s,t}b(x)|}{|t-s|^{\gamma}}\\
&\quad \quad +\sup_{t} \sup_{x \neq y} \frac{|T^\mathit{w}_t b(y)-T^\mathit{w}_t b(x)|}{|x-y|^{\rho}}+ \sup_{t}\sup_{x} |T^\mathit{w}_t b(x)| \nonumber
\end{align}
is finite. Fix $s \neq t$ and $x \neq y$. For any $n \in \mathbb{N}$,
\begin{align*}
|T^{\mathit{w}}_{s,t} b(x)-T^{\mathit{w}}_{s,t}b(y)|\leqslant 2\|T_{s,t}^{\mathit{w}} b(\cdot)- T_{s,t}^{\mathit{w}} b^n(\cdot)\|_\infty+|T^{\mathit{w}}_{s,t} b^n(x) -T^{{\mathit{w}}}_{s,t} b^n(y)|.
\end{align*}
Choosing $n=n(s,t,x,y)$ large enough, we have
\begin{align*}
\|T_{s,t}^{\mathit{w}} b(\cdot)- T_{s,t}^{\mathit{w}} b^n(\cdot)\|_\infty \leqslant |t-s|^\gamma |x-y|^{\beta-\tilde{\beta}} .
\end{align*}
Moreover, using continuity of the bilinear form, an embedding of Besov spaces and Lemma~\ref{A.2}\ref{A.5}, we have, for $\varepsilon=\beta-\tilde{\beta}-\rho>0$,
\begin{align*}
|T^{\mathit{w}}_{s,t} b^n(x) -T^{{\mathit{w}}}_{s,t} b^n(y)|&\leqslant C \|b^n\|_{\mathcal{B}^{\beta-\varepsilon/2}_{p}}\|L_{s,t}(\cdot-x)-L_{s,t}(\cdot-y)\|_{\mathcal{B}^{-\beta+\varepsilon}_{p^\prime}}\\
&\leqslant C |t-s|^\gamma |x-y|^{\rho} \|b\|_{\mathcal{B}^\beta_{p}} \|L\|_{\mathcal{C}^\gamma 
(\mathcal{B}^{-\tilde{\beta}}_{p^\prime})}.
\end{align*}
As the other terms in \eqref{eq:fourterms} can be controlled similarly, the result follows.
\end{proof}

\begin{lemma} \label{lem:regTL}
	Let $p,\tilde{p} \in [1,\infty]$, $\beta \in \mathbb{R}$, $\eta, \gamma 
	\in (0,1)$ and $b \in \mathcal{B}^\beta_p$. Assume that the local time $L$ of $\mathit{w}$ satisfies 
	Assumption~\ref{assumption21} \ref{ass:1} or \ref{ass:2}. Then,	
for any $\vep\in (0,\eta)$, ${T^{\mathit{w}}b \in 
		\mathcal{C}^\gamma_{[0,T]}(\mathcal{C}^{\eta-\varepsilon})}$. 
		Moreover, for any bounded open interval $\mathcal{I}$ that contains the (compact) support of $L$, there exists a constant $C_{\mathcal{I}}$ such that:
		\begin{itemize}
		\item If \ref{ass:1} holds, then $ 
		[T^{\mathit{w}}b]_{\mathcal{C}^\gamma_{[0,T]}(\mathcal{C}^{\eta-\vep})} \leqslant C_{\mathcal{I}} \|b\|_{\mathcal{B}^\beta_p} 
		[L]_{\mathcal{C}^\gamma_{[0,T]}(\mathcal{B}_{\tilde{p}}^{-\beta+\eta+1/{p}+1/{\tilde{p}}
		 - 1 
				})}$ and \\ $ 
		\|T^{\mathit{w}}b\|_{\mathcal{C}^\gamma_{[0,T]}(\mathcal{C}^{\eta-\vep})} \leqslant C_{\mathcal{I}}  \|b\|_{\mathcal{B}^\beta_p} 
		\|L\|_{\mathcal{C}^\gamma_{[0,T]}(\mathcal{B}_{\tilde{p}}^{-\beta+\eta+1/{p}+1/{\tilde{p}}
		 - 1 
				})}$;
			
		\item If \ref{ass:2} holds, then $[T^{\mathit{w}}b]_{\mathcal{C}^\gamma_{[0,T]}(\mathcal{C}^{\eta-\vep})} \leqslant C_{\mathcal{I}} \|b\|_{\mathcal{B}^\beta_p}  
		[L]_{\mathcal{C}^\gamma_{[0,T]}(\mathcal{B}_{\tilde{p}}^{-\beta+\eta})}$ and \\ $\|T^{\mathit{w}}b\|_{\mathcal{C}^\gamma_{[0,T]}(\mathcal{C}^{\eta-\vep})} \leqslant C_{\mathcal{I}}  \|b\|_{\mathcal{B}^\beta_p} 
		\|L\|_{\mathcal{C}^\gamma_{[0,T]}(\mathcal{B}_{\tilde{p}}^{-\beta+\eta})}$.
		
		\end{itemize}
\end{lemma}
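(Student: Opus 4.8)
The plan is to mimic the argument of Lemma~\ref{lem:mollification}, but now tracking the precise Besov norm of $L$ rather than just closing the estimate, and to do so separately in the two regimes of Assumption~\ref{assumption21}. First I would fix a bounded open interval $\mathcal{I}$ containing the compact support of $L$ and a mollifying sequence $(b^n)$ converging to $b$ in $\mathcal{B}^{\beta-}_p$. As in \eqref{eq:motivation}, write $T^{\mathit{w}}_{s,t}b^n(x)=\langle b^n, L_{s,t}(\cdot-x)\rangle$, using the duality bracket from Remark~\ref{rem:bracket}. The key point is that, since $L_{s,t}$ is supported in $\mathcal{I}$, we may replace $b^n$ by its restriction $b^n|_{\mathcal{I}}$ in the bracket and invoke the embedding $\mathcal{B}^\beta_{p}(\mathcal{I})\hookrightarrow\mathcal{B}^{\beta'}_{p'}(\mathcal{I})$ from Remark~\ref{embedding3}; this is precisely what produces the interval-dependent constant $C_{\mathcal{I}}$ and lets us trade the integrability index $p$ for $\tilde p$ (or $\tilde p$ for $p'$) as needed to match the exponents in \ref{ass:1}/\ref{ass:2}.

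In the regime \ref{ass:1}, one has $1/p+1/{\tilde p}>1$, and I would pair $b^n\in\mathcal{B}^{\beta}_p$ against $L_{s,t}\in\mathcal{B}^{-\beta+\eta+1/p+1/\tilde p-1}_{\tilde p}$: the duality bracket $\mathcal{B}^\sigma_{r}\times\mathcal{B}^{-\sigma}_{r'}\to\R$ requires conjugate exponents, so I first embed $\mathcal{B}^{-\beta+\eta+1/p+1/\tilde p-1}_{\tilde p}\hookrightarrow \mathcal{B}^{-\beta+\eta'}_{p'}$ for a suitable $\eta'<\eta$ (this uses exactly the Besov embedding of Remark~\ref{embedding2} together with $1/p+1/\tilde p>1$), then apply the bracket with $b^n\in\mathcal{B}^{\beta-\eta'+\rho}_p$-ish regularity, where $\rho=\eta-\varepsilon$. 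To obtain the spatial H\"older modulus I use the difference $L_{s,t}(\cdot-x)-L_{s,t}(\cdot-y)$ and bound its $\mathcal{B}^{-\beta+\rho+\varepsilon'}_{p'}$-norm by $|x-y|^{\rho}\,[L_{s,t}]_{\mathcal{B}^{-\beta+\eta}_{p'}(\cdots)}$ via Lemma~\ref{A.2}\ref{A.5} (the translation/incremental bound on Besov norms), and the time H\"older modulus comes directly from $L\in\mathcal{C}^\gamma_{[0,T]}(\mathcal{B}^{\cdots}_{\tilde p})$. The four terms of the norm in \eqref{eq:fourterms} are then each controlled, the two ``$[\,\cdot\,]$'' seminorm terms giving the seminorm bound and all four together giving the full-norm bound. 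In the regime \ref{ass:2}, $1/p+1/\tilde p\leqslant 1$, so $\tilde p\leqslant p'$ and the embedding $\mathcal{B}^{-\beta+\eta}_{\tilde p}(\mathcal{I})\hookrightarrow \mathcal{B}^{-\beta+\eta}_{p'}(\mathcal{I})$ holds on the bounded interval $\mathcal{I}$ with no loss in regularity; after that the argument is identical, pairing $b^n\in\mathcal{B}^{\beta}_p$ against $L_{s,t}\in\mathcal{B}^{-\beta+\eta}_{p'}$ restricted to $\mathcal{I}$.

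The convergence of $T^{\mathit{w}}b^n$ and well-definedness of $T^{\mathit{w}}b$ in $\mathcal{C}^\gamma_{[0,T]}(\mathcal{C}^{\eta-\varepsilon})$ is handled exactly as in Lemma~\ref{lem:mollification}: the same estimates applied to $b^n-b^m$ show $(T^{\mathit{w}}b^n)$ is Cauchy in that space, and the limit is independent of the approximating sequence. I expect the main obstacle to be purely bookkeeping: juggling the several auxiliary parameters ($\varepsilon$, the small loss $\varepsilon'$ in the Besov index, the secondary integrability/summability indices $q,q'=1$ needed to make the bilinear pairing continuous) so that every embedding invoked is admissible under the stated constraints $1/p+1/\tilde p>1$ or $\leqslant 1$, and in particular checking that the inequality $\rho<\eta$ leaves enough room to absorb all these losses. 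There is no genuinely hard analytic step beyond what is already in Lemma~\ref{lem:mollification}; the novelty is only in keeping the constant's dependence on $\|b\|_{\mathcal{B}^\beta_p}$ and $\|L\|$ (resp. the seminorms) explicit and in splitting according to which part of Assumption~\ref{assumption21} holds.
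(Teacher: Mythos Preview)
Your approach is correct and matches the paper's: the proof in the paper simply says it ``follows along the same lines as Lemma~\ref{lem:mollification}, making use of several Besov embeddings, including $\mathcal{B}_{\tilde{p}}^{-\beta+\eta+1/{p}+1/{\tilde{p}} - 1}(\mathcal{I}) \hookrightarrow \mathcal{B}^{-\beta+\eta -\varepsilon}_{p^\prime,1}(\mathcal{I})$ (see Remark~\ref{embedding3}),'' and you have correctly unpacked this.

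One slip in your bookkeeping for regime~\ref{ass:2}: from $1/p+1/\tilde p\leqslant 1$ you get $1/\tilde p\leqslant 1/p'$, hence $\tilde p\geqslant p'$, not $\tilde p\leqslant p'$. Your conclusion that $\mathcal{B}^{-\beta+\eta}_{\tilde p}(\mathcal{I})\hookrightarrow \mathcal{B}^{-\beta+\eta-\varepsilon}_{p',1}(\mathcal{I})$ is nonetheless correct (on a bounded interval, higher integrability embeds into lower; formally, Remark~\ref{embedding3} applies since $(-\beta+\eta)-1/\tilde p>(-\beta+\eta-\varepsilon)-1/p'$ reduces to $\varepsilon>1/\tilde p-1/p'$, and the right-hand side is $\leqslant 0$), so this is purely a sign error in the justification, not in the outcome.
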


\begin{remark}\label{rem:regTL}
In particular Lemma~\ref{lem:regTL} shows the following: Assume \ref{ass:1} or \ref{ass:2} 
holds for $\gamma,\eta \in (0,1)$ with $\gamma+\eta>1$. Then $T^{\mathit{w}}b \in 
\mathcal{C}_{[0,T]}^{\gamma}(\mathcal{C}^{\tilde{\eta}})$ for $\tilde{\eta} 
\in (0,\eta)$ with $\gamma+\tilde{\eta}>1$.
\end{remark}

\begin{proof}
The proof follows along the same lines as Lemma~\ref{lem:mollification}, making use of several Besov embeddings, including
 $\mathcal{B}_{\tilde{p}}^{-\beta+\eta+1/{p}+1/{\tilde{p}} - 1}(\mathcal{I}) 
 \hookrightarrow \mathcal{B}^{-\beta+\eta -\varepsilon}_{p^\prime,1}(\mathcal{I})$ (see 
 Remark~\ref{embedding3}).%
\end{proof}

\subsection{Path-by-path solutions: Existence and comparison of solutions}\label{subsec:proofs-pbp}

In this section we prove Theorem~\ref{generalsolution} on the existence of path-by-path solutions to Equation \eqref{eq:skewZ} and Theorem~\ref{solutionsagree} on the comparison of solutions.

\begin{proof}[Proof of Theorem~\ref{generalsolution}]

The idea of the proof is to identify a set of full measure on which $T^Z b$ is sufficiently regular and has nonnegative increments. Then, for $\omega$ in this set, we can apply the (deterministic) theory of nonlinear Young integral equations developed in Section~\ref{nonlinearyoung}. In particular, for any such $\omega$, we use Theorem~\ref{thm:existence} to pick a solution. As the proof of Theorem~\ref{thm:existence} is non-constructive (it relies on Arzel\`a-Ascoli's theorem), the axiom of choice is needed to pick a solution simultaneously for all such $\omega$. For $\omega$ outside of this full-measure set, we can define the solution to be identically $0$. Note that this construction is indeed done in a path-by-path sense so that the solution solves \eqref{eq:skewZ} on a set of full measure. However, \emph{a priori} there is no reason why the constructed solution should be adapted.

Let $0\leqslant s <t$. Provided that
\begin{align}
&T^{Z} b \in 
\mathcal{C}^\gamma_{[0,T]}(\mathcal{C}^{\tilde{\eta}}) \text{ for } \gamma,\tilde{\eta} \in (0,1) \text{ with } \gamma+\tilde{\eta}>1 ~ \text{ and } \label{ass:regularity} \\
&T^{Z}_{s,t}b(x)\geqslant 0 \text{ for all } x \in \mathbb{R}\label{ass:positivity}
\end{align}
hold a.s., using \(\mathcal{C}^\gamma_{[0,T]}(\mathcal{C}^{\tilde{\eta}})
\subset \mathcal{C}^{1/\gamma\text{-}\text{var}}_{[0,T]}(\mathcal{C}^{\tilde{\eta}})\),
Theorem~\ref{thm:existence} will give a solution.

Under the assumptions of Theorem~\ref{generalsolution}, Lemma~\ref{lem:regTL} ensures 
that \eqref{ass:regularity} holds.
To see that \eqref{ass:positivity} holds, let $\varepsilon\in (0,\eta)$ and 
$\delta:=\eta-\varepsilon$. Both under Assumption~\ref{assumption21} 
\ref{ass:1} or \ref{ass:2}, we have that $L^Z_{s,t} \in  
\mathcal{B}_{p^{\prime},1}^{-\beta+\delta}$ and $L^Z_{s,t} \in  
\mathcal{B}_{p^{\prime}}^{-\beta+2\delta/3}$ by the Besov embeddings in Remark~\ref{embedding2} and Remark~\ref{embedding3}. By Remark~\ref{rem:approximation}, the 
sequence of nonnegative function $\phi^x_n:=G_{1/n}L^Z_{s,t}(\cdot - x)$ converges to  
$L^Z_{s,t}(\cdot-x)$ in 
$\mathcal{B}_{p^{\prime}}^{(-\beta+2\delta/3)-}$. Hence
\begin{align}\label{eq:approxLT}
\lim_n\|L^Z_{s,t}(\cdot-x)-\phi^x_n\|_{\mathcal{B}_{p^{\prime},1}^{-\beta}}\leqslant C \, 
\lim_n\|L^Z_{s,t}(\cdot-x)-\phi^x_n\|_{\mathcal{B}_{p^{\prime}}^{-\beta+\delta/3}}=0.
\end{align}
\red{Then we get, for any sequence of nonnegative smooth bounded functions $b^m$ converging to $b$ in $\mathcal{B}^{\beta-}_p$,
\begin{align*}
T^Z_{s,t} b(x)&=\lim_{m \rightarrow \infty} T^Z_{s,t} b^m(x)\\
&=\lim_{m \rightarrow \infty} \lim_{n \rightarrow \infty} \langle b^m,\phi^x_n\rangle,
\end{align*}
where the first line holds true by definition and the second line by the continuity of $\langle \cdot, \cdot \rangle$ and \eqref{eq:approxLT}.
The inequality 
\eqref{ass:positivity} now follows from the fact that $\langle b^m, \phi^x_n\rangle \geqslant 0$, 
for any $n,m$  and \(x\).}
\end{proof}

\begin{proof}[Proof of Theorem~\ref{solutionsagree}]
\ref{theorem:eq.a}: 
Let $\varepsilon \in (0,\eta)$ such that $\gamma+(\eta-\varepsilon)/q>1$. 
As Assumption~\ref{assumption21}  \ref{ass:1} or \ref{ass:2} is fulfilled, 
we know by Lemma~\ref{lem:regTL} that 
$T^Z b\in \mathcal{C}^\gamma_{[0,T]}(\mathcal{C}^{\eta-\varepsilon})$ \red{on $\mathcal{N}^{\mathsf{c}}$} (which we do not mention in the rest of the proof, although all equalities and 
convergences happen on this set). Then, by the definition of the 
nonlinear Young integral of $T^Z b^n$ in Theorem~\ref{int}, we get that 
Equality \eqref{eq:reformulation} does hold for $b^n$:
\begin{align} \label{eq:Tsmooth}
\int_0^t b^n(X_r) dr = \int_0^t T_{dr}^Z b^n(X_r-Z_r).
\end{align}
By Lemma~\ref{lem:regTL}, we get that $T^Z b^n$ converges to $T^Z b$ in 
$\mathcal{C}^\gamma_{[0,T]}(\mathcal{C}^{\eta-\varepsilon})$. Hence, by 
Corollary~\ref{cor:approximation2}, we obtain that 
\begin{align*}
    \sup_{t \in [0,T]}&\Big|\int_0^t b^n(X_r) dr-\int_0^t T_{dr}^Z b(X_r-Z_r)\Big|
    \underset{n\rightarrow \infty}{\longrightarrow} 0.
\end{align*}
Therefore the convergence in the statement holds for 
$K_t:=\int_0^t T_{dr}^Z b(X_r-Z_r)$. %

\ref{theorem:eq.b}: By \eqref{approximation2Z}, we know that for some subsequence $(n_k)_{k \in \mathbb{N}}$,
\begin{align*}
    \sup_{t \in [0,T]} \Big| \int_0^t b^{n_k}(X_r) dr -K_t\Big| \longrightarrow 0 \text{ a.s.}
\end{align*}
We have again that \eqref{eq:Tsmooth} holds true and we deduce that
\begin{align*}
    \sup_{t \in [0,T]} \Big| \int_0^t T_{dr}^Z b^{n_k}(X_r-Z_{r})-K_t\Big| \longrightarrow 0 \text{ a.s.}
\end{align*}
By Lemma~\ref{lem:regTL} and Corollary~\ref{cor:approximation2} , we get 
that almost surely, $K_t= \int_0^t T_{dr}^Z b(X_r-Z_{r})$ for all $t 
\in [0,T]$. Equation \eqref{eq:definitionNLY} now follows from 
\eqref{solution1Z}.
\end{proof}

\subsection{Joint regularity of the local time of the fractional Brownian motion}

In the rest of this section, $L$ denotes the local time of a one-dimensional fBm 
 \(B\) and $T^B$ denotes the averaging operator associated to it, as 
constructed in Section~\ref{subsec:defofaverop}.

First, we recall Theorem 3.1 from \cite{HuLe}. This result is stated in \cite{HuLe} for a compact hypercube with side length equal to one. By dilatation, the result also holds for an arbitrary large hyperrectangle.

\begin{theorem}[Th. 3.1 in \cite{HuLe}]\label{kolmogorov}
Consider the rectangle $\mathcal{R} = [m_{x},M_{x}]\times [m_{y},M_{y}]$ for some $m_{x}< M_{x}$ and $m_{y}<M_{y}$.  
Let $Y:\mathbb{R}^2 \rightarrow \mathbb{R}$ be a continuous stochastic process. Suppose that for $\gamma,\beta >0$, $\alpha>1$ and all $(x_1,y_1),(x_2,y_2) \in \mathcal{R}$,
\begin{align*}
    \EE\left[|Y(x_2,y_2)-Y(x_2,y_1)-Y(x_1,y_2)+Y(x_1,y_1)|^\alpha\right]\leqslant K |x_2-x_1|^{1+\gamma}|y_2-y_1|^{1+\beta}.
\end{align*}
Then for every $\varepsilon_1,\varepsilon_2$ with $0<\varepsilon_1 \alpha<\gamma$ and $0<\varepsilon_2 \alpha<\beta$, there exists a random variable $\rho$ with $\EE [\rho^\alpha]\leqslant K$ and a constant $C=C(\mathcal{R})>0$ such that almost surely,
\begin{align*}
    |Y(x_2,y_2)-Y(x_2,y_1)-Y(x_1,y_2)+Y(x_1,y_1)|\leqslant C\, \rho\, |x_2-x_1|^{\gamma/\alpha -\varepsilon_1}|y_2-y_1|^{\beta/\alpha-\varepsilon_2},
\end{align*}
for all $(x_1,y_1),(x_2,y_2) \in \mathcal{R}$.
\end{theorem}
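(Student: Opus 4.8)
The statement is a two-parameter Kolmogorov continuity theorem for the \emph{rectangular} (second mixed) increment
\[
  \square_{z_1,z_2}Y := Y(x_2,y_2)-Y(x_2,y_1)-Y(x_1,y_2)+Y(x_1,y_1),\qquad z_i=(x_i,y_i),
\]
and the plan is to prove it by dyadic chaining. The structural fact that makes this possible is that $(z_1,z_2)\mapsto\square_{z_1,z_2}Y$ is \emph{additive}: if the rectangle with opposite corners $z_1,z_2$ is cut by a vertical or horizontal segment, then $\square Y$ over it is the sum of $\square Y$ over the pieces. Hence $\square Y$ behaves exactly like the increments of a one-parameter process, and a two-parameter chaining argument applies. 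By an affine change of variables I may assume $\mathcal R=[0,1]^2$ --- this is where the dependence $C=C(\mathcal R)$ enters --- and, since $|\square_{z_1,z_2}Y|$ is unchanged under any relabelling of the four corners, I may assume in the chaining that $x_1\le x_2$ and $y_1\le y_2$.

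First I would set up the dyadic layers: for $m,\ell\ge0$ let $\mathcal D_{m,\ell}$ be the family of cells $[i2^{-m},(i+1)2^{-m}]\times[j2^{-\ell},(j+1)2^{-\ell}]$, $0\le i<2^m$, $0\le j<2^\ell$, and set $S_{m,\ell}:=\bigl(\sum_{R\in\mathcal D_{m,\ell}}|\square_R Y|^\alpha\bigr)^{1/\alpha}$, so that $|\square_R Y|\le S_{m,\ell}$ for every $R\in\mathcal D_{m,\ell}$. Summing the hypothesis over the $2^{m+\ell}$ cells of $\mathcal D_{m,\ell}$ gives
\[
  \EE\bigl[S_{m,\ell}^\alpha\bigr]=\sum_{R\in\mathcal D_{m,\ell}}\EE\bigl[|\square_R Y|^\alpha\bigr]\le 2^{m+\ell}K\,2^{-m(1+\gamma)}2^{-\ell(1+\beta)}=K\,2^{-m\gamma-\ell\beta}.
\]

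The core step --- and the hardest one --- is the chaining estimate: for dyadic $z_1,z_2$ with $h:=x_2-x_1$, $k:=y_2-y_1$ and $p,q\ge0$ chosen so that $2^{-p}\asymp h$, $2^{-q}\asymp k$, I would prove $|\square_{z_1,z_2}Y|\le C\sum_{m\ge p}\sum_{\ell\ge q}S_{m,\ell}$. To obtain it I would telescope in the two coordinates successively. Writing $[x]_m:=2^{-m}\lfloor 2^m x\rfloor$ for the level-$m$ dyadic truncation of $x$, the additivity identity $\square_{(a',y_1),(b',y_2)}Y-\square_{(a,y_1),(b,y_2)}Y=\square_{(b,y_1),(b',y_2)}Y-\square_{(a,y_1),(a',y_2)}Y$ (valid for $a\le a'$, $b\le b'$) lets me telescope the first coordinate from level $p$ up to its exact value: this leaves a coarse term (an increment over a cell that is dyadic of generation $p$ in the first coordinate) plus, at each scale $m\ge p$, at most two increments over rectangles whose horizontal side is a generation-$(m+1)$ dyadic interval. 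Telescoping the second coordinate inside each of these from level $q$ upward produces, with uniformly bounded multiplicity, increments over genuine dyadic cells of $\mathcal D_{m',\ell'}$ with $m'\ge p$ and $\ell'\ge q$, each dominated by $S_{m',\ell'}$; summing these yields the displayed bound. The delicate point here is the bookkeeping --- decomposing an arbitrary rectangular increment into elementary dyadic ones across two \emph{independent} scales while keeping the per-level multiplicity bounded; the rest is routine one-parameter Kolmogorov machinery.

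To finish I would package the random constant. Since $\varepsilon_1\alpha<\gamma$ and $\varepsilon_2\alpha<\beta$, the exponents $\gamma/\alpha-\varepsilon_1$ and $\beta/\alpha-\varepsilon_2$ are nonnegative, so for $m\ge p$, $\ell\ge q$ one has $2^{-m(\gamma/\alpha-\varepsilon_1)}\le 2^{-p(\gamma/\alpha-\varepsilon_1)}$ and similarly in $\ell$. Setting $\rho_0:=\sum_{m,\ell\ge0}2^{m(\gamma/\alpha-\varepsilon_1)}2^{\ell(\beta/\alpha-\varepsilon_2)}S_{m,\ell}$, the chaining estimate together with $2^{-p}\asymp h$, $2^{-q}\asymp k$ gives $|\square_{z_1,z_2}Y|\le C\rho_0\,h^{\gamma/\alpha-\varepsilon_1}k^{\beta/\alpha-\varepsilon_2}$ for all dyadic $z_1,z_2$, and by Minkowski's inequality in $L^\alpha(\Omega)$ (here $\alpha>1$ is used) and the moment bound above,
\[
  \|\rho_0\|_{L^\alpha}\le\sum_{m,\ell\ge0}2^{m(\gamma/\alpha-\varepsilon_1)}2^{\ell(\beta/\alpha-\varepsilon_2)}\bigl(K\,2^{-m\gamma-\ell\beta}\bigr)^{1/\alpha}=K^{1/\alpha}\sum_{m,\ell\ge0}2^{-m\varepsilon_1}2^{-\ell\varepsilon_2}=:C_0K^{1/\alpha},
\]
which is finite precisely because $\varepsilon_1,\varepsilon_2>0$. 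Taking $\rho:=C_0^{-1}\rho_0$ gives $\EE[\rho^\alpha]\le K$ and absorbs $C_0$ into the constant; finally the estimate on $|\square_{z_1,z_2}Y|$ extends from the dense set of dyadic corner pairs to all of $\mathcal R$ by continuity of $Y$. A shorter route would be to invoke a two-parameter Garsia--Rodemich--Rumsey lemma, but the dyadic argument above is self-contained.
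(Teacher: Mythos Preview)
The paper does not prove this theorem: it is recalled verbatim from \cite{HuLe} (a multiparameter Garsia--Rodemich--Rumsey inequality), with only the remark that the unit-hypercube version extends to an arbitrary rectangle by dilatation. So there is no ``paper's own proof'' to compare against beyond that citation.

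Your self-contained dyadic chaining argument is a correct and standard alternative route. The approach in \cite{HuLe} is precisely the two-parameter GRR lemma you mention at the end as a shortcut; your version trades that analytic machinery for an explicit dyadic decomposition of the rectangular increment, which is more elementary but requires the bookkeeping you flag (bounded multiplicity across two independent scales). Both yield the same conclusion with the same moment control on $\rho$. One minor point: in your chaining bound $|\square_{z_1,z_2}Y|\le C\sum_{m\ge p}\sum_{\ell\ge q}S_{m,\ell}$ the per-level multiplicity is not literally $1$ (it is a bounded constant, typically $4$ or so, coming from the two endpoints in each coordinate), but you absorb this into $C$ correctly. The passage from dyadic to arbitrary corners via continuity of $Y$ is fine.
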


\begin{remark} \label{rem:Xiao}
Let $T>0$ and $n \in \mathbb{N}$. By Lemma 8.12 in \cite{Xiao}, for any $n\geq1$ there exists a constant $C>0$ such that for any $(x,y) \in \mathbb{R}^2$, $(s,t) \in \Delta_{[0,T]}$ and $0<\bar{\beta}<(1/(2H)-1/2)\wedge 1$, the local time $L$ of a one-dimensional fBm fulfills
\begin{align} \label{eq:xiao}
    \EE\left[|L_{s,t}(y)-L_{s,t}(x)|^n\right]\leqslant C\, |t-s|^{n(1-H(1+\bar{\beta}))}\, |x-y|^{n \bar{\beta}}.
\end{align}
\end{remark}

\begin{lemma} \label{lem:regL}
Let $0<\bar{\beta}<(1/(2H)-1/2)\wedge 1$ and $0<\gamma<1-H(1+\bar{\beta})$. Then, almost surely, ${L\in \mathcal{C}^\gamma_{[0,T]}(\mathcal{C}^{\bar{\beta}})}$. \red{Additionally for any $n \in \mathbb{N}$ there exists a random variable $\rho$ with $\EE[\rho^n]<\infty$ such that for any $M>0$, $s,t \in [0,T]$ and $x,y$ with $|x|,|y|\leqslant M$, there exists a constant $C_M>0$ such that
\begin{align} \label{eq:Lholder}
|L_{s,t}(x)-L_{s,t}(y)|\leqslant C_M \rho \,|t-s|^\gamma |x-y|^{\tilde{\beta}}.
\end{align}}
\end{lemma}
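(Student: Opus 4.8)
The plan is to obtain the quantitative bound \eqref{eq:Lholder} from the two-parameter Kolmogorov criterion of Theorem~\ref{kolmogorov}, applied to (a jointly continuous version of) the random field $Y(t,x):=L_t(x)$ with the moment input of Remark~\ref{rem:Xiao}, and then to deduce the qualitative statement $L\in\mathcal{C}^\gamma_{[0,T]}(\mathcal{C}^{\bar\beta})$ from \eqref{eq:Lholder} together with the (random) compact support of $x\mapsto L_t(x)$. Recall that $L$ admits a jointly continuous version (see \cite{Xiao}), which I fix once and for all; with this version, for $(s,t)\in\Delta_{[0,T]}$ and $x,y\in\R$ the rectangular increment of $Y$ equals $L_{s,t}(x)-L_{s,t}(y)$, so the hypothesis of Theorem~\ref{kolmogorov} (with the first coordinate being time and the second being space) is precisely \eqref{eq:xiao}.

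First I would fix the free parameters. Since $r\mapsto 1-H(1+r)$ is continuous and $\gamma<1-H(1+\bar\beta)$ while $\bar\beta<(1/(2H)-1/2)\wedge1$, I can choose $\bar\beta'\in(\bar\beta,(1/(2H)-1/2)\wedge1)$ with $\gamma<1-H(1+\bar\beta')$, and then invoke Remark~\ref{rem:Xiao} with $\bar\beta'$: for every $n\geqslant1$ there is $C$ with $\EE[|L_{s,t}(y)-L_{s,t}(x)|^n]\leqslant C|t-s|^{n(1-H(1+\bar\beta'))}|x-y|^{n\bar\beta'}$. I then pick $n$ large enough that $n>1$, $\gamma_{\mathrm{HL}}:=n(1-H(1+\bar\beta'))-1>0$, $\beta_{\mathrm{HL}}:=n\bar\beta'-1>0$ and $1/n<\tfrac12\min\{1-H(1+\bar\beta')-\gamma,\ \bar\beta'-\bar\beta\}$, and finally $\varepsilon_1,\varepsilon_2>0$ so small that $\varepsilon_1 n<\gamma_{\mathrm{HL}}$, $\varepsilon_2 n<\beta_{\mathrm{HL}}$, $\gamma_{\mathrm{HL}}/n-\varepsilon_1\geqslant\gamma$ and $\beta_{\mathrm{HL}}/n-\varepsilon_2\geqslant\bar\beta$; all these requirements are compatible because of the strict inequalities $1-H(1+\bar\beta')>\gamma$ and $\bar\beta'>\bar\beta$.

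Next, for each $M\in\N$ I apply Theorem~\ref{kolmogorov} on the rectangle $[0,T]\times[-M,M]$ with $\alpha=n$ and the exponents above, getting a random variable $\rho_M$ with $\EE[\rho_M^n]\leqslant K_M$ (the constant coming from the moment bound and the dilation of the unit square) and a constant $C_M$ such that, a.s., $|L_{s,t}(x)-L_{s,t}(y)|\leqslant C_M\rho_M|t-s|^{\gamma_{\mathrm{HL}}/n-\varepsilon_1}|x-y|^{\beta_{\mathrm{HL}}/n-\varepsilon_2}$ for all $(s,t)\in\Delta_{[0,T]}$ and $|x|,|y|\leqslant M$; using $|t-s|\leqslant T$, $|x-y|\leqslant 2M$ and the inequalities $\gamma_{\mathrm{HL}}/n-\varepsilon_1\geqslant\gamma$, $\beta_{\mathrm{HL}}/n-\varepsilon_2\geqslant\bar\beta$, this is \eqref{eq:Lholder} with $\rho_M$ in place of $\rho$. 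To obtain a single $\rho$ valid for all $M$, I set $\rho:=\sup_{k\in\N}(k^2K_k^{1/n})^{-1}\rho_k$, so that $\EE[\rho^n]\leqslant\sum_k k^{-2n}K_k^{-1}\EE[\rho_k^n]\leqslant\sum_k k^{-2n}<\infty$, and for $|x|,|y|\leqslant M$ I use $\rho_{\lceil M\rceil}\leqslant\lceil M\rceil^2K_{\lceil M\rceil}^{1/n}\rho$, absorbing the $M$-dependent factor into $C_M$. This yields \eqref{eq:Lholder}.

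Finally, on the full-measure event where \eqref{eq:Lholder} holds for every $M\in\N$ and $B$ is bounded on $[0,T]$, set $M_0:=1+\sup_{[0,T]}|B_t|$. For each $(s,t)$ the function $L_{s,t}(\cdot)$ is continuous and supported in $\overline{B([s,t])}\subseteq(-M_0,M_0)$, so a short case analysis (both points in $[-M_0,M_0]$: use \eqref{eq:Lholder}; one point inside the support and one outside $[-M_0,M_0]$: insert the point where the segment joining them meets $\{|z|=M_0\}$, at which $L_{s,t}$ vanishes; both outside: trivial) upgrades \eqref{eq:Lholder} to $[L_{s,t}]_{\mathcal{C}^{\bar\beta}(\R)}\leqslant C_{M_0}\rho|t-s|^\gamma$, and the same argument applied at a near-maximiser of $|L_{s,t}|$ gives $\|L_{s,t}\|_{\infty}\leqslant C_{M_0}\rho(2M_0)^{\bar\beta}|t-s|^\gamma$; together these bound $\|L_t-L_s\|_{\mathcal{C}^{\bar\beta}}$ by a constant times $|t-s|^\gamma$, and taking $s=0$ (so $L_0\equiv0$) also bounds $\sup_t\|L_t\|_{\mathcal{C}^{\bar\beta}}$, whence $L\in\mathcal{C}^\gamma_{[0,T]}(\mathcal{C}^{\bar\beta})$ a.s. There is no deep obstacle in this proof: the real work is the bookkeeping of $\bar\beta',n,\varepsilon_1,\varepsilon_2$ needed to hit the exact target exponents $(\gamma,\bar\beta)$ despite the $\varepsilon$-loss inherent in Theorem~\ref{kolmogorov}, together with the gluing over $M\in\N$ to produce a single $\rho$; the passage from the local bound to a global-in-space Hölder estimate is routine thanks to the compact support of $L_t$.
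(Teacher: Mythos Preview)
Your proof is correct and follows the same route as the paper: invoke the moment bound of Remark~\ref{rem:Xiao}, feed it into the two-parameter Kolmogorov criterion (Theorem~\ref{kolmogorov}) on rectangles $[0,T]\times[-M,M]$, and then use the compact support of $L$ to pass from the local estimate to $L\in\mathcal{C}^\gamma_{[0,T]}(\mathcal{C}^{\bar\beta})$. You are in fact more careful than the paper's terse argument on two points it leaves implicit: the slack parameter $\bar\beta'>\bar\beta$ to absorb the $\varepsilon$-loss of Theorem~\ref{kolmogorov}, and the weighted-supremum gluing $\rho:=\sup_k(k^2K_k^{1/n})^{-1}\rho_k$ to produce a single $\rho$ with $\EE[\rho^n]<\infty$ valid for all $M$.
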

\begin{proof}
Note that by \cite[Th.~26.1]{Geman} we can assume $L$ to be jointly continuous in $(t,x)$. 
Then choosing $n$ large enough in \eqref{eq:xiao} and by Theorem~\ref{kolmogorov}, there 
exists a random variable $\rho$ with finite $n$-th moment such that, a.s., for $s,t \in [0,T]$ 
and $x,y$ with $|x|,|y|\leqslant M$, 
\begin{align*}
|L_{s,t}(x)-L_{s,t}(y)|\leqslant C_M \rho \, |t-s|^{\gamma} |x-y|^{\tilde{\beta}}.
\end{align*} 
As $L$ is a.s. compactly supported, after exhausting $[0,T]\times \mathbb{R}$ with an increasing sequence of compacts $[-M_i,M_i]$, it follows that $L \in \mathcal{C}^{\gamma}_{[0,T]}(\mathcal{C}^{\tilde{\beta}})$ a.s.
\end{proof}

Note that Lemma~\ref{lem:regL} does not give differentiability in space for the local time of a 
fBm, even in case of a small Hurst parameter. It is also possible to get time-space 
Sobolev regularity of $L$ by following the methodology of \cite{HarangPerkowski}, and in 
particular to obtain differentiability for small enough $H$. This is used in proving Theorem~\ref{prop:existence} under Assumption \ref{ex:fourbis}.

\begin{remark} \label{rem:regL}
With the same parameters as in the previous lemma, if $b \in \mathcal{B}^{\beta}_p$ for 
$\beta \in (-\bar{\beta},1-\bar{\beta})$ and $p \in [1,\infty]$, then 
Lemma~\ref{lem:regTL} and Lemma~\ref{embedding} imply that 
$T^B b \in \mathcal{C}^\gamma_{[0,T]}(\mathcal{C}^{\bar{\beta}+\beta-\varepsilon})$ almost surely, for any $\varepsilon\in (0,\bar{\beta}+\beta)$.
\end{remark}

\subsection{Existence of a weak solution} \label{sec:existenceweak}

We  first show in Proposition~\ref{lem:pbp} 
that the assumptions of Theorem~\ref{prop:existence} imply that either Assumption \ref{ass:1} or Assumption \ref{ass:2} holds. In particular, we get existence of a path-by-path solution by Theorem~\ref{generalsolution}. Then we observe, by ``randomizing'' the Euler scheme, that it is actually possible to construct a weak solution (i.e. adapted solution).
\begin{proposition} \label{lem:pbp}
Let $\beta, p, b$ be as in Theorem~\ref{prop:existence}.
\begin{enumerate}[label=(\alph*)]
\item \label{en:first2} Assume that \ref{ex:onebis} or \ref{ex:twobis} from 
Theorem~\ref{prop:existence} holds. Then, a.s., $L$ satisfies 
Assumption~\ref{assumption21} \ref{ass:2} for $\gamma, \eta \in (0,1)$ with 
$\gamma+\eta>1$, $0 < -\beta + \eta< 1$ and $\tilde{p}=\infty$.

\item \label{en:third2} Assume that \ref{ex:fourbis} from Theorem~\ref{prop:existence} holds. 
Then, a.s., $L$ satisfies Assumption~\ref{assumption21} \ref{ass:2} for $\gamma, \eta \in 
(0,1)$ with $\gamma+\eta>1$, $\eta <(\tfrac{1}{2H}+\beta-1/2) $ and $\tilde{p}=2$.
\end{enumerate}

Hence, by Theorem~\ref{generalsolution}, whenever one of the conditions 
\ref{ex:onebis}-\ref{ex:fourbis} from Theorem~\ref{prop:existence} holds, there exists a 
path-by-path solution to \eqref{eq:skewZ}.
\end{proposition}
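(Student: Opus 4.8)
The plan is, for each of the three regimes~\ref{ex:onebis}--\ref{ex:fourbis}, to exhibit a triple $(\gamma,\eta,\tilde p)$ with $\gamma,\eta\in(0,1)$ and $\gamma+\eta>1$ for which the local time $L$ of $B$ a.s.\ has the regularity demanded by Assumption~\ref{assumption21}\ref{ass:2}; once this is done, the final assertion of the proposition is a direct application of Theorem~\ref{generalsolution}. In every case $\tilde p$ will be forced and the whole difficulty is arithmetical: one must check that the range of admissible $\eta$ (into which $\gamma$ is then slotted via $\gamma>1-\eta$) is nonempty, and it turns out that each of conditions~\ref{ex:onebis}--\ref{ex:fourbis} is \emph{exactly} the nonemptiness of the corresponding range.

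For part~\ref{en:first2} I would take $\tilde p=\infty$, so that $1/p+1/\tilde p=1/p\leqslant1$ automatically and, by the Besov--H\"older identification $\mathcal{B}^{s}_\infty=\mathcal{C}^{s}$ for $s\in(0,1)$, condition~\ref{ass:2} becomes $L\in\mathcal{C}^\gamma_{[0,T]}(\mathcal{C}^{-\beta+\eta})$ with $-\beta+\eta\in(0,1)$. Feeding $\bar\beta:=-\beta+\eta$ into Lemma~\ref{lem:regL}, this holds a.s.\ as soon as $\bar\beta<(\tfrac1{2H}-\tfrac12)\wedge1$ and $\gamma<1-H(1+\bar\beta)$. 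Together with $\gamma>1-\eta$, these constraints are simultaneously satisfiable precisely when
\[
\max\Bigl(0,\,\beta,\,\tfrac{H(1-\beta)}{1-H}\Bigr)<\eta<\min\Bigl(1,\,1+\beta,\,\beta+\tfrac{1-H}{2H}\Bigr),
\]
and the verification is then a handful of elementary equivalences: $\tfrac{H(1-\beta)}{1-H}<1+\beta\iff\beta>2H-1$, $\ \tfrac{H(1-\beta)}{1-H}<\beta+\tfrac{1-H}{2H}\iff\beta>1+\tfrac H2-\tfrac1{2H}$, and $\beta+\tfrac{1-H}{2H}\leqslant1+\beta\iff H\geqslant\tfrac13$. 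So the active upper bound on $\eta$ is $\beta+\tfrac{1-H}{2H}$ for $H\geqslant1/3$ and $1+\beta$ for $H<1/3$, which is exactly why~\ref{ex:onebis} and~\ref{ex:twobis} take the form they do; the inequalities involving the other two endpoints are checked the same way. One then fixes an admissible $\eta$ and any $\gamma\in\bigl(1-\eta,\,1-H(1+\eta-\beta)\bigr)$.

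For part~\ref{en:third2} I would take $\tilde p=2$, which is legitimate since $p\geqslant2$ gives $1/p+1/2\leqslant1$; now I need $L\in\mathcal{C}^\gamma_{[0,T]}(\mathcal{B}^{-\beta+\eta}_2)$. As Lemma~\ref{lem:regL} only yields H\"older (never differentiable) spatial regularity, the genuinely new ingredient here---and the step I expect to be the main obstacle---is a time--space Sobolev regularity estimate for $L$, which I would establish in the spirit of \cite{HarangPerkowski}: starting from the Fourier representation $\widehat{L_{s,t}}(\xi)=\int_s^t e^{i\xi B_r}\,dr$, using the Gaussian identity $\EE[e^{i\xi(B_r-B_u)}]=e^{-\xi^2|r-u|^{2H}/2}$ to bound moments of $\|L_{s,t}\|_{H^s}$, and then invoking a Kolmogorov-type continuity argument (as in Theorem~\ref{kolmogorov}). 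This should give, a.s., $L\in\mathcal{C}^\gamma_{[0,T]}(H^{s})$ for every $s<\tfrac1{2H}-\tfrac12$ and $\gamma<1-H(s+\tfrac12)$. Using $H^{s}=\mathcal{B}^{s}_{2,2}\hookrightarrow\mathcal{B}^{s}_{2}$ and setting $s=-\beta+\eta$, I am left with the constraints $\eta<\tfrac1{2H}+\beta-\tfrac12$, $\ \gamma<1-H(\eta-\beta+\tfrac12)$ and $\gamma>1-\eta$; exactly as in part~\ref{en:first2} this is solvable iff $\tfrac{H(1/2-\beta)}{1-H}<\min\bigl(1,\,\tfrac1{2H}+\beta-\tfrac12\bigr)$, and an elementary computation identifies the binding inequality as $\tfrac{H(1/2-\beta)}{1-H}<\tfrac1{2H}+\beta-\tfrac12\iff\beta>1-\tfrac1{2H}$, i.e.\ condition~\ref{ex:fourbis}.

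Having produced in all three cases parameters $\gamma,\eta\in(0,1)$ with $\gamma+\eta>1$ and a suitable $\tilde p$ such that a.s.\ $L$ satisfies Assumption~\ref{assumption21}\ref{ass:2}, I would conclude by quoting Theorem~\ref{generalsolution}, which then hands over the path-by-path solution to~\eqref{eq:skewZ}. Apart from the Sobolev estimate needed in part~\ref{en:third2}, there is nothing deep here: the only real care goes into matching the several exponent conditions, and the point is that assumptions~\ref{ex:onebis}--\ref{ex:fourbis} are tailored to be precisely what is needed.
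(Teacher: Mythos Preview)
Your approach is correct and essentially identical to the paper's. For part~\ref{en:first2} the paper, rather than your systematic analysis of when the admissible interval for $\eta$ is nonempty, first uses the embedding $\mathcal{B}^\beta_p\hookrightarrow\mathcal{B}^{\tilde\beta}_p$ for $\tilde\beta<\beta$ to restrict $\beta$ without loss of generality to a bounded interval (e.g.\ $\beta\in(1+H/2-1/(2H),\,3/2-1/(2H))$ in case~\ref{ex:onebis}), and then simply writes down explicit choices of $\eta$ and $\gamma$ near the endpoints; the arithmetic is the same. For part~\ref{en:third2}, the Sobolev regularity estimate you flag as the main obstacle is precisely Theorem~3.1 of \cite{HarangPerkowski} (after identifying $H^s=\mathcal{B}^s_{2,2}$), and the paper simply cites it rather than reproving it; you can do the same.
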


\begin{proof}[Proof of Proposition~\ref{lem:pbp}]

It is used multiple times throughout the proof that we can also consider $b$ to be in $\mathcal{B}^{\tilde{\beta}}_p$ for any $\tilde{\beta}<\beta$ by an embedding.

\ref{en:first2}: Assume that \ref{ex:onebis} holds. W.l.o.g. assume that $\beta \in 
(1+\frac{H}{2}-\frac{1}{2H}, \frac{3}{2} - \frac{1}{2H})$. In view of Lemma~\ref{lem:regL} and 
the assumption $H\geqslant \frac{1}{3}$, $L \in 
\mathcal{C}^\gamma_{[0,T]}(\mathcal{C}^{-\beta+\eta})$ for $\eta \in \mathbb{R}$ such that 
$0<-\beta+\eta < \frac{1}{2H} - \frac{1}{2}$ and $0<\gamma <1-H(1-\beta+\eta)$. Thus 
Assumption \ref{ass:2} is fulfilled for $\tilde{p}=\infty$. Choose $\eta = \beta + 
\frac{1}{2H} - \frac{1}{2}-\vep$ and $\gamma = 1- H(\frac{1}{2}-\vep + \frac{1}{2H}) -\vep$. 
Then for small enough $\varepsilon$ one gets $\gamma \in (0,1)$, $\eta \in (0\vee \beta,1)$ 
and $\gamma+\eta>1$.

Assume that \ref{ex:twobis} holds. W.l.o.g. assume that $\beta \in (2H-1,0)$ . In view of 
Lemma \ref{lem:regL} and the assumption $H<\frac{1}{3}$, we have that $L \in 
\mathcal{C}^\gamma_{[0,T]}(\mathcal{C}^{-\beta+\eta})$ for $\eta \in \mathbb{R}$ such that 
$0<-\beta+\eta < 1$ and $0<\gamma <1-H(1-\beta+\eta)$. Thus Assumption \ref{ass:2} is 
fulfilled for $\tilde{p}=\infty$.
Choose $\eta = \beta+1-\vep$ and $\gamma = 1-H(2-\vep)-\vep$. Then again, one gets 
$\gamma \in (0,1)$, $\eta \in (0\vee \beta,1)$ and $\gamma+\eta>1$ for small enough $\vep$.

\ref{en:third2}: Assume that \ref{ex:fourbis} holds. By Theorem 3.1 in \cite{HarangPerkowski}, we know that $L \in 
\mathcal{C}^\gamma_{[0,T]}(\mathcal{B}^\lambda_{2,2})$ almost surely for $\lambda<\tfrac{1}{2H}-1/2$ 
and $0<\gamma < 1-H(\lambda+1/2)$~\footnote{Actually in \cite{HarangPerkowski} they use a 
Bessel space instead of $\mathcal{B}^\lambda_{2,2}$, but by Proposition 2.(iii) on page 47 and the Theorem on page 88 in \cite{Triebel}, this is 
equivalent.}. Hence, after a Besov space embedding (see Remark~\ref{embedding3}), Assumption \ref{ass:2} is fulfilled for $\tilde{p}=2$, $0<\eta <(\tfrac{1}{2H}-1/2+\beta) \wedge 1$ and 
$0<\gamma<(1-H(\eta-\beta+1/2))\wedge 1$. The assumption $\beta>1-\frac{1}{2H}$ ensures that we can choose $\eta$ and $\gamma$ such that $\eta+\gamma>1$.
\end{proof}

\red{
	To construct weak solutions, we will proceed with an approximation by an Euler scheme,
	 similarly to the proof of Theorem~\ref{thm:existence}. 
	 Although this time, we need the following lemma, 
	 which gives tightness and is a crucial step in showing adaptedness by an argument using 
	 Skorokhod's representation Theorem.}

\begin{lemma}\label{lem:LT2}
Assume that one of the assumptions \ref{ex:onebis}-\ref{ex:fourbis} in 
Theorem~\ref{prop:existence} holds. Then there exist $\gamma, \tilde{\eta} \in (0,1)$ with 
$\gamma + \tilde{\eta} >1$ such that for any $\lambda>0$,
\begin{equation*}
\lim_{\delta\rightarrow 0}\mathbb{P}(\Omega_{\delta,\lambda})=1, \text{ where } 
\Omega_{\delta,\lambda}:=\Big\{\omega:\sup_{|t-s|<\delta} 
[T^B 
b]_{\mathcal{C}^{1/\gamma\text{-}\text{var}}_{[s,t]}(\mathcal{C}^{\tilde{\eta}})}<\lambda\Big\}.
\end{equation*}
\end{lemma}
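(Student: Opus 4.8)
The plan is to upgrade the pointwise-in-$\omega$ regularity of $T^B b$ obtained from Lemma~\ref{lem:regL} (or from Theorem~3.1 in \cite{HarangPerkowski} in case \ref{ex:fourbis}) and Lemma~\ref{lem:regTL} into a uniform integrability / tightness statement for the $1/\gamma$-variation seminorm of $T^B b$ over short time intervals. The key point is that Lemma~\ref{lem:regL} does not merely say $L\in\mathcal{C}^\gamma_{[0,T]}(\mathcal{C}^{\bar\beta})$ almost surely: it produces a \emph{single} random variable $\rho$ with $\EE[\rho^n]<\infty$ (for $n$ as large as we like) controlling the joint H\"older norm of $L$ on each compact $[-M,M]$ via \eqref{eq:Lholder}. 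Feeding this through the deterministic estimates of Lemma~\ref{lem:regTL} (which, under \ref{ass:2} with $\tilde p=\infty$ or $\tilde p=2$, bound $[T^B b]_{\mathcal{C}^\gamma_{[s,t]}(\mathcal{C}^{\tilde\eta})}$ by $C_{\mathcal{I}}\|b\|_{\mathcal{B}^\beta_p}[L]_{\mathcal{C}^\gamma_{[s,t]}(\mathcal{B}^{\cdots}_{\tilde p})}$, with the constant depending only on a fixed bounded interval $\mathcal{I}$ containing the compact support of $L$), one gets, with $\tilde\eta<\eta$ and $\gamma+\tilde\eta>1$ chosen as in Proposition~\ref{lem:pbp},
\begin{equation}\label{eq:LT2-key}
[T^B b]_{\mathcal{C}^{1/\gamma\text{-}\text{var}}_{[s,t]}(\mathcal{C}^{\tilde\eta})}
\leqslant [T^B b]_{\mathcal{C}^{\gamma}_{[s,t]}(\mathcal{C}^{\tilde\eta})}\,|t-s|^{\gamma}
\leqslant C\,\rho_{\mathcal{I}}\,\|b\|_{\mathcal{B}^\beta_p}\,|t-s|^{\gamma},
\end{equation}
where $\rho_{\mathcal{I}}$ is a finite random variable (depending on the random bounded interval $\mathcal{I}\supset\operatorname{supp}L$, but with finite moments after localising, see below) and we used the embedding $\mathcal{C}^\gamma_{[s,t]}(\mathcal{C}^{\tilde\eta})\subset\mathcal{C}^{1/\gamma\text{-}\text{var}}_{[s,t]}(\mathcal{C}^{\tilde\eta})$ already invoked in the proof of Theorem~\ref{generalsolution}.

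Granting \eqref{eq:LT2-key}, the lemma follows by a routine exhaustion-and-Markov argument. First I would handle the dependence of $C_{\mathcal{I}}$ and of the H\"older constant of $L$ on the random support: exhaust $\mathbb{R}$ by the deterministic intervals $\mathcal{I}_M=(-M,M)$, set $A_M=\{\operatorname{supp}L\subset\mathcal{I}_M\}$, and note $\mathbb{P}(A_M)\to1$ as $M\to\infty$ since $B$ has continuous paths on $[0,T]$; on $A_M$ the constant in \eqref{eq:LT2-key} is the deterministic $C_{\mathcal{I}_M}\|b\|_{\mathcal{B}^\beta_p}$ times the random variable $\rho_M$ from Lemma~\ref{lem:regL} for the compact $[-M,M]$, which has a finite first moment. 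Then for fixed $\lambda>0$ and $\varepsilon>0$, choose $M$ with $\mathbb{P}(A_M^{\mathsf c})<\varepsilon/2$; on $A_M$, \eqref{eq:LT2-key} gives
\begin{equation*}
\sup_{|t-s|<\delta}[T^B b]_{\mathcal{C}^{1/\gamma\text{-}\text{var}}_{[s,t]}(\mathcal{C}^{\tilde\eta})}\leqslant C_{\mathcal{I}_M}\|b\|_{\mathcal{B}^\beta_p}\,\rho_M\,\delta^{\gamma},
\end{equation*}
so $\mathbb{P}(\Omega_{\delta,\lambda}^{\mathsf c})\leqslant \mathbb{P}(A_M^{\mathsf c})+\mathbb{P}(C_{\mathcal{I}_M}\|b\|_{\mathcal{B}^\beta_p}\rho_M\delta^{\gamma}\geqslant\lambda)\leqslant \varepsilon/2 + C_{\mathcal{I}_M}\|b\|_{\mathcal{B}^\beta_p}\EE[\rho_M]\,\delta^{\gamma}/\lambda$ by Markov's inequality; letting $\delta\to0$ and then $\varepsilon\to0$ yields $\lim_{\delta\to0}\mathbb{P}(\Omega_{\delta,\lambda})=1$.

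The main obstacle I anticipate is \eqref{eq:LT2-key} itself, i.e.\ making the constant in the deterministic bound of Lemma~\ref{lem:regTL} genuinely interval-dependent (through $\mathcal{I}_M$) rather than path-dependent, and tracking that the H\"older-in-time constant of $T^B b$ is the \emph{same} random variable $\rho_M$ uniformly over all subintervals $[s,t]\subset[0,T]$ — this is exactly what Lemma~\ref{lem:regL}'s formulation (a single $\rho$ with $\EE[\rho^n]<\infty$ valid for all $s,t\in[0,T]$) is designed to provide, so the work is bookkeeping: checking that passing from $L$ on $[0,T]\times[-M,M]$ through the Besov embeddings of Remark~\ref{embedding3} and the bilinear-pairing estimate of Lemma~\ref{lem:regTL} preserves both the localisation to $\mathcal{I}_M$ and the moment bound on the random prefactor. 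A secondary point is ensuring the chosen $(\gamma,\tilde\eta)$ simultaneously satisfy $\gamma+\tilde\eta>1$ and the constraints of \ref{ass:2} in each of the three regimes \ref{ex:onebis}–\ref{ex:fourbis}; but this is already verified in Proposition~\ref{lem:pbp}, so it can simply be quoted.
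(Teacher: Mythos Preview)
Your proposal is correct and follows essentially the same approach as the paper: localise via the events $\{\sup_{t\in[0,T]}|B_t|\leqslant M\}$, pass from H\"older to $1/\gamma$-variation via $[T^B b]_{\mathcal{C}^{1/\gamma\text{-}\text{var}}_{[s,t]}(\mathcal{C}^{\tilde\eta})}\leqslant |t-s|^\gamma[T^B b]_{\mathcal{C}^\gamma_{[s,t]}(\mathcal{C}^{\tilde\eta})}$, invoke Lemma~\ref{lem:regTL} to reduce to a moment bound on the local-time norm, and conclude by Markov's inequality. One point is underspecified in your plan: in case~\ref{ex:fourbis} the relevant norm is $[L]_{\mathcal{C}^\gamma_{[0,T]}(\mathcal{B}^{-\beta+\eta}_{2,2})}$ rather than a H\"older norm in space, and Theorem~3.1 of \cite{HarangPerkowski} only gives almost-sure membership, not directly a moment-bounded $\rho_M$; the paper closes this by using the increment moment bounds from \cite{HarangPerkowski} together with Kolmogorov's continuity theorem for Banach-valued processes (\cite[Th.~4.3.2]{Stroock}) to produce $\EE\big[[L^M]^m_{\mathcal{C}^\gamma_{[0,T]}(\mathcal{B}^{-\beta+\eta}_{2,2})}\big]<\infty$, after which your Markov argument goes through verbatim.
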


\begin{proof}
In view of Proposition~\ref{lem:pbp}, $L$ satisfies either Assumption~\ref{assumption21} 
\ref{ass:1} or \ref{ass:2} for some $\gamma,\eta\in (0,1)$ such that $\gamma+\eta>1$. Hence 
by Lemma~\ref{lem:regTL}, there exists $\tilde{\eta}<\eta$ such that 
$\tilde{\eta}+\gamma>1$ and  
$T^B b \in \mathcal{C}_{[0,T]}^\gamma(\mathcal{C}^{\tilde{\eta}})$. 

For $M>0$, define $L^M:=\mathbbm{1}_{\Omega_M} L$, where $\Omega_M=\{\omega:\sup_{t \in 
[0,T]} |B_t|\leqslant M\}$. 
Then,
\begin{align} \label{eq:26b}
\mathbb{P}(\Omega_{\delta,\lambda}^{\mathsf{c}})\leqslant\mathbb{P}\Big(\Big\{\omega:\delta^\gamma\sup_{|t-s|<\delta}
 [T^B b]_{\mathcal{C}^{\gamma}_{[s,t]}(\mathcal{C}^{\tilde{\eta}})}\geqslant\lambda\Big\}\cap 
\Omega_M\Big)+\mathbb{P}(\Omega_M^\mathsf{c}),
\end{align}
as $[T^B 
b]_{\mathcal{C}^{1/\gamma\text{-}\text{var}}_{[s,t]}(\mathcal{C}^{\tilde{\eta}})}\leqslant 
|t-s|^\gamma [T^B b]_{\mathcal{C}^{\gamma}_{[s,t]}(\mathcal{C}^{\tilde{\eta}})}$.

~

We will distinguish two cases, depending on whether \ref{ex:onebis} or \ref{ex:twobis} is satisfied (first case), or \ref{ex:fourbis} is satisfied (second case).

\emph{First case.} \red{Assume that \ref{ex:onebis} or \ref{ex:twobis} in 
Theorem~\ref{prop:existence} holds true. Then by Proposition~\ref{lem:pbp}\ref{en:first2}, 
$L\in \mathcal{C}_{[0,T]}^\gamma (\mathcal{C}^{-\beta+\eta})$ with \(0 < -\beta + \eta < 1\) and by Lemma~\ref{lem:regL} $\EE[[L^M]^m_{\mathcal{C}^{\gamma}_{[s,t]}(\mathcal{C}^{-\beta+\eta})}]$ is finite for any $m\geqslant 1$.
  Moreover, by Lemma~\ref{lem:regTL}, we have on $\Omega_{M}$ that $[T^B b]_{\mathcal{C}_{[0,T]}^\gamma (\mathcal{C}^{\tilde{\eta}})}\leqslant C_M [L]_{\mathcal{C}_{[0,T]}^\gamma (\mathcal{C}^{-\beta+\eta})}$, where $C_M$ depends on $M$ (but not on the realisation $\omega$).
Hence, we can bound the right hand side of \eqref{eq:26b} from above by
\begin{align}
\forall m \geqslant 1,\quad \mathbb{P}\Big(\Big\{\omega:\delta^\gamma\sup_{|t-s|<\delta} C_M[&L^M]_{\mathcal{C}^{\gamma}_{[s,t]}(\mathcal{C}^{-\beta+\eta})}\geqslant\lambda\Big\}\Big)+\mathbb{P}(\Omega_M^{\mathsf{c}}) \nonumber\\
&\leqslant C_M^m\lambda^{-m}\delta^{m\gamma}\EE\big[[L^M]^m_{\mathcal{C}^{\gamma}_{[0,T]}(\mathcal{C}^{-\beta+\eta})}\big] +\mathbb{P}(\Omega_M^{\mathsf{c}}).\nonumber\\
    &\leqslant C(m,M)\,\lambda^{-m} \delta^{m\gamma}+\mathbb{P}(\Omega_M^\mathsf{c}).\label{ineq:case1}
\end{align}}

Let $\varepsilon>0$. By Fernique's theorem, $\|B\|_{\infty}$ has exponential moments and we can choose $M$ such that $\mathbb{P}(\Omega_M^{\mathsf{c}})<\varepsilon/2$. Then $\delta$ can be chosen such that the other term in \eqref{ineq:case1} can also be controlled by $\varepsilon/2$.

~

\emph{Second case.} \red{Now assume that \ref{ex:fourbis} in 
Theorem~\ref{prop:existence} holds. By 
Proposition~\ref{lem:pbp}\ref{en:third2}, $L$ satisfies Assumption~\ref{assumption21} 
\ref{ass:2} with $\gamma<(1-H(-\beta+\eta+1/2))\wedge 1$, $\tilde{p}=2$ and $\eta <(\tfrac{1}{2H}-1/2+\beta)\wedge 1$. In view of 
Lemma~\ref{lem:regTL} and an embedding of Besov spaces (see Remark~\ref{embedding2}), we have that on $\Omega_M$, 
\begin{align}\label{ineq:TL}
[T^B b]_{\mathcal{C}_{[0,T]}^\gamma (\mathcal{C}^{\tilde{\eta}})}\leqslant C_M 
[L]_{\mathcal{C}_{[0,T]}^\gamma (\mathcal{B}_{2,2}^{-\beta+\eta})} 
\end{align}
for some constant $C_{M}>0$  that depends on $M$ but not on the realisation $\omega$. Next, we use the chain of inequalities on page 12 in \cite{HarangPerkowski} setting $\lambda^\prime$ therein equal to $-\beta+\eta+1/2+\varepsilon$. To check the condition $\lambda^\prime<1/(2H)$ appearing in \cite{HarangPerkowski}, note that $-\beta+\eta+1/2+\varepsilon<1/(2H)$
for $\varepsilon$ small enough. Hence, we get for any $m\geqslant 2$ and some $\tilde{\varepsilon}>0$,
\[\EE[\|L^M_{u,v}\|_{\mathcal{B}_{2,2}^{-\beta+\eta}}^m] 
\leqslant C\, |u-v|^{m(1-H(-\beta+\eta+1/2+\varepsilon))}\leqslant C\, |u-v|^{m(\gamma+\tilde{\varepsilon})}.
\]
An application of Kolmogorov's continuity theorem for Banach-valued stochastic processes 
(see \cite[Th.~4.3.2]{Stroock}) gives that 
$\EE[[L^M]_{\mathcal{C}^\gamma_{[0,T]}(\mathcal{B}_{2,2}^{-\beta+\eta})}^m]$ 
is finite. 
Using \eqref{ineq:TL}, we deduce as in the first case (see \eqref{ineq:case1}) that
\begin{align*}
\PP(\Omega_{\delta,\lambda}^{\mathsf{c}}) \leqslant C(m,M) \lambda^{-m} \delta^{m\gamma}+\mathbb{P}(\Omega_M^{\mathsf{c}}),
\end{align*}
for a constant \(C(m,M)\) only depending on \(m\) and \(M\). We can make the right-hand side arbitrarily small by choosing $M$ large enough and $\delta$ small enough.}
\end{proof}

\begin{proof}[Proof of Theorem~\ref{prop:existence}]

Proof of \ref{en:weak1}: 
 By Proposition~\ref{lem:pbp}, Assumption~\ref{assumption21} \ref{ass:1} or \ref{ass:2} is 
 fulfilled and thus there are some $\gamma\in(0,1)$ and $\tilde{p}\in [1,\infty]$ such that $L\in 
 \mathcal{C}^\gamma_{[0,T]}(\mathcal{B}^{\xi}_{\tilde{p}})$  a.s., for some $\xi$, with 
 its precise value given in Assumption~\ref{assumption21} \ref{ass:1} or \ref{ass:2}. Besides 
 it follows by Remark~\ref{rem:regTL} that $T^B b \in 
 \mathcal{C}^\gamma(\mathcal{C}^{\tilde{\eta}})$ for some $\tilde{\eta}\in(0,1)$ such that 
 $\gamma + \tilde{\eta}>1$.

W.l.o.g. let $T=1$. First, we use the Euler scheme as in the proof of Theorem~\ref{thm:existence} in order to construct a measurable solution. Let $X^n$ be the random counterpart of $x^n$ in the proof of Theorem~\ref{thm:existence} for $A=T^B b$, $p=1/\gamma$ and $\theta:=\gamma+\tilde{\eta}$. Let $\nu>0$. The computations done in the proof of 
Theorem~\ref{thm:existence} hold for almost every $\omega \in \Omega$ until equation 
\eqref{ineq:xn}. Choose $\lambda \in (0,1)$ such that $2\lambda C(\theta)<1$ for $C(\theta)$ 
as in \eqref{ineq:xn}. Let $\delta$ such that 
$\mathbb{P}(\Omega_{\delta,\lambda})\geqslant 1-\nu$, which is possible by Lemma~\ref{lem:LT2}. Then one can choose $N = N(\delta)$ 
large enough so that for any $n\geqslant N$ and any $\omega \in \Omega_{\delta,\lambda}$,
\begin{align*}
    \Vn{X^n}{[s,t]}&\leqslant 2C(\theta) \frac{[T^B b]_{\mathcal{C}_{[s,t]}^{1/\gamma \text{-var}}(\mathcal{C}^{\tilde{\eta}})}+\lambda}{1-2C(\theta) [T^B b]_{\mathcal{C}_{[s,t]}^{1/\gamma \text{-var}}(\mathcal{C}^{\tilde{\eta}})}}\\
    &\leqslant 2C(\theta)\frac{2\lambda}{1-2C(\theta)\lambda}, ~ \forall\, (s,t) \in \Delta_{[0,1]} \text{ with } |t-s|<\delta.
\end{align*}
It follows that we can choose $M$ sufficiently large so that for any $n\geqslant N$,
\begin{align*}
    \mathbb{P}(\|X^n\|_{\infty}>M)\leqslant \mathbb{P}(\Omega_{\delta,\lambda}\cap \{\|X^n\|_{\infty}>M\})+\mathbb{P}(\Omega^\mathsf{c}_{\delta,\lambda})<\nu.
\end{align*}
Therefore the sequence $(\text{Law}(X^n))_{n\in \N}$ is tight in the space of continuous 
functions.
Hence, along some subsequence that we do not relabel, $(X^n,B,L)$ converges in 
law in $\mathcal{C}_{[0,T]}\times \mathcal{C}_{[0,T]}\times 
\mathcal{C}^\gamma_{[0,T]}(\mathcal{B}^{\xi}_{\tilde{p}})$ to some $(X,\tilde{B},\tilde{L})$. By Skorokhod's representation Theorem, there exists a sequence $(Y^n,B^n,L^{B^n})_{n \in \mathbb{N}}$ with $\text{Law}(Y^n,B^n,L^{B^n})=\text{Law}(X^n,B,L)$ for all $n \in \mathbb{N}$, such that $(Y^n,B^n,L^{B^n})_{n \in \mathbb{N}}$ converges a.s. to some $(Y,\hat{B},\hat{L})$. To get 
that $\hat{L}$ is the local time of $\hat{B}$, observe that for any bounded measurable 
function $f$,
\begin{align} \label{eq:law2}
1=\mathbb{P}\Big(\Big\{\omega:\forall t \in [0,T]&,\,\int_0^t f(B_r) dr=\int_{\mathbb{R}} f(x) L_t(x) dx \Big\}\Big)\nonumber\\
&=\hat{\mathbb{P}}\Big(\Big\{\omega:\forall t \in [0,T],\, \int_0^t f(\hat{B}_r) dr= \int_{\mathbb{R}} f(x) \hat{L}_t(x) dx \Big\}\Big).
\end{align}
As the local time of a fBm is characterised by the occupation time formula, we deduce that $\hat{L}$ is  the local time of $\hat{B}$.
By \eqref{psi}, we have that for $T^B b(x)=\langle b, L^{B}(\cdot-x) \rangle$,
\begin{align*}
\left|X_t^n -X_{0}-\int_0^t T^{B}_{dr}b(X_r^n)\right| \limbashaut{\longrightarrow}{n \rightarrow 
\infty}{\text{a.s.}} 0.
\end{align*}
Hence for $T^{B^n} b(x)=\langle b, L^{B^n}(\cdot-x) \rangle$,
\begin{align*}
\left|Y_t^n -X_{0}-\int_0^t T^{B^n}_{dr}b(Y_r^n)\right| 
\limbashaut{\longrightarrow}{n\rightarrow \infty}{\mathbb{P}}0,
\end{align*}
and therefore a.s. along a subsequence, which we do not relabel. 
Hence, using that $\hat{L}\in \mathcal{C}^\gamma_{[0,T]}(\mathcal{B}^{\xi}_{\tilde{p}})$, we get by Remark~\ref{rem:regTL} and Lemma~\ref{intineq} that for 
$T^{\hat{B}}b(x)=\langle b,\hat{L}(\cdot-x)\rangle$,
\begin{align*}
Y_t=X_0+\int_0^t T^{\hat{B}}_{dr}b(Y_r),\ \text{for all } t \in [0,T] ~\text{ a.s.}
\end{align*}

In order for $Y$ to be a weak solution, it remains to show that $Y$ is adapted to a filtration $\hat{\mathbb{F}}$ such that $\hat{B}$ is an $\hat{\mathbb{F}}$-fBm. First note that by construction $X^n$ is adapted to $\mathbb{F}^B$. Hence, $Y^n$ is $\mathbb{F}^{B^n}$ measurable as
\begin{equation*}
\text{Law}(Y^n,B^n,L^{B^n})=\text{Law}(X^n,B,L). 
\end{equation*}
Therefore $B^n$ is an $\hat{\mathbb{F}}^n$-fBm for $\hat{\mathcal{F}}_t^n:=\sigma(Y^n_s,B^n_s,s \in [0,t])$. By definition this implies that, for $s<t$, $W^n_t-W^n_s=\mathcal{A}(B^n)_t-\mathcal{A}(B^n)_s$ is independent of $\hat{\mathcal{F}}^n_s$. After passing to the limit and using Lemma~\ref{lem:operatorcontinuity} we infer that $W_t-W_s=\mathcal{A}(\hat{B})_t-\mathcal{A}(\hat{B})_s$ is independent of $\hat{\mathcal{F}}_s:=\sigma(Y_r,\hat{B}_r, r \in [0,s])$.
Hence, $\hat{B}=\bar{\mathcal{A}}(W)$ is an $\hat{\mathbb{F}}$-fBm and therefore $(Y,\hat{B})$ is a weak solution as $Y$ is clearly $\hat{\mathbb{F}}$-adapted.

Proof of \ref{en:weak2}: \red{this proof will be presented at the end of Section~\ref{sec:RegWeak} as it follows from the stochastic sewing arguments developed in the next section.}
\end{proof}

\begin{proof}[Proof of Corollary~\ref{cor:existence}]
Any \red{finite measure} lies in $\mathcal{B}_1^0$ by similar computations as in 
Proposition 2.39 in \cite{BaDaCh}. Hence, if $b$ is a measure, we can choose $\beta=0$ in Theorem~\ref{prop:existence}. \red{For $H<1/3$ condition \ref{ex:onebis} therein is clearly fulfilled and condition \ref{ex:twobis} is fulfilled if $H^2+2H-1<0$, giving existence of a weak solution for $H<\sqrt{2}-1$.}
\end{proof}

\section{Regularity of weak solutions}\label{sec:RegWeak}

We first state Lemma~\ref{lem:Cs} which establishes various regularity estimates on the conditional expectation of fractional Brownian motion. 
It is an extension to the fBm of  
\cite[Lemma C.4]{Atetal} (which was for standard Brownian motion). It is used several times in the remainder of the paper and its proof is postponed to 
Appendix \ref{app:LND}. In particular, the proof of Lemma~\ref{lem:Cs}\ref{(C.9)} relies on a \red{variant of} local nondeterminism of the fBm, see Lemma~\ref{condvar2}. Note that Lemma~\ref{lem:Cs}\ref{(C.8)} was already stated and proven in Proposition 3.6(iii) of \cite{ButkovskyEtAl}.

\begin{lemma} \label{lem:Cs}
Let $(\Omega,\mathcal{F},\mathbb{F},\mathbb{P})$ be a filtered probability space and $B$ be an $\mathbb{F}$-fBm. 
Let $\gamma<0$ and $p \in [1,\infty]$. Let $d \in \mathbb{N}$, $(t_1,t_2) \in \Delta_{[0,T]}$ and $f:\R\times \R^d\to \R$ be a bounded measurable function and $\Xi$ be an $\mathcal{F}_{t_{1}}$-measurable $\R^d$-valued random variable. Assume that $\|f(\cdot,\Xi)\|_{\mathcal{C}^1}<\infty$ almost surely.  
Then there exists a constant $C>0$ such that
\begin{enumerate}[label=(\alph*)] 
\item $\EE^{t_1}[f(B_{t_2},\Xi)]= \int_{\R} g_{\sigma_{{t_1},{t_2}}^2}(x)\, f(\EE^{t_1}[B_{t_2}]-x,\Xi)\, dx$, also written $\EE^{t_1}[f(B_{t_2},\Xi)]=G_{\sigma_{{t_1},{t_2}}^2}f(\EE^{t_1}[B_{t_2}],\Xi)$, where $g$ is the Gaussian density and $G$ is the Gaussian semigroup introduced in \eqref{Gaussiansemigroup} and $\sigma_{{t_1},{t_2}}^2:=\var{(B_{t_2}-\EE^{t_1}[B_{t_2}])}$; 
\label{(C.8)}
\item $|\EE^{t_1}[f(B_{t_2},\Xi)]|\leqslant C \|f(\cdot,\Xi)\|_{\mathcal{B}_p^\gamma} ({t_2}-t_1)^{H(\gamma-1/p)}$;\label{(C.10)}
\item $\|f(B_{t_2},\Xi)-\EE^{t_1}[f(B_{t_2},\Xi)]\|_{L^1}\leqslant C \|\|f(\cdot, \Xi)\|_{\mathcal{C}^1}\|_{L^2} ({t_2}-{t_1})^H.$ \label{(C.11)}
\end{enumerate}
\red{Furthermore, for $n \in [1,p]$, there exists a constant $C>0$ such that for any $\tilde{t}$
in the interval $(t_1,t_2)$,}
\begin{enumerate}[resume, label=(\alph*)]
\item $\|\EE^{\tilde{t}}[f(B_{t_2},\Xi)]\|_{L^n}\leqslant C  \|\|f(\cdot,\Xi)\|_{\mathcal{B}_p^\gamma}\|_{L^n}(t_2-\tilde{t})^{H\gamma}(\tilde{t}-t_1)^{-\tfrac{1}{2p}}(t_2-t_1)^{\tfrac{1-2H}{2p}}$. \label{(C.9)}
\end{enumerate}
\end{lemma}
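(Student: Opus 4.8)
The backbone of the argument is the Volterra representation of $B$ with respect to the driving Brownian motion. Since $B$ is an $\mathbb{F}$-fBm, $W=\mathcal{A}(B)$ is an $\mathbb{F}$-Brownian motion and $B=\bar{\mathcal{A}}(W)$, so that $B_t=\int_0^t K(t,s)\,dW_s$ for the Molchan--Golosov kernel $K$ (see \eqref{operatortildeA}--\eqref{operatorA} and \cite[Th.~11]{Picard}). Hence, for $0\leqslant u<v\leqslant T$, the random variable $B_v-\EE^u[B_v]=\int_u^v K(v,s)\,dW_s$ is centred Gaussian, \emph{independent of $\mathcal{F}_u$} (as $W$ is an $\mathbb{F}$-Brownian motion), with variance $\sigma_{u,v}^2=\int_u^v K(v,s)^2\,ds$; moreover the two-sided local nondeterminism of the fBm gives $c\,(v-u)^{2H}\leqslant\sigma_{u,v}^2\leqslant C\,(v-u)^{2H}$ (Appendix~\ref{app:LND}). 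The plan is to combine this conditional Gaussian structure with the regularising bounds for the Gaussian semigroup $G$ (Lemma~\ref{lem:heatkernel} and Appendix~\ref{app:Besov}).

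\textbf{Items \ref{(C.8)}--\ref{(C.11)}.} Item~\ref{(C.8)} is a restatement of the above: conditionally on $\mathcal{F}_{t_1}$ we have $B_{t_2}=\EE^{t_1}[B_{t_2}]+N$ with $N\sim\mathcal{N}(0,\sigma_{t_1,t_2}^2)$ independent of $\mathcal{F}_{t_1}$, and since $\Xi$ is $\mathcal{F}_{t_1}$-measurable, $\EE^{t_1}[f(B_{t_2},\Xi)]=\int_{\R}g_{\sigma_{t_1,t_2}^2}(x)\,f(\EE^{t_1}[B_{t_2}]-x,\Xi)\,dx$ after using the symmetry of the Gaussian density (alternatively one cites \cite[Prop.~3.6(iii)]{ButkovskyEtAl}). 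For~\ref{(C.10)}, item~\ref{(C.8)} rewrites the left-hand side as $\big(G_{\sigma_{t_1,t_2}^2}f(\cdot,\Xi)\big)(\EE^{t_1}[B_{t_2}])$, hence it is at most $\|G_{\sigma_{t_1,t_2}^2}f(\cdot,\Xi)\|_\infty\leqslant C(\sigma_{t_1,t_2}^2)^{(\gamma-1/p)/2}\|f(\cdot,\Xi)\|_{\mathcal{B}_p^\gamma}$ by the $\mathcal{B}_p^\gamma\to L^\infty$ smoothing estimate, and $(\sigma_{t_1,t_2}^2)^{(\gamma-1/p)/2}\leqslant C(t_2-t_1)^{H(\gamma-1/p)}$ because $\gamma-1/p<0$ and $\sigma_{t_1,t_2}^2\geqslant c(t_2-t_1)^{2H}$. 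For~\ref{(C.11)}, I would write $f(B_{t_2},\Xi)-\EE^{t_1}[f(B_{t_2},\Xi)]$ as $\big(f(B_{t_2},\Xi)-f(\EE^{t_1}[B_{t_2}],\Xi)\big)-\EE^{t_1}\big[f(B_{t_2},\Xi)-f(\EE^{t_1}[B_{t_2}],\Xi)\big]$ (legitimate since $f(\EE^{t_1}[B_{t_2}],\Xi)$ is $\mathcal{F}_{t_1}$-measurable), take $L^1$-norms, use that $\EE^{t_1}$ is an $L^1$-contraction, bound $|f(B_{t_2},\Xi)-f(\EE^{t_1}[B_{t_2}],\Xi)|\leqslant\|f(\cdot,\Xi)\|_{\mathcal{C}^1}\,|N|$, and conclude by Cauchy--Schwarz with $\|N\|_{L^2}=\sigma_{t_1,t_2}\leqslant C(t_2-t_1)^H$ (independence of $N$ and $\mathcal{F}_{t_1}$ makes the cross term harmless).

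\textbf{Item \ref{(C.9)}.} This is the main point. Since $t_1<\tilde t$, applying~\ref{(C.8)} at time $\tilde t$ gives $\EE^{\tilde t}[f(B_{t_2},\Xi)]=\big(G_{\sigma_{\tilde t,t_2}^2}f(\cdot,\Xi)\big)(\EE^{\tilde t}[B_{t_2}])=:h(\EE^{\tilde t}[B_{t_2}])$. Then I would condition on $\mathcal{F}_{t_1}$: the variable $\Delta:=\EE^{\tilde t}[B_{t_2}]-\EE^{t_1}[B_{t_2}]=\int_{t_1}^{\tilde t}K(t_2,s)\,dW_s$ is $\mathcal{N}(0,\tau^2)$ with $\tau^2:=\int_{t_1}^{\tilde t}K(t_2,s)^2\,ds$, and is independent of $\mathcal{F}_{t_1}$, so
\begin{equation*}
\EE^{t_1}\big[\,|\EE^{\tilde t}[f(B_{t_2},\Xi)]|^{\,n}\,\big]=\big(g_{\tau^2}\ast|h|^{\,n}\big)\big(\EE^{t_1}[B_{t_2}]\big).
\end{equation*}
Applying Hölder's inequality with conjugate exponents $\big(\tfrac{p}{p-n},\tfrac{p}{n}\big)$ (read as $(\infty,1)$ when $n=p$) together with $\|g_{\tau^2}\|_{L^{p/(p-n)}}=c\,\tau^{-n/p}$ gives $\EE^{t_1}[|\EE^{\tilde t}[f(B_{t_2},\Xi)]|^n]\leqslant c\,\tau^{-n/p}\|h\|_{L^p}^{\,n}$, while the $\mathcal{B}_p^\gamma\to L^p$ smoothing estimate gives $\|h\|_{L^p}\leqslant C(\sigma_{\tilde t,t_2}^2)^{\gamma/2}\|f(\cdot,\Xi)\|_{\mathcal{B}_p^\gamma}$. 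Taking expectations and $n$-th roots (the prefactors being deterministic) yields
\begin{equation*}
\|\EE^{\tilde t}[f(B_{t_2},\Xi)]\|_{L^n}\leqslant C\,\tau^{-1/p}(\sigma_{\tilde t,t_2}^2)^{\gamma/2}\,\big\|\,\|f(\cdot,\Xi)\|_{\mathcal{B}_p^\gamma}\,\big\|_{L^n}.
\end{equation*}
Since $\gamma<0$, $(\sigma_{\tilde t,t_2}^2)^{\gamma/2}\leqslant C(t_2-\tilde t)^{H\gamma}$, and it remains to prove $\tau^{-1/p}\leqslant C(\tilde t-t_1)^{-1/(2p)}(t_2-t_1)^{(1-2H)/(2p)}$, i.e. the lower bound $\tau^2=\var\big(\EE^{\tilde t}[B_{t_2}]-\EE^{t_1}[B_{t_2}]\big)\geqslant c\,(\tilde t-t_1)(t_2-t_1)^{2H-1}$.

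\textbf{The main obstacle} is precisely this last lower bound on $\tau^2$. It is not a consequence of the usual two-sided local nondeterminism (which only controls $\sigma_{u,v}^2$); one has to exploit the explicit behaviour of the Volterra kernel $K(t_2,\cdot)$ on $[t_1,\tilde t]$ — its singularity on the diagonal and its decay near the origin — which is exactly the content of the variant of local nondeterminism proved in Lemma~\ref{condvar2} (note that for $H=1/2$ the Markov property reduces it to $\tau^2=\tilde t-t_1$). Granted that lemma, all four estimates follow from the elementary manipulations above and the Gaussian-semigroup bounds of Lemma~\ref{lem:heatkernel}.
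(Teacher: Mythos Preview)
Your proposal is correct and follows essentially the same approach as the paper: parts \ref{(C.8)}, \ref{(C.10)} and \ref{(C.9)} are handled identically (conditional Gaussian structure via the Volterra representation, the $\mathcal{B}_p^\gamma\to L^p$ and $\mathcal{B}_p^\gamma\to L^\infty$ smoothing bounds for $G_t$, H\"older on the Gaussian density, and the key lower bound on $\var(\EE^{\tilde t}[B_{t_2}]-\EE^{t_1}[B_{t_2}])$ from Lemma~\ref{condvar2}). The only difference is in \ref{(C.11)}: the paper inserts $f(\EE^{t_1}[B_{t_2}],\Xi)$ as an intermediate term and controls the piece $|f(\EE^{t_1}[B_{t_2}],\Xi)-G_{\sigma^2_{t_1,t_2}}f(\EE^{t_1}[B_{t_2}],\Xi)|$ via the pointwise heat-kernel bound $|\phi(x)-G_t\phi(x)|\leqslant C\|\phi\|_{\mathcal{C}^1}\sqrt{t}$, whereas you write the quantity as $Y-\EE^{t_1}[Y]$ with $Y=f(B_{t_2},\Xi)-f(\EE^{t_1}[B_{t_2}],\Xi)$ and invoke the $L^1$-contraction of conditional expectation to reduce both halves to the same Lipschitz estimate --- a slightly cleaner variant that bypasses the explicit heat-kernel estimate.
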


\begin{remark}\label{rem:betap}
In  this section we assume that $b \in \mathcal{B}^\beta_p$ for $p \in 
[1,\infty]$ and $\beta \in \mathbb{R}$ with $\beta-1/p>-1/(2H)$. Note that this condition 
allows negative values of $\beta$ for any $H<\frac{1}{2}$. In the proofs of this section, we 
 consider the case $\beta<0$ and $p\geqslant m$ for some $m \geqslant 2$. Indeed, it is 
always possible to come back to these cases in the following way:
If $\beta\geqslant 0$, $p \in [1,\infty]$ and $m\geqslant 2$, there exist $\tilde{\beta}<0$, $\tilde{p}\geqslant m$ fulfilling $\tilde{\beta}-1/{\tilde{p}}>-1/(2H)$ such that $\mathcal{B}^{\beta}_{p} \hookrightarrow \mathcal{B}^{\tilde{\beta}}_{\tilde{p}}$. This can be seen using the embeddings $\mathcal{B}^{\beta}_{p} \hookrightarrow \mathcal{B}^{\beta-(\frac{1}{p}-\frac{1}{\tilde{p}})}_{\tilde{p}}$ (see Remark \ref{embedding2}) and $\mathcal{B}^{s}_p \hookrightarrow \mathcal{B}^{\tilde{s}}_p$ for $s>\tilde{s}$.
\end{remark}

The following proposition ensures smoothness of \(X - B\) for any solution \(X\) of 
\eqref{eq:skew}.

\begin{proposition} \label{prop:regularityALT}
Let $\beta \in \mathbb{R}$, $p \in [1,\infty]$ with $0>\beta-1/p> -1/(2H)$. 
Suppose that $b \in \mathcal{B}_p^\beta$ is \red{a measure}. Then every 
weak solution $X$ to \eqref{eq:skew} fulfills $X-B \in \mathcal{C}^{1+H(\beta-1/p)}_{[0,T]}(L^m)$, for any $m\geqslant 2$.
\end{proposition}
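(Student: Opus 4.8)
The plan is to show that for any weak solution $X = X_0 + K + B$, the drift part $K_t = X_t - X_0 - B_t$ has the claimed H\"older regularity in $L^m$ by applying the stochastic sewing lemma to the germ $A_{s,t} := \EE^s\bigl[\int_s^t b(X_r)\,dr\bigr]$, or rather to a mollified version of it. Since $b$ is a measure in $\mathcal{B}^\beta_p$ with $\beta - 1/p > -1/(2H)$, by Remark \ref{rem:betap} we may assume $\beta < 0$ and $p \geqslant m$. First I would work with smooth bounded approximations $(b^n)$ of $b$ in $\mathcal{B}^{\beta-}_p$, so that $K_t = \lim_n \int_0^t b^n(X_r)\,dr$ (the convergence holding a.s.\ or in probability by the definition of weak solution), and establish the H\"older bound uniformly in $n$ for $K^n_t := \int_0^t b^n(X_r)\,dr$, then pass to the limit. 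Write $K^n_{s,t} = \int_s^t b^n(X_r)\,dr$; the goal is $\|K^n_{s,t}\|_{L^m} \lesssim |t-s|^{1+H(\beta - 1/p)}$ uniformly in $n$.

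The core estimate comes from Lemma \ref{lem:Cs}. For the conditional expectation, one writes, for $s \leqslant r$, $X_r = (X_r - B_r) + B_r$ where the first term is (after conditioning on $\mathcal{F}_s$ and using the a priori regularity of $X - B$) close to its value at $s$; more precisely one applies Lemma \ref{lem:Cs}\ref{(C.10)} with $\Xi$ the $\mathcal{F}_s$-measurable random variable $X_s - B_s$ and $f(y,\xi) = b^n(\xi + y)$, but to handle the discrepancy between $X_r - B_r$ and $X_s - B_s$ one needs the a priori bound $X - B \in \mathcal{C}^{\alpha}_{[0,T]}(L^{m'})$ for some $\alpha > 0$ — this comes from Theorem \ref{prop:existence}\ref{en:weak2}, or more carefully one runs a bootstrap: any weak solution satisfies \emph{some} positive H\"older regularity (e.g.\ by the construction of Theorem \ref{prop:existence}, or because $K^n$ has at least the regularity of an integral of a bounded function would be too weak, so one starts from the worst a priori exponent and iterates). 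The sewing setup is: define the germ $A^n_{s,t} = \EE^s[K^n_{s,t}]$; then $\delta A^n_{s,u,t} = \EE^s[K^n_{u,t}] - \EE^u[K^n_{u,t}]$, which by Lemma \ref{lem:Cs}\ref{(C.9)} (applied with intermediate time) and Lemma \ref{lem:Cs}\ref{(C.11)} is controlled by $|t-s|^{1/2 + H\beta - H/p + \varepsilon}$ with exponent strictly bigger than $1/2$, and $\|A^n_{s,t}\|_{L^m} \lesssim |t-s|^{1 + H(\beta - 1/p)}$ by Lemma \ref{lem:Cs}\ref{(C.10)}; the stochastic sewing lemma then yields that $K^n$ (which equals the sewing of $A^n$ by a telescoping/Riemann-sum argument) inherits the bound $\|K^n_{s,t}\|_{L^m} \lesssim |t-s|^{1 + H(\beta - 1/p)}$, uniformly in $n$.

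Then I would pass to the limit in $n$: since $K^n_{s,t} \to K_{s,t}$ in probability (uniformly in $t$) and the $L^m$ bounds are uniform, Fatou's lemma gives $\|K_{s,t}\|_{L^m} \leqslant C|t-s|^{1+H(\beta-1/p)}$, i.e.\ $X - B = X_0 + K \in \mathcal{C}^{1+H(\beta - 1/p)}_{[0,T]}(L^m)$.

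The main obstacle I anticipate is twofold. First, the a priori regularity needed to control $X_r - B_r \approx X_s - B_s$ inside the conditional expectation: one cannot assume the target regularity $1 + H(\beta - 1/p)$ from the start, so a bootstrap argument is required — start with a crude exponent $\kappa_0 > 0$ (available because $K$ is continuous, or from a first rough sewing estimate treating $b$ as slightly more regular), then each application of the sewing lemma upgrades $\kappa_j$ to $\kappa_{j+1} = \min\{1 + H(\beta-1/p),\ \kappa_j + H\beta + \text{something positive}\}$ until it saturates at $1 + H(\beta - 1/p)$. Making this iteration converge and checking the exponent bookkeeping (in particular that the sewing condition "exponent of $\delta A > 1/2$" holds at each stage, which is where $\beta - 1/p > -1/(2H)$ is used) is the delicate part. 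Second, the non-Markovianity of $B$ forces the use of the variant of local nondeterminism (Lemma \ref{condvar2}, via Lemma \ref{lem:Cs}\ref{(C.9)}) rather than a naive Markov-semigroup computation when estimating $\EE^u$ of a quantity that is $\mathcal{F}_s$-conditionally (but not $\mathcal{F}_u$-conditionally) nice; one must be careful that the conditioning times are ordered correctly and that the Gaussian variances appearing are the conditional ones $\sigma^2_{u,t}$, for which the required lower bounds are exactly the content of the local nondeterminism estimate.
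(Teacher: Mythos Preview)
Your proposal has a genuine gap: the bootstrap you describe cannot be initialized, and in fact the iteration strategy is \emph{not} how the paper proceeds. The difficulty you flag yourself---that one needs some a priori H\"older exponent $\kappa_0>0$ for $K=X-B$ to start---is fatal here. For an arbitrary weak solution with distributional drift there is no such exponent available: $K$ is merely continuous, and while $K^n_t=\int_0^t b^n(X_r)\,dr$ is Lipschitz for each $n$, its Lipschitz constant is $\|b^n\|_\infty$, which blows up as $n\to\infty$. Note also that $K^n$ is not the drift of any SDE with smooth coefficient (it is $\int b^n(B_r+K_r)\,dr$ with $K$, not $K^n$, inside), so the self-consistency argument of Lemma~\ref{regularity3.2} does not apply either. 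That lemma, which is the precise implementation of the bootstrap you sketch, is only used in the paper under the \emph{stronger} hypothesis $\beta-1/p>1/2-1/(2H)$ and for strong solutions with smooth drift.

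The key idea you are missing is the role of the \emph{measure} hypothesis on $b$. Because $b\geqslant0$, one may choose the approximations $b^k\geqslant0$, so each $K^k$ is nondecreasing and hence so is the limit $K$. Then $(s,t)\mapsto K_t-K_s$ is a (random) control function. The paper takes the germ $A^k_{s,t}:=\int_s^t b^k(B_r+K_s)\,dr$ (freezing $K$ at time $s$, no conditional expectation) and shows the \emph{pathwise} bound
\[
\big|\EE^u[\delta A^k_{s,u,t}]\big|\leqslant C\,\|b\|_{\mathcal{B}^\beta_p}\,|K_u-K_s|\,(t-u)^{1+H(\beta-1/p-1)},
\]
together with $\|A^k_{s,t}\|_{L^m}\leqslant C\|b\|_{\mathcal{B}^\beta_p}(t-s)^{1+H(\beta-1/p)}$ from Lemma~\ref{regulINT}. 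The stochastic sewing lemma \emph{with random controls} (Lemma~\ref{lem:stsrandomcontrols}) then yields
\[
\|K^k_{s,t}\|_{L^m}\leqslant C\,\|K_{s,t}\|_{L^m}\,(t-s)^{1+H(\beta-1/p-1)}+C\,(t-s)^{1+H(\beta-1/p)},
\]
and after letting $k\to\infty$ one absorbs the first term on the right for small $t-s$. No bootstrap and no a priori regularity of $K$ are needed; the monotonicity of $K$ replaces both. Your choice of germ $A^n_{s,t}=\EE^s[K^n_{s,t}]$ would give $\EE^s[\delta A^n]=0$ for free, but controlling $\|\delta A^n_{s,u,t}\|_{L^m}$ then requires exactly the a priori $[K]_{\mathcal{C}^\alpha(L^m)}$ bound you do not have.
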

Thus if \(X\) is a solution to \eqref{eq:skew} with $b$ \red{a finite measure}, we 
deduce that  $X-B \in 
\mathcal{C}^{1-H}_{[0,T]}(L^m)$, for any $m\geqslant 2$.
It will imply Theorem~\ref{prop:existence}\ref{en:weak2} and  is the main step in the proof of 
Theorem~\ref{thm:uniqueness}\ref{uniqueness(2)}.
\begin{proof}[Proof of Proposition~\ref{prop:regularityALT}]
Let $X$ be a weak solution to \eqref{eq:skew} and $m\geqslant 2$. 
W.l.o.g. we assume that \(X_0 = 0\). We choose a 
sequence $(b^k)_{k \in \mathbb{N}}$ of smooth nonnegative bounded functions 
converging to $b$ in $\mathcal{B}_p^{\beta-}$ with 
$\|b^k\|_{\mathcal{B}^\beta_p}\leqslant\|b\|_{\mathcal{B}^\beta_p}$ for all $k$. For $k 
\in \mathbb{N}$ and $t \in [0,T]$, let 
\begin{equation*}
    K_t^k:=\int_0^t b^k(X_r) \, dr =\int_0^t b^k(B_r+K_r) \, dr.
\end{equation*}

By Definition~\ref{def:solution} of a weak solution, we know that $K^k$ converges in 
probability to $K$ with respect to $\|\cdot\|_\infty$ on $[0,T]$. Hence it also converges 
almost surely, up to passing to a subsequence (without loss of generality, 
we do not relabel $K^k$ and assume it converges a.s.). By nonnegativity of $b^k$, $K^k$ is 
monotone for all $k$, and therefore $K$ is monotone as well.
For any \((s,t)\in\Delta_{[0,T]}\), let
\begin{align*}
   A_{s,t}^k&:=\int_s^t b^k(B_r+K_s)\, dr.
\end{align*}

We now want to apply the stochastic sewing Lemma with random controls (see Lemma~\ref{sts:randomcontrols}) for $K^k=\mathcal{A}^k$. In order to see that all conditions of this theorem are fulfilled (i.e. 
\eqref{sts2} and \eqref{sts:randomcontrols}), 
we will show the following for a constant $C$ that is independent of $k$ and for any \(u\in (s,t)\),
\begin{enumerate} [label=(\roman*)]
\item \label{enum:1ALT} $\|A_{s,t}^k\|_{L^m}\leqslant C \|b\|_{\mathcal{B}_p^\beta}(t-s)^{1+H(\beta-1/p)}$;

\item \label{enum:3ALT} $|\EE^u[\delta A_{s,u,t}^k]|\leqslant C \|b\|_{\mathcal{B}^\beta_p} |K_u-K_s|(t-u)^{H(\beta-1/p-1)+1}$;

\item \label{enum:2ALT} %
\red{Identifying $K^k$ with the limit of $\sum_{i=0}^{N_n-1} A^k_{t^n_i,t^n_{i+1}}$ along any sequence of partitions $\Pi_n=\{t_i^n\}_{i=0}^{N_n}$ of $[0,t]$ 
with mesh converging to $0$.}
\end{enumerate}
Notice that \ref{enum:1ALT} gives \eqref{sts2} for $n=m$
and that \( 1 + H (\beta - 1/p) > 1/2\).
 Furthermore, \ref{enum:3ALT} 
gives condition \eqref{sts:randomcontrols} for $\alpha_1=H(\beta-1/p-1)+1>0$, 
$\beta_1=1$ and $\lambda(s,t):=K_t-K_s$, which is a random control function by 
monotonicity of $K$.

Assume, for the moment, that \ref{enum:1ALT}-\ref{enum:3ALT}-\ref{enum:2ALT} hold true. Then by Lemma~\ref{lem:stsrandomcontrols}, there exists a process $D^k$ such that
\begin{align}\label{eq:KtnALT}
    |K_t^k-K_s^k - A^k_{s,t}|\leqslant C\|b\|_{\mathcal{B}_p^\beta} (K_t-K_s) (t-s)^{H(\beta-1/p-1)+1} + D_{s,t}^k,
\end{align}
with $\|D^k_{s,t}\|_{L^m} \leqslant C \|b\|_{\mathcal{B}_p^\beta} (t-s)^{1+H(\beta-1/p)}$.
Hence, by \ref{enum:1ALT},
\begin{align*}
\|K^k_{t}-K^k_{s}\|_{L^m} \leqslant C\|b\|_{\mathcal{B}_p^\beta} \|K_t-K_s\|_{L^m} (t-s)^{{H(\beta-1/p-1)+1}} + C \|b\|_{\mathcal{B}^\beta_p} (t-s)^{1+H(\beta-1/p)}.
\end{align*}
Hence, after letting 
$k$ go to $\infty$ in the previous inequality, we obtain that for $(s,t) \in \Delta_{[0,T]}$ such that 
$C\|b\|_{\mathcal{B}_p^\beta} (t-s)^{H(\beta-1/p-1)+1}<1/2$,
\[
    \|K_t-K_s\|_{L^m}\leqslant C\|b\|_{\mathcal{B}_p^\beta} (t-s)^{1+H(\beta-1/p)}.
\]
After covering $[0,T]$ with a finite number of small enough intervals, we obtain the result.

Let us now verify \ref{enum:1ALT}-\ref{enum:3ALT}-\ref{enum:2ALT}.
 
Proof of \ref{enum:1ALT}: By Lemma~\ref{regulINT} applied to $\Xi=K_{s}$,  $f(z,x) = 
b^k(z+x)$ and $n=m$, we have 
\begin{align*}
 \|A_{s,t}^k\|_{L^m}&\leqslant C \big\| \|b^k(\cdot+K_{s})\|_{\mathcal{B}_p^{\beta}} \big\|_{L^m} (t-s)^{1+H(\beta-1/p)} .
\end{align*}
Using Lemma~\ref{A.2}\ref{A.4}, we thus get
\begin{align} \label{regulAnALT}
    \|A_{s,t}^k\|_{L^m}&\leqslant C \|b^k\|_{\mathcal{B}_p^{\beta}} (t-s)^{1+H(\beta-1/p)} \nonumber\\
    &\leqslant C \|b\|_{\mathcal{B}_p^\beta}(t-s)^{1+H(\beta-1/p)}.
\end{align}

Proof of \ref{enum:3ALT}:
 	By 
Lemma~\ref{lem:Cs}\ref{(C.8)} applied to $\Xi = (K_{s},K_{u})$ 
	and $f(z,(x_{1},x_{2}))=b^k(z+x_{1}) - b^k(z+x_{2})$, we get
\begin{align*}
    |\EE^u[\delta A_{s,u,t}^k]|= \Big| \int_u^t \EE^u [b^k(B_r+K_s)-b^k(B_r+K_u)] dr\Big|  &\leqslant \int_u^t \|G_{\sigma^2_{u,r}}b^k\|_{\mathcal{C}^1}|K_u-K_s| dr,
   \end{align*}
where we recall $\sigma^2_{u,r}= \var(B_r-\EE^u[B_r])$. From \eqref{LND}, we have that 
$\sigma^2_{u,r}= C(u-r)^{2H}$.
Apply now Lemma~\ref{A.3}\ref{A.3.4}, which is possible as $\beta-1/p<0$, and then use that 
$\beta-1/p>-1/2H$
 to ensure integrability. This gives
\begin{align*}
    |\EE^u[\delta A_{s,u,t}^k]|
    &\leqslant C \int_u^t |r-u|^{H(\beta-1/p-1)} \|b^k\|_{\mathcal{B}^\beta_p} |K_u-K_s|dr \\
    &\leqslant C \|b\|_{\mathcal{B}^\beta_p} |K_u-K_s|(t-u)^{H(\beta-1/p-1)+1}.
\end{align*}

Proof of \ref{enum:2ALT}: For a sequence $\Pi_n=\{t_i^n\}_{i=0}^{N_n}$ of partitions of 
$[0,t]$ with mesh size going to $0$, we have
\begin{align*}
    |K_t^k - \sum_{i=0}^{N_{n}-1} A_{t_i^n,t^n_{i+1}}^k| &\leqslant \sum_i \int_{t^n_i}^{t^n_{i+1}} |b^k(B_r+K_r)-b^k(B_r+K_{t_i^n})|dr\\
    &\leqslant \sum_i \int_{t^n_i}^{t^n_{i+1}} \|b^k\|_{\mathcal{C}^1} |K_r-K_{t^n_i}| dr \\
    &\leqslant \sum_i \|b^k\|_{\mathcal{C}^1} (t^n_{i+1}-t^n_i) |K_{t^n_{i+1}}-K_{t^n_i}|\\
    &\leqslant \|b^k\|_{\mathcal{C}^1} |\Pi_n| (K_t-K_0)
    \limbashaut{\longrightarrow}{n\rightarrow \infty}{\text{a.s.}} 0.
\end{align*}
\end{proof}

\red{We conclude this section with the proof of Theorem~\ref{prop:existence}\ref{en:weak2}.}
\begin{proof}[Proof of Theorem~\ref{prop:existence}\ref{en:weak2}]
It is a direct consequence of Proposition~\ref{prop:regularityALT}.
	More precisely, in order to apply Proposition~\ref{prop:regularityALT}, we check that
	 $\beta-1/p>-1/(2H)$ holds in all cases \ref{ex:onebis}-\ref{ex:fourbis}. 
	 In case \ref{ex:onebis} holds, the assumption $\beta-1>-1/(2H)+H/2$ and
	 \(p\geqslant 1\) yields $\beta-1/p>-1/(2H)$.  In case \ref{ex:twobis} holds, then 
	 $\beta-1/p>2H-2$ and $2H-2>-1/(2H)$ for $H<1/3$. %
	Finally, in case \ref{ex:fourbis} the result 
	 follows from $p \geqslant 1$.
\end{proof}

\section{Uniqueness} \label{uniqueness}

In this section we state and prove Proposition~\ref{unique}, which gives 
	pathwise uniqueness to \eqref{eq:skew} among all weak solutions $X$ fulfilling $X-B \in 
	\mathcal{C}_{[0,T]}^{1/2+H}(L^2)$.
	 This is a crucial step towards the proof of the 
	uniqueness part in Theorem~\ref{thm:uniqueness}. 
Note that we do not assume anymore that $b$ \red{is a measure}.
The scheme of proof of Proposition~\ref{unique}, which is briefly described in 
Section~\ref{subsec:OrgaProof}, 
 is inspired by the proof of Proposition 2.1 in 
\cite{Davie} and closely follows the steps in the proof of Proposition 3.6 in \cite{Atetal}. More 
precisely, we establish that if two solutions $X$ and $Y$ are such that $X-B$ and $Y-B$ are 
$(\frac{1}{2}+H)$-H\"older continuous,
 then their difference $Z$ satisfies inequality 
\eqref{eq:z}, and since any continuous function which satisfies such an inequality must be 
$0$, the uniqueness follows. The most technical part is to establish \eqref{eq:z}. This relies on 
variations of the stochastic sewing Lemma (Sections \ref{proofunique}, \ref{proofs} and 
Appendix \ref{app:sewing}).

\red{Even when it is not explicitly written, we assume in the whole section that $H<1/2$.}

In the rest of the paper we need the following definition. Let $0 \leqslant s \leqslant t$. Let $\psi: [s,t] \times \Omega \rightarrow \mathbb{R}$ 
be a stochastic process. For $\alpha \in (0,1]$ and $m,n \in [1,\infty]$, define
\begin{align}\label{eq:defbracket}
   [\psi]_{\mathcal{C}_{[s,t]}^{\alpha}(L^{m,n})}&:= \sup_{(u,v) \in \Delta_{[s,t]}}\frac{\|\EE^u[|\psi_v-\psi_u|^m]^{\frac{1}{m}}\|_{L^n}}{(v-u)^\alpha}.
\end{align}
where the conditional expectation is taken w.r.t. the filtration the space is equipped with. 
By the tower property and Jensen's inequality for conditional expectation, we know that, for ${1\leqslant m \leqslant n \leqslant \infty}$,
\begin{align} \label{3.4}
   [\psi]_{\mathcal{C}_{[s,t]}^{\alpha}(L^m)}= [\psi]_{\mathcal{C}^{\alpha}_{[s,t]}(L^{m,m})}\leqslant  [\psi]_{\mathcal{C}_{[s,t]}^{\alpha}(L^{m,n})}\leqslant [\psi]_{\mathcal{C}_{[s,t]}^{\alpha}(L^n)}.
\end{align}

\begin{proposition} \label{unique}
Let $H<1/2$. Let $\beta \in \mathbb{R}$, $p \in [1,\infty]$ such that \eqref{eq:assumptionstrong} holds. Let $b\in \mathcal{B}^\beta_p$, and $(X_t)_{t\in [0,T]}$ and $(Y_t)_{t\in 
[0,T]}$ be two weak solutions to \eqref{eq:skew} in the sense of Definition~\ref{def:solution} 
with the same initial condition $X_0$, both being defined on the same probability 
space and adapted to 
the same filtration $\mathbb{F}$. Suppose that $(X-B)$ and $(Y-B)$ are in $\mathcal{C}_{[0,T]}^{1/2+H}(L^2)$. Further assume that
\begin{equation} \label{eq:regXB}
    [X-B]_{\COO{1/2+H}{2}{\infty}{[0,T]}}<\infty \quad a.s.
\end{equation}
Then $X$ and $Y$ are indistinguishable.
\end{proposition}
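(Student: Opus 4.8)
The strategy, following Davie \cite{Davie} and Proposition~3.6 of \cite{Atetal}, is to take two weak solutions $(X,B)$ and $(Y,B)$ on the same filtered space with the same initial datum, to write $X_t=X_0+K^X_t+B_t$ and $Y_t=X_0+K^Y_t+B_t$, and to set $Z:=X-Y$. Since the fBm cancels, $Z=K^X-K^Y=(X-B)-(Y-B)$, so $Z$ is continuous, $\mathbb{F}$-adapted, $Z_0=0$, and by hypothesis $Z\in\mathcal{C}_{[0,T]}^{1/2+H}(L^2)$ as a difference of two such processes, while $X-B$ additionally satisfies the pathwise bound \eqref{eq:regXB}. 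The plan is to establish the Davie-type inequality \eqref{eq:z} for $Z$; a standard covering argument — splitting $[0,T]$ into finitely many short subintervals on each of which \eqref{eq:z} reads ``a H\"older-type seminorm of $Z$ is bounded by a small factor times itself'' — then forces $Z\equiv0$, i.e. $X$ and $Y$ indistinguishable.

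The core of the argument is an estimate on the increment $Z_t-Z_s=K^X_t-K^X_s-(K^Y_t-K^Y_s)$, which by Definition~\ref{def:solution} equals $\lim_n\int_s^t(b^n(X_r)-b^n(Y_r))\,dr=:\int_s^t(b(X_r)-b(Y_r))\,dr$. I would apply the stochastic sewing Lemma — in its critical-exponent form, Theorem~4.5 of \cite{Atetal}, together with the random-control form Lemma~\ref{lem:stsrandomcontrols} — to the germ obtained by freezing the drift parts at the left endpoint,
\[
A_{s,t}:=\int_s^t\big(b(B_r+K^X_s)-b(B_r+K^Y_s)\big)\,dr ,
\]
which makes sense for $b\in\mathcal{B}^\beta_p$ after approximation, using Lemma~\ref{lem:Cs}. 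The first germ bound follows from the translation estimate $\|b(\cdot+a)-b(\cdot+a')\|_{\mathcal{B}^{\beta-1}_{p}}\lesssim|a-a'|\,\|b\|_{\mathcal{B}^\beta_p}$ combined with the regularity Lemma~\ref{regulINT} applied to $\Xi=(K^X_s,K^Y_s)$, giving $\|A_{s,t}\|_{L^m}\lesssim\|b\|_{\mathcal{B}^\beta_p}\,\|Z_s\|_{L^m}\,(t-s)^{1+H(\beta-1-1/p)}$, and the exponent $1+H(\beta-1-1/p)=\tfrac12+H(\beta-1/p)-(H-\tfrac12)$ is $\geqslant\tfrac12$ precisely because $\beta-1/p\geqslant1-\tfrac1{2H}$ (equal to $\tfrac12$ in the critical case $p<\infty$, $\beta-1/p=1-\tfrac1{2H}$, which is why the critical version of sewing is needed). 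The second-order condition concerns $\delta A_{s,u,t}=\int_u^t\big((b(B_r+K^X_s)-b(B_r+K^X_u))-(b(B_r+K^Y_s)-b(B_r+K^Y_u))\big)\,dr$: here one interpolates the translation estimate against the $\mathcal{C}^{1/2+H}$-regularity of $K^X,K^Y,Z$ and, to gain a total time exponent strictly above $1$, one splits $[u,t]$ and invokes the conditional estimate Lemma~\ref{lem:Cs}\ref{(C.9)}, the local-nondeterminism substitute compensating the non-Markovianity of $B$. Stochastic sewing then identifies $Z_t-Z_s$ with $\int_s^t(b(X_r)-b(Y_r))\,dr$ (the comparison with smooth approximations being as in the proof of Proposition~\ref{prop:regularityALT}) and yields $\|Z_t-Z_s\|_{L^2}\lesssim\|b\|_{\mathcal{B}^\beta_p}\,[Z]_{\mathcal{C}_{[s,t]}(L^{2,\infty})}\,(t-s)^{1+H(\beta-1-1/p)}+(\text{lower order})$.

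Dividing by $(t-s)^{1/2+H}$ and taking the supremum over $(s,t)$ inside a subinterval $[s_0,t_0]$ converts this into $[Z]_{\mathcal{C}^{1/2+H}_{[s_0,t_0]}(L^{2,\infty})}\leqslant C(t_0-s_0)^{\varepsilon}\,[Z]_{\mathcal{C}^{1/2+H}_{[s_0,t_0]}(L^{2,\infty})}+(\text{l.o.t.})$ for some $\varepsilon>0$, using $Z_0=0$ to control $\sup_{r\leqslant t_0}\|Z_r\|$ by $t_0^{1/2+H}$ times the seminorm and using \eqref{eq:regXB} for the $L^\infty$-in-$\omega$ bookkeeping; this is the content of \eqref{eq:z}. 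Choosing $t_0-s_0$ small then gives $[Z]_{\mathcal{C}^{1/2+H}_{[s_0,t_0]}(L^{2,\infty})}=0$, hence $Z$ is constant and equal to $Z_0=0$ on $[s_0,t_0]$; covering $[0,T]$ by finitely many such intervals concludes that $X$ and $Y$ are indistinguishable. The main obstacle is the second-order germ estimate: one must show $\|\EE^s[\delta A_{s,u,t}]\|$ has time-exponent strictly larger than $1$ while keeping at most linear dependence on the increments of $K^X$, $K^Y$ and $Z$ that reassemble into a factor of a H\"older norm of $Z$ after summation — and to do so in the \emph{critical} regime $\beta-1/p=1-\tfrac1{2H}$, where the first-order exponent is only $\tfrac12$. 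This is exactly where the sharp conditional-expectation bounds of Lemma~\ref{lem:Cs} (obtained from the adapted local nondeterminism of the fBm) and the critical stochastic sewing Lemma of \cite{Atetal} are indispensable; the non-Markovianity of $B$ — so that $\EE^s$ of future increments is not a plain Gaussian average — is what breaks the naive computation and forces the $\EE^{\tilde t}$-splitting of Lemma~\ref{lem:Cs}\ref{(C.9)}.
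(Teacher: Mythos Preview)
Your setup and the identification of the main tools --- the germ $A_{s,t}=\int_s^t\big(b(B_r+K^X_s)-b(B_r+K^Y_s)\big)\,dr$, Lemma~\ref{regulINT}, Lemma~\ref{lem:Cs}, and the critical-exponent stochastic sewing of \cite{Atetal} --- are all correct and match the paper. But there is a genuine gap in your conclusion mechanism. You claim that dividing by $(t-s)^{1/2+H}$ yields an inequality of the form $[Z]\leqslant C(t_0-s_0)^{\varepsilon}[Z]+(\text{l.o.t.})$ for some $\varepsilon>0$, after which a covering argument finishes. In the critical regime $\beta-1/p=1-\tfrac{1}{2H}$ (which is explicitly allowed by \eqref{eq:assumptionstrong}), the first-order exponent $1+H(\beta-1-1/p)$ is exactly $1/2$, and the $\|\EE^s[\delta A_{s,u,t}]\|$ estimate produces a term of order $\|Z\|_{\infty}(t-s)^1$ with exponent exactly $1$; there is no $\varepsilon>0$ to spare, so the absorption you describe fails. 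You correctly note that this is why the \emph{critical} sewing lemma is needed, but you do not follow through on what critical sewing actually outputs: a \emph{logarithmic} correction.

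What the paper does instead is a two-tier scheme. One introduces the remainder $R_{s,t}:=(K^X_{s,t}-\tilde T^{K^X}_{s,t})-(K^Y_{s,t}-\tilde T^{K^Y}_{s,t})$ where $\tilde T^{K}_{s,t}$ is the $L^2$-limit of $\int_s^t b^n(B_r+K_s)\,dr$ (Lemma~\ref{Hst}), and proves a \emph{coupled} pair of inequalities (Lemma~\ref{Hstzst}) for $\|Z_v-Z_u\|_{L^2}$ and $\|R_{u,v}\|_{L^2}$ in which the critical sewing lemma produces a term $C\|Z\|_{\mathcal{C}_{[u,v]}(L^2)}\big|\log\|Z\|_{\mathcal{C}_{[u,v]}(L^2)}\big|\,(v-u)$. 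Bootstrapping $R$ against itself on short intervals (Lemma~\ref{zst}) then yields the genuine \eqref{eq:z}:
\[
\|Z_t-Z_s\|_{L^2}\leqslant C\|Z\|_{\mathcal{C}_{[s,t]}(L^2)}(t-s)^{1/2}+C\|Z\|_{\mathcal{C}_{[s,t]}(L^2)}\big|\log\|Z\|_{\mathcal{C}_{[s,t]}(L^2)}\big|\,(t-s),
\]
which is \emph{not} an absorption inequality. The conclusion $Z\equiv 0$ is then obtained by an Osgood-type argument \`a la Davie: defining $t_k:=\inf\{t:\|Z_t\|_{L^2}\geqslant 2^{-k}\}$, one deduces from \eqref{eq:z} that $t_k-t_{k+1}\gtrsim 1/k$, and the divergence of the harmonic series gives a contradiction. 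Your ``covering and absorbing'' argument is the standard device for the \emph{subcritical} case $\beta-1/p>1-\tfrac{1}{2H}$, but it does not reach the equality case in \eqref{eq:assumptionstrong}.
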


\begin{remark}\label{rk:betaneg}
In the above proposition, note that condition \eqref{eq:assumptionstrong} allows negative values of $\beta$ for any $H<\frac{1}{2}$. In the proofs in this section, we will thus only consider the case $\beta<0$ and $p\geqslant 2$. Indeed, it is always possible to come back to these cases in the following way:
If $\beta\geqslant 0$ and $p \in [1,\infty]$, there exist $\tilde{\beta}<0$, $\tilde{p}\geqslant 
2$ fulfilling \eqref{eq:assumptionstrong} such that $\mathcal{B}^{\beta}_{p} \hookrightarrow 
\mathcal{B}^{\tilde{\beta}}_{\tilde{p}}$. This can be seen using the embeddings 
$\mathcal{B}^{\beta}_{p} \hookrightarrow 
\mathcal{B}^{\beta-(\frac{1}{p}-\frac{1}{\tilde{p}})}_{\tilde{p}}$ (see Remark 
\ref{embedding2}) and $\mathcal{B}^{s}_p \hookrightarrow \mathcal{B}^{\tilde{s}}_p$ for 
$s>\tilde{s}$.
\end{remark}

\subsection{Uniqueness: Proof of Proposition~\ref{unique}}\label{proofunique}

For the proof of Proposition~\ref{unique}, which is detailed at the end of this subsection, we will use Lemma~\ref{Hst}, Lemma~\ref{Hstzst} 
and Lemma~\ref{zst}.
From now on, assume w.l.o.g. that $X_0=0$. Let $K:=X-B$ and 
$\tilde{K}:=Y-B$. Let $Z:=X-Y=K-\tilde{K}$.

\begin{lemma} \label{Hst}
Let the assumptions 
of Proposition~\ref{unique} hold. Let $(b^n)_{n \in \mathbb{N}}$ be a sequence of smooth bounded functions converging to $b$ in $\mathcal{B}^{\beta-}_p$ with $\sup_n \|b^n\|_{\mathcal{B}_p^\beta}\leqslant \|b\|_{\mathcal{B}_p^\beta}$. Then for any $(s,t) \in \Delta_{[0,T]}$, there exist random variables $\tilde{T}^K_{s,t}$ and $\tilde{T}^{\tilde{K}}_{s,t}$ such that
\begin{align} \label{eq:Hst}
   \int_s^t b^n(B_r + K_s) dr 
   \limbashaut{\longrightarrow}{n\rightarrow \infty}{L^2}
\tilde{T}_{s,t}^K ~ \text{ and } ~ \int_s^t 
   b^n(B_r + \tilde{K}_s) dr  \limbashaut{\longrightarrow}{n\rightarrow \infty}{L^2}
   \tilde{T}_{s,t}^{\tilde{K}}.
\end{align}
Moreover, there exists $C>0$ such that, for any $(s,t) \in \Delta_{[0,T]}$, we have that
\begin{align}
    \|\tilde{T}_{s,t}^K-\tilde{T}_{s,t}^{\tilde{K}}\|_{L^2} &\leqslant C\|b\|_{\mathcal{B}_p^\beta} \|Z_s\|_{L^2}(t-s)^{1/2} \ \text{and} \label{eq:Hst1}\\
     \|\tilde{T}_{s,t}^K-\tilde{T}_{s,t}^{\tilde{K}}\|_{L^2} &\leqslant C\|b\|_{\mathcal{B}_p^\beta}(t-s)^{1/2+H}. \label{eq:Hst2}
\end{align}
\end{lemma}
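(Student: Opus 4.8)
The plan is to construct $\tilde{T}^K_{s,t}$ and $\tilde{T}^{\tilde{K}}_{s,t}$ as $L^2$-limits of the smooth approximations by a Cauchy argument, using the key regularisation estimate from Lemma~\ref{regulINT} (and its companion Lemma~\ref{lem:Cs}\ref{(C.8)}), which controls moments of integrals of the form $\int_s^t f(B_r + \Xi)\,dr$ for $\mathcal{F}_s$-measurable $\Xi$ in terms of the Besov norm of $f$. First I would fix $(s,t)\in\Delta_{[0,T]}$ and, since $K_s = (X-B)_s$ is $\mathcal{F}_s$-measurable, apply Lemma~\ref{regulINT} (with $\Xi = K_s$, $f(z,x) = (b^n - b^m)(z+x)$, $n=2$) together with Lemma~\ref{A.2}\ref{A.4} to get
\begin{align*}
\Big\| \int_s^t (b^n - b^m)(B_r + K_s)\,dr \Big\|_{L^2} \leqslant C\, \|b^n - b^m\|_{\mathcal{B}^{\beta'}_p}\,(t-s)^{1+H(\beta'-1/p)}
\end{align*}
for a suitable $\beta' < \beta$ still satisfying $\beta' - 1/p > -1/(2H)$ (possible by \eqref{eq:assumptionstrong} and an embedding, cf. Remark~\ref{rk:betaneg}); since $(b^n)$ converges in $\mathcal{B}^{\beta-}_p$ the right-hand side vanishes as $n,m\to\infty$, so the sequence is Cauchy in $L^2$ and we define $\tilde{T}^K_{s,t}$ as its limit. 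The same argument with $\tilde{K}_s$ in place of $K_s$ defines $\tilde{T}^{\tilde{K}}_{s,t}$, establishing \eqref{eq:Hst}.

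For the difference estimates I would work at the level of the smooth drifts and pass to the limit. The natural quantity is $\int_s^t \big(b^n(B_r + K_s) - b^n(B_r + \tilde{K}_s)\big)\,dr$, and both $K_s$ and $\tilde{K}_s$ are $\mathcal{F}_s$-measurable. To prove \eqref{eq:Hst1} I would apply Lemma~\ref{lem:Cs}\ref{(C.8)} conditionally on $\mathcal{F}_s$: with $\Xi = (K_s, \tilde{K}_s)$ and $f(z,(x_1,x_2)) = b^n(z+x_1) - b^n(z+x_2)$, Jensen and the tower property reduce matters to bounding $\int_s^t \| G_{\sigma^2_{s,r}} b^n(\cdot + K_s) - G_{\sigma^2_{s,r}} b^n(\cdot + \tilde{K}_s)\|_\infty$-type terms; writing $G_{\sigma^2_{s,r}} b^n(y + K_s) - G_{\sigma^2_{s,r}} b^n(y + \tilde{K}_s)$ as an integral of its spatial derivative over the segment between $K_s$ and $\tilde{K}_s$ produces a factor $|Z_s| = |K_s - \tilde{K}_s|$ times $\|G_{\sigma^2_{s,r}} b^n\|_{\mathcal{C}^1}$; then $\sigma^2_{s,r} = C(r-s)^{2H}$ from \eqref{LND}, the heat-kernel bound Lemma~\ref{A.3}\ref{A.3.4} gives $\|G_{\sigma^2_{s,r}} b^n\|_{\mathcal{C}^1} \leqslant C (r-s)^{H(\beta - 1/p - 1)} \|b^n\|_{\mathcal{B}^\beta_p}$, and the exponent $H(\beta-1/p-1) > -1$ (equivalent to $\beta - 1/p > 1 - 1/H$, which follows from \eqref{eq:assumptionstrong}) makes the $dr$-integral converge with value $\lesssim (t-s)^{1+H(\beta-1/p-1)}$. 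Since $1 + H(\beta - 1/p - 1) \geqslant 1/2$ under \eqref{eq:assumptionstrong}, one gets $\|\int_s^t(\cdots)dr\|_{L^2} \leqslant C\|b\|_{\mathcal{B}^\beta_p} \|Z_s\|_{L^2}(t-s)^{1/2}$ after absorbing the extra power of $(t-s)$ into the constant on $[0,T]$; letting $n\to\infty$ yields \eqref{eq:Hst1}. For \eqref{eq:Hst2} I would instead use that $\tilde{T}^K_{s,t}$ and $\tilde{T}^{\tilde{K}}_{s,t}$ individually satisfy $\|\tilde{T}^K_{s,t}\|_{L^2} + \|\tilde{T}^{\tilde{K}}_{s,t}\|_{L^2} \leqslant C\|b\|_{\mathcal{B}^\beta_p}(t-s)^{1+H(\beta-1/p)}$ (the limiting form of the Cauchy estimate above, with $\beta$ recovered by a further $\varepsilon$-embedding), and then note $1 + H(\beta-1/p) \geqslant 1 + H(-1/(2H)+1) \cdot$-type bound — more directly, one checks $1 + H(\beta - 1/p) \geqslant 1/2 + H$ is exactly \eqref{eq:assumptionstrong}; hence the triangle inequality gives \eqref{eq:Hst2}.

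The main obstacle I expect is the interplay between the two different H\"older exponents, $1/2$ in \eqref{eq:Hst1} versus $1/2+H$ in \eqref{eq:Hst2}: the first must be \emph{linear} in $\|Z_s\|_{L^2}$ and only needs exponent $1/2$, which is why the ``one spatial derivative'' trick with $\|G\cdot\|_{\mathcal{C}^1}$ is affordable there, whereas \eqref{eq:Hst2} needs the sharper time exponent $1/2+H$ with no $Z$-factor, forcing the use of the plain regularisation bound rather than the Lipschitz-in-space one. Making sure the two exponents are simultaneously $\geqslant 1/2+H$ (for the sewing argument downstream) while keeping \eqref{eq:Hst1} linear in $Z$ is the delicate bookkeeping; everything else is a routine combination of Lemma~\ref{regulINT}, Lemma~\ref{lem:Cs}, the heat-kernel estimates of Appendix~\ref{app:Besov}, and the local nondeterminism identity $\sigma^2_{s,r} \asymp (r-s)^{2H}$.
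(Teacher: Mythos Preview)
Your Cauchy argument for \eqref{eq:Hst} is exactly what the paper does: apply Lemma~\ref{regulINT} with $\Xi=K_s$, $f(z,x)=(b^k-b^l)(z+x)$ at some $\beta'\in(-\tfrac{1}{2H},\beta)$, then use $\mathcal{B}^{\beta-}_p$-convergence. Likewise, your route to \eqref{eq:Hst2} via the individual bounds on $\tilde T^K_{s,t}$, $\tilde T^{\tilde K}_{s,t}$ and the triangle inequality is correct (the paper gets it as a special case of a single interpolated estimate, see below, but your argument works).

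There is, however, a genuine gap in your sketch for \eqref{eq:Hst1}. You write that ``Jensen and the tower property reduce matters to bounding $\int_s^t \|G_{\sigma^2_{s,r}}b^n(\cdot+K_s)-G_{\sigma^2_{s,r}}b^n(\cdot+\tilde K_s)\|_\infty$-type terms''. But Lemma~\ref{lem:Cs}\ref{(C.8)} only gives you a formula for the \emph{conditional expectation} $\EE^s[f(B_r,\Xi)]$; it does not, by itself or combined with Jensen/tower, control the $L^2$ norm of $\int_s^t f(B_r,\Xi)\,dr$. Bounding $|\EE^s[\int_s^t f(B_r,\Xi)\,dr]|$ pointwise by $\int_s^t\|G_{\sigma^2_{s,r}}b^n\|_{\mathcal{C}^1}|Z_s|\,dr$ is fine, but $\|\cdot\|_{L^2}$ is not dominated by $\|\EE^s[\cdot]\|_{L^2}$. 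To upgrade to an $L^2$ bound you must go through the stochastic sewing machinery, i.e.\ Lemma~\ref{regulINT} again, now applied to the difference function $(z,(x_1,x_2))\mapsto b^n(z+x_1)-b^n(z+x_2)$. The point is that by Lemma~\ref{A.2}\ref{A.5} with $\alpha=1$ this function has $\mathcal{B}^{\beta-1}_p$ norm bounded by $C\|b^n\|_{\mathcal{B}^\beta_p}|K_s-\tilde K_s|$, and Lemma~\ref{regulINT} with $\gamma=\beta-1$ then yields
\[
\Big\|\int_s^t\!\big(b^n(B_r+K_s)-b^n(B_r+\tilde K_s)\big)\,dr\Big\|_{L^2}\leqslant C\|b\|_{\mathcal{B}^\beta_p}\|Z_s\|_{L^2}(t-s)^{1+H(\beta-1-1/p)},
\]
and $1+H(\beta-1-1/p)\geqslant 1/2$ is exactly \eqref{eq:assumptionstrong}. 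This is the content of Corollary~\ref{cor:6.2.2} with $\lambda=1$.

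The paper packages both \eqref{eq:Hst1} and \eqref{eq:Hst2} into a single application of Corollary~\ref{cor:6.2.2}: for $\lambda\in[0,1]$ one has
\[
\|\tilde T^K_{s,t}-\tilde T^{\tilde K}_{s,t}\|_{L^2}\leqslant C\|b\|_{\mathcal{B}^\beta_p}\|Z_s\|_{L^2}^{\lambda}(t-s)^{1/2+H-\lambda H},
\]
and \eqref{eq:Hst1}, \eqref{eq:Hst2} are the endpoints $\lambda=1$, $\lambda=0$. This is tidier than your split treatment, but once you replace the conditional-expectation shortcut by a proper call to Lemma~\ref{regulINT}, your argument is equivalent.
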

The proof of this lemma is moved to Section~\ref{proofs}.

~

For $(s,t) \in \Delta_{[0,T]}$, let 
\begin{align*}
    R_{s,t}:= (K_t-K_s-\tilde{T}_{s,t}^K)-(\tilde{K}_t-\tilde{K}_s-\tilde{T}_{s,t}^{\tilde{K}}).
\end{align*}
It is now necessary to estimate the regularity of this remainder term. The proof of the 
following lemma, which heavily relies on stochastic sewing, is moved to Section~\ref{proofs}.
\begin{lemma} \label{Hstzst}
Let the assumptions 
of Proposition~\ref{unique} hold. 
There exists $\delta>0$ such that for any $\alpha \in (1/2,1)$, there exists
 $C>0$ for which the following holds: for all $(u,v) \in \Delta_{[0,T]}$, we have
\begin{align}
    \|Z_{v}-Z_{u}\|_{L^2}&\leqslant C\|R\|_{\mathcal{C}_{[u,v]}^{\alpha}(L^2)} (v-u)^{1/2+\delta} \label{(5.37)}\\
    &+C\|Z\|_{\mathcal{C}_{[u,v]}(L^2)}|\log\|Z\|_{\mathcal{C}_{[u,v]}(L^2)}|\,(v-u)+C\|Z\|_{\mathcal{C}_{[u,v]}(L^2)} (v-u)^{1/2}, \nonumber\\
    \|R_{u,v}\|_{L^2}&\leqslant C\left(\|Z\|_{\mathcal{C}_{[u,v]}(L^2)}+\|R\|_{\mathcal{C}_{[u,v]}^{\alpha}(L^2)}\right) (v-u)^{1/2+\delta} \label{(5.38)}\\
    &+C\|Z\|_{\mathcal{C}_{[u,v]}(L^2)}|\log\|Z\|_{\mathcal{C}_{[u,v]}(L^2)}|\,(v-u). \nonumber
\end{align}
\end{lemma}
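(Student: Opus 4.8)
The idea is to estimate the two quantities $Z_{v}-Z_{u}$ and $R_{u,v}$ using the stochastic sewing Lemma, in the spirit of Proposition~3.6 in~\cite{Atetal}. First I would recall the decomposition
\[
Z_{v}-Z_{u} = (K_{v}-K_{u}) - (\tilde{K}_{v}-\tilde{K}_{u}) = (\tilde{T}^{K}_{u,v} - \tilde{T}^{\tilde{K}}_{u,v}) + R_{u,v},
\]
so that, once a good control on $R_{u,v}$ is obtained, the bound on $Z_{v}-Z_{u}$ follows by combining it with the estimates~\eqref{eq:Hst1}--\eqref{eq:Hst2} on $\tilde{T}^{K}-\tilde{T}^{\tilde{K}}$ from Lemma~\ref{Hst}. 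So the heart of the matter is~\eqref{(5.38)}.

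To get~\eqref{(5.38)}, I would apply the stochastic sewing Lemma to the germ $A_{s,t}$ defined, for $(s,t)\in\Delta_{[u,v]}$, as an appropriate conditional-expectation regularisation of $b^{n}(B_{r}+K_{s}) - b^{n}(B_{r}+\tilde{K}_{s})$ integrated over $[s,t]$ — more precisely $A_{s,t}=\EE^{s}[\tilde{T}^{K}_{s,t}-\tilde{T}^{\tilde{K}}_{s,t}]$ — so that $R$ plays the role of the process to be reconstructed. The two hypotheses of the sewing Lemma then require: (i) an $L^2$ bound $\|A_{s,t}\|_{L^2}\lesssim \|b\|_{\mathcal{B}^{\beta}_p}(\|Z\|_{\infty}+\dots)(t-s)^{1/2+\delta}$, which comes from Lemma~\ref{lem:Cs}\ref{(C.10)} applied to $f(z,x_1,x_2)=b^{n}(z+x_1)-b^{n}(z+x_2)$ together with the fact that $b^{n}(\cdot+K_s)-b^{n}(\cdot+\tilde{K}_s)$ has Besov norm controlled both by $\|b\|_{\mathcal{B}^{\beta}_p}$ (giving the $t-s$ exponent $1+H(\beta-1/p)>1/2$) and, interpolating with one spatial derivative, by $\|b\|_{\mathcal{B}^{\beta}_p}\|Z_s\|^{\theta}$ for a suitable $\theta$ — this interpolation against $|K_s-\tilde{K}_s|=|Z_s|$ is what produces the $\|Z\|_{\mathcal{C}_{[u,v]}(L^2)}$ prefactor and, in the critical regime, the logarithmic correction $|\log\|Z\|_\infty|$; (ii) a bound on $\|\EE^{s}[\delta A_{s,m,t}]\|_{L^2}$, i.e. on the conditional expectation of the second-order increment. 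The latter is where the dependence on $K_{m}-K_{s}$ and $\tilde{K}_{m}-\tilde{K}_{s}$ enters: using $\|X-B\|_{\mathcal{C}^{1/2+H}(L^2)}<\infty$ and~\eqref{eq:regXB}, these increments are of size $(m-s)^{1/2+H}$, and one writes $\delta A_{s,m,t}$ as an integral over $[m,t]$ of $\EE^{m}$ of a difference of (regularised) $b^n$ evaluated at shifted arguments, bounding it via Lemma~\ref{lem:Cs}\ref{(C.8)} and Lemma~\ref{A.3}\ref{A.3.4} to pull out a factor $|K_{m}-K_{s}-(\tilde K_{m}-\tilde K_s)+\text{cross terms}|$ times an integrable singular kernel $|r-m|^{H(\beta-1/p-1)}$. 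Care is needed to separate the piece proportional to $Z_{m}-Z_{s}\le \|Z\|_{\mathcal{C}^{1/2+H}}$ (harmless, absorbed) from the piece that must be carried as $\|R\|_{\mathcal{C}^{\alpha}(L^2)}$.

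The remaining point is that the raw output of the sewing Lemma is a bound on $R_{u,v}$ in terms of $\|A\|$-type quantities on the \emph{whole} interval $[u,v]$; one then uses the superadditivity of the associated control functions and the definition of $[\,\cdot\,]_{\mathcal{C}^{\alpha}(L^2)}$ and $\|\cdot\|_{\mathcal{C}(L^2)}$ to localise, obtaining~\eqref{(5.38)} with the stated exponents $1/2+\delta$ and the log term; then~\eqref{(5.37)} follows immediately by the triangle inequality and Lemma~\ref{Hst}. Finally one lets $n\to\infty$ (the $L^2$ convergences in~\eqref{eq:Hst} ensure all germs and limits pass to the limit) so that the constant $C$ is independent of $n$. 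I expect the main obstacle to be the bookkeeping in step (ii): isolating exactly which part of $\delta A_{s,m,t}$ is estimated by the Hölder norm of $K,\tilde K$ (and contributes the benign $(v-u)$ and $(v-u)^{1/2}$ terms after summation) versus which part genuinely needs to be fed back as $\|R\|_{\mathcal{C}^{\alpha}(L^2)}$, while keeping all time-exponents strictly above $1/2$ so that the sewing Lemma applies and the logarithmic term appears only where the Besov interpolation is critical.
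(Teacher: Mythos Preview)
Your overall strategy matches the paper's: apply a stochastic sewing argument to control $R_{u,v}$, then deduce \eqref{(5.37)} from \eqref{(5.38)} via Lemma~\ref{Hst}. A few points need correcting, however.

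First, the germ. The paper does not take $A_{s,t}=\EE^s[\tilde T^K_{s,t}-\tilde T^{\tilde K}_{s,t}]$ but works at the approximate level with $A^k_{s,t}=\int_s^t\bigl(b^k(B_r+K_s)-b^k(B_r+\tilde K_s)\bigr)\,dr$ (no outer conditional expectation), and identifies the sewn process $\mathcal A^k$ with $\int_u^\cdot\bigl(b^k(B_r+K_r)-b^k(B_r+\tilde K_r)\bigr)\,dr$. Then $\mathcal A^k_t-\mathcal A^k_s-A^k_{s,t}\to R_{s,t}$ in probability as $k\to\infty$, and Fatou transfers the sewing estimate to the limit.

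Second, and more substantively, you misplace the origin of the logarithm. It does not arise from your condition~(i) via interpolation on $\|A_{s,t}\|_{L^2}$. The paper invokes the \emph{critical-exponent} stochastic sewing lemma (Theorem~4.5 in~\cite{Atetal}), which requires \emph{three} bounds on $\delta A^k_{s,\theta,t}$: (a)~$\|\delta A^k_{s,\theta,t}\|_{L^2}\le C(\|Z\|_{\mathcal C_{[u,v]}(L^2)}+\|R\|_{\mathcal C^\alpha_{[u,v]}(L^2)})(t-s)^{1/2+\delta}$, obtained via Corollary~\ref{cor:6.2.3} (this is a bound on $\delta A$, not on $A$); (b)~a supercritical bound $\|\EE^s\delta A^k_{s,\theta,t}\|_{L^2}\le C(t-s)^{1+H}$; and (c)~a \emph{critical} bound $\|\EE^s\delta A^k_{s,\theta,t}\|_{L^2}\le C\|Z\|_{\mathcal C_{[u,v]}(L^2)}(t-s)+C\|R\|_{\mathcal C^\alpha_{[u,v]}(L^2)}(t-s)^{1/2+\alpha}$. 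It is the exponent exactly $1$ in (c) that produces the logarithmic factor in the sewing output. This critical exponent appears because, splitting the integrand of $\delta A^k$ as $f_1+f_2$ (a four-point difference and a two-point difference), the $f_1$ contribution with $\lambda=1$ carries the time-exponent $\tfrac32+H+H(\beta-2-1/p)$, which under $\beta-1/p\ge 1-1/(2H)$ is at least (and, in the borderline case, equal to)~$1$.

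Your worry about the bookkeeping is well-founded and is indeed the technical core: the precise split is $f_1$ (controlled by $[K]_{\mathcal C^{1/2+H}_{[0,T]}(L^{2,\infty})}$ times $|Z_s|^\lambda$) versus $f_2$ (controlled by $\|Z_\theta-Z_s\|_{L^2}$, which via $\|Z_\theta-Z_s\|_{L^2}\le\|R\|_{\mathcal C^\alpha}(\theta-s)^\alpha+C\|Z_s\|_{L^2}(\theta-s)^{1/2}$ feeds back the $\|R\|_{\mathcal C^\alpha}$ and $\|Z\|_{\mathcal C(L^2)}$ terms). Note that this is exactly where the hypothesis \eqref{eq:regXB} on $[K]_{\mathcal C^{1/2+H}(L^{2,\infty})}$, with $L^\infty$ on the outside, is genuinely used.
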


One can now deduce a regularity estimate on $Z$ alone.
\begin{lemma} \label{zst}
Let the assumptions 
of Proposition~\ref{unique} hold. 
Let $\tilde{T} \in (0,T]$ such that ${\|Z\|_{\mathcal{C}_{[0,\tilde{T}]}(L^2)}\leqslant 1/e}$. 
Then there exist $C>0$ and $l>0$ such that for any $(s,t) \in \Delta_{[0, \tilde{T}]}$ with 
$t-s<l$, we have
\begin{equation} \label{eq:z}
    \|Z_{t}-Z_{s}\|_{L^2}\leqslant C \|Z\|_{\mathcal{C}_{[s,t]}(L^2)}(t-s)^{1/2}+C\|Z\|_{\mathcal{C}_{[s,t]}(L^2)}|\log\|Z\|_{\mathcal{C}_{[s,t]}(L^2)}|\,(t-s).
\end{equation}
\end{lemma}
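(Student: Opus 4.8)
The plan is to derive \eqref{eq:z} by combining the two estimates of Lemma~\ref{Hstzst}, eliminating the remainder term $\|R\|_{\mathcal{C}_{[u,v]}^{\alpha}(L^2)}$ via a self-referential (bootstrap) argument on small intervals. First I would fix $\alpha\in(1/2,1)$ and pick the $\delta>0$ provided by Lemma~\ref{Hstzst}; without loss of generality assume $\delta<\alpha-1/2$. Working on a subinterval $[s,t]\subset[0,\tilde T]$ with $t-s<l$ (where $l$ is to be chosen), I would take the supremum in \eqref{(5.38)} over $(u,v)\in\Delta_{[s,t]}$ after dividing by $(v-u)^\alpha$. Since $\delta>0$ and the logarithmic factor only costs an arbitrarily small power of $(v-u)$, this yields
\begin{align*}
\|R\|_{\mathcal{C}^{\alpha}_{[s,t]}(L^2)}
&\leqslant C(t-s)^{1/2+\delta-\alpha}\Bigl(\|Z\|_{\mathcal{C}_{[s,t]}(L^2)}+\|R\|_{\mathcal{C}^{\alpha}_{[s,t]}(L^2)}\Bigr)\\
&\quad+C(t-s)^{1-\alpha}\|Z\|_{\mathcal{C}_{[s,t]}(L^2)}\bigl|\log\|Z\|_{\mathcal{C}_{[s,t]}(L^2)}\bigr|.
\end{align*}
Because $1/2+\delta-\alpha<0$, the coefficient $C(t-s)^{1/2+\delta-\alpha}$ in front of $\|R\|_{\mathcal{C}^{\alpha}_{[s,t]}(L^2)}$ is \emph{not} small for small $t-s$, so a direct absorption fails; instead I would rescale. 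Define $F(h):=\sup\{\|R\|_{\mathcal{C}^{\alpha}_{[s,t]}(L^2)} : [s,t]\subset[0,\tilde T],\ t-s\leqslant h\}$ and $G(h):=\sup\{\|Z\|_{\mathcal{C}_{[s,t]}(L^2)} : [s,t]\subset[0,\tilde T],\ t-s\leqslant h\}$. Both are finite (by \eqref{eq:regXB} and the assumed $\mathcal{C}^{1/2+H}(L^2)$ regularity, together with $H<1/2$ so $1/2+H>1/2\geqslant\alpha$ is \emph{not} automatic — rather one uses that $R$ is a difference of H\"older-in-$L^2$ terms, hence $\alpha$-H\"older for $\alpha$ slightly above $1/2$). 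The displayed inequality, re-read with these sup-functions and using that $h\mapsto h^{1/2+\delta-\alpha}$ is non-increasing, is not yet contractive; the fix is to iterate the pair of inequalities \eqref{(5.37)}--\eqref{(5.38)} once: substitute the bound for $\|R\|_{\mathcal{C}^{\alpha}_{[u,v]}(L^2)}$ obtained on $[u,v]$ back into \eqref{(5.37)}, which replaces the factor $(v-u)^\alpha$ coming from $\|R\|_{\mathcal{C}^\alpha(L^2)}(v-u)^{1/2+\delta}$ by a genuinely higher power $(v-u)^{1/2+2\delta}$ plus terms already of the form appearing in \eqref{eq:z}.

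Carrying this out: plugging the estimate for $\|R\|_{\mathcal{C}^{\alpha}_{[s,t]}(L^2)}$ into \eqref{(5.37)} gives, for $(s,t)$ with $t-s<l$,
\begin{align*}
\|Z_t-Z_s\|_{L^2}
&\leqslant C(t-s)^{1/2+2\delta-\alpha}\|Z\|_{\mathcal{C}_{[s,t]}(L^2)}(t-s)^{1/2+\delta-(1/2+2\delta-\alpha)}\\
&\quad+C\|Z\|_{\mathcal{C}_{[s,t]}(L^2)}\bigl|\log\|Z\|_{\mathcal{C}_{[s,t]}(L^2)}\bigr|(t-s)+C\|Z\|_{\mathcal{C}_{[s,t]}(L^2)}(t-s)^{1/2},
\end{align*}
where I am being schematic about exponents — the point is that every surviving term carries either $(t-s)^{1/2}$ times $\|Z\|_{\mathcal{C}(L^2)}$ or $(t-s)$ times $\|Z\|_{\mathcal{C}(L^2)}|\log\|Z\|_{\mathcal{C}(L^2)}|$, possibly with an extra small positive power of $(t-s)$ which on $t-s<l\leqslant 1$ is harmless after enlarging $C$. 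Crucially, the condition $\|Z\|_{\mathcal{C}_{[0,\tilde T]}(L^2)}\leqslant 1/e$ makes $x\mapsto x|\log x|$ monotone on $(0,1/e]$, so the logarithmic term on $[s,t]$ is controlled by the same expression with the sup over the larger interval and the argument of the log stays in the monotonicity range; this is where that hypothesis is used.

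The hardest point is not any single estimate but the exponent bookkeeping in the iteration: one must check that after substituting \eqref{(5.38)} into \eqref{(5.37)} the coefficient multiplying $\|Z\|_{\mathcal{C}_{[s,t]}(L^2)}$ from the $R$-contribution is $C(t-s)^{\text{(something)}>1/2}$, i.e. strictly better than the $(t-s)^{1/2}$ that already appears in \eqref{eq:z}, so that it can be merged into the last term of \eqref{eq:z} (absorbing the extra positive power on small intervals). This forces the choice of $l$: $l>0$ must be small enough that $C l^{\epsilon}\leqslant 1$ for the relevant small exponent $\epsilon=\epsilon(\alpha,\delta)>0$, and also small enough that the $\|R\|$-self-reference has a contraction-type closure (equivalently, $2Cl^{1/2+\delta-\alpha+\text{correction}}<1$ after the single iteration removes the bad power). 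Once $l$ and $C$ are fixed this way, \eqref{eq:z} follows directly for all $(s,t)\in\Delta_{[0,\tilde T]}$ with $t-s<l$, with no further appeal to the structure of $X,Y$ beyond Lemmas~\ref{Hst} and~\ref{Hstzst}. Finally, I would remark that \eqref{eq:z} is exactly the hypothesis of a Grönwall-type lemma (of the kind appearing in \cite{Davie,Atetal}) forcing $Z\equiv 0$ on $[0,\tilde T]$, and a covering-of-$[0,T]$ argument then closes the proof of Proposition~\ref{unique}; but that deduction is carried out after this lemma, so here I stop at \eqref{eq:z}.
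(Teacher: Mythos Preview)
Your proposal has a genuine gap stemming from a wrong choice of parameter. In Lemma~\ref{Hstzst} the exponent $\delta>0$ is fixed first and then $\alpha\in(1/2,1)$ is \emph{free}. You impose $\delta<\alpha-1/2$, i.e.\ $\alpha>1/2+\delta$, which forces the exponent $1/2+\delta-\alpha$ to be negative. You correctly observe that direct absorption then fails, but your proposed fix --- iterating \eqref{(5.38)} into \eqref{(5.37)} once --- does not work either: the self-referential inequality for $\|R\|_{\mathcal{C}^\alpha(L^2)}$ still contains $C(t-s)^{1/2+\delta-\alpha}\|R\|_{\mathcal{C}^\alpha(L^2)}$ on the right with a coefficient that \emph{blows up} as $t-s\to 0$, so no finite number of substitutions removes it. Your ``schematic'' exponent bookkeeping glosses over precisely this obstruction.

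The paper's proof avoids the difficulty entirely by choosing $\alpha$ in the opposite regime: set $\alpha=1/2+\delta/2$ (with $\delta<2H$ w.l.o.g., so $\alpha\in(1/2,1)$). Then $1/2+\delta-\alpha=\delta/2>0$ and $1-\alpha=1/2-\delta/2>0$, so dividing \eqref{(5.38)} by $(v-u)^\alpha$ and taking the supremum over $(u,v)\in\Delta_{[s,t]}$ yields
\[
\|R\|_{\mathcal{C}_{[s,t]}^{\alpha}(L^2)}\leqslant C\bigl(\|Z\|_{\mathcal{C}_{[s,t]}(L^2)}+\|R\|_{\mathcal{C}_{[s,t]}^{\alpha}(L^2)}\bigr)(t-s)^{\delta/2}+C\|Z\|_{\mathcal{C}_{[s,t]}(L^2)}|\log\|Z\|_{\mathcal{C}_{[s,t]}(L^2)}|(t-s)^{1/2-\delta/2},
\]
and now the coefficient $C(t-s)^{\delta/2}$ in front of $\|R\|$ \emph{is} small for small $t-s$: choosing $l$ with $Cl^{\delta/2}<1/2$ allows direct absorption. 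Plugging the resulting bound for $\|R\|_{\mathcal{C}^\alpha_{[s,t]}(L^2)}$ into \eqref{(5.37)} with $(u,v)=(s,t)$ gives \eqref{eq:z} immediately. The a priori finiteness of $\|R\|_{\mathcal{C}^\alpha(L^2)}$ (needed to make the absorption legitimate) follows because $\alpha=1/2+\delta/2<1/2+H$ and $R$ is $(1/2+H)$-H\"older in $L^2$ by the regularity assumptions on $K,\tilde K$ together with \eqref{eq:Hst2}; your discussion of this point was also muddled. The use of $\|Z\|\leqslant 1/e$ to guarantee monotonicity of $x\mapsto x|\log x|$ is correct and is exactly what the paper does.
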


\begin{proof}
Consider  \(\delta \)  as in Lemma~\ref{Hstzst} and assume w.l.o.g. that \(\delta < 2H\).
By assumption on $X$ 
and $Y$ and by \eqref{eq:Hst2} we know that $\|R\|_{\mathcal{C}_{[0,T]}^{1/2+H}(L^2)}<\infty$. Let $(s,t) \in \Delta_{[0, \tilde{T}]}$. After dividing both 
sides of equation \eqref{(5.38)} by $(v-u)^{1/2+\delta/2}$ and taking the 
supremum over all $(u,v) \in \Delta_{[s,t]}$, we obtain for $\alpha=1/2+\delta/2$
that 
\begin{align} \label{ineq:R}
    \|R\|_{\mathcal{C}_{[s,t]}^{\alpha}(L^2)}\leqslant C&\left(\|Z\|_{\mathcal{C}_{[s,t]}(L^2)}+\|R\|_{\mathcal{C}_{[s,t]}^{\alpha}(L^2)}\right)(t-s)^{\delta/2}\\
    &+C\|Z\|_{\mathcal{C}_{[s,t]}(L^2)}|\log\|Z\|_{\mathcal{C}_{[s,t]}(L^2)}|\,(t-s)^{1/2-\delta/2}. \nonumber
\end{align}
In the above we used that
\begin{align*}
\sup_{(u,v) \in \Delta_{[s,t]}} \|Z\|_{\mathcal{C}_{[u,v]}(L^2)}|\log\|Z\|_{\mathcal{C}_{[u,v]}(L^2)}|\leqslant \|Z\|_{\mathcal{C}_{[s,t]}(L^2)}|\log\|Z\|_{\mathcal{C}_{[s,t]}(L^2)}|
\end{align*}
as $f(x)=x \log(1/x)$ is increasing on $[0,1/e]$ and $\|Z\|_{\mathcal{C}_{[s,t]}(L^2)} \leqslant 
1/e$ by assumption.
Using \(\delta < 2H\), we have that 
$\|R\|_{\mathcal{C}_{[0,T]}^{\alpha}(L^2)}\leqslant C 
\|R\|_{\mathcal{C}_{[0,T]}^{1/2+H}(L^2)}<\infty$. 
Let $l\in (0,\tilde{T})$ be such that $C l^{\delta/2}<1/2$ for $C$ as in \eqref{ineq:R}. For 
$t-s<l$, we have
\begin{align*}
     \frac{1}{2}\|R\|_{\mathcal{C}_{[s,t]}^{\alpha}(L^2)}\leqslant C\|Z\|&_{\mathcal{C}_{[s,t]}(L^2)} (t-s)^{\delta/2}\\
     &+C\|Z\|_{\mathcal{C}_{[s,t]}(L^2)}|\log\|Z\|_{\mathcal{C}_{[s,t]}(L^2)}|\,(t-s)^{1/2-\delta/2}.
\end{align*}
Plugging this into \eqref{(5.37)} for $u=s$ and $v=t$ finishes the proof.
\end{proof}

\begin{proof}[Proof of Proposition~\ref{unique}]
Let $\tilde{T}:= \sup\{t \in [0,T]: \sup_{0\leqslant s\leqslant t}\|Z_s\|_{L^2}\leqslant 1/e\}$. We 
show 
in the next paragraph that $Z$ is indistinguishable from $0$ on $[0,\tilde{T}]$. 
Note that this implies that $\tilde{T}=T$: Indeed, by definition of $\tilde{T}$ and continuity of 
$Z: [0,T] \rightarrow L^2$, $\|Z_{\tilde{T}}\|_{L^2}=1/e$ if $\tilde{T}<T$. This would 
contradict $Z$ being indistinguishable from $0$ on $[0,\tilde{T}]$. Hence, we get that $Z$ is 
actually indistinguishable from $0$ on $[0,T]$.

Assume that $\|Z_t\|_{L^2}$ is not identically $0$ on $[0,\tilde{T}]$. Choose $k_0 \in \mathbb{N}$ such 
that $2^{-{k_0}}< \sup_{t \in [0,\tilde{T}]} \|Z_t\|_{L^2}$. For $k\geqslant k_0$ let $t_k:= 
\inf\{t:\|Z_t\|_{L^2}\geqslant 2^{-k}\}$. We have that $\|Z_t\|_{L^2}<2^{-k}$ for 
$t<t_k$ and $\|Z_{t_k}\|_{L^2}=2^{-k}$ as $t\mapsto \|Z_t\|_{L^2}$ is continuous by 
\eqref{eq:z}. For $l$ as in Lemma~\ref{zst}, choose $a \in 
(0,l)$ such that $C a^{1/2}<1/4$ for $C$ as in \eqref{eq:z}. As the sequence $t_k$ is 
strictly decreasing we can choose $M$ such that $t_k-t_{k+1}<a$ for $k \geqslant M$.
Therefore by \eqref{eq:z} we have, for $k\geqslant M$, that
\begin{equation*}
2^{-(k+1)}\leqslant \|Z_{t_k}-Z_{t_{k+1}}\|_{L^2}\leqslant 2^{-(k+2)}+ C\, 2^{-k} k\log(2)\, (t_k-t_{k+1})
\end{equation*}
and therefore
\begin{equation} \label{summable}
\frac{1}{4C\log(2) k} \leqslant  t_k-t_{k+1},
\end{equation}
which leads to a contradiction as the sequence $t_k$ is strictly decreasing and the 
left hand side of \eqref{summable} is diverging when summing over $k\geqslant M$. 
Hence, $\|Z_t\|_{L^2}$ is identically $0$ on $[0,\tilde{T}]$. Now using that $Z\in \mathcal{C}^{1/2+H}_{[0,T]}(L^2)$, we get that $\EE[|Z_{t}-Z_{s}|^2] \leqslant \|Z\|_{\mathcal{C}^{1/2+H}_{[0,T]}(L^2)}^2\, |t-s|^{1+2H}$ for any $s,t\in [0,\tilde{T}]$. Hence by Kolmogorov's continuity theorem, $Z$ is a.s. continuous and is therefore indistinguishable from $0$.
\end{proof}

\subsection{Intermediate regularity results: Proofs of Lemma~\ref{Hst} and Lemma~\ref{Hstzst}} 
\label{proofs}

The proofs of Lemma~\ref{Hst} and Lemma~\ref{Hstzst} rely on several results 
about the regularity of $\int_{s}^t f(B_{r},\Xi)\, dr$ in terms of $t-s$ and of the Besov norm of 
the random mapping $f(\cdot,\Xi)$, for $\Xi$ an $\mathcal{F}_{s}$-measurable random variable.
These regularity results are stated and proven in Appendix~\ref{app:sewing}, and are derived from two main 
ingredients: the stochastic sewing Lemma 
(Lemma~\ref{sts}) and Lemma~\ref{lem:Cs}.

We now turn to the proofs of Lemmas \ref{Hst} and \ref{Hstzst}.

\begin{proof} [Proof of Lemma~\ref{Hst}]
Recall that we assumed $\beta< 0$ at the beginning of this section (Remark \ref{rk:betaneg}). Let 
$\beta^\prime \in (-\tfrac{1}{2H},\beta)$ and $(s,t) \in \Delta_{[0,T]}$. Let $k,l \in 
\mathbb{N}$. Applying the crucial regularity Lemma~\ref{regulINT} for $\Xi = K_{s}$ and $f(z, x) = b^k(z+x)-b^l(z+x)$,
we get that 
\begin{align*}
    \left\|\int_s^t b^k(B_r+K_s)dr-\int_s^t b^l(B_r+K_s)dr\right\|_{L^2} &\leqslant C\left\| \|b^k(\cdot+K_{s})-b^l(\cdot+K_{s})\|_{\mathcal{B}_p^{\beta^\prime}}\right\|_{L^2}\\
    &\leqslant C\|b^k-b^l\|_{\mathcal{B}_p^{\beta^\prime}},
\end{align*}
using Lemma~\ref{A.2}$(a)$ in the last inequality. 
Hence, $(\int_s^t b^k(B_r+K_s)dr)_{k\in \N}$ is a Cauchy sequence in $L^2$, so it converges 
to some $\tilde{T}_{s,t}^K$. The same holds for the sequence with $K$ being replaced by 
$\tilde{K}$. Using Fatou's Lemma and Corollary~\ref{cor:6.2.2}, for $\lambda \in [0,1]$ we 
obtain
\begin{align*}
    \|\tilde{T}_{s,t}^K-\tilde{T}_{s,t}^{\tilde{K}}\|_{L^2}&\leqslant \liminf_{n} \left\|\int_s^t b^n(B_r+K_s)dr-\int_s^t b^n(B_r+\tilde{K}_s)dr\right\|_{L^2}\\
    &\leqslant C \sup_n\|b^n\|_{\mathcal{B}_p^\beta} \|K_s-\tilde{K}_s\|_{L^2}^\lambda (t-s)^{1+H(\beta-\lambda-1/p)}\\
    &\leqslant C \|b\|_{\mathcal{B}_p^\beta} \|K_s-\tilde{K}_s\|_{L^2}^\lambda (t-s)^{1/2+H-\lambda H},
\end{align*}
using that $\beta-1/p\geqslant -1/(2H)+1$.
Setting $\lambda=1$ gives \eqref{eq:Hst1} and setting $\lambda=0$ gives 
\eqref{eq:Hst2}.
\end{proof}

\begin{proof} [Proof of Lemma~\ref{Hstzst}]
Let $(b^k)_{k \in \mathbb{N}}$ be a sequence of smooth bounded functions converging to $b$ in $\mathcal{B}^{\beta-}_p$ with $\sup_k \|b^k\|_{\mathcal{B}_p^\beta}\leqslant \|b\|_{\mathcal{B}_p^\beta}$. Fix $(u,v) \in \Delta_{[0,T]}$. For $(s,t) \in \Delta_{[u,v]}$, let 
\begin{align}\label{eq:defAcalk}
    A_{s,t}^k&:=\int_s^t \left(b^k(B_r+K_s)-b^k(B_r+\tilde{K}_s)\right) dr ,\nonumber \\
    \mathcal{A}_t^k&:=\int_{u}^t \left(b^k(B_r+K_r)-b^k(B_r+\tilde{K}_r)\right) dr.
\end{align}
Then, for $\theta \in (s,t)$,
\begin{align} \label{hsulsu}
    \delta A_{s,\theta,t}^k := \int_{\theta}^t \left(b^k(B_r+K_s)-b^k(B_r+\tilde{K}_s)-b^k(B_r+K_\theta)+b^k(B_r+\tilde{K}_\theta)\right) dr.
\end{align}

We now verify the conditions of the stochastic sewing Lemma with critical exponents, as 
stated in \cite[Th.~4.5]{Atetal}. To show that the conditions of this theorem hold (i.e. \eqref{sts1} and \eqref{sts2} with $\beta_{2}=0$ and $m=n=2$
 in the current paper, and (4.11) from \cite{Atetal}), we verify that there exists $C>0$ 
 independent of $s, t, u, v$ and $\theta$ such that for any $k\in \N$,
\begin{enumerate}[label=(\roman*)]

\item \label{en:2}$\|\EE^s\delta A_{s,\theta,t}^k\|_{L^2}\leqslant C (t-s)^{1+H}$;

\item \label{en:1}
	$\|\delta A_{s,\theta,t}^k\|_{L^2}
		\leqslant C\left(\|Z\|_{\mathcal{C}_{[u,v]}(L^2)}+\|R\|_{\mathcal{C}_{[u,v]}^{\alpha}(L^2)}\right)(t-s)^{1/2+\delta}$;

\item \label{en:4}
$\|\EE^s \delta A_{s,\theta,t}^k\|_{L^2}\leqslant C\|Z\|_{\mathcal{C}_{[u,v]}(L^2)}(t-s)+C \|R\|_{\mathcal{C}_{[u,v]}^{\alpha}(L^2)} (t-s)^{1/2+\alpha}$;

\item \label{en:3} \red{Identifying $\mathcal{A}^k$, as given in \eqref{eq:defAcalk}, with the limit of $\sum_{i=0}^{N_n-1} A^k_{t^n_i,t^n_{i+1}}$ along any sequence of 
partitions $\Pi_n=\{t_i^n\}_{i=0}^{N_n}$ of $[u,t]$ with mesh converging to $0$.}
\end{enumerate}

Assume for the moment that the above properties hold. Then applying Theorem 4.5 in 
\cite{Atetal}, we get that for any $(s,t) \in \Delta_{[u,v]}$,
\begin{align} \label{AtAsAst}
    \|\mathcal{A}^k_{t}-&\mathcal{A}^k_{s}-A^k_{s,t}\|_{L^2} \nonumber\\
    &\leqslant C \|Z\|_{\mathcal{C}_{[u,v]}(L^2)}\left(1+\left| \log \frac{T^H}{\|Z\|_{\mathcal{C}_{[u,v]}(L^2)}}\right|\right)|t-s|\nonumber \\
    &\quad + C\left(\|Z\|_{\mathcal{C}_{[u,v]}(L^2)}+\|R\|_{\mathcal{C}_{[u,v]}^{\alpha}(L^2)}\right)(t-s)^{1/2+\delta}+C\|R\|_{\mathcal{C}_{[u,v]}^{\alpha}(L^2)}(t-s)^{1/2+\alpha} \nonumber\\
    &\leqslant C\left(\|Z\|_{\mathcal{C}_{[u,v]}(L^2)}+\|R\|_{\mathcal{C}_{[u,v]}^{\alpha}(L^2)}\right) (t-s)^{1/2+\delta} \nonumber\\
    &\quad +C\|Z\|_{\mathcal{C}_{[u,v]}(L^2)}|\log\|Z\|_{\mathcal{C}_{[u,v]}(L^2)}|\,(t-s).
\end{align}
By Definition~\ref{def:solution} and Lemma~\ref{Hst}, we have the following convergence in probability: 
\begin{align*}
    \lim_{k\rightarrow \infty} \mathcal{A}^k_{t}-&\mathcal{A}^k_{s}-A^k_{s,t} = Z_{t}-Z_{s}-(\tilde{T}^{K}_{s,t}-\tilde{T}^{\tilde{K}}_{s,t})=R_{s,t} .
\end{align*}
Hence and letting $k$ go to $\infty$ in \eqref{AtAsAst}  and using Fatou's 
Lemma, we get \eqref{(5.38)} by choosing $(s,t)=(u,v)$. Putting together \eqref{(5.38)} and \eqref{eq:Hst1}, we 
get \eqref{(5.37)}.

Proof of \ref{en:2}: Recall that we consider times $0\leqslant u\leqslant s \leqslant \theta\leqslant t \leqslant v \leqslant T$.
Let $f_{1}:\R\times \R^4\to \R$ and $f_{2}:\R\times\R^2\to \R$ be defined by
\begin{align*}
    f_{1}(z,(x_{1},x_{2},x_{3},x_{4}))&= b^k(z+x_{1})-b^k(z+x_{2})+b^k(z+x_{3})-b^k(z+x_{4}),\\
    f_{2}(z,(x_{1},x_{2}))&=b^k(z+x_{1})-b^k(z+x_{2}),
\end{align*}
and consider the $\mathcal{F}_{\theta}$-measurable random vectors $\Xi_{1}$ and $\Xi_{2}$ given by:
\begin{align*}
\Xi_{1} &= (K_{s},\tilde{K}_{s},K_{\theta}+\tilde{K}_{s}-K_{s},K_{\theta})\\
\Xi_{2} &= (\tilde{K}_{\theta},K_{\theta}+\tilde{K}_s-K_s) .
\end{align*}
We can rewrite the integrand on the right hand side of \eqref{hsulsu} as 
$f_{1}(B_{r},\Xi_{1})+f_{2}(B_r,\Xi_{2})$. Hence using Lemma~\ref{lem:Cs}\ref{(C.10)},
 we get that 
for $\lambda \in [0,1]$,
\begin{equation} \label{hsulsu2}
	\begin{split}
		    |\EE^s \delta A^k_{s,\theta,t}|&=|\EE^s \EE^\theta \delta A^k_{s,\theta,t}|\\
    &\leqslant C\int_\theta^t \Big\{(r-\theta)^{H(\beta-1-\lambda-1/p)}\EE^s\|f_{1}(\cdot,\Xi_{1})\|_{\mathcal{B}_p^{\beta-1-\lambda}}\\
    &\qquad \qquad +(r-\theta)^{H(\beta-1-1/p)}\EE^s\|f_{2}(\cdot,\Xi_{2})\|_{\mathcal{B}_p^{\beta-1}} \Big\}dr.
	\end{split}
\end{equation}
Apply Lemma~\ref{A.2}\ref{A.6} first, then Jensen's inequality for 
conditional expectation to get that
\begin{align*}
    \EE^s\|f_{1}(\cdot,\Xi_{1})\|_{\mathcal{B}_p^{\beta-1-\lambda}}& \leqslant C \|b^k\|_{\mathcal{B}_p^\beta}\, |K_s-\tilde{K}_s|^\lambda\, \EE^s|K_\theta-K_s|\\
    &\leqslant C \|b\|_{\mathcal{B}_p^\beta}\, |K_s-\tilde{K}_s|^\lambda (\EE^s[|K_\theta-K_s|^2])^{1/2}\\
    &\leqslant C \|b\|_{\mathcal{B}_p^\beta}\, |Z_s|^\lambda\, [K]_{\COO{1/2+H}{2}{\infty}{[0,T]}}(\theta-s)^{1/2+H}.
\end{align*}
Hence,
\begin{align} \label{hsu}
    \|\EE^s\|f_{1}(\cdot,\Xi_{1})\|_{\mathcal{B}_p^{\beta-1-\lambda}}\|_{L^2}\leqslant C\|b\|_{\mathcal{B}_p^\beta} \|Z\|_{\mathcal{C}_{[u,v]}(L^2)}^\lambda [K]_{\COO{1/2+H}{2}{\infty}{[0,T]}}(t-s)^{1/2+H}.
\end{align}
By Lemma~\ref{A.2}\ref{A.5}, we have that 
$\EE^s\|f_{2}(\cdot,\Xi_{2})\|_{\mathcal{B}_p^{\beta-1}}\leqslant C \|b^k\|_{\mathcal{B}_p^\beta} 
\EE^s|Z_\theta-Z_s|$. Therefore using the contraction property of conditional 
expectation and \eqref{3.4}, we get that
\begin{align}
    \|\EE^s\|f_{2}(\cdot,\Xi_{2})\|_{\mathcal{B}_p^{\beta-1}}\|_{L^2}&\leqslant C \|b\|_{\mathcal{B}_p^\beta}\|Z_\theta-Z_s\|_{L^2}\label{eq:lsu1}\\
    &\leqslant C \|b\|_{\mathcal{B}_p^\beta}\left([K]_{\COO{1/2+H}{2}{\infty}{[0,T]}}+[\tilde{K}]_{\mathcal{C}_{[0,T]}^{1/2+H}(L^2)}\right)(t-s)^{1/2+H}\label{lsu}.
\end{align}
After putting $\lambda=0$ in \eqref{hsulsu2} and using inequalities \eqref{hsu} and 
\eqref{lsu}, we get
\begin{align*}
|\EE^s \delta &A^k_{s,\theta,t}| \\
&\leqslant C \, \|b\|_{\mathcal{B}_p^\beta} \left([K]_{\COO{1/2+H}{2}{\infty}{[0,T]}} +[\tilde{K}]_{\mathcal{C}_{[0,T]}^{1/2+H}(L^2)}\right) (t-s)^{1/2+H} \int_\theta^t (r-\theta)^{H(\beta-1-1/p)} \, dr. 
\end{align*}
Now using that $\beta-1/p\geqslant -1/(2H)+1$, we obtain that for some 
$C$ depending only on \(T\), \(\|b\|_{\mathcal{B}_p^\beta}\), 
\([K]_{\COO{1/2+H}{2}{\infty}{[0,T]}}\) and \([\tilde{K}]_{\mathcal{C}_{[0,T]}^{1/2+H}(L^2)}\),
\begin{align*}
    \|\EE^s\delta A_{s,\theta,t}^k\|_{L^2}\leqslant C (t-s)^{1+H}.
\end{align*}
Proof of \ref{en:1}: 
We use Corollary~\ref{cor:6.2.3} with $f=b^k$, $\gamma=\beta$, $\varepsilon>0$ 
small enough to ensure that $0>\beta>-1/(2H)+1+\varepsilon$, $\lambda=\lambda_1=1$, $\lambda_2=\varepsilon$, 
$\kappa_1=K_s$, $\kappa_2=\tilde{K}_s$, $\kappa_3=K_{\theta}$ and 
$\kappa_4=\tilde{K_{\theta}}$.

Hence, we get
\begin{align}\label{Asut}
\|\delta A_{s,\theta,t}^k\|_{L^2}\leqslant C \|b\|_{\mathcal{B}_p^\beta} &\|\EE^s[|K_\theta-K_s|^2]^{1/2}\|_{L^\infty}^{\varepsilon}\|K_s-\tilde{K}_s\|_{L^2}(t-\theta)^{1+H(\beta-1/p-1-\varepsilon)}\\
&+ C \|b\|_{\mathcal{B}_p^\beta}  \|Z_\theta-Z_s\|_{L^2}(t-\theta)^{1+H(\beta-1-1/p)}.\nonumber
\end{align}
We have $\|Z_\theta-Z_s\|_{L^2}\leqslant \|R_{s,\theta}\|_{L^2}+\|\tilde{T}_{s,\theta}^K-\tilde{T}_{s,\theta}^{\tilde{K}}\|_{L^2} $. Thus by Lemma~\ref{Hst},
\begin{align} \label{zuzs}
    \|Z_\theta-Z_s\|_{L^2}\leqslant \|R\|_{\mathcal{C}_{[u,v]}^{\alpha}(L^2)}(\theta-s)^\alpha + C \|b\|_{\mathcal{B}_p^\beta} \|Z_s\|_{L^2}(\theta-s)^{1/2}.
\end{align}
It follows directly from the definitions that
\begin{align*}
    \|\EE^s[|K_\theta-K_s|^2]^{1/2}\|_{L^\infty}\leqslant C (\theta-s)^{1/2+H}[K]_{\COO{1/2+H}{2}{\infty}{[0,T]}}
\end{align*}
and that $\|K_s-\tilde{K}_s\|_{L^2}\leqslant 
\|Z\|_{\mathcal{C}_{[u,v]}(L^2)}$. Plugging the last three inequalities into \eqref{Asut} we 
get that
\begin{align*}
    \|\delta A_{s,\theta,t}^k\|_{L^2}\leqslant C\|b\|_{\mathcal{B}_p^\beta} &[K]_{\COO{1/2+H}{2}{\infty}{[0,T]}}^{\varepsilon}\|Z\|_{\mathcal{C}_{[u,v]}(L^2)}(t-s)^{1-H+H(\beta-1/p)+\varepsilon /2}\\
    &+C \|b\|_{\mathcal{B}_p^\beta} \|R\|_{\mathcal{C}_{[u,v]}^{\alpha}(L^2)}(t-s)^{\alpha+1+H(\beta-1-1/p)}\\
    &\quad + C \|b\|_{\mathcal{B}_p^\beta}^2 \|Z\|_{\mathcal{C}_{[u,v]}(L^2)} (t-s)^{3/2-H+H(\beta-1/p)}.
\end{align*}
After using that $\beta-1/p\geqslant-1/(2H)+1$, we obtain that for some $C$ depending only 
on \(T\), \(\|b\|_{\mathcal{B}_p^\beta}\) and \([K]_{\COO{1/2+H}{2}{\infty}{[0,T]}}\) and some 
$\delta>0$, 
\begin{align*}
    \|\delta A_{s,\theta,t}^k\|_{L^2} \leqslant C\left(\|Z\|_{\mathcal{C}_{[u,v]}(L^2)}+\|R\|_{\mathcal{C}_{[u,v]}^{\alpha}(L^2)}\right)(t-s)^{1/2+\delta}.
\end{align*}

Proof of \ref{en:4}: We use \eqref{hsulsu2} with $\lambda=1$, \eqref{hsu}, 
\eqref{eq:lsu1} and \eqref{zuzs} to obtain that
\begin{align*}
    \|\EE^s \delta A_{s,\theta,t}^k\|_{L^2}\leqslant C\|Z\|_{\mathcal{C}_{[u,v]}(L^2)}(t-s)+ C\|R\|_{\mathcal{C}_{[u,v]}^{\alpha}L^2} (t-s)^{1/2+\alpha}.
\end{align*}

\red{That \ref{en:3} holds can be shown by similar arguments as for \ref{enum:2ALT} in the proof of Proposition~\ref{prop:regularityALT}.}
\end{proof}

\section{Existence of weak and strong solutions}\label{thmproof}

In this section we prove Theorem~\ref{th:approx} and Theorem~\ref{thm:uniqueness}. 
As in Section~\ref{uniqueness}, recall that $b$ is not assumed to be \red{a measure}.
	
	In the proofs, we assume that $p \in [m,\infty]$ and $\beta<0$ for some 
$m \geqslant 2$. As explained in Remarks~\ref{rem:betap} and \ref{rk:betaneg}, it is not a 
restriction as we can always reduce to this case under our assumptions.

The scheme of proof is the following: we use a sequence of smooth approximations of the 
drift $(b^{n})_{n\in \N}$ 
that converges in $\mathcal{B}^{\beta-}_{p}$ to $b$. We prove that the sequence 
\((X^n)_{n\in \N}\) of solutions associated to $b^{n}$
is tight 
and that each 
limit point is a weak solution to \eqref{eq:skew}. To prove the existence of a strong solution 
(which is in some sense a Yamada-Watanabe result), we rely on a classical argument of 
Krylov \cite{Krylov}, a new result on the continuity of fractional operators (Lemma 
\ref{lem:operatorcontinuity}) and the aforementionned construction of weak solutions.

On the technical level, we follow the approach of \cite{Atetal} which is based on applications of the stochastic sewing Lemma. However there are noticeable differences due to the non-Markovian character of the fBm. In particular we state immediately below:
\vspace{-0.2cm}
\begin{itemize}
\item Lemma~\ref{lem:mainregularisation}, which is a crucial estimate on the regularity of the integrals of the fBm, relying on fine properties of the 
fBm (notably Lemma \ref{lem:Cs}) and stochastic sewing, and which will be used in Section \ref{subsec:apriori} to establish \emph{a priori} estimates on the solutions;
\item Lemma \ref{lem:operatorcontinuity}, which will be used in Section \ref{mainresults} to prove that solutions are adapted.
\end{itemize}

\begin{lemma}\label{lem:mainregularisation}
Let $(\psi_t)_{t\in[0,T]}$ be a stochastic process adapted to $\mathbb{F}$. Let 
$m\in[2,\infty)$, $n\in [m,\infty]$, $p \in [n,\infty]$ and $\gamma<0$ such that 
$(\gamma-1/p)H>-1/2$. Let $\alpha \in (0,1)$ such that $H(\gamma-1/p-1)+\alpha>0$. There 
exists a constant $C>0$
 such that for any $f\in \mathcal{C}^\infty_b(\mathbb{R})\cap \mathcal{B}_p^\gamma$ and any $(s,t) \in \Delta_{[0,T]}$ we have
\begin{equation} \label{eq:lem5.2}
	\begin{split}
		\bigg\|\EE^s\bigg[\left| \int_s^t f(B_r+\psi_r) dr \right|^m\bigg]^{1/m}\bigg\|_{L^n} \leqslant& C \|f\|_{\mathcal{B}_p^\gamma}(t-s)^{1+H(\gamma-1/p)}\\
    &+C \|f\|_{\mathcal{B}_p^\gamma} [\psi]_{\COO{\alpha}{m}{n}{[s,t]}} (t-s)^{1+H(\gamma-1-1/p)+\alpha},
	\end{split}
\end{equation}
where we recall that the seminorm $[\psi]_{\COO{\alpha}{m}{n}{[s,t]}}$ is defined in \eqref{eq:defbracket}.
\end{lemma}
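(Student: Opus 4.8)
The plan is to apply the stochastic sewing Lemma (Lemma~\ref{sts}) to the two-parameter process $A_{s,t} := \int_s^t \EE^s[f(B_r+\psi_s)]\, dr$ — i.e. the natural "freezing" of the integrand at the left endpoint $s$, after taking the conditional expectation $\EE^s$ — and to identify the resulting sewn process with $\mathcal{A}_t := \int_0^t f(B_r+\psi_r)\, dr$. First I would fix $(s,t)\in\Delta_{[0,T]}$ and $\theta\in(s,t)$ and set up the germ $A_{s,t}$; the two hypotheses of stochastic sewing to verify are a bound on $\|A_{s,t}\|_{L^n}$ (of the form $(t-s)^{1/2+}$) and a bound on $\|\EE^s[\delta A_{s,\theta,t}]\|_{L^n}$. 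For the first, I would write $A_{s,t} = \int_s^t \EE^s[f(B_r+\psi_s)]\, dr$ and estimate $\|\EE^s[f(B_r+\psi_s)]\|$ using Lemma~\ref{lem:Cs}\ref{(C.10)} with $\Xi=\psi_s$ (which is $\mathcal{F}_s$-measurable), giving $|\EE^s[f(B_r+\psi_s)]| \leqslant C\|f(\cdot,\psi_s)\|_{\mathcal{B}_p^\gamma}(r-s)^{H(\gamma-1/p)}$, then use Lemma~\ref{A.2} to replace $\|f(\cdot,\psi_s)\|_{\mathcal{B}_p^\gamma}$ by $\|f\|_{\mathcal{B}_p^\gamma}$, and integrate in $r$; the condition $(\gamma-1/p)H>-1/2$ is exactly what makes the resulting exponent $1+H(\gamma-1/p)$ exceed $1/2$ and makes the integral converge.

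For the second hypothesis, the key computation is the $\delta$-increment. Writing $\delta A_{s,\theta,t} = \int_\theta^t \left(\EE^s[f(B_r+\psi_s)] - \EE^\theta[f(B_r+\psi_\theta)]\right) dr$ and conditioning, $\EE^s[\delta A_{s,\theta,t}] = \int_\theta^t \EE^s\left[f(B_r+\psi_s) - f(B_r+\psi_\theta)\right] dr$, I would apply Lemma~\ref{lem:Cs}\ref{(C.10)} to the difference $f(\cdot+\psi_s)-f(\cdot+\psi_\theta)$ (with the pair $(\psi_s,\psi_\theta)$ as the $\mathcal{F}_\theta$-measurable vector $\Xi$) at regularity $\beta-1$ one notch lower, picking up a factor $(r-\theta)^{H(\gamma-1-1/p)}$, then use Lemma~\ref{A.2}\ref{A.5} (the Lipschitz-type bound $\|f(\cdot+a)-f(\cdot+b)\|_{\mathcal{B}_p^{\gamma-1}}\leqslant C\|f\|_{\mathcal{B}_p^\gamma}|a-b|$) to extract $|\psi_\theta-\psi_s|$, and bound $\|\EE^s|\psi_\theta-\psi_s|\|_{L^n}$ via the seminorm $[\psi]_{\COO{\alpha}{m}{n}{[s,t]}}$ by $(\theta-s)^\alpha$ times that seminorm. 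Integrating $(r-\theta)^{H(\gamma-1-1/p)}$ over $[\theta,t]$ — convergent and with the right exponent precisely because $H(\gamma-1/p-1)+\alpha>0$ is assumed — yields a bound of order $[\psi]_{\COO{\alpha}{m}{n}{[s,t]}}(t-s)^{1+H(\gamma-1-1/p)+\alpha}$, which is the second term on the right-hand side of \eqref{eq:lem5.2}. One also has to check the identification condition, namely that $\sum_{\Pi_n} A_{t_i,t_{i+1}}$ converges to $\mathcal{A}_t-\mathcal{A}_0$ along partitions with vanishing mesh; this follows, as in the proof of Proposition~\ref{prop:regularityALT}\ref{enum:2ALT}, by first establishing the estimate for smooth $f$ (so that $\mathcal{A}$ is genuinely differentiable and $A_{s,t}-\int_s^t f(B_r+\psi_r)dr$ is controlled), and here $f\in\mathcal{C}^\infty_b$ is assumed throughout, which simplifies this point considerably.

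The main obstacle is bookkeeping the two exponents simultaneously so that both lie above the critical threshold $1/2$ demanded by stochastic sewing, while keeping the constant $C$ independent of $f$ and of $(s,t)$: one must be careful that the application of Lemma~\ref{lem:Cs} at level $\gamma$ (for the size bound) and at level $\gamma-1$ (for the $\delta$-bound) both have integrable singularities, which is guaranteed respectively by $(\gamma-1/p)H>-1/2$ and $H(\gamma-1/p-1)+\alpha>0$, and that the $L^n$ norms are handled correctly — here the assumption $p\geqslant n\geqslant m$ lets one move the conditional $L^m$-norm inside and use Minkowski/Jensen to bound everything by $[\psi]_{\COO{\alpha}{m}{n}{[s,t]}}$ without loss. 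Once the two germ estimates are in place with these exponents, the conclusion is a direct invocation of Lemma~\ref{sts} and a final use of Lemma~\ref{A.2} to pass from $\|f(\cdot,\psi_s)\|_{\mathcal{B}_p^\gamma}$ to $\|f\|_{\mathcal{B}_p^\gamma}$ uniformly.
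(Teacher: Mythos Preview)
Your strategy is correct and follows the same stochastic sewing template as the paper, with one notable variation in the choice of germ. The paper takes $A_{\tilde s,\tilde t}=\int_{\tilde s}^{\tilde t} f(B_r+\psi_{\tilde s})\,dr$ \emph{without} the conditional expectation, whereas you take $A_{s,t}=\int_s^t \EE^s[f(B_r+\psi_s)]\,dr$. The $\EE^s[\delta A]$ computation is identical in both cases (since $\EE^s[A^{\text{paper}}_{s,t}]=A^{\text{student}}_{s,t}$), so your treatment of condition~\eqref{sts1} matches the paper's. The difference shows up in~\eqref{sts2}: because your germ is $\mathcal{F}_s$-measurable, Lemma~\ref{lem:Cs}\ref{(C.10)} gives a \emph{pointwise} bound $|A_{s,t}|\leqslant C\|f\|_{\mathcal{B}_p^\gamma}(t-s)^{1+H(\gamma-1/p)}$, and the $\delta A$ bound follows by triangle inequality --- you avoid invoking Lemma~\ref{regulINT}, which the paper needs to control $\|\EE^s[|A_{\tilde s,\tilde t}|^m]^{1/m}\|_{L^n}$ for its non-adapted germ. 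The trade-off is in the identification step: with the paper's germ one only needs $f\in\mathcal{C}^1$ and continuity of $\psi$ in $L^m$, whereas with yours one must also control $f(B_r+\psi_{t_i})-\EE^{t_i}[f(B_r+\psi_{t_i})]$, which is handled by Lemma~\ref{lem:Cs}\ref{(C.11)}. Two small imprecisions to fix: the hypothesis~\eqref{sts2} is on $\delta A$, not on $A_{s,t}$ itself (the triangle inequality you have in mind should be stated); and when applying Lemma~\ref{lem:Cs}\ref{(C.10)} with $\Xi=(\psi_s,\psi_\theta)$ you must first insert $\EE^\theta$ via the tower property, since $\psi_\theta$ is only $\mathcal{F}_\theta$-measurable --- your factor $(r-\theta)^{H(\gamma-1-1/p)}$ shows you have this in mind.
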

A similar regularisation result was proposed in \cite[Lemma 4.7]{ButkovskyEtAl} \red{for functions of positive regularity}. There, the deterministic sewing lemma is used instead of the stochastic sewing Lemma, hence the conditional expectation does not appear. \red{Note that due to an embedding argument, it would be no restriction here to work in H\"older spaces, yet we directly work in Besov spaces to stay consistent throughout the paper.}

The proof of Lemma \ref{lem:mainregularisation} is postponed to
Appendix~\ref{app:sewing}.

\subsection{\emph{A priori} estimates}\label{subsec:apriori}

Before starting the main proofs of existence of solutions, we need \emph{a priori} estimates on solutions. These estimates are established for drifts in $\mathcal{C}_b^\infty(\mathbb{R}) \cap \mathcal{B}^\beta_p$ and therefore the solution is unique and strong. Note that the following lemma looks similar to Proposition~\ref{prop:regularityALT}.
However we work here within the assumptions of Theorem \ref{th:approx}, that is with a drift that may not be \red{a measure} and $\beta-1/p>1/2-1/(2H)$ (recall that Proposition~\ref{prop:regularityALT} allows the milder condition $\beta-1/p>-1/(2H)$).

\begin{lemma} \label{regularity3.2}
Let $m \in [2,\infty)$. There exists $C > 0$, 
 such that, for any ${b\in 
\mathcal{C}_b^\infty(\mathbb{R}) \cap \mathcal{B}^\beta_p}$ \red{with 
$\beta-1/p>1/2-1/(2H)$, one has} 
\begin{equation} \label{eq:regularity3.2}
    [X-B]_{\mathcal{C}_{[0,T]}^{1+H(\beta-1/p)}(L^{m,\infty})}\leqslant 
    C\, \|b\|_{\mathcal{B}^\beta_p}\left(1+\|b\|_{\mathcal{B}^\beta_p}^{\frac{H(1/p-\beta)}{1+H(\beta-1/p)-H}}\right)\leqslant C\, \left(1+\|b\|_{\mathcal{B}^\beta_p}^2\right),
\end{equation}
where $X$ is the strong solution to  \eqref{eq:skew} with 
drift $b$, and we recall that the seminorm $[\cdot]_{\COO{\alpha}{m}{n}{[s,t]}}$ was defined in \eqref{eq:defbracket}.
\end{lemma}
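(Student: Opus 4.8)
The plan is to apply Lemma~\ref{lem:mainregularisation} with $\psi = X - B$, which is exactly the object whose H\"older-in-$L^{m,\infty}$ norm we want to bound, and then close the estimate by a fixed-point-style (or bootstrap) argument. First I would note that since $b\in\mathcal{C}_b^\infty(\mathbb{R})\cap\mathcal{B}^\beta_p$, the SDE \eqref{eq:skew} has a unique strong solution $X$, so $K := X - B = \int_0^\cdot b(X_r)\,dr$ is well defined and $(K_t)$ is adapted to $\mathbb{F}^B$. Fix $(s,t)\in\Delta_{[0,T]}$; then $K_t - K_s = \int_s^t b(B_r + K_r)\,dr$, which is precisely the quantity estimated by Lemma~\ref{lem:mainregularisation} with $f = b$, $\gamma = \beta$, and an appropriate $\alpha\in(0,1)$ with $H(\beta-1/p-1)+\alpha>0$ (such $\alpha<1$ exists because $\beta-1/p>1/2-1/(2H)$ forces $1+H(\beta-1/p-1)>1/2+H-1>\ldots$ in any case positive after the embedding reductions; one takes $\alpha$ slightly above $1-(1+H(\beta-1/p-1))$). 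The hypotheses of Lemma~\ref{lem:mainregularisation} on the exponents ($p\in[n,\infty]$, $(\gamma-1/p)H>-1/2$) are guaranteed by the reductions in Remark~\ref{rem:betap} and the standing assumption $\beta-1/p>1/2-1/(2H)$.

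Applying Lemma~\ref{lem:mainregularisation} gives, for all $(s,t)\in\Delta_{[0,T]}$,
\begin{align*}
\big\|\EE^s[|K_t-K_s|^m]^{1/m}\big\|_{L^\infty}
&\leqslant C\|b\|_{\mathcal{B}_p^\beta}(t-s)^{1+H(\beta-1/p)}\\
&\quad + C\|b\|_{\mathcal{B}_p^\beta}\,[K]_{\COO{\alpha}{m}{\infty}{[s,t]}}\,(t-s)^{1+H(\beta-1/p-1)+\alpha}.
\end{align*}
Next I would choose $\alpha$ in a way that lets me absorb the second term: pick $\alpha$ slightly larger than $1+H(\beta-1/p)$ minus a small exponent gain, so that dividing both sides by $(t-s)^{\alpha}$ and taking the supremum over $(s,t)\subset[0,\tau]$ (for a short interval of length $\tau$) yields a bound of the form
\[
[K]_{\COO{\alpha}{m}{\infty}{[0,\tau]}}
\leqslant C\|b\|_{\mathcal{B}_p^\beta}\,\tau^{\,1+H(\beta-1/p)-\alpha}
+ C\|b\|_{\mathcal{B}_p^\beta}\,\tau^{\,1+H(\beta-1/p-1)}\,[K]_{\COO{\alpha}{m}{\infty}{[0,\tau]}}.
\]
Choosing $\tau$ small enough (depending only on $C$, $\|b\|_{\mathcal{B}_p^\beta}$, $H$, $\beta$, $p$) so that $C\|b\|_{\mathcal{B}_p^\beta}\tau^{1+H(\beta-1/p-1)}\leqslant 1/2$, the last term is absorbed and one gets a finite bound on $[K]_{\COO{\alpha}{m}{\infty}{[0,\tau]}}$; this $a~priori$ finiteness then lets one re-run the estimate at the sharp exponent $1+H(\beta-1/p)$ (rather than $\alpha$) to obtain $[K]_{\COO{1+H(\beta-1/p)}{m}{\infty}{[0,\tau]}}\leqslant C\|b\|_{\mathcal{B}_p^\beta}(1+\|b\|_{\mathcal{B}_p^\beta}\tau^{\,H(\beta-1/p-1)+1-\ldots})$. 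Tracking the dependence of the threshold $\tau$ on $\|b\|_{\mathcal{B}_p^\beta}$ — namely $\tau \sim \|b\|_{\mathcal{B}_p^\beta}^{-1/(1+H(\beta-1/p-1))}$ — and finally covering $[0,T]$ by $O(T/\tau)$ such subintervals (adding the contributions, which is legitimate for the $[\cdot]_{\mathcal{C}^\alpha}$-type seminorm on $L^{m,\infty}$ via the standard concatenation lemma for such seminorms) produces the stated power $\|b\|_{\mathcal{B}_p^\beta}^{H(1/p-\beta)/(1+H(\beta-1/p)-H)}$; the second, cruder inequality $\leqslant C(1+\|b\|_{\mathcal{B}_p^\beta}^2)$ follows because the exponent $H(1/p-\beta)/(1+H(\beta-1/p)-H)$ is bounded by $1$ under \eqref{eq:assumptionweak}, so the whole right-hand side is $\leqslant C\|b\|(1+\|b\|)\leqslant C(1+\|b\|^2)$.

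**Main obstacle.** The delicate point is the bookkeeping when concatenating the short-interval estimates into a global one: the threshold length $\tau$ depends on $\|b\|_{\mathcal{B}_p^\beta}$, so the number of subintervals is $\sim T\|b\|_{\mathcal{B}_p^\beta}^{1/(1+H(\beta-1/p-1))}$, and one must check that summing the per-interval bounds reproduces exactly the exponent $\frac{H(1/p-\beta)}{1+H(\beta-1/p)-H}$ and not something worse. This requires care because the $[\cdot]_{\COO{\alpha}{m}{n}{[0,T]}}$ seminorm is not simply additive over a partition — one uses that for $(s,t)$ straddling several subintervals, $\EE^s[|K_t-K_s|^m]^{1/m}\leqslant \sum_i \EE^s[|K_{t_{i+1}}-K_{t_i}|^m]^{1/m}$ by the triangle inequality in $L^m(\Omega)$ and then the tower property to reduce each term to the conditional bound on its own subinterval — so the constant picks up the number of subintervals linearly, which is where the extra power of $\|b\|_{\mathcal{B}_p^\beta}$ comes from. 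Verifying that this is precisely the advertised exponent (and in particular $\leqslant 1$, so that the second inequality holds) is the crux of the argument; everything else is a direct application of Lemma~\ref{lem:mainregularisation} together with the embedding reductions of Remark~\ref{rem:betap}.
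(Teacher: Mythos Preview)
Your approach is essentially the paper's: apply Lemma~\ref{lem:mainregularisation} with $\psi=K$ and $n=\infty$, absorb on a short interval of length $\ell\sim\|b\|_{\mathcal{B}^\beta_p}^{-1/(1+H(\beta-1/p)-H)}$, then concatenate. Two small tightenings match the paper and remove the vagueness: first, there is no need to vary $\alpha$ or ``re-run'' --- take $\alpha=1+H(\beta-1/p)$ from the outset, the a~priori finiteness of $[K]_{\COO{\alpha}{m}{\infty}{[0,T]}}$ being immediate from $|K_t-K_s|\leqslant\|b\|_\infty|t-s|$ since $b\in\mathcal{C}^\infty_b$; second, in the concatenation the factor picked up is not linear in the number $N$ of subintervals but $N^{H(1/p-\beta)}$ (because $\sum_{k=1}^N((t-s)/N)^{1+H(\beta-1/p)}=N^{H(1/p-\beta)}(t-s)^{1+H(\beta-1/p)}$), which is exactly what produces the advertised exponent after substituting $N\sim\|b\|_{\mathcal{B}^\beta_p}^{1/(1+H(\beta-1/p)-H)}$.
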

Another difference with Proposition~\ref{prop:regularityALT} is that we obtain here an estimate on the seminorm $[\cdot]_{\mathcal{C}_{[0,T]}^{1+H(\beta-1/p)}(L^{m,\infty})}$, which is stronger (see \eqref{3.4}) than the seminorm $[\cdot]_{\mathcal{C}_{[0,T]}^{1+H(\beta-1/p)}(L^{m})}$ used in Proposition~\ref{prop:regularityALT}.

\begin{proof}
Note that it is sufficient to show the first inequality in \eqref{eq:regularity3.2} as the second one follows immediately from
\(1+H(\beta-1/p) - H > H(1/p -\beta) > 0\). Without loss of generality, we assume that $X_{0}=0$ and denote $K=X-B$. Then $[K]_{\mathcal{C}^{\alpha}_{[0,T]}(L^{m,\infty})}$ is finite for any $\alpha \in (0,1]$ as
\begin{align*}
    |K_t-K_s|=\left|\int_s^t b(B_r+K_r)dr\right|\leqslant \|b\|_{\infty}|t-s|.
\end{align*}
We will apply Lemma~\ref{lem:mainregularisation} with
 $n=\infty$, $\alpha=1+H(\beta-1/p)$ and considering $b \in \mathcal{B}^{\beta-1/p}_{\infty}$ 
 after an embedding.
Remark that \(\alpha > \alpha - H  > 1/2 - H/2 > 0\) and since we assume $\beta<0$ 
in the whole section, \(\alpha \in (0,1)\).
In addition,  $H(\beta-1/p)>H/2-1/2>-1/2$ and
 $H(\beta-1/p-1)+\alpha>0$.  So, the assumptions of 
 Lemma~\ref{lem:mainregularisation} are fulfilled.
Then we get for some $C,\tilde{C}>0$ that
\begin{align} \label{eq:beforeiteration}
    \|\EE^s[|K_t-K_s|&^m]^{1/m}\|_{L^\infty}\nonumber\\
    &\leqslant C \|b\|_{\mathcal{B}_\infty^{\beta-1/p}}\left((t-s)^{1+H(\beta-1/p)}+[K]_{\COO{\alpha}{m}{\infty}{[s,t]}}(t-s)^{1+H(\beta-1/p)+\alpha-H}\right) \nonumber\\
    &\leqslant \tilde{C} 
    \|b\|_{\mathcal{B}_p^\beta}(t-s)^{1+H(\beta-1/p)}\left(1+[K]_{\COO{\alpha}{m}{\infty}{[s,t]}}(t-s)^{\alpha-H}\right),
\end{align}
where we used a Besov space embedding in the second line.

Choose $l = (4\tilde{C}	\|b\|_{\mathcal{B}^\beta_p})^{1/(H-\alpha)}$ so that $\tilde{C} 
	\|b\|_{\mathcal{B}^\beta_p}l^{\alpha-H}<1/2$.
	Let $u \in [0,T]$. After dividing both sides in 
\eqref{eq:beforeiteration} by $(t-s)^{1+H(\beta-1/p)}$ and taking the supremum over 
$(s,t) \in \Delta_{[u,(u+l)\wedge T]}$ we get
\begin{align*}
[K]_{\COO{1+H(\beta-1/p)}{m}{\infty}{[u,(u+l)\wedge T]}}\leqslant \left(\tilde{C}\|b\|_{\mathcal{B}_p^\beta}+1/2 [K]_{\COO{\alpha}{m}{\infty}{[u,(u+l)\wedge T]}}\right)
\end{align*}
and therefore
\begin{align} \label{eq:beforeiteration2}
    [K]_{\COO{1+H(\beta-1/p)}{m}{\infty}{[u,(u+l) \wedge T]}}\leqslant 2 \tilde{C} \|b\|_{\mathcal{B}_p^\beta}.
\end{align}
If $l>T$, then \eqref{eq:regularity3.2} follows immediately from \eqref{eq:beforeiteration2}. Hence, assume $l\leqslant T$. In order to obtain \eqref{eq:regularity3.2} we will iteratively apply inequality 
\eqref{eq:beforeiteration2}. Let $(s,t) \in \Delta_{[0,T]}$ be arbitrary. Let 
\(N = \lceil T/l\rceil\) 
 and define the sequence 
$(s_k)_{k=0}^N$ by $s_{k} = s+ k(t-s)/N$. 
Using the triangle inequality, that $\mathcal{F}_s \subset \mathcal{F}_{s_{k-1}}$ for $k\geqslant 1$, the 
contraction property of conditional expectation and \eqref{eq:beforeiteration2} we get
\begin{align*}
    \|\EE^s[|K_t-K_s|^m]^{1/m}\|_{L^\infty}&\leqslant \sum_{k=1}^N \|\EE^s[|K_{s_k}-K_{s_{k-1}}|^m]^{1/m}\|_{L^\infty}\\
    &\leqslant \sum_{k=1}^N \|\EE^{s_{k-1}}[|K_{s_k}-K_{s_{k-1}}|^m]^{1/m}\|_{L^\infty}\\
    &\leqslant \tilde{C} \|b\|_{\mathcal{B}_p^\beta} \sum_{k=1}^N (s_k-s_{k-1})^{1+H(\beta-1/p)}\\
    &\leqslant \tilde{C} \|b\|_{\mathcal{B}_p^\beta} \sum_{k=1}^N \left(\frac{t-s}{N}\right)^{1+H(\beta-1/p)}\\
    &\leqslant \red{\tilde{C} \|b\|_{\mathcal{B}_p^\beta} N^{H(1/p-\beta)} (t-s)^{1+H(\beta-1/p)}}.
\end{align*}
Using \(N \leqslant 1+\frac{T}{l} \leqslant 2 \frac{T}{l}\leqslant C\|b\|_{\mathcal{B}^\beta_p}^{-\frac{1}{H-\alpha}}$, we get
\begin{align*}
\|b\|_{\mathcal{B}^\beta_p} N^{H(1/p-\beta)}\leqslant C\|b\|_{\mathcal{B}^\beta_p}\|b\|_{\mathcal{B}^\beta_p}^{-\frac{H(1/p-\beta)}{H-\alpha}}
\end{align*}
and therefore \eqref{eq:regularity3.2}.
\end{proof}

\begin{lemma} \label{lem:KHoelder}
Let $b,h \in \mathcal{C}_b^\infty(\mathbb{R}) \cap \mathcal{B}_p^\beta$ \red{where $\beta-1/p>1/2-1/(2H)$}. Let $X$ be 
the strong solution to \eqref{eq:skew} with drift $b$. Let $\delta \in 
(0,1+H(\beta-1/p))$. Then there exists a constant $C>0$ which is 
independent of $X_0$, $b$ and $h$, and a nonnegative random variable $Z$ which satisfies 
$\EE[Z]\leqslant 
C \|h\|_{\mathcal{B}_p^\beta}(1+\|b\|^2_{\mathcal{B}_p^\beta})$ such that
\begin{equation}\label{eq:averagingX}
    \left|\int_s^t h(X_r) dr\right|\leqslant Z\, |t-s|^\delta.
\end{equation}
\end{lemma}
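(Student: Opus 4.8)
The plan is to deduce this from the \emph{a priori} estimate of Lemma~\ref{regularity3.2} together with the regularisation estimate of Lemma~\ref{lem:mainregularisation}, applied now to the function $h$ rather than to $b$ (while still using that $X-B=K$ has the regularity dictated by the drift $b$). First I would set $K=X-B$, fix $m=2$, and note by Lemma~\ref{regularity3.2} that $[K]_{\mathcal{C}_{[0,T]}^{1+H(\beta-1/p)}(L^{m,\infty})}\leqslant C(1+\|b\|_{\mathcal{B}^\beta_p}^2)$; in particular, for $\alpha=1+H(\beta-1/p)$, the seminorm $[K]_{\COO{\alpha}{m}{\infty}{[s,t]}}$ is bounded uniformly in $(s,t)$ by this quantity. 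One checks as in the proof of Lemma~\ref{regularity3.2} that the hypotheses of Lemma~\ref{lem:mainregularisation} hold with this choice of $\alpha$, $n=\infty$, and with $h$ (embedded into $\mathcal{B}^{\beta-1/p}_\infty$) in the role of $f$; since $\psi=K$ is $\mathbb{F}$-adapted and $h$ is smooth and bounded, the lemma applies and yields
\begin{equation*}
\Big\|\EE^s\Big[\Big|\int_s^t h(B_r+K_r)\,dr\Big|^m\Big]^{1/m}\Big\|_{L^\infty}\leqslant C\|h\|_{\mathcal{B}^\beta_p}\Big((t-s)^{1+H(\beta-1/p)}+[K]_{\COO{\alpha}{m}{\infty}{[s,t]}}(t-s)^{1+H(\beta-1/p)+\alpha-H}\Big).
\end{equation*}
Using the bound on $[K]_{\COO{\alpha}{m}{\infty}{[s,t]}}$ and that both exponents of $(t-s)$ exceed $\delta$ (after further shrinking, since $\delta<1+H(\beta-1/p)$ and $\alpha-H>0$), this gives, for a fixed $\delta\in(0,1+H(\beta-1/p))$,
\begin{equation*}
\Big\|\EE^s\Big[\Big|\textstyle\int_s^t h(X_r)\,dr\Big|^m\Big]^{1/m}\Big\|_{L^\infty}\leqslant C\,\|h\|_{\mathcal{B}^\beta_p}\big(1+\|b\|^2_{\mathcal{B}^\beta_p}\big)\,(t-s)^\delta
\end{equation*}
for $(s,t)\in\Delta_{[0,T]}$ with $t-s$ at most some fixed length, and then for all $(s,t)\in\Delta_{[0,T]}$ after covering $[0,T]$ by finitely many such intervals and using the triangle inequality together with the tower/contraction property of conditional expectation, exactly as at the end of the proof of Lemma~\ref{regularity3.2} (the constant picks up a factor depending only on $T$ and $\delta$).

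Finally I would construct the random variable $Z$ by a Kolmogorov-type argument on the process $t\mapsto \int_0^t h(X_r)\,dr$. The estimate just obtained controls, in particular, $\big\|\int_s^t h(X_r)\,dr\big\|_{L^m}\leqslant C\|h\|_{\mathcal{B}^\beta_p}(1+\|b\|^2_{\mathcal{B}^\beta_p})(t-s)^\delta$ for all $(s,t)$; choosing $m$ large enough (which is allowed, since Lemma~\ref{regularity3.2} and Lemma~\ref{lem:mainregularisation} hold for every $m\in[2,\infty)$) so that $\delta m>1$, Kolmogorov's continuity criterion furnishes a random variable
\begin{equation*}
Z:=\sup_{(s,t)\in\Delta_{[0,T]},\,s\neq t}\frac{\big|\int_s^t h(X_r)\,dr\big|}{|t-s|^{\delta'}}
\end{equation*}
for a slightly smaller $\delta'<\delta$, with $\EE[Z]\leqslant C\|h\|_{\mathcal{B}^\beta_p}(1+\|b\|^2_{\mathcal{B}^\beta_p})$; relabelling $\delta'$ as $\delta$ (which is harmless since $\delta$ ranges over the open interval) gives \eqref{eq:averagingX}. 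Since the constants in Lemmas~\ref{regularity3.2} and \ref{lem:mainregularisation} do not depend on $X_0$, and $h$ enters only linearly, the final constant $C$ is independent of $X_0$, $b$ and $h$.

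The main obstacle is bookkeeping rather than conceptual: one must make sure that the parameter constraints of Lemma~\ref{lem:mainregularisation} ($(\gamma-1/p)H>-1/2$, $H(\gamma-1-1/p)+\alpha>0$, $\alpha\in(0,1)$) are met simultaneously with the choice $\alpha=1+H(\beta-1/p)$ used in Lemma~\ref{regularity3.2}, and that the exponent $1+H(\beta-1/p)+\alpha-H$ appearing on the ``$[K]$'' term is indeed at least $\delta$ (it equals $2(1+H(\beta-1/p))-H>1+H(\beta-1/p)>\delta$, so this is fine). A secondary point to handle carefully is passing from the conditional-$L^\infty$ bound to an almost-sure Hölder modulus via Kolmogorov while keeping the $\EE[Z]$ bound linear in $\|h\|_{\mathcal{B}^\beta_p}$; this is standard once $m$ is taken large.
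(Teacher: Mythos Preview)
Your approach is essentially the same as the paper's: apply Lemma~\ref{lem:mainregularisation} to $h$ with $\psi=K=X-B$, feed in the a~priori bound on $[K]$ from Lemma~\ref{regularity3.2}, and conclude by Kolmogorov's continuity criterion with $m$ large. The paper is slightly more direct in that it takes $n=m$ (and $\gamma=\beta$) in Lemma~\ref{lem:mainregularisation} and uses \eqref{3.4} to pass from $[K]_{\mathcal{C}^{\alpha}(L^m)}$ to $[K]_{\mathcal{C}^{\alpha}(L^{m,\infty})}$, which makes your covering-by-small-intervals step superfluous: since $[K]$ is already bounded by Lemma~\ref{regularity3.2}, there is no buckling to perform and the estimate holds for all $(s,t)\in\Delta_{[0,T]}$ at once.
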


\begin{proof}
Let $\Lambda^h_t:=\int_0^t h(X_r) dr$. We  apply 
Lemma~\ref{lem:mainregularisation} to establish an upper bound for 
$\| \Lambda_t^h-\Lambda_s^h\|_{L^m}$, with the 
parameters $n=m$, $\gamma=\beta (<0)$ and $\alpha= 1+H(\beta-1/p)$. 
Hence, we get
\begin{align} \label{eq:kolm}
    \|\Lambda_t^h-\Lambda_s^h\|_{L^m}\leqslant C 
    \|h\|_{\mathcal{B}_p^\beta} 
    (1+[X-B]_{\mathcal{C}_{[0,T]}^{\alpha}(L^m)})(t-s)^{1+H(\beta-1/p)}.
\end{align}
By \eqref{3.4} and Lemma~\ref{regularity3.2} we know that
\begin{align*}
    [X-B]_{\mathcal{C}_{[0,T]}^{\alpha}(L^m)}\leqslant 
    [X-B]_{\COO{\alpha}{m}{\infty}{[0,T]}}\leqslant C\, (1 +  
    \|b\|^2_{\mathcal{B}_p^\beta}).
\end{align*}
Since for any $m$ there exists a constant $C$ such that \eqref{eq:kolm} holds, the result follows from Kolmogorov's continuity criterion.
\end{proof}

\subsection{Tightness and stability}\label{tightnessandstability}

In Proposition~\ref{prop:tightness} we show that if $(b^n)_{n\in \N}$ approximates $b$, the solution $X^n$ to \eqref{eq:skew} with drift $b^n$ converges weakly  (up to taking a subsequence). Proposition~\ref{prop:stability} states that 
the limit in probability of such a sequence $(X^n)_{n\in \N}$ is a solution to the original 
equation with drift $b$.

\begin{proposition} \label{prop:tightness}
Let \((b^n)_{n \in \mathbb{N}}\) 
be a sequence of 
smooth bounded functions converging to $b$ in $\mathcal{B}_p^{\beta-}$ \red{where $b \in \mathcal{B}_p^\beta$ for $\beta-1/p>1/2-1/(2H)$}. For $n 
\in \mathbb{N}$, let $X^{n}$ be the strong solution to  
\eqref{eq:skew} with initial condition $X_0$ and drift $b^n$. Then there exists a 
subsequence $(n_k)_{k\in \mathbb{N}}$ such that $(X^{n_k},B)_{k \in 
\mathbb{N}}$ converges weakly in the space $[\mathcal{C}_{[0,T]}]^2$.
\end{proposition}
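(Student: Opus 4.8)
The plan is to obtain tightness of the laws of $(X^n,B)_{n}$ directly from the \emph{a priori} estimate of Lemma~\ref{regularity3.2}, and then conclude by Prokhorov's theorem. Recall that, by the reduction made at the beginning of this section, we may assume $\beta<0$ and $p\in[m,\infty]$. Write $X^n = X_0 + K^n + B$ with $K^n_t := \int_0^t b^n(X^n_r)\,dr$, so that $K^n_0 = 0$. Since $(b^n)_n$ converges to $b$ in $\mathcal{B}_p^{\beta-}$, the quantity $M:=\sup_{n}\|b^n\|_{\mathcal{B}^\beta_p}$ is finite, and each $b^n$ lies in $\mathcal{C}^\infty_b(\mathbb{R})\cap\mathcal{B}^\beta_p$ with $\beta-1/p>1/2-1/(2H)$; hence Lemma~\ref{regularity3.2} applies uniformly in $n$.

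Fix $m\ge 2$. Lemma~\ref{regularity3.2} gives $[K^n]_{\mathcal{C}_{[0,T]}^{1+H(\beta-1/p)}(L^{m,\infty})}\le C(1+M^2)$ for every $n$, hence by \eqref{3.4}
\[
\big\|K^n_t-K^n_s\big\|_{L^m}\ \le\ C(1+M^2)\,|t-s|^{1+H(\beta-1/p)},\qquad (s,t)\in\Delta_{[0,T]},\ n\in\mathbb{N}.
\]
The assumption $\beta-1/p>1/2-1/(2H)$ forces $1+H(\beta-1/p)>\tfrac12+\tfrac H2>\tfrac12$, so that $m\bigl(1+H(\beta-1/p)\bigr)>1$ already for $m=2$. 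Together with $\|K^n_0\|_{L^m}=0$, this is exactly the hypothesis of Kolmogorov's tightness (continuity) criterion applied to the family $(K^n)_n$, which therefore is tight in $\mathcal{C}_{[0,T]}$ endowed with $\|\cdot\|_\infty$.

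Finally, since all the $X^n$ are built on the same probability space carrying the fixed fBm $B$, the second marginal $\mathrm{Law}(B)$ is constant in $n$ and hence tight; as tightness of the marginals implies tightness of the joint laws, $(K^n,B)_n$ is tight in $[\mathcal{C}_{[0,T]}]^2$, and pushing forward under the continuous map $(\kappa,\omega)\mapsto(X_0+\kappa+\omega,\ \omega)$ yields tightness of $(X^n,B)_n$. Prokhorov's theorem then provides a subsequence $(n_k)_k$ along which $(X^{n_k},B)$ converges weakly in $[\mathcal{C}_{[0,T]}]^2$. I do not anticipate any real difficulty here: the substance lies entirely in Lemma~\ref{regularity3.2} (in particular in the fact that its constant does not depend on the mollification level of the drift, so that the increment bound is uniform in $n$); the only points requiring care are this uniformity in $n$ and recording tightness \emph{jointly} with $B$, since the latter is what later allows one to identify the weak limit as a solution to \eqref{eq:skew}.
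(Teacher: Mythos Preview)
Your proof is correct and follows essentially the same route as the paper's: both rely on Lemma~\ref{regularity3.2} for a uniform-in-$n$ increment bound on $K^n=X^n-X_0-B$, deduce tightness of $(K^n)_n$ and then of the pair $(K^n,B)_n$, and conclude with Prokhorov. The only cosmetic difference is that you invoke Kolmogorov's tightness criterion directly, whereas the paper defines explicit Arzel\`a--Ascoli compact sets $A_M=\{f:f(0)=0,\ |f_{s,t}|\le M(t-s)^{1+H(\beta-1/p)}\}$ and bounds $\mathbb{P}(K^n\notin A_M)$ via Markov's inequality.
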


\begin{proof}
Let $K^{n}_t:=\int_0^t b^n(X^{n}_r)dr$. 
\red{For $M>0$, let  
\begin{align*}
    A_M:=\{f \in \mathcal{C}_{[0,T]}: f(0)=0,\ |f(t)-f(s)|\leqslant M (t-s)^{1+H(\beta-1/p)},\ \forall (s,t) \in \Delta_{[0,T]}\}.
\end{align*}}
By Arzel\`a-Ascoli's theorem, $A_M$ is compact in $\mathcal{C}_{[0,T]}$. 
Applying Lemma~\ref{regularity3.2} and Markov's inequality, we get
\begin{align*}
    \mathbb{P}(K^{n} \notin A_M) &\leqslant \mathbb{P}(\exists (s,t) \in 
    \Delta_{[0,T]}:|K^{n}_{s,t}|> M (t-s)^{1+H(\beta-1/p)})\\
    &\leqslant C\,
    (1+\sup_{n \in \mathbb{N}}\|b^n\|^2_{\mathcal{B}_p^\beta}) \, M^{-1}.
\end{align*}
Hence, the sequence $(K^{n})_{n \in \mathbb{N}}$  is tight in $\mathcal{C}_{[0,T]}$. So 
$(K^{n},B)_{n \in \mathbb{N}}$ is tight in $[\mathcal{C}_{[0,T]}]^2$. Thus by Prokhorov's 
Theorem, there exists a subsequence $(n_k)_{k \in \mathbb{N}}$ such that $(K^{{n_k}},B)_{k 
\in \mathbb{N}}$ converges weakly in the space $[\mathcal{C}_{[0,T]}]^2$, and so does 
$(X^{{n_k}},B)_{k \in \mathbb{N}}$.

\end{proof}
\begin{remark}
	\label{rem:twoapprox}
	The previous proposition can be generalised to any pair of sequences of approximations 
	\((X^{1,n},X^{2,n})_{n\in\mathbb{N}}\) of strong solution to \eqref{eq:skew} with two 
	sequences \((b_1^n)_{n\in\mathbb{N}}\) 
	and \((b_2^n)_{n\in\mathbb{N}}\) of smooth approximations of \(b\).
\end{remark}

\begin{proposition} \label{prop:stability}
	Let $(\tilde{b}^k)_{k \in \mathbb{N}}$ be a sequence of smooth bounded functions 
	converging to $b$ in $\mathcal{B}_p^{\beta-}$ \red{where $b \in \mathcal{B}^\beta_p$ for $\beta-1/p>1/2-1/(2H)$}. Let $\tilde{B}^k$ have the same law as 
	$B$. 
	We consider $\tilde{X}^k$ the strong solution to \eqref{eq:skew} for $B=\tilde{B}^k$, initial 
	condition 
	$X_0$ and drift $\tilde{b}^k$. 
	We assume that  there exist stochastic processes $\tilde{X},\tilde{B}: [0,T]  
	\rightarrow \mathbb{R}$ such that $(\tilde{X}^k,\tilde{B}^k)_{k \in \mathbb{N}}$ converges 
	to 
	$(\tilde{X},\tilde{B})$ on $[\mathcal{C}_{[0,T]}]^2$ in probability. Then $\tilde{X}$ fulfills 
	\eqref{solution1} and \eqref{approximation2} from Definition~\ref{def:solution} and 
	for any $m\in [2,\infty)$, there exists $C>0$ such that
	\begin{align} \label{eq:regularity}
		[\tilde{X}-\tilde{B}]_{\COO{1+H(\beta-1/p)}{m}{\infty}{[0,T]}}\leqslant 
		C \, (1 
		+ 
			\sup_{k \in \mathbb{N}}\|\tilde{b}^k\|^2_{\mathcal{B}_p^\beta}) <\infty.
	\end{align}
\end{proposition}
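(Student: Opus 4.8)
The plan is to pass to the limit in the defining relations of the strong solutions $\tilde X^k$ and then to transfer the \emph{a priori} bound \eqref{eq:regularity3.2} to the limit. First, since $(\tilde X^k,\tilde B^k)\to(\tilde X,\tilde B)$ in probability on $[\mathcal C_{[0,T]}]^2$, up to extracting a subsequence (not relabelled) we may assume the convergence is almost sure, uniformly on $[0,T]$. As $\tilde B^k\overset{d}{=}B$ for each $k$, the limit $\tilde B$ is a fractional Brownian motion. Setting $\tilde K^k:=\tilde X^k-\tilde B^k=\int_0^\cdot \tilde b^k(\tilde X^k_r)\,dr$ and $\tilde K:=\tilde X-\tilde B$, we immediately get \eqref{solution1} for $\tilde X$ with $K=\tilde K$, and $\tilde K^k\to\tilde K$ a.s.\ uniformly.

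The second step is \eqref{eq:regularity}. For fixed $k$, Lemma~\ref{regularity3.2} applied to the strong solution $\tilde X^k$ (with driving fBm $\tilde B^k$, drift $\tilde b^k\in \mathcal C_b^\infty\cap\mathcal B^\beta_p$, and $\beta-1/p>1/2-1/(2H)$) gives, for every $m\in[2,\infty)$,
\begin{equation*}
\big[\tilde X^k-\tilde B^k\big]_{\COO{1+H(\beta-1/p)}{m}{\infty}{[0,T]}}\leqslant C\big(1+\|\tilde b^k\|^2_{\mathcal B^\beta_p}\big)\leqslant C\big(1+\sup_{j}\|\tilde b^j\|^2_{\mathcal B^\beta_p}\big),
\end{equation*}
with $C$ independent of $k$. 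To pass this bound to the limit, fix $(u,v)\in\Delta_{[0,T]}$. By a.s.\ uniform convergence, $\tilde K^k_{u,v}\to\tilde K_{u,v}$ a.s.; since all processes live on the same space with the same filtration $\mathbb F$, applying conditional Fatou with respect to $\mathcal F_u$ and then Fatou in $L^n$ (here $n=\infty$, so we use that an a.s.\ limit of uniformly bounded random variables is bounded by the same constant) yields
\begin{equation*}
\big\|\EE^u[|\tilde K_{u,v}|^m]^{1/m}\big\|_{L^\infty}\leqslant\liminf_k\big\|\EE^u[|\tilde K^k_{u,v}|^m]^{1/m}\big\|_{L^\infty}\leqslant C\big(1+\sup_j\|\tilde b^j\|^2_{\mathcal B^\beta_p}\big)\,|v-u|^{1+H(\beta-1/p)}.
\end{equation*}
Taking the supremum over $(u,v)\in\Delta_{[0,T]}$ gives \eqref{eq:regularity}; in particular $\tilde K=\tilde X-\tilde B$ is continuous.

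The third step is \eqref{approximation2}: given an arbitrary sequence $(b^n)_n$ of smooth bounded functions converging to $b$ in $\mathcal B^{\beta-}_p$, we must show $\sup_{t}|\int_0^t b^n(\tilde X_r)\,dr-\tilde K_t|\to 0$ in probability. This is the main obstacle, because the approximating sequence in \eqref{approximation2} need not be the sequence $(\tilde b^k)$ used to build $\tilde X$, and $b^n(\tilde X_r)$ is evaluated at the limit process. The strategy is to compare $\int_0^\cdot b^n(\tilde X_r)\,dr$ to $\tilde K^k=\int_0^\cdot \tilde b^k(\tilde X^k_r)\,dr$ via a double limit. For fixed $n$, Lemma~\ref{lem:mainregularisation} (with $\gamma=\beta$, $n=m=2$, $\alpha=1+H(\beta-1/p)$) applied to the difference $b^n-\tilde b^k$ and the adapted process $\psi=\tilde K^k$ gives, using the uniform regularity estimate \eqref{eq:regularity3.2} for $[\tilde X^k-\tilde B^k]$,
\begin{equation*}
\Big\|\sup_{t\in[0,T]}\Big|\int_0^t (b^n-\tilde b^k)(\tilde X^k_r)\,dr\Big|\Big\|_{L^2}\leqslant C\big(1+\sup_j\|\tilde b^j\|^2_{\mathcal B^\beta_p}\big)\,\|b^n-\tilde b^k\|_{\mathcal B^{\beta'}_p}
\end{equation*}
for $\beta'<\beta$ with $\beta'-1/p>1/2-1/(2H)$ (this uses a Besov embedding and that $\|b^n\|_{\mathcal B^\beta_p}$, $\|\tilde b^k\|_{\mathcal B^\beta_p}$ are bounded). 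Sending $k\to\infty$ first: by a.s.\ uniform convergence of $\tilde X^k$ and the regularity just established, one identifies $\int_0^\cdot \tilde b^k(\tilde X^k_r)\,dr=\tilde K^k\to\tilde K$ and, by a Kolmogorov-type tightness argument (along the lines of Lemma~\ref{lem:KHoelder}), $\int_0^\cdot b^n(\tilde X^k_r)\,dr\to\int_0^\cdot b^n(\tilde X_r)\,dr$ uniformly in probability; combined with the displayed bound this yields
\begin{equation*}
\Big\|\sup_t\Big|\int_0^t b^n(\tilde X_r)\,dr-\tilde K_t\Big|\Big\|_{L^2}\leqslant C\big(1+\sup_j\|\tilde b^j\|^2_{\mathcal B^\beta_p}\big)\,\liminf_k\|b^n-\tilde b^k\|_{\mathcal B^{\beta'}_p}.
\end{equation*}
Finally, both $(b^n)$ and $(\tilde b^k)$ converge to $b$ in $\mathcal B^{\beta'}_p$, so the right-hand side tends to $0$ as $n\to\infty$, proving \eqref{approximation2} (hence in probability, and the original-sequence statement follows since the subsequence extraction at the start was harmless: the limit $\tilde X$ was assumed given). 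I expect the careful handling of this interchange of limits — in particular justifying $\int_0^\cdot b^n(\tilde X^k_r)\,dr\to\int_0^\cdot b^n(\tilde X_r)\,dr$ uniformly, for fixed smooth $b^n$, from uniform convergence $\tilde X^k\to\tilde X$ — to be the one genuinely delicate point, but it is routine given the equicontinuity supplied by Lemma~\ref{lem:KHoelder}.
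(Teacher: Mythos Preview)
Your approach is essentially the same as the paper's: verify \eqref{solution1} trivially, obtain \eqref{eq:regularity} from Lemma~\ref{regularity3.2} applied to $\tilde X^k$ followed by Fatou, and prove \eqref{approximation2} by splitting $\big|\int_0^t b^n(\tilde X_r)\,dr-\tilde K_t\big|$ into the three pieces $\int_0^t b^n(\tilde X_r)-b^n(\tilde X^k_r)\,dr$, $\int_0^t (b^n-\tilde b^k)(\tilde X^k_r)\,dr$, and $\tilde K^k_t-\tilde K_t$, with the middle piece controlled by the sewing-based estimate (the paper cites Lemma~\ref{lem:KHoelder} and Markov; you cite Lemma~\ref{lem:mainregularisation} directly, which amounts to the same thing once Kolmogorov is applied).

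One point worth correcting: you flag the convergence $\int_0^\cdot b^n(\tilde X^k_r)\,dr\to\int_0^\cdot b^n(\tilde X_r)\,dr$ as ``the one genuinely delicate point'' and invoke equicontinuity via Lemma~\ref{lem:KHoelder}. In fact this step is trivial, and the paper dispatches it in one line: since $b^n\in\mathcal C^1_b$, one has $\big|\int_0^t b^n(\tilde X_r)-b^n(\tilde X^k_r)\,dr\big|\leqslant \|b^n\|_{\mathcal C^1}\,T\,\sup_r|\tilde X_r-\tilde X^k_r|$, which goes to $0$ by the assumed uniform convergence. The paper then chooses $k=k(n)$ so that this term stays small despite the factor $\|b^n\|_{\mathcal C^1}$ blowing up in $n$, and handles all three pieces in probability rather than in $L^2$ (which avoids the extra dominated-convergence / uniform-integrability justifications your $L^2$ formulation would need).
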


\begin{proof}
	We again assume here w.l.o.g. that \(X_0 = 0\) and let
		\(\tilde{K}:=\tilde{X}-\tilde{B}\), 
	so that \eqref{solution1} is automatically verified. Let now $(b^n)_{n \in 
		\mathbb{N}}$ be any sequence of smooth bounded functions converging to $b$ in 
	$\mathcal{B}_p^{\beta-}$. 
	In order to verify that $\tilde{K}$ and $\tilde{X}$ fulfill \eqref{approximation2} 
	from Definition~\ref{def:solution}, we have to show that 
	\begin{align}
		\lim_{n \rightarrow \infty} \sup_{t \in [0,T]} \left|\int_0^t b^n(\tilde{X}_r) 
		dr-\tilde{K}_t\right|=0 
		\text{ in probability}. \label{convergence}
	\end{align}	
	By the triangle inequality we have that for $k,n \in \mathbb{N}$ and $t \in [0,T]$,
	\begin{align} \label{I1I2I3}
		\left|\int_0^t b^n(\tilde{X}_r) dr-\tilde{K}_t\right|\leqslant &\left|\int_0^t b^n(\tilde{X}_r) 
		dr-\int_0^t 
		b^n(\tilde{X}_r^k)dr\right|+\left|\int_0^t b^n(\tilde{X}_r^k)dr-\int_0^t 
		\tilde{b}^k(\tilde{X}_r^k)dr\right|\nonumber\\
		&+\left|\int_0^t \tilde{b}^k(\tilde{X}_r^k)dr-\tilde{K}_t\right|=:A_1+A_2+A_3.
	\end{align}
	Now we will show that all summands on the right hand side of \eqref{I1I2I3} 
	converge to $0$ uniformly on $[0,T]$ in probability as $n\to \infty$, 
	choosing $k=k(n)$ accordingly.
	
	First we bound $A_1$. Notice that
	\begin{align*}
		\left|\int_0^t b^n(\tilde{X}_r) dr-\int_0^t b^n(\tilde{X}_r^k)dr\right|&\leqslant 
		\|b^n\|_{\mathcal{C}^1} \int_0^t |\tilde{X}_r-\tilde{X}_r^k| dr\\
		&\leqslant \|b^n\|_{\mathcal{C}^1}\,  T \sup_{t \in [0,T]} 
		 |\tilde{X}_t-\tilde{X}_t^k|.
	\end{align*}
	For any $\varepsilon>0$, we can choose an increasing sequence $(k(n))_{n \in 
	\mathbb{N}}$ such that 
	\begin{align*}
		\mathbb{P}\Big(\|b^n\|_{\mathcal{C}^1}\, T\sup_{t \in [0,T]} 
		|\tilde{X}_t-\tilde{X}_t^{k(n)}| > \varepsilon\Big)< \frac{1}{n}, ~ \forall n \in 
		\mathbb{N}.
	\end{align*}
	Hence, we get that 
	\begin{align*}
		\lim_{n \rightarrow \infty} \sup_{t\in [0,T]} \left|\int_0^t b^n(\tilde{X}_r) dr-\int_0^t 
		b^n(\tilde{X}_r^{k(n)})dr\right|=0 \text{ in probability}.
	\end{align*}
		Now, let us bound $A_2$. Let $\beta^\prime<\beta$ with 
		$\beta^\prime-1/p>-1/(2H)+1/2$. 
	By 
	Lemma~\ref{lem:KHoelder} applied to $\tilde{X}^k$, $h=b^n-\tilde{b}^k$ and $\beta^\prime$ 
	instead of 
	$\beta$, we know that there exists a random variable 
	$Z_{n,k}$ such 
	that
	\begin{align} \label{eq:expectation}
		\EE[Z_{n,k}]&\leqslant C\, 
			\|b^n-\tilde{b}^k\|_{\mathcal{B}_p^{\beta^\prime}}(1+\|\tilde{b}^k\|^2_{\mathcal{B}_p^{\beta^\prime}})\nonumber\\
		& \leqslant C\, 
			(\|b^n-b\|_{\mathcal{B}_p^{\beta^\prime}}+\|\tilde{b}^k-b\|_{\mathcal{B}_p^{\beta^\prime}})
			 \, 
			(1+\sup_{m \in \mathbb{N}}\|\tilde{b}^m\|^2_{\mathcal{B}_p^{\beta^\prime}}),
	\end{align}
	for $C$ independent of $k,n$ and that we have
	\begin{align*}
		\sup_{t \in [0,T]}\left|\int_0^t b^n(\tilde{X}_r^k)dr-\int_0^t 
		\tilde{b}^k(\tilde{X}_r^k)dr\right|\leqslant 
		Z_{n,k}(1+T).
	\end{align*}
	Using Markov's inequality and \eqref{eq:expectation} we obtain that
	\begin{align*}
		\mathbb{P}&\left(\sup_{t \in [0,T]} \left|\int_0^t b^n(\tilde{X}_r^k)dr-\int_0^t 
		\tilde{b}^k(\tilde{X}_r^k)dr\right|>\varepsilon\right)\leqslant \varepsilon^{-1}\, \EE[Z_{n,k}] 
		\, (1+T)\\
		&\qquad \qquad \leqslant C\, \varepsilon^{-1}\, (1+T)\, 
			(\|b^n-b\|_{\mathcal{B}_p^{\beta^\prime}}+\|\tilde{b}^k-b\|_{\mathcal{B}_p^{\beta^\prime}})
			 \, 
			(1+\sup_{m \in \mathbb{N}}\|\tilde{b}^m\|^2_{\mathcal{B}_p^{\beta^\prime}}).
	\end{align*}
	Choosing $k=k(n)$ as before we get
	\begin{align*}
		\lim_{n\rightarrow \infty}\sup_{t \in [0,T]}\left|\int_0^t b^n(\tilde{X}^{k(n)}_r) dr-\int_0^t 
		\tilde{b}^{k(n)}(\tilde{X}^{k(n)}_r) dr\right|=0 \text{ in probability}.
	\end{align*}
		To bound the last summand $A_3$, recall that $\tilde{X}^k_t=\int_{0}^t 
		\tilde{b}^k(\tilde{X}^k_r) 
	dr+\tilde{B}^k_t$. Hence, we get that
	\begin{align*}
		\sup_{t \in [0,T]} \left|\int_0^t \tilde{b}^k(\tilde{X}^k_r) dr-\tilde{K}_t\right| \leqslant 
		\sup_{t \in 
		[0,T]}(|\tilde{X}^k_t-\tilde{X}_t|+|\tilde{B}_t^k-\tilde{B}_t|).
	\end{align*}
	Since by assumption $(\tilde{X}^k,\tilde{B}^k)_{k \in \mathbb{N}}$ converges to 
	$(\tilde{X},\tilde{B})$ on 
	$[\mathcal{C}_{[0,T]}]^2$ in probability, we get that
	\begin{align*}
		\lim_{n \rightarrow \infty} \sup_{t \in [0,T]} \left|\int_0^t 
		\tilde{b}^{k(n)}(\tilde{X}^{k(n)}_r)dr-\tilde{K}_t\right|=0 
		\text{ in probability}
	\end{align*}
	and therefore \eqref{convergence} holds true.
	
	It remains to show that \eqref{eq:regularity} holds true. By 
Lemma~\ref{regularity3.2}, we 
	know that there exists some $C>0$ such that for any $(s,t)\in \Delta_{[0,T]}$,
	\begin{align} \label{eq:regularity2}
		\|\EE^s[|(\tilde{X}_t^k-\tilde{B}_t^k)-(\tilde{X}_s^k-\tilde{B}_s^k)|^m]^{1/m}\|_{L^\infty}\leqslant
		 C 
		\, 
			(1 + \sup_{m \in \mathbb{N}}\|\tilde{b}^m\|^2_{\mathcal{B}_p^\beta}) \, 
			(t-s)^{1+H(\beta-1/p)}.
	\end{align}
	Using that $\int_0^t \tilde{b}^k(\tilde{X}^k_r) dr$ converges to $K_t$ on 
	$\mathcal{C}_{[0,T]}$ in 
	probability (by assumption) and that $\sup_{m \in 
			\mathbb{N}}\|\tilde{b}^m\|_{\mathcal{B}_p^\beta}$ is finite, we get \eqref{eq:regularity} 
			by 
	applying Fatou's Lemma to \eqref{eq:regularity2}.
\end{proof}

\subsection{Approximation by smooth drifts} \label{mainresults}

\begin{proof}[Proof of Theorem~\ref{th:approx}]
Instead of constructing a single weak solution, we will construct a couple $(Y^1,Y^2)$. This does not change the nature of the proof, but it will be extremely useful in the proof of Theorem~\ref{thm:uniqueness} when we will seek to construct a strong solution, \emph{via} a Gy\"ongy-Krylov  argument.

Let $(b_1^n)_{n \in \mathbb{N}}$, $(b_2^n)_{n \in \mathbb{N}}$ be sequences of 
smooth bounded functions converging to $b$ in $\mathcal{B}_p^{\beta-}$. 
By Proposition~\ref{prop:tightness} and Remark~\ref{rem:twoapprox}, 
there exists a subsequence $(n_k)_{k \in \mathbb{N}}$ such that $(X^{1,n_k},X^{2,n_k}, B)_{k 
\in \mathbb{N}}$ converges weakly in $[\mathcal{C}_{[0,T]}]^3$. Without loss of generality, 
we assume that $(X^{1,n},X^{2,n}, B)_{n \in \mathbb{N}}$ converges weakly. By the 
Skorokhod representation Theorem, there exists a sequence of random variables 
$(Y^{1,n},Y^{2,n},\hat{B}^n)_{n \in \mathbb{N}}$ defined on a common probability space 
$(\hat{\Omega},\hat{\mathcal{F}},\hat{P})$, such that
\begin{align} \label{samelaw}
    \text{Law}(Y^{1,n},Y^{2,n},\hat{B}^n)=\text{Law}(X^{1,n},X^{2,n}, B), \ \forall n \in \mathbb{N},
\end{align}
and  $(Y^{1,n},Y^{2,n},\hat{B}^n)$ converges a.s. to some $(Y^1,Y^2,\hat{B})$ in $[\mathcal{C}_{[0,T]}]^3$.
As $X^{i,n}$ solves the SDE \eqref{eq:skew} with drift $b_i^n$, we know by \eqref{samelaw} that $Y^{i,n}$ also solves 
\eqref{eq:skew} with drift $b_i^n$ and $\hat{B}^n$ 
instead of $B$. As $X^{i,n}$ is a strong solution, we have that $X^{i,n}$ 
is adapted to $\mathbb{F}^B$. Hence by \eqref{samelaw}, we know that $Y^{i,n}$ is adapted to $\mathbb{F}^{\hat{B}^n}$ as the conditional laws of 
$Y^{i,n}$ and $X^{i,n}$ agree and therefore they are strong solutions to 
\eqref{eq:skew} with $\hat{B}^n$ instead of $B$.

By Proposition~\ref{prop:stability}, we know that $Y^1$ and $Y^2$ fulfill (\ref{solution1}) and 
(\ref{approximation2}) from Definition~\ref{def:solution} with $\hat{B}$ instead of $B$ and 
they are both adapted with respect to the filtration $\hat{\mathbb{F}}$ defined by 
$\hat{\mathcal{F}}_t:= \sigma(Y^1_{s},Y^2_{s},\hat{B}_{s},s \in [0,t])$. 

\red{Following the same arguments as in the proof of Theorem~\ref{prop:existence}, after passing to the limit, we know that $\hat{B}$ is an $\hat{\mathbb{F}}$-fBm and $Y^1$ and $Y^1$ are weak solutions adapted to $\hat{\mathbb{F}}$.}

Lastly, \eqref{eq:regularity} gives that 
\begin{equation}\label{eq:Khatreg}
[Y^i-\hat{B}]_{\COO{1+H(\beta-1/p)}{m}{\infty}{[0,T]}}<\infty ,\quad i=1,2,
\end{equation}
and thus $Y^1-\hat{B}$ and $Y^2-\hat{B}$ belong to $\mathcal{C}_{[0,T]}^{1+H(\beta-1/p)}(L^m)$. 
\end{proof}

\subsection{Existence of strong solutions}

In order to prove strong existence,
 we follow a Yamada-Watanabe argument: this is done here by combining 
 the construction of weak solutions from the previous proof, the uniqueness result Proposition~\ref{unique} and Gy\"ongy-Krylov's result.
 
 \begin{lemma} \label{lem:approx}
Let $H<1/2$, $p \in [1,\infty]$ and $\beta$ such that $\beta>-1/(2H)+1$ and 
$\beta-1/p\geqslant -1/(2H)+1$. Let $b \in \mathcal{B}_p^\beta$,  
  and $(b^n)_{n \in 
\mathbb{N}}$ be a sequence of smooth bounded functions converging to $b$ in 
$\mathcal{B}_p^{\beta-}$. For $n \in \mathbb{N}$, consider $X^n$ the strong solution to 
\eqref{eq:skew} with drift $b^n$ and initial condition $X_0\in \R$. Then there exists an $\mathbb{F}^B$-adapted  
process $X:[0,T]\times \Omega \rightarrow \mathbb{R}$ such that
\begin{enumerate}[label=(\roman*)]
    \item $\sup_{t \in [0,T]} |X_t^n-X_t|
     \limbashaut{\longrightarrow}{n\rightarrow \infty}{\mathbb{P}} 0$;
    \item\label{2.10(3)} $[X-B]_{\COO{1/2+H}{2}{\infty}{[0,T]}}<\infty$ a.s.
\end{enumerate}
\end{lemma}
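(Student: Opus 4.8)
The plan is to establish strong existence by a Yamada--Watanabe-type argument: weak existence is supplied by (the proof of) Theorem~\ref{th:approx}, pathwise uniqueness by Proposition~\ref{unique}, and the two are glued together by Gy\"ongy--Krylov's lemma \cite[Lem.~1.1]{Krylov}. Concretely, to prove (i) it suffices, by that lemma, to show that given any two subsequences $(n_k)_k$ and $(m_k)_k$, the sequence of joint laws of $(X^{n_k},X^{m_k})$ admits a further subsequential weak limit on $[\mathcal{C}_{[0,T]}]^2$ concentrated on the diagonal $\{(f,f)\}$. Granting this, $(X^n)_n$ converges in probability in $\mathcal{C}_{[0,T]}$ to some limit $X$; since each $X^n$ is $\mathbb{F}^B$-adapted, $X_t$ is, for each $t$, an a.s.\ limit along a subsequence of $\mathcal{F}_t^B$-measurable random variables, hence $\mathcal{F}_t^B$-measurable, so $X$ is $\mathbb{F}^B$-adapted by continuity. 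This yields (i) and the adaptedness claim.

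To produce the diagonal limit, I would feed the two approximating sequences $b_1^j:=b^{n_j}$ and $b_2^j:=b^{m_j}$ into the construction carried out in the proof of Theorem~\ref{th:approx} (note that \eqref{eq:assumptionstrong} is stronger than \eqref{eq:assumptionweak}, so the construction applies): along a further subsequence, not relabelled, $(X^{n_j},X^{m_j},B)_j$ converges weakly, and by Skorokhod's representation theorem there are copies $(Y^{1,j},Y^{2,j},\hat B^j)_j$ converging a.s.\ to a triple $(Y^1,Y^2,\hat B)$ such that $Y^1$ and $Y^2$ are weak solutions to \eqref{eq:skew} with drift $b$ and initial condition $X_0$, both adapted to the filtration generated by $(Y^1,Y^2,\hat B)$, with $\hat B$ an fBm for that filtration, and $[Y^i-\hat B]_{\COO{1+H(\beta-1/p)}{m}{\infty}{[0,T]}}<\infty$ for all $m\in[2,\infty)$ (this is \eqref{eq:Khatreg}). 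I would then check that $Y^1,Y^2$ meet the hypotheses of Proposition~\ref{unique}: they live on the same filtered space with the same driving fBm $\hat B$ and the same initial condition, and since \eqref{eq:assumptionstrong} forces $1+H(\beta-1/p)\geqslant\tfrac{1}{2}+H$, downgrading the H\"older exponent on the bounded interval $[0,T]$ gives $[Y^i-\hat B]_{\COO{1/2+H}{2}{\infty}{[0,T]}}<\infty$, hence also $Y^i-\hat B\in\mathcal{C}^{1/2+H}_{[0,T]}(L^2)$, which in particular covers \eqref{eq:regXB}. Proposition~\ref{unique} then shows $Y^1$ and $Y^2$ are indistinguishable, so the law of $(Y^1,Y^2)$ charges only the diagonal, as required.

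For (ii): since $X$ is the weak limit of $(X^{n_k})_k$, which are distributed as the $(Y^{1,k})_k$ converging a.s.\ to $Y^1$, the pair $(X,B)$ has the same law as $(Y^1,\hat B)$. To transfer the regularity bound to $X-B$, I would observe that for $(u,v)\in\Delta_{[0,T]}$ the random variable $\EE[|X_v-B_v-(X_u-B_u)|^2\mid\mathcal F_u^B]$ has the same law as $\EE[|Y^1_v-\hat B_v-(Y^1_u-\hat B_u)|^2\mid\hat{\mathcal F}_u^{\hat B}]$, which in turn is $\EE[\cdot\mid\hat{\mathcal F}_u^{\hat B}]$ applied to $\EE[\cdot\mid\hat{\mathcal F}_u]$, so its $L^\infty$-norm is at most $[Y^1-\hat B]^2_{\COO{1/2+H}{2}{\infty}{[0,T]}}(v-u)^{1+2H}$; hence $[X-B]_{\COO{1/2+H}{2}{\infty}{[0,T]}}<\infty$, giving (ii).

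I expect the main obstacle to be the middle step: verifying that the two subsequential limit points $Y^1,Y^2$ genuinely satisfy the hypotheses of Proposition~\ref{unique} — in particular that they are simultaneously adapted to one filtration turning $\hat B$ into a fractional Brownian motion, and that they carry the $\mathcal{C}^{1/2+H}(L^2)$-regularity together with the pathwise control \eqref{eq:regXB}. This is exactly where the stronger assumption \eqref{eq:assumptionstrong}, rather than the weak-existence assumption \eqref{eq:assumptionweak}, enters, through $1+H(\beta-1/p)\geqslant\tfrac{1}{2}+H$; the adaptedness of the limits via the operator $\mathcal{A}$, the tightness, and the \emph{a priori} H\"older estimates have already been established in Theorem~\ref{th:approx} and Proposition~\ref{unique}.
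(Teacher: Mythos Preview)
Your proposal is correct and follows essentially the same approach as the paper: Gy\"ongy--Krylov applied to pairs of subsequences, with the diagonal concentration coming from Proposition~\ref{unique} once the regularity \eqref{eq:Khatreg} is downgraded via $1+H(\beta-1/p)\geqslant\tfrac12+H$.

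The one place where you work harder than necessary is part~(ii). You transfer the bound from $(Y^1,\hat B)$ to $(X,B)$ by a law-equivalence argument involving nested conditional expectations. The paper instead observes that once (i) is established, Proposition~\ref{prop:stability} applies \emph{directly on the original space} with $\tilde X^k=X^k$, $\tilde B^k=B$ for all $k$, and $(\tilde X,\tilde B)=(X,B)$: the convergence $X^k\to X$ in probability is exactly the hypothesis needed, and \eqref{eq:regularity} then yields $[X-B]_{\COO{1+H(\beta-1/p)}{m}{\infty}{[0,T]}}<\infty$ immediately, without any detour through Skorokhod copies. Your route is valid but the direct one is shorter.
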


\begin{proof} 
Let $(X^{\phi(n)})_{n \in \mathbb{N}}$ and $(X^{\psi(n)})_{n \in 
\mathbb{N}}$ be two arbitrary subsequences of $(X^n)_{n \in \mathbb{N}}$. From the 
previous proof, we know that there exists a filtered probability space 
$(\hat{\Omega},\hat{\mathcal{F}},\hat{\mathbb{F}}, \hat{P})$, an 
$\hat{\mathbb{F}}$-fBm $\hat{B}$ and a pair $(Y^1,Y^2)$ of weak solutions to 
\eqref{eq:skew} adapted to $\hat{\mathbb{F}}$. By tightness 
(Proposition~\ref{prop:tightness} and Remark~\ref{rem:twoapprox}), 
we also have that there exist subsequences $\tilde{\phi}(n)$ and $\tilde{\psi}(n)$ such that 
$(X^{\tilde{\phi}(n)},X^{\tilde{\psi}(n)})$ converges weakly to $(Y^1,Y^2)$ on 
$[\mathcal{C}_{[0,T]}]^2$ for $n \rightarrow \infty$.

Combining \eqref{eq:Khatreg}, \eqref{3.4} and that by 
assumption $1+H(\beta-1/p)\geqslant 1/2+H$, we get that
\begin{align*}
    [Y^i-\hat{B}]_{\mathcal{C}^{1/2+H}_{[0,T]}(L^2)}\leqslant[Y^i-\hat{B}]_{\mathcal{C}^{1/2+H}_{[0,T]}(L^{2,\infty})}<\infty, \quad i=1,2.
\end{align*}
By Proposition~\ref{unique}, this gives  
$Y^1=Y^2$ a.s. Hence by Lemma 1.1 in \cite{Krylov}, we get that there exists $X$ 
such that $X^n$ converges in probability to $X$ on $\mathcal{C}_{[0,T]}$. Notice that $X^n$ is adapted to $\mathbb{F}^B$, for any $n \in 
\mathbb{N}$, as they are strong solutions to \eqref{eq:skew}. So $X$ is adapted 
to the same filtration. Lastly, \ref{2.10(3)} follows from \eqref{eq:regularity}.
\end{proof}

\begin{proof}[Proof of Theorem~\ref{thm:uniqueness}]
\ref{uniqueness(1)} Let $X$ be the process constructed in Lemma \ref{lem:approx}. Proposition~\ref{prop:stability} yields that $X$ is a strong solution to \eqref{eq:skew} fulfilling \eqref{eq:regularity} for any $m\geqslant 2$. Since $\beta-1/p\geqslant 1-1/(2H)$, we get that $[X-B]_{\mathcal{C}^{\frac{1}{2}+H}_{[0,T]}(L^{m,\infty})}<\infty$ for any $m\geqslant2$.

\ref{uniqueness(1-2)} Let $(Y^1,B)$ and $(Y^2,B)$ be weak solutions defined on the same probability space, with $Y^1-B$ and $Y^2-B$ in $\mathcal{C}^{1/2+H}_{[0,T]}(L^2)$. On this probability space, let $X$ be a strong solution which satisfies \eqref{eq:regXB} with the same fBm $B$. Since $X$ is also a weak solution adapted to $\mathbb{F}$, it follows from Proposition~\ref{unique} that $X=Y^i$ for $i=1,2$.

\ref{uniquelimit} \red{By Lemma~\ref{lem:approx} the sequence of strong solutions corresponding to an approximation sequence of the drift $b$ converges in probability to a strong solution $X$ to \eqref{eq:skew} such that
\begin{align} \label{eq:regularityXB}
[X-B]_{\COO{1/2+H}{2}{\infty}{[0,T]}}<\infty.
\end{align}
The limit $X$ is the same for any approximation sequence as uniqueness in the class of solutions fulfilling \eqref{eq:regularityXB} holds by \ref{uniqueness(1-2)}.}

\ref{uniqueness(2)} Observe that if $b$ is \red{a measure}, then we know by applying 
Proposition~\ref{prop:regularityALT} and using $\beta-1/p\geqslant 1-1/(2H)$ that any solution $X$ to \eqref{eq:skew} verifies $X-B \in \mathcal{C}^{1/2+H}_{[0,T]}(L^2)$. Hence by the previous uniqueness statement, $X$ is the pathwise unique solution to \eqref{eq:skew}.
\end{proof}

\begin{appendices}

\section{Elementary results on Besov spaces}\label{app:Besov}

\begin{definition}[Partition of unity] \label{def:POF}

Let $\chi, \rho \in C^\infty(\mathbb{R})$ be even functions and for \(j\geqslant 0\), 
$\rho_j(x)=\rho(2^{-j}x)$.
We assume that 
there exists $a,b,c>0$ with 
$\supp(\chi)\subset [-c,c]$ and $\supp(\rho)\subset [-b,-a]\cup [a,b]$. Moreover, we have 
\begin{align}
\chi + \sum_{j\geqslant 0} \rho_j &\equiv 1,\\
\supp(\chi)\cap \supp(\rho_j)&=\emptyset, \ \forall j\geqslant 1,\\
\supp(\rho_j)\cap \supp(\rho_i)&=\emptyset, \text{ if } |i-j|\geqslant 2.
\end{align}
Then we call the pair $(\chi,\rho)$ a partition of unity. 
\end{definition}
Existence of a partition of unity  is proven in \cite[Prop. 2.10]{BaDaCh}. 
\red{Let such a partition be fixed.} We denote the Schwartz space on 
$\mathbb{R}$ by $\mathcal{S}$ and the space of tempered distributions by 
$\mathcal{S}^\prime$.

\begin{definition}[Littlewood-Paley blocks] \label{def:LP}
Let $f \in \mathcal{S}^\prime$. We define its $j$-th Littlewood-Paley block by
\[
    \boldsymbol{\Delta}_j f=\begin{dcases}
        \mathcal{F}^{-1}(\rho_j \mathcal{F}(f))~ \text{ for } j\geqslant 0\, , \\
        \mathcal{F}^{-1}(\chi \mathcal{F}(f))~ \text{ for } j=-1\, ,\\
        0~ \text{ for } j\leqslant -2,
    \end{dcases}
\]
where \(\mathcal{F}\) and \(\mathcal{F}^{-1}\) denote the Fourier transform and its inverse.
\end{definition}

\begin{definition}
For $s \in \mathbb{R}$ and $1\leqslant p,q \leqslant \infty$, let the nonhomogeneous Besov 
space $\mathcal{B}_{p,q}^s$ be the space of tempered distributions $u\in\mathcal{S}^\prime$ such that
\begin{align*}
    \|u\|_{\mathcal{B}_{p,q}^s}:= \left(\sum_{j \in \mathbb{Z}}\left(2^{js}\|\boldsymbol{\Delta}_j u\|_{L^p(\mathbb{R})}\right)^q\right)^{\frac{1}{q}}<\infty.
\end{align*}
\end{definition}

Lemma~\ref{A.3} and \ref{A.2}, both taken from \cite{Atetal}, are used on a regular basis 
throughout the paper. For the readers convenience we state them again below. 

\begin{lemma} \label{A.3}
Let $\gamma \in \mathbb{R}$ 
and $p \in [1,\infty]$ \red{and the Gaussian semigroup $G_t$ defined
by \eqref{Gaussiansemigroup}}. Then there exists $C>0$ such that for any $f \in 
\mathcal{B}_p^\gamma$,
\begin{enumerate} [label=(\alph*)]
    \item $\|G_t f\|_{L^p(\mathbb{R})}\leqslant C \|f\|_{\mathcal{B}_p^\gamma} t^{\gamma/2}$ for any $t>0$, provided that $\gamma<0$; \label{A.3.1}
    \item $\lim_{t \rightarrow 0} G_tf=f$ in $\mathcal{B}^{\tilde{\gamma}}_p$ for every $\tilde{\gamma}<\gamma$; \label{A.3.2}
    \item $\sup_{t>0}\|G_tf\|_{\mathcal{B}_p^\gamma}\leqslant \|f\|_{\mathcal{B}_p^\gamma}$; \label{A.3.3}
    \item $\|G_tf\|_{\mathcal{C}^1}\leqslant C\|f\|_{\mathcal{B}_p^\gamma} t^{(\gamma-1/p-1)/2}$ for all $t>0$, provided that $\gamma-1/p<0$. \label{A.3.4}
\end{enumerate}
\end{lemma}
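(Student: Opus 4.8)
The plan is to derive all four estimates from the Littlewood--Paley decomposition, exploiting that the Fourier multiplier $G_t$ commutes with every block (so that $\boldsymbol{\Delta}_j G_t f = G_t \boldsymbol{\Delta}_j f$), together with two standard facts from \cite[Ch.~2]{BaDaCh}: if $\widehat{u}$ is supported in an annulus $\{|\xi|\sim 2^j\}$ then the heat semigroup decays, $\|G_t u\|_{L^p}\leqslant C e^{-ct2^{2j}}\|u\|_{L^p}$, whereas if $\widehat{u}$ is supported in a ball then $\|G_t u\|_{L^p}\leqslant \|g_t\|_{L^1}\|u\|_{L^p}=\|u\|_{L^p}$; and the Bernstein inequalities $\|\partial_x^k u\|_{L^\infty}\leqslant C 2^{j(k+1/p)}\|u\|_{L^p}$ for such $u$. (Alternatively, since this lemma is merely restated from \cite{Atetal}, one may just invoke that reference.)

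First I would dispatch (c): since $\|g_t\|_{L^1}=1$, $2^{j\gamma}\|\boldsymbol{\Delta}_j G_t f\|_{L^p}=2^{j\gamma}\|G_t\boldsymbol{\Delta}_j f\|_{L^p}\leqslant 2^{j\gamma}\|\boldsymbol{\Delta}_j f\|_{L^p}$, and taking the supremum over $j$ gives the bound with constant $1$. For (a), I would split $\|G_t f\|_{L^p}\leqslant\sum_{j\geqslant -1}\|G_t\boldsymbol{\Delta}_j f\|_{L^p}$, bound the $j=-1$ block by $\|\boldsymbol{\Delta}_{-1}f\|_{L^p}\leqslant C\|f\|_{\mathcal{B}_p^\gamma}$ and the blocks $j\geqslant 0$ by $C e^{-ct2^{2j}}2^{-j\gamma}\|f\|_{\mathcal{B}_p^\gamma}$, reducing to the estimate $\sum_{j\geqslant 0}2^{-j\gamma}e^{-ct2^{2j}}$; since $-\gamma>0$ this dyadic sum is comparable to $\int_0^\infty x^{-\gamma-1}e^{-ctx^2}\,dx=C t^{\gamma/2}$, which blows up as $t\to 0$ and hence also dominates the fixed low-frequency term. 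For (d) I would control $\|G_t f\|_{L^\infty}$ and $\mathrm{Lip}(G_t f)=\|\partial_x G_t f\|_{L^\infty}$ separately by the same device: Bernstein gives $\|\partial_x G_t\boldsymbol{\Delta}_j f\|_{L^\infty}\leqslant C2^{j(1+1/p)}e^{-ct2^{2j}}2^{-j\gamma}\|f\|_{\mathcal{B}_p^\gamma}$, and $\sum_{j\geqslant 0}2^{j(1+1/p-\gamma)}e^{-ct2^{2j}}\sim t^{(\gamma-1/p-1)/2}$, the series converging precisely because the hypothesis $\gamma-1/p<0$ forces $1+1/p-\gamma>0$; the $\|\cdot\|_{L^\infty}$ contribution yields the smaller power $t^{(\gamma-1/p)/2}\leqslant t^{(\gamma-1/p-1)/2}$ for $t$ small.

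Finally, for (b) I would write $\|G_t f-f\|_{\mathcal{B}_p^{\tilde\gamma}}=\sup_{j\geqslant -1}2^{j\tilde\gamma}\|(G_t-\mathrm{Id})\boldsymbol{\Delta}_j f\|_{L^p}$ and split at a high dyadic level $J$. For $j>J$ the crude bound $\|(G_t-\mathrm{Id})\boldsymbol{\Delta}_j f\|_{L^p}\leqslant 2\|\boldsymbol{\Delta}_j f\|_{L^p}$ gives a term $\leqslant 2\cdot 2^{j(\tilde\gamma-\gamma)}\|f\|_{\mathcal{B}_p^\gamma}$, which tends to $0$ as $j\to\infty$ because $\tilde\gamma<\gamma$, so this supremum is $<\varepsilon$ uniformly in $t$ once $J$ is large. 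For $j\leqslant J$, writing $G_t-\mathrm{Id}=\int_0^t\tfrac12\Delta G_s\,ds$ and using Bernstein gives $\|(G_t-\mathrm{Id})\boldsymbol{\Delta}_j f\|_{L^p}\leqslant C t\,2^{2j}\|\boldsymbol{\Delta}_j f\|_{L^p}$, hence $\sup_{-1\leqslant j\leqslant J}2^{j\tilde\gamma}\|(G_t-\mathrm{Id})\boldsymbol{\Delta}_j f\|_{L^p}\leqslant C_J\, t\,\|f\|_{\mathcal{B}_p^\gamma}\to 0$ as $t\to 0$. Combining the two regimes proves (b). The step I expect to require the most care is the passage from the dyadic sums $\sum_j 2^{j\theta}e^{-ct2^{2j}}$ to the sharp powers of $t$, together with the elementary but easily overlooked point that the block $j=-1$ carries no heat decay and must be absorbed by hand — which is harmless here since it contributes only a bounded quantity while the target powers of $t$ diverge as $t\to 0$.
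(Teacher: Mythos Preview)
The paper does not prove this lemma; it is quoted verbatim from \cite{Atetal} (see the sentence introducing it in Appendix~\ref{app:Besov}). Your Littlewood--Paley argument is the standard route to such heat-flow smoothing estimates and is correct.

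One point you already half-flag deserves to be made explicit: your absorption of the $j=-1$ block in (a) and (d), and the comparison $t^{(\gamma-1/p)/2}\leqslant t^{(\gamma-1/p-1)/2}$, are valid only for $t$ in a bounded interval. In fact (a) as written cannot hold for all $t>0$ on the nonhomogeneous Besov scale: take $p=1$ and $f\geqslant 0$ a Schwartz function, so that $\|G_t f\|_{L^1}=\|f\|_{L^1}$ is constant in $t$, while $t^{\gamma/2}\to 0$ as $t\to\infty$. The estimate should be read on $(0,T]$ with $C=C(T)$ (or else on the homogeneous scale, where there is no low-frequency block). Since every use of (a) and (d) in the paper has $t=\sigma_{t_1,t_2}^2=C(t_2-t_1)^{2H}\leqslant C T^{2H}$, this is immaterial, and your argument establishes precisely the version that is actually needed.
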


\begin{lemma} \label{A.2}
Let $f$ be a tempered distribution on $\mathbb{R}$, $\gamma \in \mathbb{R}$, $p \in [1,\infty]$. Then for any $a,a_1,a_2,a_3 \in \mathbb{R}$, $\alpha,\alpha_1, \alpha_2 \in [0,1]$, there exists a constant $C>0$ such that
\begin{enumerate}[label=(\alph*)]
    \item $\|f(a+\cdot)\|_{\mathcal{B}_{p}^\gamma}=\|f\|_{\mathcal{B}_{p}^\gamma}$; \label{A.4}
    \item $\|f(a_1+\cdot)-f(a_2+\cdot)\|_{\mathcal{B}_{p}^\gamma}\leqslant C |a_1-a_2|^\alpha \|f\|_{\mathcal{B}_{p}^{\gamma+\alpha}}$ \label{A.5};
   
    \item $\|f(a_1+\cdot)-f(a_2+\cdot)-f(a_3+\cdot)+ f(a_3+a_2-a_1+\cdot)\|_{\mathcal{B}_p^\gamma} 
\leqslant C |a_1-a_2|^{\alpha_1} |a_1-a_3|^{\alpha_2}\|f\|_{\mathcal{B}^{\gamma+\alpha_1+\alpha_2}_p}$.\label{A.6}
\end{enumerate}
\end{lemma}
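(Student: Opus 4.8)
The plan is to deduce all three estimates from the single smoothing inequality
$\|f(h+\cdot)-f\|_{\mathcal{B}_p^\gamma}\leqslant C|h|^{\alpha}\|f\|_{\mathcal{B}_p^{\gamma+\alpha}}$, valid for every $h\in\mathbb{R}$ and $\alpha\in[0,1]$, using the exact translation invariance of item (a) as the glue. Throughout, write $\tau_h$ for the translation operator $g\mapsto g(h+\cdot)$, and recall that $\mathcal{B}_p^\gamma=\mathcal{B}_{p,\infty}^\gamma$, so the norm is a supremum over the Littlewood--Paley index $j$.

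First I would prove (a). Since $\mathcal{F}(\tau_h f)(\xi)=e^{ih\xi}\mathcal{F}(f)(\xi)$, the Fourier multipliers $\rho_j$ and $\chi$ of Definition~\ref{def:LP} commute with $\tau_h$, hence $\boldsymbol{\Delta}_j(\tau_h f)=\tau_h(\boldsymbol{\Delta}_j f)$ for all $j$. As the $L^p(\mathbb{R})$-norm is translation invariant, $\|\boldsymbol{\Delta}_j\tau_h f\|_{L^p}=\|\boldsymbol{\Delta}_j f\|_{L^p}$, and taking the supremum over $j$ gives equality of the two Besov norms. For the key inequality, I would estimate the block $\boldsymbol{\Delta}_j(\tau_h f-f)=\tau_h\boldsymbol{\Delta}_j f-\boldsymbol{\Delta}_j f$ in $L^p$ in two ways: trivially by $2\|\boldsymbol{\Delta}_j f\|_{L^p}$, and — writing $\tau_h\boldsymbol{\Delta}_j f-\boldsymbol{\Delta}_j f=\int_0^h(\boldsymbol{\Delta}_j f)'(\cdot+r)\,dr$ and using Bernstein's inequality $\|(\boldsymbol{\Delta}_j f)'\|_{L^p}\leqslant C2^j\|\boldsymbol{\Delta}_j f\|_{L^p}$ (with $2^j$ replaced by a fixed constant for $j=-1$, since $\boldsymbol{\Delta}_{-1}f$ is band-limited to $[-c,c]$) — by $C|h|2^j\|\boldsymbol{\Delta}_j f\|_{L^p}$. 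Combining the two bounds via $\min(2,Cy)\leqslant C'y^{\alpha}$ for all $y\geqslant 0$ (treating $y\leqslant 1$ and $y>1$ separately) with $y=|h|2^j$, then multiplying by $2^{j\gamma}$ and taking the supremum over $j$, yields $\|\tau_h f-f\|_{\mathcal{B}_p^\gamma}\leqslant C|h|^{\alpha}\|f\|_{\mathcal{B}_p^{\gamma+\alpha}}$. If $\|f\|_{\mathcal{B}_p^{\gamma+\alpha}}=\infty$ the statement is vacuous, so one may always assume each $\boldsymbol{\Delta}_j f\in L^p$, which, being smooth and band-limited, legitimises the fundamental-theorem-of-calculus step.

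Item (b) then follows at once: $f(a_1+\cdot)-f(a_2+\cdot)=\tau_{a_1-a_2}(\tau_{a_2}f)-\tau_{a_2}f$, so applying the key inequality to $g:=\tau_{a_2}f$ and using $\|g\|_{\mathcal{B}_p^{\gamma+\alpha}}=\|f\|_{\mathcal{B}_p^{\gamma+\alpha}}$ from (a) gives the bound with $|h|=|a_1-a_2|$. For item (c), set $h_1:=a_2-a_1$ and $h_2:=a_3-a_1$, so $a_3+a_2-a_1=a_1+h_1+h_2$ and the four-term combination equals $\tau_{a_1}\bigl((\tau_{h_2}-\mathrm{id})(\tau_{h_1}-\mathrm{id})f\bigr)$; by (a) its $\mathcal{B}_p^\gamma$-norm equals $\|(\tau_{h_2}-\mathrm{id})(\tau_{h_1}-\mathrm{id})f\|_{\mathcal{B}_p^\gamma}$. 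Applying the key inequality twice — first with exponent $\alpha_2$ to the outer difference, then with exponent $\alpha_1$ to $(\tau_{h_1}-\mathrm{id})f$ measured in $\mathcal{B}_p^{\gamma+\alpha_2}$ — produces $C|h_2|^{\alpha_2}|h_1|^{\alpha_1}\|f\|_{\mathcal{B}_p^{\gamma+\alpha_1+\alpha_2}}$, which is exactly the claimed estimate since $|h_1|=|a_1-a_2|$ and $|h_2|=|a_1-a_3|$. There is no serious obstacle here; the only points requiring a little care are the separate treatment of the low-frequency block $j=-1$ (where $2^j$ is replaced by a constant and large translations must be absorbed via the trivial bound) and, for (c), keeping track of the regularity loss so that the intermediate object $(\tau_{h_1}-\mathrm{id})f$ is measured in $\mathcal{B}_p^{\gamma+\alpha_2}$, which is where the two exponents $\alpha_1+\alpha_2$ add up.
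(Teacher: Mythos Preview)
The paper does not give its own proof of this lemma; it merely states the result and cites \cite{Atetal} as the source. Your argument is correct and is the standard one: translation commutes with the Littlewood--Paley blocks, Bernstein's inequality supplies the gradient-type bound $\|\tau_h\boldsymbol{\Delta}_j f-\boldsymbol{\Delta}_j f\|_{L^p}\leqslant C|h|2^{j}\|\boldsymbol{\Delta}_j f\|_{L^p}$, interpolation against the trivial bound gives the fractional estimate in (b), and iterating once yields (c). There is nothing to compare against in this paper, and no gap in what you wrote.
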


\begin{lemma} \label{lem:heatkernel}
Let $\gamma \in \mathbb{R}$, $p \in [1,\infty]$ and $f \in \mathcal{B}_p^\gamma$ and \(t > 0\), the function $G_t f$ defined
by \eqref{Gaussiansemigroup} is smooth and bounded.
\end{lemma}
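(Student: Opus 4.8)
The plan is to establish the two assertions separately: \emph{smoothness} follows from general distribution theory, and \emph{boundedness} is obtained by reducing to a regularity regime where the smoothing estimate already recorded in Lemma~\ref{A.3}\ref{A.3.4} applies.

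\emph{Smoothness.} For each $t>0$ the Gaussian density $g_t(x)=\tfrac{1}{\sqrt{2\pi t}}e^{-x^2/(2t)}$ lies in the Schwartz space $\mathcal{S}$, and it is a classical fact that the convolution of a tempered distribution with a Schwartz function is a $C^\infty$ function (of at most polynomial growth). Concretely, one writes $(G_t f)(x)=\langle f, g_t(x-\cdot)\rangle$ and observes that $x\mapsto g_t(x-\cdot)$ is a $C^\infty$ map from $\mathbb{R}$ into $\mathcal{S}$, so one may differentiate under the pairing. From here on I identify the distribution $G_t f$ with this smooth function, so that $\|G_t f\|_\infty$ makes sense.

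\emph{Boundedness.} The key remark is that decreasing the Besov regularity index costs nothing: since $1/p\geqslant 0$, I can pick $\gamma'<\gamma$ with $\gamma'-1/p<0$, and the elementary embedding $\mathcal{B}_p^\gamma\hookrightarrow\mathcal{B}_p^{\gamma'}$ (which is immediate from the definition of the Besov norm, as recalled in Remark~\ref{rem:betap}) gives $f\in\mathcal{B}_p^{\gamma'}$. Then I apply Lemma~\ref{A.3}\ref{A.3.4} with $\gamma'$ in place of $\gamma$ — legitimate precisely because $\gamma'-1/p<0$ — to get
\begin{equation*}
\|G_t f\|_{\mathcal{C}^1}\leqslant C\,\|f\|_{\mathcal{B}_p^{\gamma'}}\,t^{(\gamma'-1/p-1)/2}<\infty,
\end{equation*}
whence in particular $\|G_t f\|_\infty\leqslant\|G_t f\|_{\mathcal{C}^1}<\infty$. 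Together with the previous paragraph this proves the lemma.

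I do not expect a real obstacle here: the statement is essentially a bookkeeping consequence of the smoothing property of the Gaussian semigroup, the only mild care being to first lower the regularity index into the range where Lemma~\ref{A.3}\ref{A.3.4} can be invoked. If one wanted a self-contained argument avoiding Lemma~\ref{A.3}, the alternative is to decompose $f=\sum_{j\geqslant-1}\boldsymbol{\Delta}_j f$ into Littlewood--Paley blocks and bound $\|g_t*\boldsymbol{\Delta}_j f\|_\infty$ block by block: Bernstein's inequality together with the fact that $e^{-t|\xi|^2/2}$ and its derivatives decay like $e^{-c\,t\,2^{2j}}$ times powers of $2^j$ on the frequency annulus $\{|\xi|\sim 2^j\}$ yields $\|g_t*\boldsymbol{\Delta}_j f\|_\infty\leqslant C\,2^{j(1/p-\gamma)}e^{-c\,t\,2^{2j}}\|f\|_{\mathcal{B}_p^\gamma}$, which is summable in $j$ for each fixed $t>0$; the resulting uniform convergence identifies the sum with $G_t f$ and shows that it is bounded.
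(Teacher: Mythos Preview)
Your proof is correct and follows essentially the same route as the paper: both reduce to a regularity index with $\gamma'-1/p<0$ via the trivial embedding $\mathcal{B}_p^\gamma\hookrightarrow\mathcal{B}_p^{\gamma'}$, then invoke a smoothing estimate from Lemma~\ref{A.3}. The only cosmetic difference is that you use part~\ref{A.3.4} directly to get a $\mathcal{C}^1$ bound, whereas the paper uses part~\ref{A.3.1} combined with the embedding $\mathcal{B}_p^{\gamma'}\hookrightarrow\mathcal{B}_\infty^{\gamma'-1/p}$ to get an $L^\infty$ bound.
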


\begin{proof}
Smoothness comes from \cite[Th 3.13]{Stein}. 
We now prove boundedness: First, we consider the case  $f\in 
\mathcal{B}_p^\gamma$ for $\gamma \in \mathbb{R}$ and $p \in [1,\infty]$
such that $\gamma-1/p<0$. Using Lemma 
\ref{A.3}\ref{A.3.1} and the embedding $\mathcal{B}_p^\gamma\hookrightarrow 
\mathcal{B}_\infty^{\gamma-1/p}$, we have  
\begin{align*} 
    \|G_{t}f\|_{L^\infty}\leqslant C_\gamma \|f\|_{\mathcal{B}_\infty^{\gamma- 1/p}} t^{\frac{\gamma}{2}- \frac{1}{2p}} \leqslant C_\gamma \|f\|_{\mathcal{B}_p^{\gamma}} t^{\frac{\gamma}{2}- \frac{1}{2p}}.
\end{align*}
So, $G_t f$ is bounded. 
Now for arbitrary $\gamma \in \mathbb{R}$ and $p\in [1,\infty]$ such that $f \in 
\mathcal{B}_p^\gamma$, one can choose $\gamma'<\gamma$ such that 
$\gamma'-1/p<0$. We then have $f \in \mathcal{B}_p^{\gamma'}$ since 
$\mathcal{B}^{\gamma}_{p}\subset \mathcal{B}^{\gamma^\prime}_p$. 
By the first part of the proof  
$G_t f$ is bounded. 
\end{proof}

The following lemma provides an embedding between H\"older and Besov spaces for 
functions with compact support. It is useful when we deal with local times.
\begin{lemma} \label{embedding}
Let $M \subset \mathbb{R}$ be a compact set, $0 < \beta <\hat{\beta} <  1$ and $p,q \in 
[1,\infty]$. 
There exists constants $C, \tilde{C}>0$ only depending on \(M\) such that 
for any $f \in \mathcal{C}^{\hat{\beta}}$ with support included in $M$, we have
\begin{align*}
    \|f\|_{\mathcal{B}^\beta_{p,q}}\leqslant C \|f\|_{\mathcal{B}^{\hat{\beta}}_\infty}\leqslant 
    \tilde{C} \|f\|_{\mathcal{C}^{\hat{\beta}}}.
\end{align*}
\end{lemma}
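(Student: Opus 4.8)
The claim is the chain $\|f\|_{\mathcal{B}^\beta_{p,q}}\leqslant C\|f\|_{\mathcal{B}^{\hat\beta}_\infty}\leqslant \tilde C\|f\|_{\mathcal{C}^{\hat\beta}}$ for $f\in\mathcal{C}^{\hat\beta}$ supported in the compact set $M$. The second inequality is the easy one: since $\hat\beta\in(0,1)$ is not an integer, the Besov space $\mathcal{B}^{\hat\beta}_{\infty,\infty}=\mathcal{B}^{\hat\beta}_\infty$ coincides with the H\"older space $\mathcal{C}^{\hat\beta}$ with equivalent norms (as recalled in the excerpt, see \cite[p.99]{BaDaCh}); so $\|f\|_{\mathcal{B}^{\hat\beta}_\infty}\leqslant \tilde C\|f\|_{\mathcal{C}^{\hat\beta}}$ with a universal $\tilde C$ (not even depending on $M$, but we state it that way for uniformity). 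The first inequality is where the compact support is used.

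For the first inequality I would argue at the level of Littlewood--Paley blocks. Fix a partition of unity $(\chi,\rho)$ as in Definition~\ref{def:POF}. For each $j\geqslant -1$ we must bound $\|\boldsymbol{\Delta}_j f\|_{L^p(\mathbb{R})}$. The point is that $\boldsymbol{\Delta}_j f$ is not compactly supported, but it has good decay away from $\supp(f)\subset M$, so its $L^p$ norm is comparable to its $L^\infty$ norm up to a constant depending only on $M$. Concretely, write $\boldsymbol{\Delta}_j f=K_j * f$ where $K_j=\mathcal{F}^{-1}(\rho_j)$ for $j\geqslant0$ (and $K_{-1}=\mathcal{F}^{-1}(\chi)$), so $K_j(x)=2^j K_0(2^j x)$ for $j\geqslant 0$ with $K_0$ a fixed Schwartz function. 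Split the integration region for $\|\boldsymbol{\Delta}_j f\|_{L^p}^p$ into a bounded neighbourhood $M_1$ of $M$ (say the $1$-neighbourhood) and its complement. On $M_1$ one has $\|\boldsymbol{\Delta}_j f\|_{L^p(M_1)}\leqslant |M_1|^{1/p}\|\boldsymbol{\Delta}_j f\|_{L^\infty}\leqslant C_M\,2^{-j\hat\beta}\|f\|_{\mathcal{B}^{\hat\beta}_\infty}$, using the standard Littlewood--Paley characterisation $\|\boldsymbol{\Delta}_j f\|_{L^\infty}\leqslant \|f\|_{\mathcal{B}^{\hat\beta}_\infty}2^{-j\hat\beta}$. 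Off $M_1$, for $x$ with $\operatorname{dist}(x,M)\geqslant1$, the rapid decay of the Schwartz kernel gives, for any $N$, $|\boldsymbol{\Delta}_j f(x)|\leqslant \|f\|_{L^\infty}\int_M |K_j(x-y)|\,dy\leqslant C_N\,\|f\|_{L^\infty}\,2^{-jN}\,(1+\operatorname{dist}(x,M))^{-N}$ (exploiting $|x-y|\geqslant \operatorname{dist}(x,M)$ and the scaling of $K_j$), which is $L^p(\mathbb{R}\setminus M_1)$-integrable with norm bounded by $C_{N,M}2^{-jN}\|f\|_{\mathcal{B}^{\hat\beta}_\infty}$ after absorbing $\|f\|_{L^\infty}\leqslant\|f\|_{\mathcal{B}^{\hat\beta}_\infty}$. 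Picking $N>\hat\beta$ (e.g. $N=1$), both contributions are $\lesssim_M 2^{-j\hat\beta}\|f\|_{\mathcal{B}^{\hat\beta}_\infty}$; for $j=-1$ the block $\boldsymbol{\Delta}_{-1}f$ is handled identically (its kernel is also Schwartz). Hence $\sup_j 2^{j\hat\beta}\|\boldsymbol{\Delta}_j f\|_{L^p}\leqslant C_M\|f\|_{\mathcal{B}^{\hat\beta}_\infty}$.

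Finally, to pass from this to the $\mathcal{B}^\beta_{p,q}$ norm, note that with $s:=\hat\beta-\beta>0$,
\[
\|f\|_{\mathcal{B}^\beta_{p,q}}^q=\sum_{j\geqslant-1}\big(2^{j\beta}\|\boldsymbol{\Delta}_j f\|_{L^p}\big)^q\leqslant \Big(\sup_j 2^{j\hat\beta}\|\boldsymbol{\Delta}_j f\|_{L^p}\Big)^q\sum_{j\geqslant-1}2^{-jsq}\leqslant C_M^q\Big(\sum_{j\geqslant-1}2^{-jsq}\Big)\|f\|_{\mathcal{B}^{\hat\beta}_\infty}^q,
\]
and the geometric series converges since $s>0$; for $q=\infty$ replace the sum by the supremum, which is trivially finite. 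This yields $\|f\|_{\mathcal{B}^\beta_{p,q}}\leqslant C\|f\|_{\mathcal{B}^{\hat\beta}_\infty}$ with $C$ depending only on $M$ (through $|M_1|$ and the constants $C_{N,M}$), completing the chain. The one slightly delicate point — the only real obstacle — is the off-support estimate: one must make sure the Schwartz-decay bound on $K_j(x-y)$ is uniform in $j$ after the parabolic-type rescaling $K_j(x)=2^jK_0(2^jx)$, which is why choosing $N$ strictly larger than $\hat\beta$ (and noting $2^{j(1-N)}\leqslant C$ for $j\geqslant-1$ when $N\geqslant1$) is needed; everything else is a routine application of standard Littlewood--Paley facts already invoked in the paper.
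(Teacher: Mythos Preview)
Your approach is sound and genuinely different from the paper's. The paper disposes of the lemma in one line by quoting the local embedding $\mathcal{B}^{\hat\beta}_{\infty,\infty}(\mathcal{I})\hookrightarrow\mathcal{B}^{\beta}_{p,q}(\mathcal{I})$ on a bounded interval $\mathcal{I}\supset M$ (Remark~\ref{embedding3}) together with the identification of $\mathcal{B}^{\hat\beta}_\infty$ with $\mathcal{C}^{\hat\beta}$ from \cite{RunstSickel}; the passage between norms on $\mathcal{I}$ and on $\mathbb{R}$ for functions supported in $M$ is implicit (via a fixed cutoff and the multiplier theorem). Your direct Littlewood--Paley computation is more hands-on and entirely self-contained, which is a virtue; it also makes the dependence of the constants on $M$ completely transparent.

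There is, however, a concrete slip in your off-support estimate. From $K_j(z)=2^jK_0(2^jz)$ and $|K_0(w)|\leqslant C_N(1+|w|)^{-N}$ one gets, for $|z|\geqslant 1$ and $j\geqslant 0$,
\[
|K_j(z)|\leqslant C_N\,2^j(1+2^j|z|)^{-N}\leqslant C_N\,2^{j(1-N)}(1+|z|)^{-N},
\]
so the exponent in $j$ is $1-N$, not $-N$. Consequently your suggested choice $N=1$ fails on two counts: it gives no decay in $j$ (so the far contribution to $2^{j\hat\beta}\|\boldsymbol{\Delta}_jf\|_{L^p}$ blows up), and $(1+\operatorname{dist}(\cdot,M))^{-1}\notin L^1(\mathbb{R})$, so the $L^p$ bound is not even finite for $p=1$. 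The fix is painless: take any $N\geqslant 2$. Then $2^{j(1-N)}\leqslant 2^{-j}\leqslant 2^{-j\hat\beta}$ for $j\geqslant 0$ (since $\hat\beta<1$), and $(1+\operatorname{dist}(\cdot,M))^{-N}\in L^p(\mathbb{R})$ for every $p\in[1,\infty]$. With this correction your argument goes through verbatim and yields the stated inequality with a constant depending only on $|M_1|$ and the fixed Schwartz bounds for $K_0,K_{-1}$.
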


The first inequality follows by Remark~\ref{embedding3} and the second inequality by the Proposition on page 14 in \cite{RunstSickel}.

\section{Properties of fBm}

\subsection{\red{Operator linking Bm and fBm}}

\red{In Lemma~\ref{lem:operatorcontinuity} we prove a continuity property of the operator $\mathcal{A}$ introduced in \eqref{operatortildeA}. This is crucial in order to show adaptedness in the proof of \ref{prop:existence}\ref{en:weak1} and Theorem~\ref{thm:existence}. The explicit form of the operator $\mathcal{A}$ is:}
\begin{align} \label{operator}
\mathcal{A}=\widetilde{\Pi}^{H-1/2} \mathcal{I}^{1/2-H} \widetilde{\Pi}^{1/2-H},
\end{align}
where the operators $\widetilde{\Pi}^h$ and 
$\mathcal{I}^h$ are defined as follows:
\begin{align*}
\forall h \in \mathbb{R}, \quad (\widetilde{\Pi}^h f)(t)&:= t^h f(t) - h \int_0^t s^{h-1} f(s) ds ,\\
\forall h > 0, \quad  (\mathcal{I}^h f)(t)&:=\red{\frac{1}{\Gamma(h)}} \int_0^t (t-s)^{h-1} f(s) ds.
\end{align*}

\begin{lemma} \label{lem:operatorcontinuity}
\red{Let $H<1/2$.} The operator $\mathcal{A}$ defined in \eqref{operator} continuously maps 
the space $(\mathcal{C}_{[0,T]},\|\cdot\|_\infty)$ to itself.
\end{lemma}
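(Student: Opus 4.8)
The plan is to decompose $\mathcal{A}$ as the composition of the three building-block operators appearing in \eqref{operator}, namely $\widetilde{\Pi}^{1/2-H}$, $\mathcal{I}^{1/2-H}$ and $\widetilde{\Pi}^{H-1/2}$, and to show that each one maps $(\mathcal{C}_{[0,T]},\|\cdot\|_\infty)$ continuously into itself; continuity of the composition then follows at once since the composition of bounded linear operators is bounded. Since all three operators are linear, it suffices to prove boundedness, i.e.\ to exhibit for each of them a constant $C=C(T,H)$ with $\|Of\|_\infty \leqslant C\|f\|_\infty$.

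First I would treat $\widetilde{\Pi}^h$ for a generic exponent $h$ (we need $h=1/2-H>0$ and $h=H-1/2<0$). For $f\in\mathcal{C}_{[0,T]}$ and $t\in[0,T]$ one has
\begin{align*}
|(\widetilde{\Pi}^h f)(t)| \leqslant t^h |f(t)| + |h|\int_0^t s^{h-1}|f(s)|\,ds \leqslant \Big(t^h + |h|\int_0^t s^{h-1}\,ds\Big)\|f\|_\infty = \big(t^h + \operatorname{sgn}(h)\,t^h\big)\|f\|_\infty,
\end{align*}
so that $\|\widetilde{\Pi}^h f\|_\infty \leqslant 2T^h\|f\|_\infty$ when $h>0$ and, when $h<0$, a slightly more careful splitting is needed because $s^{h-1}$ is not integrable at $0$ with a bare bound; but in that case $(\widetilde{\Pi}^h f)(t) = t^h f(t) - h\int_0^t s^{h-1}f(s)\,ds$ should be rewritten as $t^h f(t) + |h|\int_0^t s^{h-1}(f(s)-f(t))\,ds + |h| f(t)\int_0^t s^{h-1}\,ds$ only if that integral converges — which it does \emph{not} for $h<0$. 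Hence for the negative exponent one must instead use the precise definition and the fact that, in the specific chain \eqref{operator}, $\widetilde{\Pi}^{H-1/2}$ is only ever applied to the output of $\mathcal{I}^{1/2-H}$, which vanishes at $t=0$ and is in fact H\"older of order $1/2-H$ near $0$; this extra regularity makes $\int_0^t s^{H-3/2}\,[\mathcal{I}^{1/2-H}g](s)\,ds$ absolutely convergent, with the bound $\int_0^t s^{H-3/2} s^{1/2-H}\,ds = \int_0^t s^{-1}\,ds$ — which still diverges. The honest route, therefore, is to combine the two $\widetilde{\Pi}$-type factors with the Riemann--Liouville factor and recognise that the product $\widetilde{\Pi}^{H-1/2}\mathcal{I}^{1/2-H}\widetilde{\Pi}^{1/2-H}$ collapses (as in \cite{Picard}) into a single bounded fractional operator; I would simply invoke that $\mathcal{A}$ equals, up to a multiplicative constant, the operator $f\mapsto c_H\, t^{1/2-H}\frac{d}{dt}\int_0^t (t-s)^{-1/2-H} s^{H-1/2} f(s)\,ds$ or an equivalent non-degenerate Erd\'elyi--Kober-type form whose kernel is integrable against bounded functions on $[0,T]$, and bound it directly.

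For the Riemann--Liouville fractional integral $\mathcal{I}^h$ with $h=1/2-H>0$, the estimate is immediate: $|(\mathcal{I}^h f)(t)| \leqslant \frac{1}{\Gamma(h)}\int_0^t (t-s)^{h-1}|f(s)|\,ds \leqslant \frac{\|f\|_\infty}{\Gamma(h)}\int_0^t (t-s)^{h-1}\,ds = \frac{t^h}{\Gamma(h+1)}\|f\|_\infty \leqslant \frac{T^h}{\Gamma(h+1)}\|f\|_\infty$, and continuity of $t\mapsto(\mathcal{I}^h f)(t)$ follows from dominated convergence, so $\mathcal{I}^h$ maps $\mathcal{C}_{[0,T]}$ into itself boundedly.

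The main obstacle is precisely the boundedness of the factor carrying the \emph{negative} exponent $H-1/2$: taken in isolation $\widetilde{\Pi}^{H-1/2}$ is \emph{not} bounded on $(\mathcal{C}_{[0,T]},\|\cdot\|_\infty)$, so the argument cannot be a naive term-by-term estimate. The resolution is to never isolate that factor: one works with the composition $\mathcal{I}^{1/2-H}\widetilde{\Pi}^{1/2-H}$ first, observes that its image consists of functions vanishing like $t^{1/2-H}$ at the origin (indeed $\mathcal{I}^{1/2-H}$ raises the vanishing order at $0$ by exactly $1/2-H$), and then checks that $\widetilde{\Pi}^{H-1/2}$ restricted to that subspace \emph{is} bounded into $\mathcal{C}_{[0,T]}$, because the singular integral $\int_0^t s^{H-3/2}\varphi(s)\,ds$ with $\varphi(s)=O(s^{1/2-H})$ is only borderline-divergent and is rescued by using the full Besov/H\"older regularity $\varphi\in\mathcal{C}^{1/2-H+\epsilon}$ coming from \cite[Th.~11]{Picard}, or equivalently by using the explicit closed form of $\mathcal{A}$ from \cite{Picard} whose kernel $k_H(t,s)$ satisfies $\sup_{t\le T}\int_0^t |k_H(t,s)|\,ds<\infty$. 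I would present the clean version: cite the explicit expression of $\mathcal{A}$ (with its kernel) from \cite[Th.~11, (2.3)]{Picard}, verify the kernel bound $\int_0^t|k_H(t,s)|\,ds\le C(T,H)$ by a change of variables $s=ut$ reducing it to a Beta integral, deduce $\|\mathcal{A}f\|_\infty\le C(T,H)\|f\|_\infty$, and finally note continuity of $t\mapsto(\mathcal{A}f)(t)$ by dominated convergence. This yields the claimed continuous self-map of $(\mathcal{C}_{[0,T]},\|\cdot\|_\infty)$.
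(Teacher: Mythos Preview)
Your attempt to bound the three factors separately is the right instinct, and you correctly identify that $\widetilde{\Pi}^{H-1/2}$ is not bounded on $\mathcal{C}_{[0,T]}$ in isolation. But you then make a computational slip that leads you to abandon a perfectly workable direct argument. You claim the image of $\mathcal{I}^{1/2-H}\widetilde{\Pi}^{1/2-H}$ consists of functions vanishing like $t^{1/2-H}$. In fact $(\widetilde{\Pi}^{1/2-H}f)(t)=O(t^{1/2-H})$ already, and then $\mathcal{I}^{1/2-H}$ raises the vanishing order by a further $1/2-H$, so the composition produces functions that are $O(t^{1-2H})$. Hence the integral you worried about is
\[
\int_0^t s^{H-3/2}\,O(s^{1-2H})\,ds=\int_0^t O(s^{-1/2-H})\,ds,
\]
which \emph{converges} since $-1/2-H>-1$ for $H<1/2$. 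There is no borderline divergence.

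This is exactly what the paper's proof exploits. It expands $\mathcal{A}f$ into four explicit terms $f_1,\dots,f_4$ (the four terms coming from the two summands in each $\widetilde{\Pi}$), and bounds each one by pulling out $\|f\|_\infty$ and reducing the remaining integrals to Beta-type integrals via the substitution $y=tz$. In every term the dangerous factor $s^{H-3/2}$ from the outer $\widetilde{\Pi}^{H-1/2}$ meets an inner integral of size $s^{1-2H}$, so the net integrand is $s^{-1/2-H}$. Continuity of each $f_i$ is then checked by dominated convergence after the same change of variables. Your fallback of invoking an explicit kernel from \cite{Picard} could in principle be made to work, but you do not carry it out, and it is unnecessary: the direct expansion you started with already closes once the vanishing order is computed correctly.
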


\begin{proof}
The operator $\mathcal{A}$ is linear, therefore it is sufficient to show that it is bounded. For $f \in \mathcal{C}_{[0,T]}$ and $t \in [0,T]$, we have by \eqref{operator} that
\begin{align*}
    \widetilde{\Pi}^{H-1/2}\mathcal{I}^{1/2-H} \widetilde{\Pi}^{1/2-H} f (t)= \sum_{i=1}^4 f_i (t),
\end{align*}
where
\begin{align*}
    f_1(t)&= t^{H-1/2} \frac{1}{\Gamma(1/2-H)} \int_0^t (t-y)^{-1/2-H} y^{1/2-H} f(y) dy\\
    f_2(t)&= t^{H-1/2} (1/2-H)  \frac{-1}{\Gamma(1/2-H)}  \int_0^t (t-y)^{-1/2-H}\int_0^y x^{-1/2-H} f(x) dx dy\\
    f_3(t)&= (1/2-H) \frac{1}{\Gamma(1/2-H)} \int_0^t s^{H-3/2} \int_0^s (s-y)^{-1/2-H} y^{1/2-H} f(y) dy ds\\
    f_4(t)&= (1/2-H)^2 \frac{-1}{\Gamma(1/2-H)} \int_0^t s^{H-3/2} \int_0^s (s-y)^{-1/2-H} \int_0^y r^{-1/2-H} f(r) dr dy ds.
\end{align*}
We have, after doing a changes of variables of the form $\xi=y/t$, uniformly on $[0,T]$, that
\begin{align*}
    |f_1(t)|&\leqslant C  t^{H-1/2} \|f\|_\infty \int_0^1 t^{-1/2-H} t^{1/2-H} t (1-z)^{-1/2-H} z^{1/2-H} dz \leqslant C T^{1/2-H} \|f\|_\infty;\\
    |f_2(t)| &\leqslant C t^{H-1/2} \|f\|_\infty \int_0^t (t-y)^{-1/2-H} y^{1/2-H} dy \leqslant C T^{1/2-H} \|f\|_\infty;\\
    |f_3(t)| &\leqslant C \|f\|_\infty \int_0^t s^{H-3/2} s^{1-2H} ds \leqslant C T^{1/2-H} \|f\|_\infty;\\
    |f_4(t)| &\leqslant C \|f\|_\infty \int_0^t s^{H-3/2} \int_0^s (s-y)^{-1/2-H} y^{1/2-H} dy ds \leqslant C T^{1/2-H} \|f\|_\infty.
\end{align*}
Hence, the operator is bounded. 
It remains to prove that $\mathcal{A}(f)$ is a bounded 
continuous function. For $f_{1}$, we perform the change of variables $z=y/t$ and it comes
\begin{align*}
f_{1}(t) = t^{1/2-H}  \frac{1}{\Gamma(1/2-H)} \int_0^1 (1-z)^{-1/2-H} z^{1/2-H} f(tz) dz .
\end{align*}
By the dominated convergence theorem, this is a continuous function. We proceed similarly for $f_{2}$. As for $i=3,4$, we have 
\begin{align*}
f_i(t)=\int_0^t g_i(s) ds
\end{align*}
for functions $g_i$ that are integrable on $[0,T]$, which implies the continuity of $f_{3}$ and $f_{4}$.
\end{proof}

\subsection{Local nondeterminism of fBm and proof of Lemma~\ref{lem:Cs}}\label{app:LND}

The goal of this section is to prove Lemma~\ref{lem:Cs}. As an intermediate step, we obtain first the following lemma, which is a local nondeterminism property for the fractional Brownian motion.
\begin{lemma} \label{condvar2}
Let $(B_t)_{t\geqslant 0 }$ be an $\mathbb{F}$-fractional Brownian motion with $H\leqslant 
1/2$. There exists $C>0$ such that for any $0 \leqslant s <u<t$,
\begin{align*}
    \EE\left[(\EE^u[B_t]-\EE^s[B_t])^2\right]\geqslant C(u-s)(t-s)^{2H-1}.
\end{align*}
\end{lemma}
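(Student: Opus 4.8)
The statement to prove is Lemma~\ref{condvar2}: a local nondeterminism-type lower bound for the fBm, namely
\[
\EE\big[(\EE^u[B_t]-\EE^s[B_t])^2\big]\geqslant C\,(u-s)(t-s)^{2H-1}
\]
for $0\leqslant s<u<t$, with $H\leqslant 1/2$. The plan is to pass through the Brownian representation of $B$. Recall from \eqref{operatortildeA}–\eqref{operatorA} that $B=\bar{\mathcal{A}}(W)$ for a Brownian motion $W$ generating the same filtration, and the Volterra representation of fBm: $B_t=\int_0^t \mathcal{K}_H(t,r)\,dW_r$ for the standard fBm kernel $\mathcal{K}_H$. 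Since $\mathbb{F}$ is the filtration of $W$, one has $\EE^v[B_t]=\int_0^v \mathcal{K}_H(t,r)\,dW_r$ for any $v\leqslant t$. Hence
\[
\EE^u[B_t]-\EE^s[B_t]=\int_s^u \mathcal{K}_H(t,r)\,dW_r,
\]
and by the Itô isometry,
\[
\EE\big[(\EE^u[B_t]-\EE^s[B_t])^2\big]=\int_s^u \mathcal{K}_H(t,r)^2\,dr.
\]
So everything reduces to a deterministic lower bound on $\int_s^u \mathcal{K}_H(t,r)^2\,dr$.

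**The deterministic kernel estimate.**

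For $H<1/2$ the Volterra kernel has the explicit form (up to a constant $c_H$)
\[
\mathcal{K}_H(t,r)=c_H\Big[\Big(\tfrac{t}{r}\Big)^{H-1/2}(t-r)^{H-1/2}-(H-1/2)r^{1/2-H}\int_r^t u^{H-3/2}(u-r)^{H-1/2}\,du\Big],
\]
valid for $0<r<t$. The first thing I would do is observe that for $r\in(s,u)$ with $u<t$, and since $H<1/2$, the kernel $\mathcal{K}_H(t,r)$ is bounded below by $c_H'(t-r)^{H-1/2}$ times a factor controlled by $t/r$ — more precisely, a standard computation (see e.g. the bounds in \cite[Ch.~5]{Nualart} or the explicit estimates in \cite{Picard}) gives
\[
\mathcal{K}_H(t,r)\geqslant c_H''\,(t-r)^{H-1/2}\quad\text{for }0<r<t,
\]
because the subtracted integral term is of the same order but with a smaller constant; this is exactly where $H\leqslant 1/2$ is used (the bracket stays nonnegative and bounded below). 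Since the sign and monotonicity facts are routine, I would state them as a kernel lemma and not grind through the integral. Then
\[
\int_s^u \mathcal{K}_H(t,r)^2\,dr\geqslant (c_H'')^2\int_s^u (t-r)^{2H-1}\,dr.
\]
Now for $r\in(s,u)$ we have $t-r\geqslant t-u$; but a cleaner bound uses $t-r\leqslant t-s$, and since $2H-1\leqslant 0$, $(t-r)^{2H-1}\geqslant (t-s)^{2H-1}$ for all $r\in(s,u)$. Therefore
\[
\int_s^u \mathcal{K}_H(t,r)^2\,dr\geqslant (c_H'')^2\,(t-s)^{2H-1}\,(u-s),
\]
which is precisely the claimed inequality with $C=(c_H'')^2$. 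For the borderline case $H=1/2$ the kernel is just $\mathbf 1_{r<t}$, so $\int_s^u \mathcal{K}(t,r)^2\,dr=u-s=(u-s)(t-s)^0$, and the statement is trivially an equality.

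**Main obstacle.**

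The only genuine work is the deterministic lower bound $\mathcal{K}_H(t,r)\geqslant c_H''(t-r)^{H-1/2}$ on $r<t$ uniformly. One must verify that the correction term $(H-1/2)r^{1/2-H}\int_r^t u^{H-3/2}(u-r)^{H-1/2}\,du$ — which is \emph{positive} since $H-1/2<0$ makes the prefactor negative and the whole thing is subtracted, i.e.\ it \emph{adds} a nonnegative quantity — actually helps rather than hurts; so the real content is just that the leading term $(t/r)^{H-1/2}(t-r)^{H-1/2}\geqslant (t-r)^{H-1/2}$ whenever $r\leqslant t$ (as $H-1/2<0$ and $t/r\geqslant 1$). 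Thus even the ``hard'' step collapses once one writes the kernel out, and the correction term can simply be dropped by nonnegativity. The rest is bookkeeping: invoking the representation $\EE^v[B_t]=\int_0^v\mathcal{K}_H(t,r)\,dW_r$ (which follows because $\mathbb{F}=\mathbb{F}^W$ and the Volterra integrand on $(v,t]$ is independent of $\mathcal{F}_v$), the Itô isometry, and the elementary inequality $(t-r)^{2H-1}\geqslant (t-s)^{2H-1}$ for $r\in[s,u]$. An alternative, if one wants to avoid the explicit kernel entirely, is to use that $\EE^u[B_t]-\EE^s[B_t]$ is the projection of $B_t$ onto the Gaussian space generated by $(W_r-W_s)_{s\leqslant r\leqslant u}$ and to lower-bound its variance via the known two-sided local nondeterminism of fBm on disjoint intervals combined with a scaling argument; but the Volterra route above is the most direct and is what I would write.
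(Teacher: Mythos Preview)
Your overall strategy matches the paper's exactly: use the Volterra representation $B_t=\int_0^t K_H(t,r)\,dW_r$ with respect to an $\mathbb{F}$-Brownian motion, apply the It\^o isometry to reduce to $\int_s^u K_H(t,r)^2\,dr$, and then seek the pointwise lower bound $K_H(t,r)\geqslant c\,(t-r)^{H-1/2}$. The final step $(t-r)^{2H-1}\geqslant (t-s)^{2H-1}$ for $r\in[s,u]$ is also the same.

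However, your ``hard step collapses'' argument contains a genuine error. You write that the leading term satisfies
\[
\Big(\frac{t}{r}\Big)^{H-1/2}(t-r)^{H-1/2}\geqslant (t-r)^{H-1/2}
\]
``as $H-1/2<0$ and $t/r\geqslant 1$''. This is the wrong direction: $t/r\geqslant 1$ and $H-1/2<0$ give $(t/r)^{H-1/2}\leqslant 1$, hence the leading term is \emph{smaller} than $(t-r)^{H-1/2}$, not larger. In fact $(t/r)^{H-1/2}=(r/t)^{1/2-H}\to 0$ as $r\to 0^+$, so the leading term alone gives a lower bound that degenerates near $r=0$ and cannot produce a constant independent of $s,u,t$ (the lemma allows $s=0$, and even for $s>0$ the resulting factor $(s/t)^{1-2H}$ is not uniformly bounded below).

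The correction term is therefore not a throwaway: it is precisely what compensates the loss in the leading term when $r\ll t$. The paper's proof makes this explicit. After the change of variables $\tilde z=z/r$ one writes $K_H(t,r)=d_H[(t/r)^{H-1/2}(t-r)^{H-1/2}+(1/2-H)r^{H-1/2}\beta_H(t/r)]$ with $\beta_H(\tau)=\int_1^\tau \tilde z^{H-3/2}(\tilde z-1)^{H-1/2}\,d\tilde z$, lower-bounds $\beta_H(t/r)\geqslant \frac{1}{1-2H}(1-(r/t)^{1-2H})$ via $\tilde z^{H-3/2}(\tilde z-1)^{H-1/2}\geqslant \tilde z^{2H-2}$, and then combines \emph{both} terms (using subadditivity of $x\mapsto x^{1/2-H}$ and the elementary inequality $x+x^{-1}\geqslant 2$) to obtain $K_H(t,r)\geqslant \tfrac{d_H}{2}(t-r)^{H-1/2}$. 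You need to carry out this combination rather than discard the integral term.
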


Note that in the above the conditional expectation is taken as usual with respect to $\mathbb{F}$ and not with respect to the filtration generated by $B$.

\begin{proof}
The case $H=1/2$ is trivial, so we assume $H<1/2$.  
Recall that the process $W=\mathcal{A}(B)$, where $\mathcal{A}$ is the operator given in 
\eqref{operatortildeA}, is an $\mathbb{F}$-Brownian motion. Moreover, by Theorem 11 in 
\cite{Picard}, it satisfies
\begin{align} \label{integralrepresentation}
    B_t=\int_0^t K_H(t,r) dW_r,
\end{align}
where, for some $d_H>0$,
\begin{equation*} 
  K_H(t,r)=d_H\left[\left({\frac{t}{r}}\right)^{H-1/2}(t-r)^{H-1/2}+(1/2-H)r^{1/2-H}\int_r^t z^{H-3/2}(z-r)^{H-1/2} dz \right].
\end{equation*}
Therefore
\begin{align}
   \EE\left[(\EE^u[B_t]-\EE^s[B_t])^2\right]&=\EE\left[\left(\EE^u\left[\int_0^t K_H(t,r) dW_r\right]-\EE^s\left[\int_0^t K_H(t,r) dW_r\right]\right)^2\right]\nonumber\\
   &= \EE\left[\left(\int_s^u K_H(t,r) dW_r\right)^2\right] \nonumber\\
   &= \int_s^u K_H(t,r)^2 dr\nonumber\\
   &\geqslant (u-s) \min_{r \in [s,u]} K_H(t,r)^2. \label{kh}
\end{align}
Notice that by the change of variables $\tilde{z}=z/r$,
\begin{align*}
    \int_r^t z^{H-3/2}(z-r)^{H-1/2} dz=r^{2H-1}\beta_H(t/r),
\end{align*}
for $\beta_H(\tau)=\int_1^\tau \tilde{z}^{H-3/2}(\tilde{z}-1)^{H-1/2} d\tilde{z}$. Hence,
\begin{equation*}
     K_H(t,r)=d_H\left[\left({\frac{t}{r}}\right)^{H-1/2}(t-r)^{H-1/2}+(1/2-H)r^{H-1/2}\beta_H(t/r) \right].
\end{equation*}
Using that
\begin{align*}
    \beta_H\left(\tfrac{t}{r}\right)&=\int_1^{t/r} z^{H-3/2} (z-1)^{H-1/2} dz\\
    &\geqslant \int_1^{t/r} z^{2H-2} dz\\
    &=\frac{1}{1-2H}\left(1-\left(\frac{t}{r}\right)^{2H-1}\right)
\end{align*}
and subadditivity of $x \mapsto x^\alpha$ for $\alpha \in (0,1)$ we get, for $0<r<t$, that
\begin{align*}
 \frac{1}{d_H}K_H(t,r)&\geqslant \left(\frac{t}{r}\right)^{H-1/2}(t-r)^{H-1/2}+\frac{1/2-H}{1-2H}r^{H-1/2}\left(1-\left(\frac{t}{r}\right)^{2H-1}\right)\\
 &\geqslant (t-r)^{H-1/2}\left(\left(\frac{r}{t}\right)^{1/2-H}+\frac{1}{2}(t-r)^{1/2-H}r^{H-1/2}\left(1-\left(\frac{r}{t}\right)^{1-2H}\right)\right)\\
 &\geqslant (t-r)^{H-1/2} \left(\left(\frac{r}{t}\right)^{1/2-H}+\frac{1}{2}(t^{1/2-H}-r^{1/2-H})r^{H-1/2}\left(1-\left(\frac{r}{t}\right)^{1-2H}\right)\right)\\
 &= \frac{1}{2}(t-r)^{H-1/2}\left(\left(\frac{r}{t}\right)^{1/2-H} + \left(\frac{t}{r}\right)^{1/2-H} + \left(\frac{r}{t}\right)^{1-2H}-1\right)\\
 &\geqslant \frac{1}{2}(t-r)^{H-1/2},
\end{align*}
where the last line holds true as $x+x^{-1}\geqslant 2$ for $x>0$.
Plugging this into \eqref{kh} we get that
\begin{align*}
     \EE\left[(\EE^u[B_t]-\EE^s[B_t])^2\right] &\geqslant (u-s) \min_{r \in [s,u]} K_H(t,r)^2\\
     &\geqslant \frac{d_H^2}{4}(u-s)(t-s)^{2H-1}.
\end{align*}
\end{proof}

\begin{proof}[Proof of Lemma~\ref{lem:Cs}]
Proof of \ref{(C.8)}: Notice that $B_{t_2}-\EE^{t_1}[B_{t_2}]$ is Gaussian with zero mean and 
variance $\sigma_{{t_1},{t_2}}^2$. Furthermore, it is independent of $\mathcal{F}_{t_1}$, which 
can be seen using the integral representation (see \eqref{integralrepresentation}).
Hence, we have
\begin{align*}
	\EE^{t_1}[f(B_{t_2},\Xi)]&=\EE^{t_1}[f(B_{t_2}-\EE^{t_1}[B_{t_2}]+\EE^{t_1}[B_{t_2}],\Xi)]\\
	&=G_{\sigma_{{t_1},{t_2}}^2}f(\EE^{t_1}[B_{t_2}],\Xi).
\end{align*}
Proof of \ref{(C.10)}: By the local nondeterminism property of fBm (see 
Lemma 7.1 in \cite{Pitt}), there exists $C>0$ such that
\begin{align} \label{LND}
\sigma_{t_1,t_2}^2=C(t_2-t_1)^{2H}.
\end{align}
To see that \ref{(C.10)} holds true we use \ref{(C.8)}, 
Lemma~\ref{A.3}\ref{A.3.1}, the embedding $\mathcal{B}_p^\gamma \hookrightarrow 
\mathcal{B}_\infty^{\gamma-1/p}$ and \eqref{LND} to get that
\begin{align*}
    |\EE^{t_1}[f(B_{t_2},\Xi)]|\leqslant C \|f(\cdot,\Xi)\|_{\mathcal{B}_\infty^{\gamma-1/p}}\sigma_{{t_1},{t_2}}^{\gamma-1/p}\leqslant C \|f(\cdot,\Xi)\|_{\mathcal{B}_p^\gamma}({t_2}-{t_1})^{H(\gamma-1/p)}.
\end{align*}

Proof of \ref{(C.11)}: First notice that for fixed $x \in \mathbb{R}$, $t>0$ and $\phi:\mathbb{R} \rightarrow \mathbb{R}$ with $\|\phi\|_{\mathcal{C}^1}<\infty$,
\begin{align} \label{heatkernelbound}
    |\phi(x)-G_t \phi(x)|\leqslant C \|\phi\|_{\mathcal{C}^1} \sqrt{t}.
\end{align}
To see that \ref{(C.11)} holds true note that due to \ref{(C.8)}, 
\eqref{heatkernelbound} and Cauchy-Schwarz's inequality we get
\begin{align*}
    \EE[|f(B_{t_2},\Xi)&-\EE^{t_1}[f(B_{t_2},\Xi)]|]=\EE[|f(B_{t_2},\Xi)-G_{\sigma^2_{{t_1},{t_2}}}f(\EE^{t_1}[B_{t_2}],\Xi)|]\\
    &\leqslant \EE[|f(B_{t_2},\Xi)-f(\EE^{t_1}[B_{t_2}],\Xi)|]+\EE[|f(\EE^{t_1}[B_{t_2}],\Xi)-G_{\sigma^2_{{t_1},{t_2}}}f(\EE^{t_1}[B_{t_2}],\Xi)|]\\
    &\leqslant \EE[\|f(\cdot,\Xi)\|_{\mathcal{C}^1}|B_{t_2}-\EE^{t_1}[B_{t_2}]|]+C\, \EE[\|f(\cdot,\Xi)\|_{\mathcal{C}^1}]  \, \sigma_{{t_1},{t_2}}\\
    &\leqslant C\, \|\|f(\cdot,\Xi)\|_{\mathcal{C}^1}\|_{L^2} ({t_2}-{t_1})^H,
\end{align*}
where the last line holds true by \eqref{LND} and as
\begin{align*}
\|(B_{t_2}-\EE^{t_1}[B_{t_2}])\|_{L^2}\leqslant \|B_{t_2}-B_{t_1}\|_{L^2}+\|B_{t_1}-\EE^{t_1}[B_{t_2}]\|_{L^2}\leqslant 2({t_2}-{t_1})^H
\end{align*}
by Jensen's inequality for conditional expectations.

Proof of \ref{(C.9)}: In order to prove \ref{(C.9)} we first state the following inequality, for 
some $C>0$: for any $t_1 < \tilde{t} < t_2$,
 \begin{align} 
    \hat{\sigma}_{{t_1},\tilde{t},{t_2}}^2:=\var(\EE^{\tilde{t}}[B_{t_2}]-\EE^{t_1}[B_{t_2}])&\geqslant C ({\tilde{t}}-{t_1})({t_2}-{t_1})^{-1+2H}. 
    \label{LND2}
\end{align}
The above inequality holds true by Lemma~\ref{condvar2}.
Notice that $\EE^{\tilde{t}}[B_{t_2}]-\EE^{t_1}[B_{t_2}]$ is Gaussian with mean zero and variance $\hat{\sigma}_{{t_1},\tilde{t},{t_2}}^2$ and it is independent of $\mathcal{F}_{t_1}$. Using this, \ref{(C.8)} and  H\"older's inequality for $q=p/n\geqslant 1$ and $q^\prime=q/(q-1)$, we get 
\begin{align*}
    \EE^{t_1}\left[|\EE^{\tilde{t}}[f(B_{t_2},\Xi)]|^n\right]&=\EE^{t_1}[|G_{{\sigma}_{{\tilde{t}},{t_2}}^2}f(\EE^{\tilde{t}}[B_{t_2}],\Xi)|^n]\\
    &=\int g_{\hat{\sigma}_{{t_1},{\tilde{t}},{t_2}}^2}(z)|G_{\sigma_{{\tilde{t}},{t_2}}^2}f(\EE^{t_1}[B_{t_2}]+z,\Xi)|^n dz\\
    &\leqslant \|g_{\hat{\sigma}_{{t_1},{\tilde{t}},{t_2}}^2}\|_{L^{q^\prime}(\mathbb{R})}\Big\|\left(G_{\sigma_{{\tilde{t}},{t_2}}^2}f(\EE^{t_1}[B_{t_2}]+\cdot,\Xi)\right)^n\Big\|_{L^q(\mathbb{R})}\\
    &= \|G_{\hat{\sigma}_{{t_1},{\tilde{t}},{t_2}}^2} \delta_0\|_{L^{q^\prime}(\mathbb{R})}\|G_{\sigma_{{\tilde{t}},{t_2}}^2}f(\cdot,\Xi)\|_{L^p(\mathbb{R})}^n.
\end{align*}
Using Lemma~\ref{A.3}\ref{A.3.1}, \eqref{LND}, \eqref{LND2} and that 
$\|\delta_0\|_{\mathcal{B}_x^{-1+1/x}}<\infty$ for $x \geqslant 1$, we get 
\begin{align*}
    \|G_{\hat{\sigma}^2_{{t_1},{\tilde{t}},{t_2}}} \delta_0\|_{L^{q^\prime}(\mathbb{R})}&\|G_{\sigma^2_{{\tilde{t}},{t_2}}}f(\cdot,\Xi)\|_{L^p(\mathbb{R})}^n \leqslant C \|f(\cdot,\Xi)\|_{\mathcal{B}^\gamma_p}^n\|\delta_0\|_{\mathcal{B}_{q^\prime}^{-n/p}}\, \sigma_{{\tilde{t}},{t_2}}^{\gamma n}\, \hat{\sigma}_{{t_1},{\tilde{t}},{t_2}}^{-n/p}\\
    &\leqslant C \|f(\cdot,\Xi)\|_{\mathcal{B}^\gamma_p}^n\|\delta_0\|_{\mathcal{B}_{q^\prime}^{-n/p}} ({t_2}-{\tilde{t}})^{H\gamma n} ({\tilde{t}}-{t_1})^{-n/(2p)} ({t_2}-{t_1})^{n(1-2H)/(2p)}.
\end{align*}
Hence,
\begin{align} \label{ineq:C.9}
     \EE^{t_1}\Big[|\EE^{\tilde{t}}[f(B_{t_2},&\Xi)]|^n\Big]\nonumber\\
     &\leqslant C \|f(\cdot,\Xi)\|_{\mathcal{B}^\gamma_p}^n\|\delta_0\|_{\mathcal{B}_{q^\prime}^{-n/p}} ({t_2}-\tilde{t})^{H\gamma n} ({\tilde{t}}-t_1)^{-n/(2p)} ({t_2}-t_1)^{n(1-2H)/(2p)}.
\end{align}
After taking expectations in \eqref{ineq:C.9} and raising both sides to the power 
$1/n$, we obtain \ref{(C.9)}.
\end{proof}

\section{Stochastic sewing Lemma and regularising properties of the fBm}\label{app:sewing}

In this section we will state and prove some results that are crucial throughout the paper. The statements and their proofs are close extensions to fBm of Lemma 6.1, its corollaries and Lemma 5.2 in \cite{Atetal}.

\red{First, in the following two lemmas we recall two recent extensions of the stochastic 
sewing Lemma (see \cite[Th.~4.1 and Th.~4.7]{Le}). In both statements, let $0<S<T$, let 
$(\Omega,\mathcal{F},\mathbb{F},\mathbb{P})$ be a filtered probability space and let $A: 
\Delta_{[S,T]} \rightarrow L^m$ such that $A_{s,t}$ is $\mathcal{F}_t$-measurable for $(s,t) 
\in \Delta_{[S,T]}$.}
\begin{lemma}[Stochastic sewing Lemma] \label{sts}
Let $n \in [m,\infty]$. Assume that there exist constants $\Gamma_1,\Gamma_2\geqslant 0,\, \varepsilon_1,\varepsilon_2>0, \,\alpha_2 \in [0,1/2)$ such that for every $(s,t) \in \Delta_{[S,T]}$ and $u:=(s+t)/2$,
\begin{align}
    \|\EE^s[\delta A_{s,u,t}]\|_{L^n}&\leqslant \Gamma_1 (t-s)^{1+\varepsilon_1},\label{sts1}\\
    \|\EE^S[|\delta A_{s,u,t}|^m]^{1/m}\|_{L^n} &\leqslant \Gamma_2 (u-S)^{-\alpha_2}(t-s)^{1/2+\varepsilon_2}. \label{sts2}
\end{align}
\red{Suppose} there exists a process $(\mathcal{A}_t)_{t\in [S,T]}$ such that, for any $t \in [S,T]$ 
and any sequence of partitions $\Pi_k=\{t_i^k\}_{i=0}^{N_k}$ of $[S,t]$ with mesh size going 
to zero, we have
\begin{align} \label{sts3}
    \mathcal{A}_t=\lim_{k\rightarrow \infty}\sum_{i=0}^{N_k}A_{t_i^k,t_{i+1}^k} \text{ in probability.} 
\end{align}

Moreover, there exists a constant $C=C(\varepsilon_1,\varepsilon_2,\red{\alpha_2},m)$ independent of $S,T$ such that for every $(s,t) \in \Delta_{[S,T]}$ we have
\begin{align*}
    \|\EE^S[|\mathcal{A}_t-\mathcal{A}_s-A_{s,t}|^m]^{1/m}\|_{L^n}\leqslant C \Gamma_2 \red{(t-s)^{1/2-\alpha_2+\varepsilon_2}}+C\Gamma_1\red{(t-s)^{1+\varepsilon_1}}
\end{align*}
and
\begin{align*}
    \|\EE^S[\mathcal{A}_t-\mathcal{A}_s-A_{s,t}]\|_{L^n}\leqslant C \Gamma_1 \red{(t-s)^{1+\varepsilon_1}}.
\end{align*}
\end{lemma}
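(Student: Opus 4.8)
The plan is to follow the dyadic--martingale proof of the stochastic sewing lemma from \cite{Le} (the statement here being essentially a combination of \cite[Th.~4.1]{Le} for the regular case $\alpha_2=0$ and \cite[Th.~4.7]{Le} for the singular weight $\alpha_2\in(0,1/2)$), adapted to the conditional norms $\|\EE^S[|\cdot|^m]^{1/m}\|_{L^n}$ used above. Fix $(s,t)\in\Delta_{[S,T]}$; for $k\geqslant0$ set $t_i^k:=s+i(t-s)2^{-k}$ for $0\leqslant i\leqslant 2^k$, $u_i^k:=(t_i^k+t_{i+1}^k)/2$, and form the Riemann-type sums $A^k_{s,t}:=\sum_{i=0}^{2^k-1}A_{t_i^k,t_{i+1}^k}$, each $A_{t_i^k,t_{i+1}^k}$ being $\mathcal{F}_{t_{i+1}^k}$-measurable. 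The starting point is the telescoping identity $A^{k+1}_{s,t}-A^k_{s,t}=-\sum_{i=0}^{2^k-1}\delta A_{t_i^k,u_i^k,t_{i+1}^k}$.

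Next I would split each increment as $\delta A_{t_i^k,u_i^k,t_{i+1}^k}=\EE^{t_i^k}[\delta A_{t_i^k,u_i^k,t_{i+1}^k}]+\big(\delta A_{t_i^k,u_i^k,t_{i+1}^k}-\EE^{t_i^k}[\delta A_{t_i^k,u_i^k,t_{i+1}^k}]\big)$. For the predictable part, \eqref{sts1} gives $L^n$-norm at most $\Gamma_1((t-s)2^{-k})^{1+\varepsilon_1}$ per term, so summing the $2^k$ terms and then over $k$ yields a geometric series bounded by $C\Gamma_1(t-s)^{1+\varepsilon_1}$ since $\varepsilon_1>0$. For the centered part, at level $k$ it is a sum of terms conditionally centered along the filtration $\{\mathcal{F}_{t_i^k}\}_i$; a conditional Burkholder--Davis--Gundy inequality (valid since $m\geqslant2$) and the conditional Minkowski inequality reduce its norm to the square function $\big\|\EE^S\big[\big(\sum_i|\delta A_{t_i^k,u_i^k,t_{i+1}^k}-\EE^{t_i^k}[\delta A_{t_i^k,u_i^k,t_{i+1}^k}]|^2\big)^{m/2}\big]^{1/m}\big\|_{L^n}$, which by \eqref{sts2} is controlled by $\Gamma_2\big(\sum_i(u_i^k-S)^{-2\alpha_2}((t-s)2^{-k})^{1+2\varepsilon_2}\big)^{1/2}$; because $2\alpha_2<1$, the dyadic sum of the weights $(u_i^k-S)^{-2\alpha_2}$ behaves like a Riemann sum for an integrable singularity and produces (up to a constant) the factor $(t-s)^{1/2-\alpha_2+\varepsilon_2}2^{-k\varepsilon_2}$, summable over $k$ since $\varepsilon_2>0$. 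This shows $(A^k_{s,t})_k$ is Cauchy in the relevant norm; I would define $A_{s,t}$ as its limit and obtain the two claimed bounds by writing $\mathcal{A}_t-\mathcal{A}_s-A_{s,t}=\sum_{k\geqslant0}(A^{k+1}_{s,t}-A^k_{s,t})$ — the first bound from summing both parts, the second because $\EE^S$ annihilates every centered part, leaving only the $\Gamma_1$ contribution.

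Finally, to identify $\lim_k A^k_{s,t}$ with $\mathcal{A}_t-\mathcal{A}_s$ I would invoke \eqref{sts3}: the Cauchy estimate, applied to an arbitrary refining sequence of partitions of $[s,t]$ with vanishing mesh rather than the dyadic one, shows that all such limits agree, and \eqref{sts3} identifies one of them with $\mathcal{A}_t-\mathcal{A}_s$. The main obstacle is the bookkeeping in the second step: carrying the conditional norm $\|\EE^S[|\cdot|^m]^{1/m}\|_{L^n}$ through the BDG step and summing the singular weights $(u_i^k-S)^{-2\alpha_2}$ over the dyadic grid so that exactly the exponent $1/2-\alpha_2+\varepsilon_2$ on $(t-s)$ emerges together with a summable factor $2^{-k\varepsilon_2}$ — this is precisely where the hypothesis $\alpha_2<1/2$ is essential.
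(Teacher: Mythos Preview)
The paper does not prove this lemma: it is stated in Appendix~\ref{app:sewing} as a recall of \cite[Th.~4.1 and Th.~4.7]{Le} and no argument is given. Your sketch reproduces the standard dyadic--martingale proof from \cite{Le} (telescoping along dyadic refinements, splitting $\delta A$ into its $\mathcal{F}_{t_i^k}$-conditional mean plus a centered martingale-difference part, triangle inequality on the former and conditional BDG on the latter), and the bookkeeping you indicate for the singular weights is correct: bounding $(u_i^k-S)^{-2\alpha_2}\leqslant(u_i^k-s)^{-2\alpha_2}$ and using $\sum_{i=0}^{2^k-1}(i+\tfrac12)^{-2\alpha_2}\leqslant C\,2^{k(1-2\alpha_2)}$ (valid precisely because $2\alpha_2<1$) gives exactly the claimed factor $(t-s)^{1/2-\alpha_2+\varepsilon_2}2^{-k\varepsilon_2}$. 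One small point worth making explicit: the conditional BDG step requires the outer $L^n$-norm of the square function to factor through, which is handled either by the conditional Minkowski inequality in $\ell^2$ you mention or, as in \cite{Le}, by working in the mixed norm $L^n(\Omega;L^m(\Omega,\mathcal{F}_S))$ directly; either way there is no difficulty since $n\geqslant m\geqslant 2$.
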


\begin{lemma} [Stochastic sewing lemma with random controls] \label{lem:stsrandomcontrols}
\red{Let $\lambda$ be a random control. Assume that there exist constants $\Gamma_1,\alpha_1,\beta_1\geqslant 0$ with $\alpha_1+\beta_1>1$ such that
\begin{align}\label{sts:randomcontrols}
|\EE^u \delta A_{s,u,t}| \leqslant \Gamma_1 |t-s|^{\alpha_1} \lambda(s,t)^{\beta_1} \text{ a.s. }
\end{align}
for all $(s,t) \in \Delta_{[S,T]}$ and $u\coloneqq (s+t)/2$. Additionally, assume that \eqref{sts2} holds for $m=n$ and $\alpha_2=0$ \red{and that there exists a process $(\mathcal{A}_t)_{t \in [S,T]}$ such that \eqref{sts3} holds}. Then there is a map $B\colon \Delta_{[S,T]} \rightarrow L^m$ and a constant $C>0$ such that for all $(s,t) \in \Delta_{[S,T]}$,
\begin{align}
|\mathcal{A}_t-\mathcal{A}_s-A_{s,t}|&\leqslant C \Gamma_1 |t-s|^{\alpha_1} \lambda(s,t)^{\beta_1} + B_{s,t} \text{ a.s. and}\\
\|B_{s,t}\|_{L^m} &\leqslant C \Gamma_2 |t-s|^{1/2+\varepsilon_2}.
\end{align}}
\end{lemma}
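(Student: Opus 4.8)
The plan is to run the standard dyadic argument behind the stochastic sewing lemma, the only new ingredient being that the ``drift part'' $\sum\EE^{\theta}[\delta A]$ is estimated \emph{pathwise} via the superadditive random control $\lambda$ rather than in $L^m$. Fix $(s,t)\in\Delta_{[S,T]}$ and, for $k\geqslant0$, let $\Pi^k$ be the partition of $[s,t]$ into $2^k$ intervals of equal length; for $[u,v]\in\Pi^k$ denote its midpoint by $\theta:=(u+v)/2$, and set $A^k_{s,t}:=\sum_{[u,v]\in\Pi^k}A_{u,v}$. First I would record the elementary identities $A^0_{s,t}=A_{s,t}$, $A^{k+1}_{s,t}-A^k_{s,t}=-\sum_{[u,v]\in\Pi^k}\delta A_{u,\theta,v}$, and, by \eqref{sts3} applied to the dyadic sequence, $A^k_{s,t}\to\mathcal{A}_t-\mathcal{A}_s$ in probability. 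Hence, provided the relevant series converges, $\mathcal{A}_t-\mathcal{A}_s-A_{s,t}=-\sum_{k\geqslant0}\sum_{[u,v]\in\Pi^k}\delta A_{u,\theta,v}$, and the whole point is to control this double sum.

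Next I would split each second-order increment along the $\sigma$-algebra of its midpoint: $\delta A_{u,\theta,v}=\EE^{\theta}[\delta A_{u,\theta,v}]+\xi_{u,v}$ with $\xi_{u,v}:=\delta A_{u,\theta,v}-\EE^{\theta}[\delta A_{u,\theta,v}]$, and set $M^k:=\sum_{[u,v]\in\Pi^k}\xi_{u,v}$, $D^k:=\sum_{[u,v]\in\Pi^k}\EE^{\theta}[\delta A_{u,\theta,v}]$. For $M^k$, the key observation is that, ordering the intervals of $\Pi^k$ and indexing the filtration by their midpoints $\theta_1<\theta_2<\cdots$, the variables $\xi_{u_j,v_j}$ form a martingale difference sequence: $\xi_{u_j,v_j}$ is $\mathcal{F}_{v_j}$-measurable with $v_j\leqslant\theta_{j+1}$, and $\EE^{\theta_j}[\xi_{u_j,v_j}]=0$. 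The conditional Burkholder--Davis--Gundy inequality (here $m\geqslant2$), the $L^m$-contractivity of conditional expectation, and hypothesis \eqref{sts2} (with $n=m$, $\alpha_2=0$, which reads $\|\delta A_{u,\theta,v}\|_{L^m}\leqslant\Gamma_2(v-u)^{1/2+\varepsilon_2}$) then give $\|M^k\|_{L^m}\lesssim\Gamma_2(t-s)^{1/2+\varepsilon_2}2^{-k\varepsilon_2}$, so $\sum_k M^k$ converges in $L^m$ and $B_{s,t}:=\big|\sum_k M^k\big|$ obeys $\|B_{s,t}\|_{L^m}\leqslant C\Gamma_2|t-s|^{1/2+\varepsilon_2}$.

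For $D^k$ I would apply \eqref{sts:randomcontrols} interval by interval to obtain the \emph{almost sure} bound $|D^k|\leqslant\Gamma_1\big(2^{-k}(t-s)\big)^{\alpha_1}\sum_{[u,v]\in\Pi^k}\lambda(u,v)^{\beta_1}$. Since $\lambda$ is a.s. a control, superadditivity gives $\sum_{[u,v]\in\Pi^k}\lambda(u,v)\leqslant\lambda(s,t)$; combining this with superadditivity of $x\mapsto x^{\beta_1}$ when $\beta_1\geqslant1$, and with Jensen's inequality (using the $2^k$ equal weights) when $\beta_1<1$, one bounds $\sum_{[u,v]\in\Pi^k}\lambda(u,v)^{\beta_1}$ by $\lambda(s,t)^{\beta_1}$, resp.\ by $2^{k(1-\beta_1)}\lambda(s,t)^{\beta_1}$. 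In either case $|D^k|\leqslant\Gamma_1(t-s)^{\alpha_1}\lambda(s,t)^{\beta_1}\,2^{-k\kappa}$ for an exponent $\kappa=\kappa(\alpha_1,\beta_1)>0$ produced by the assumption $\alpha_1+\beta_1>1$ (in the applications of the lemma $\beta_1=1$, so $\kappa=\alpha_1$), whence $\sum_k|D^k|\leqslant C\Gamma_1(t-s)^{\alpha_1}\lambda(s,t)^{\beta_1}<\infty$ a.s. Absolute a.s.\ convergence of $\sum_kD^k$ together with $L^m$-convergence of $\sum_kM^k$ yields convergence in probability of $\sum_k(M^k+D^k)$, which therefore equals $A_{s,t}-(\mathcal{A}_t-\mathcal{A}_s)$; the triangle inequality then gives $|\mathcal{A}_t-\mathcal{A}_s-A_{s,t}|\leqslant\sum_k|D^k|+B_{s,t}\leqslant C\Gamma_1|t-s|^{\alpha_1}\lambda(s,t)^{\beta_1}+B_{s,t}$, as required, and the estimates are uniform in $(s,t)$.

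The step I expect to be the main obstacle is the control of the drift sum $\sum_kD^k$: because $\lambda$ need not have any integrability, this bound is genuinely only almost sure, so one cannot estimate everything in $L^m$ as in the classical statement — one must peel off the martingale part $\sum_kM^k$, keep it in $L^m$, and then recover the identification of the double series with $\mathcal{A}_t-\mathcal{A}_s-A_{s,t}$ through convergence in probability, leaning on hypothesis \eqref{sts3} rather than on an $L^m$ limit. A secondary technical point is to check that the midpoint decomposition (in place of the left-endpoint one used for the plain sewing lemma) still yields a bona fide martingale difference structure for the BDG estimate, which it does at the cost of indexing the filtration by midpoints. The remaining bookkeeping — tracking the constant $C$ and passing from dyadic to arbitrary partitions — is routine and follows \cite{Le}.
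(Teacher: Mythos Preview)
The paper does not prove this lemma: it is stated as a recalled result from L\^{e}~\cite[Th.~4.7]{Le}, with no proof given in the paper itself. Your sketch is therefore not comparable to a proof in the paper, but it is essentially the correct argument and matches the strategy of L\^{e}'s original proof --- split the dyadic telescoping sum into a martingale part controlled in $L^m$ via BDG and \eqref{sts2}, and a drift part controlled pathwise via \eqref{sts:randomcontrols} and superadditivity of the random control.

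One small caveat: your bound on $\sum_k|D^k|$ yields a decay exponent $\kappa=\alpha_1$ when $\beta_1\geqslant 1$ and $\kappa=\alpha_1+\beta_1-1$ when $\beta_1<1$; the first of these vanishes in the admissible edge case $\alpha_1=0$, $\beta_1>1$, so the dyadic argument as written does not cover the full parameter range of the statement. You correctly note that the only application in the paper (Proposition~\ref{prop:regularityALT}) has $\beta_1=1$, whence $\alpha_1>0$ and the issue disappears; for the general statement one would instead view $|t-s|^{\alpha_1}\lambda(s,t)^{\beta_1}$ as $\varkappa(s,t)^{\alpha_1+\beta_1}$ for the control $\varkappa(s,t)=|t-s|^{\alpha_1/(\alpha_1+\beta_1)}\lambda(s,t)^{\beta_1/(\alpha_1+\beta_1)}$ (cf.\ \cite[Exercise~1.9]{FrizVictoir}) and run a control-based sewing argument rather than a purely dyadic one.
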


\begin{lemma} \label{regulINT}
Let  $\gamma \in (-1/(2H),0)$, $m \in [2,\infty)$, $n \in [m,\infty]$, $p \in [n,\infty]$ and 
$d\in \N$. Then there exists a constant $C>0$ such that for any $0\leqslant S\leqslant T$, 
any $\mathcal{F}_{S}$-measurable random variable $\Xi$ in $\R^d$ and any bounded 
measurable function $f:\mathbb{R}\times\R^d \rightarrow \mathbb{R}$ fulfilling
\begin{enumerate}[label=(\roman*)]
    \item $\EE\left[ \|f(\cdot,\Xi)\|_{\mathcal{C}^1}^2\right]<\infty$;
    \item $\EE\left[ \|f(\cdot,\Xi)\|_{\mathcal{B}_p^\gamma}^n\right]<\infty$,
\end{enumerate}
we have  for any $t\in[S,T]$ that
\begin{equation}\label{eq:regulINT}
    \left\|\EE^S\left[\left|\int_S^t f(B_r,\Xi) \, dr\right|^m\right]^{1/m}\right\|_{L^n} \leqslant C \, \| \|f(\cdot,\Xi)\|_{\mathcal{B}_p^\gamma}\|_{L^n}\, (t-S)^{1+H(\gamma-1/p)}.
\end{equation}
\end{lemma}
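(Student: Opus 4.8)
The statement is exactly a stochastic-sewing estimate for the averaging functional $A_{s,t} := \int_s^t f(B_r,\Xi)\,dr$, so the plan is to apply the stochastic sewing Lemma (Lemma~\ref{sts}) to this two-parameter object and identify $\mathcal A_t = \int_S^t f(B_r,\Xi)\,dr$. First I would reduce to the case $S=0$ by the stationarity/shift argument (conditioning on $\mathcal F_S$ and working with the fBm restarted at $S$ — more precisely, one keeps $S$ general but notes all constants are shift-invariant since the local nondeterminism bounds \eqref{LND}, \eqref{LND2} depend only on time increments). The integrability assumptions (i) and (ii) ensure that all the quantities below are finite, and that $\mathcal A$ is a well-defined continuous process whose Riemann sums converge (so \eqref{sts3} holds): this is because $\|f(\cdot,\Xi)\|_{\mathcal C^1}<\infty$ a.s., so $r\mapsto f(B_r,\Xi)$ is a.s. bounded and measurable and $t\mapsto \mathcal A_t$ is a.s. Lipschitz, giving $\mathcal A_t - \mathcal A_s - A_{s,t}=0$ for the exact integral — wait, that is zero, so actually the Riemann-sum convergence is trivial here and $\mathcal A_t - \mathcal A_s = A_{s,t}$ identically. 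The point of invoking sewing is therefore not to \emph{construct} $\mathcal A$ but to get the \emph{quantitative} $L^n$-bound on $\|\EE^S[|A_{s,t}|^m]^{1/m}\|_{L^n}$; one applies the sewing lemma and then uses $\mathcal A_t-\mathcal A_s - A_{s,t}\equiv 0$ together with $\|\EE^S[|A_{s,t}|^m]^{1/m}\|_{L^n}\le \|\EE^S[|\mathcal A_t-\mathcal A_s-A_{s,t}|^m]^{1/m}\|_{L^n} + \|\EE^S[|A_{s,t}|^m]^{1/m}\|_{L^n}$ — no; the clean way is: the sewing lemma's conclusion bounds $\mathcal A_t - \mathcal A_s$ itself once we also control $\|\EE^S[|A_{s,t}|^m]^{1/m}\|_{L^n}$ on the germ, via the standard inequality $\|\EE^S[|\mathcal A_t - \mathcal A_s|^m]^{1/m}\|_{L^n} \le \|\EE^S[|\mathcal A_t-\mathcal A_s-A_{s,t}|^m]^{1/m}\|_{L^n} + \sup \text{(germ bound)}$ applied on a single interval after summing a geometric series. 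In practice I would just verify \eqref{sts1} and \eqref{sts2} for the midpoint $u=(s+t)/2$ and quote Lemma~\ref{sts}; the final estimate \eqref{eq:regulINT} then follows by combining the sewing output with the direct germ bound on $\|\EE^S[|A_{s,t}|^m]^{1/m}\|_{L^n}$.

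\textbf{Verifying \eqref{sts1} (the drift condition).} Here $\delta A_{s,u,t} = \int_u^t f(B_r,\Xi)\,dr$, and $\EE^s[\delta A_{s,u,t}] = \int_u^t \EE^s[f(B_r,\Xi)]\,dr$. For $u < r$, I would condition first on $\mathcal F_u$ inside: $\EE^s[f(B_r,\Xi)] = \EE^s[\EE^u[f(B_r,\Xi)]]$. By Lemma~\ref{lem:Cs}\ref{(C.10)} (applied with the $\mathcal F_u$-measurable variable $\Xi$, which is legitimate since $\Xi$ is $\mathcal F_S\subset\mathcal F_u$-measurable), $|\EE^u[f(B_r,\Xi)]|\le C\|f(\cdot,\Xi)\|_{\mathcal B^\gamma_p}(r-u)^{H(\gamma-1/p)}$, hence $|\EE^s[f(B_r,\Xi)]| \le C\,\EE^s[\|f(\cdot,\Xi)\|_{\mathcal B^\gamma_p}](r-u)^{H(\gamma-1/p)}$ a.s. Integrating over $r\in[u,t]$ and using $H(\gamma-1/p) > -1$ (which is the hypothesis $\gamma > -1/(2H)$, since $\gamma - 1/p \ge \gamma - 1/n \ge \gamma - 1/m \ge \gamma - 1/2 > -1/(2H) - 1/2 > \ldots$ — more simply $\gamma > -1/(2H)$ with $H\le 1/2$ gives $H\gamma > -1/2 > -1$, and $H/p \le H \le 1/2$, so $H(\gamma-1/p) > -1/2 - 1/2 = -1$), the $r$-integral converges and produces $(t-u)^{1+H(\gamma-1/p)} \sim (t-s)^{1+H(\gamma-1/p)}$. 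Since $1 + H(\gamma - 1/p) > 1/2$ (this is again $\gamma > -1/(2H)$, using $H/p < H \le 1/2$ so $1 + H\gamma - H/p > 1 - 1/2 - 1/2 = 0$... let me be careful: we need $>1$ for \eqref{sts1}; in fact $\EE^s[\delta A]$ is $O((t-s)^{1+H(\gamma-1/p)})$ and $1+H(\gamma-1/p)>1$ fails in general, so \eqref{sts1} must be read with $\varepsilon_1 = H(\gamma-1/p) > 0$ only when $\gamma > 1/p$). The correct reading is: we do \emph{not} need $\EE^s[\delta A]$ small — rather, the drift term \eqref{sts1} in Lemma~\ref{sts} needs exponent $1+\varepsilon_1>1$; since $H(\gamma-1/p)$ can be negative, the trick (standard, cf. \cite{Atetal,ButkovskyEtAl}) is that $\delta A_{s,u,t}$ here has $\EE^s[\delta A_{s,u,t}] = \int_u^t \EE^s[f(B_r,\Xi)]dr$ and this is itself the quantity whose smallness we are trying to prove. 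So the genuinely correct setup is to apply sewing not to $A_{s,t} = \int_s^t f(B_r,\Xi)dr$ but to a \emph{shifted germ} $\tilde A_{s,t} := \EE^s[\int_s^t f(B_r,\Xi)\,dr]$, whose $\delta$ has $\EE^s$-vanishing leading order; I would compute $\EE^s[\delta\tilde A_{s,u,t}]$ and exhibit the extra power $(r-u)^{?}$ from the second conditioning (Lemma~\ref{lem:Cs}\ref{(C.9)} with $\tilde t = u$) to get $\varepsilon_1>0$. This is the point requiring care — matching the bookkeeping to the hypotheses of Lemma~\ref{sts}.

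\textbf{Verifying \eqref{sts2} (the variance condition) and conclusion.} For \eqref{sts2} one needs $\|\EE^S[|\delta A_{s,u,t}|^m]^{1/m}\|_{L^n}\le \Gamma_2 (u-S)^{-\alpha_2}(t-s)^{1/2+\varepsilon_2}$. I would write $\delta A_{s,u,t} = \int_u^t f(B_r,\Xi)dr$ and bound its $m$-th conditional moment: $\EE^S[|\int_u^t f(B_r,\Xi)dr|^m] \le (t-u)^{m-1}\int_u^t \EE^S[|f(B_r,\Xi)|^m]dr$ is too lossy; instead use the martingale/centering structure, writing $f(B_r,\Xi) = \EE^u[f(B_r,\Xi)] + (f(B_r,\Xi) - \EE^u[f(B_r,\Xi)])$ and estimate the two pieces by Lemma~\ref{lem:Cs}\ref{(C.9)} and \ref{(C.11)} respectively, after a further layer of conditioning; the factor $(u-S)^{-1/(2p)}$ from \ref{(C.9)} gives $\alpha_2 = 1/(2p) < 1/2$ as required, and the power of $(t-s)$ coming out is $1 + H(\gamma-1/p)$ or better, which exceeds $1/2$, giving $\varepsilon_2 > 0$. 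Once \eqref{sts1} and \eqref{sts2} are checked, Lemma~\ref{sts} gives $\|\EE^S[|\mathcal A_t - \mathcal A_s - A_{s,t}|^m]^{1/m}\|_{L^n}\le C\Gamma_2(t-s)^{1/2-\alpha_2+\varepsilon_2} + C\Gamma_1(t-s)^{1+\varepsilon_1}$; since the exact integral has $\mathcal A_t - \mathcal A_s = A_{s,t}$, I then bound $\|\EE^S[|A_{s,t}|^m]^{1/m}\|_{L^n}$ directly on intervals of unit (or small) length via the germ estimates above, and sum over a dyadic-type partition of $[S,t]$ using superadditivity, so that all powers collapse to the claimed $(t-S)^{1+H(\gamma-1/p)}$ with constant $C\,\|\,\|f(\cdot,\Xi)\|_{\mathcal B^\gamma_p}\|_{L^n}$. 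The main obstacle is the first step: correctly choosing the germ (shifted by a conditional expectation) and doing the two-level conditioning so that the exponents $\varepsilon_1,\varepsilon_2$ are strictly positive and $\alpha_2<1/2$, i.e.\ matching the fBm regularity bounds of Lemma~\ref{lem:Cs} to the precise hypotheses of the stochastic sewing Lemma; the rest is routine integration and geometric summation.
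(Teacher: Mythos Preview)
Your high-level instinct (stochastic sewing with the estimates of Lemma~\ref{lem:Cs}) is the paper's approach, and you do eventually stumble onto the correct germ $\tilde A_{s,t}=\EE^s\big[\int_s^t f(B_r,\Xi)\,dr\big]$. But the proposal as written is tangled, and there are two concrete errors that would prevent it from going through.

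First, the computation with the naive germ $A_{s,t}=\int_s^t f(B_r,\Xi)\,dr$ is wrong on its face: $\delta A_{s,u,t}=\int_s^t-\int_s^u-\int_u^t=0$, not $\int_u^t f(B_r,\Xi)\,dr$ as you write. So the entire ``verifying \eqref{sts1}'' paragraph is computing the wrong object. Second, and more importantly, once you switch to the correct germ $\tilde A_{s,t}=\EE^s[\int_s^t f\,dr]$ you write that ``$\delta\tilde A$ has $\EE^s$-vanishing leading order'' and then plan to ``exhibit the extra power'' to get $\varepsilon_1>0$. In fact $\EE^s[\delta\tilde A_{s,u,t}]$ vanishes \emph{identically}: $\delta\tilde A_{s,u,t}=\EE^s[\int_u^t f]-\EE^u[\int_u^t f]$, and taking $\EE^s$ of this gives zero by the tower property. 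So \eqref{sts1} holds trivially with $\Gamma_1=0$; there is no bookkeeping to do. This is the key simplification you miss, and it is why the paper chooses this germ.

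Your conclusion paragraph then mixes the two germs again (``since the exact integral has $\mathcal A_t-\mathcal A_s=A_{s,t}$'' is true only for the naive germ, which you've already abandoned), and the ``dyadic summation'' is unnecessary. The paper's argument is simply: with the germ $A_{s,t}=\EE^s[\int_s^t f\,dr]$, condition \eqref{sts1} is vacuous; condition \eqref{sts2} follows from Lemma~\ref{lem:Cs}\ref{(C.9)} applied to $\|\EE^u[f(B_r,\Xi)]\|_{L^n}$ (absorbing $(r-S)^{(1-2H)/(2p)}$ into $(u-S)^{(1-2H)/(2p)}$ since $r\le t\le 2u-s$, which gives $\alpha_2=H/p$ rather than your $1/(2p)$); condition \eqref{sts3} uses Lemma~\ref{lem:Cs}\ref{(C.11)} and assumption~(i). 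The sewing output bounds $\|\EE^S[|\mathcal A_t-A_{S,t}|^m]^{1/m}\|_{L^n}$, and the remaining term $\|A_{S,t}\|_{L^n}$ is bounded directly by Minkowski and Lemma~\ref{lem:Cs}\ref{(C.10)}. No partition or summation is needed.
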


\begin{proof} %
In order to show \eqref{eq:regulINT}, we will apply Lemma~\ref{sts}. For $S\leqslant s 
\leqslant t \leqslant T$, let
\begin{align*}
    \mathcal{A}_t:=\int_S^t f(B_r,\Xi) dr \quad \text{and}\quad A_{s,t}:=\EE^s\left[\int_s^t f(B_r,\Xi) dr\right].
\end{align*}
Notice that we have $\EE^s[\delta A_{s,u,t}]=0$, so \eqref{sts1} trivially holds.

In order to establish inequality \eqref{sts2}, we show that 
\begin{align} \label{(4.8)}
    \|\delta A_{s,u,t}\|_{L^n}\leqslant \Gamma_2 (u-S)^{-\alpha_2}(t-s)^{1/2+\varepsilon_2}
\end{align}
holds true for some $\alpha_{2}\in [0,1/2)$ and $\varepsilon_{2}>0$, which is sufficient by the tower property and conditional Jensen's inequality. 
For $u=(s+t)/2$ we have by Minkowski's integral inequality, Jensen's inequality for 
conditional expectation and Lemma~\ref{lem:Cs}\ref{(C.9)} that
\begin{align*}
    \|\delta A_{s,u,t}\|_{L^n}&\leqslant \left\|\EE^s\left[\int_u^t f(B_r,\Xi) dr\right]\right\|_{L^n} + \left\|\EE^u\left[\int_u^t  f(B_r,\Xi) dr\right]\right\|_{L^n}\\
    &\leqslant \int_u^t\left(\|\EE^s f(B_r,\Xi) \|_{L^n}+\|\EE^u f(B_r,\Xi) \|_{L^n}\right) dr\\
    &\leqslant 2 \int_u^t \|\EE^u f(B_r,\Xi) \|_{L^n} dr \\
    &\leqslant C\int_u^t \|\| f(\cdot,\Xi) \|_{\mathcal{B}_p^\gamma}\|_{{L^n}}(r-u)^{H\gamma}(u-S)^{-\frac{1}{2p}}(r-S)^{\tfrac{1-2H}{2p}}dr\\
    &\leqslant C \int_u^t \|\| f(\cdot,\Xi) \|_{\mathcal{B}_p^\gamma}\|_{{L^n}} (r-u)^{H\gamma} (u-S)^{-\frac{H}{p}} dr\\
    &\leqslant C \, \| \| f(\cdot,\Xi) \|_{\mathcal{B}_p^\gamma}\|_{L^n}\,  (t-u)^{1+H\gamma}(u-S)^{-\frac{H}{p}},
\end{align*}
where the penultimate inequality holds true as $r-S\leqslant 2(u-S)$. Hence, we 
have \eqref{(4.8)} for $\varepsilon_2=1/2+H\gamma>0$ and $\alpha_2=H/p<1/2$.

Let $t\in [S,T]$. Let $(\Pi_k)_{k \in \mathbb{N}}$ be a 
sequence of partitions of $[S,t]$ with mesh size converging to zero. For each $k$, denote $\Pi_k=\{t_i^k\}_{i=1}^{N_k}$. By 
Lemma~\ref{lem:Cs}\ref{(C.11)} we have that
\begin{align*}
    \|\mathcal{A}_t-\sum_i A_{t^k_i,t^k_{i+1}}\|_{L^1}&\leqslant\sum_i \int_{t^k_i}^{t^k_{i+1}}\|f(B_r,\Xi)-\EE^{t_i^k} f(B_r,\Xi) \|_{L^1} dr\\
    &\leqslant C \, \|\|f(\cdot,\Xi) \|_{\mathcal{C}^1}\|_{L^2}\, (t-S)\, |\Pi_k|^H \longrightarrow 0.
\end{align*}
Hence \eqref{sts3} holds true.

Applying Lemma~\ref{sts}, we get
\begin{align*}
    \|\EE^S[|\mathcal{A}_t-\mathcal{A}_S|&^m]^{1/m}\|_{L^n}\\
    &\leqslant \|A_{S,t}\|_{L^n} + C \, \| \| f(\cdot,\Xi) \|_{\mathcal{B}_p^\gamma}\|_{L^n}\, \left(\int_S^t(r-S)^{-\frac{2H}{p}} dr\right)^{\frac{1}{2}} (t-S)^{\frac{1}{2}+H\gamma}\\
    &\leqslant \|A_{S,t}\|_{L^n} + C \, \| \| f(\cdot,\Xi) \|_{\mathcal{B}_p^\gamma}\|_{L^n}\, (t-S)^{1+H(\gamma-1/p)}.
\end{align*}
Applying Minkowski's integral inequality and Lemma~\ref{lem:Cs}\ref{(C.10)}, we get that
\begin{align*}
   \|A_{S,t}\|_{L^n}&=\Big\|\EE^S \int_S^t f(B_r,\Xi) dr \Big\|_{L^n}\\
    &\leqslant \int_S^t \|\EE^S f(B_r,\Xi) \|_{L^n} dr\\
    &\leqslant C  \int_S^t \| \| f(\cdot,\Xi) \|_{\mathcal{B}_p^\gamma}\|_{L^n}\, (r-S)^{H(\gamma-1/p)} dr \\
    &\leqslant C \, \| \| f(\cdot,\Xi) \|_{\mathcal{B}_p^\gamma}\|_{L^n}\,  (t-S)^{1+H(\gamma-1/p)}. 
\end{align*}
Hence
\begin{align*}
    \|\EE^S[|\mathcal{A}_t-\mathcal{A}_S|^m]^{1/m}\|_{L^n} \leqslant C\, \| \|f(\cdot,\Xi) \|_{\mathcal{B}_p^\gamma}\|_{L^n}\, (t-S)^{1+H(\gamma-1/p)}.
\end{align*}
\end{proof}

\begin{corollary} \label{cor:6.2.2}
Let $\gamma \in (-1/(2H),0)$, $m \in [2,\infty)$ and $p \in [m,\infty]$. Let $\lambda \in [0,1]$ and assume that $\gamma>-1/(2H)+\lambda$. There exists $C>0$
 such that for any $f \in \mathcal{C}_b^\infty(\mathbb{R}) \cap \mathcal{B}_p^\gamma$, any $0\leqslant s \leqslant t \leqslant T$ and any $\mathcal{F}_s$-measurable random variables $\kappa_1,\kappa_2 \in L^m$, one has
\begin{align}
\Big\|\int_s^t (f(B_r+&\kappa_1)-f(B_r+\kappa_2)) \, dr \Big\|_{L^m} \nonumber\\
&\leqslant C \|f\|_{\mathcal{B}_p^\gamma}\|\kappa_1-\kappa_2\|_{L^m}^\lambda (t-s)^{1+H(\gamma-\lambda-1/p)}. \label{eq:6.2.2}
\end{align}
\end{corollary}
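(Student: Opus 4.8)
The plan is to apply the crucial regularity Lemma~\ref{regulINT} to a well-chosen function of two groups of variables. Since $\kappa_1,\kappa_2$ are $\mathcal{F}_s$-measurable, I would set $d=2$, $\Xi=(\kappa_1,\kappa_2)$ and define the bounded measurable function $F:\mathbb{R}\times\mathbb{R}^2\to\mathbb{R}$ by $F(z,(x_1,x_2))=f(z+x_1)-f(z+x_2)$. Then $\int_s^t(f(B_r+\kappa_1)-f(B_r+\kappa_2))\,dr=\int_s^t F(B_r,\Xi)\,dr$, and taking $n=m$ in Lemma~\ref{regulINT} yields, for any $\gamma^\prime\in(-1/(2H),0)$ with $\gamma^\prime$ chosen appropriately,
\begin{align*}
\Big\|\int_s^t F(B_r,\Xi)\,dr\Big\|_{L^m}\leqslant C\,\big\|\,\|F(\cdot,\Xi)\|_{\mathcal{B}_p^{\gamma^\prime}}\big\|_{L^m}\,(t-s)^{1+H(\gamma^\prime-1/p)},
\end{align*}
provided the two integrability hypotheses of Lemma~\ref{regulINT} hold for $F(\cdot,\Xi)$ (they do, since $f\in\mathcal{C}_b^\infty$: the $\mathcal{C}^1$-moment is finite because $f$ and $f^\prime$ are bounded, and the Besov moment is finite by the shift-invariance Lemma~\ref{A.2}\ref{A.4} since $\|F(\cdot,\Xi)\|_{\mathcal{B}_p^{\gamma^\prime}}\leqslant 2\|f\|_{\mathcal{B}_p^{\gamma^\prime}}$ deterministically).

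The key point is to use the difference structure to gain a factor $\|\kappa_1-\kappa_2\|_{L^m}^\lambda$. I would choose $\gamma^\prime=\gamma-\lambda$; the hypothesis $\gamma>-1/(2H)+\lambda$ guarantees $\gamma^\prime\in(-1/(2H),0)$ (shrinking if necessary so that $\gamma^\prime<0$, using $\lambda\geqslant 0$ and $\gamma<0$—note if $\lambda=0$ this is immediate, and if $\lambda>0$ then $\gamma^\prime<\gamma<0$). Then I estimate the random Besov norm of $F(\cdot,\Xi)$ by interpolating: write $F(\cdot,(x_1,x_2))=f(\cdot+x_1)-f(\cdot+x_2)$ and combine the trivial bound $\|f(\cdot+x_1)-f(\cdot+x_2)\|_{\mathcal{B}_p^{\gamma-\lambda}}\leqslant 2\|f\|_{\mathcal{B}_p^{\gamma-\lambda}}\leqslant C\|f\|_{\mathcal{B}_p^\gamma}$ with the Lipschitz-type bound $\|f(\cdot+x_1)-f(\cdot+x_2)\|_{\mathcal{B}_p^{\gamma-\lambda}}\leqslant C|x_1-x_2|^\lambda\|f\|_{\mathcal{B}_p^\gamma}$ coming from Lemma~\ref{A.2}\ref{A.5} with $\alpha=\lambda\in[0,1]$ and exponent $\gamma-\lambda+\lambda=\gamma$. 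Actually the second bound alone suffices: it gives directly $\|F(\cdot,\Xi)\|_{\mathcal{B}_p^{\gamma-\lambda}}\leqslant C|\kappa_1-\kappa_2|^\lambda\|f\|_{\mathcal{B}_p^\gamma}$ pointwise in $\omega$, hence $\big\|\,\|F(\cdot,\Xi)\|_{\mathcal{B}_p^{\gamma-\lambda}}\big\|_{L^m}\leqslant C\|f\|_{\mathcal{B}_p^\gamma}\|\kappa_1-\kappa_2\|_{L^m}^\lambda$.

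Putting these together gives
\begin{align*}
\Big\|\int_s^t (f(B_r+\kappa_1)-f(B_r+\kappa_2))\,dr\Big\|_{L^m}\leqslant C\|f\|_{\mathcal{B}_p^\gamma}\|\kappa_1-\kappa_2\|_{L^m}^\lambda\,(t-s)^{1+H((\gamma-\lambda)-1/p)},
\end{align*}
which is exactly \eqref{eq:6.2.2}. The only genuinely delicate point is making sure the parameter $\gamma-\lambda$ stays in the admissible range $(-1/(2H),0)$ for Lemma~\ref{regulINT}: the lower bound is precisely the hypothesis $\gamma>-1/(2H)+\lambda$, and the upper bound holds because $\gamma<0$ and $\lambda\geqslant 0$ (with the boundary case $\lambda=0$ needing no shift, and otherwise one may replace $\gamma$ by a slightly smaller value using $\mathcal{B}_p^\gamma\hookrightarrow\mathcal{B}_p^{\tilde\gamma}$ for $\tilde\gamma<\gamma$, which only improves the time exponent). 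Everything else is a routine application of Lemmas~\ref{regulINT}, \ref{A.2}\ref{A.4} and \ref{A.2}\ref{A.5}.
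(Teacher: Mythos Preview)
Your proof is correct and follows essentially the same approach as the paper: apply Lemma~\ref{regulINT} with $n=m$, $\Xi=(\kappa_1,\kappa_2)$, $F(z,(x_1,x_2))=f(z+x_1)-f(z+x_2)$ at regularity $\gamma-\lambda$, then use Lemma~\ref{A.2}\ref{A.5} (and Jensen for $\||\kappa_1-\kappa_2|^\lambda\|_{L^m}\leqslant\|\kappa_1-\kappa_2\|_{L^m}^\lambda$) to bound the random Besov norm. Your extra care in checking $\gamma-\lambda\in(-1/(2H),0)$ is a detail the paper leaves implicit.
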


\begin{proof}
We aim to apply Lemma~\ref{regulINT} for the function ${(z,(x_{1},x_{2})) \mapsto f(z+x_1)-f(z+x_2)}$ with $m=n$ and $\Xi = (\kappa_{1},\kappa_{2})$.
By Lemma~\ref{A.2}\ref{A.5} and Jensen's inequality, we have that 
\begin{align*}
    \|\|f(\cdot + \kappa_1)-f(\cdot + \kappa_2)\|_{\mathcal{B}_p^{\gamma-\lambda}}\|_{L^m}&\leqslant C\|f\|_{\mathcal{B}_p^\gamma}\||\kappa_1-\kappa_2|^\lambda\|_{L^m}.\\
    &\leqslant C\|f\|_{\mathcal{B}_p^\gamma}\|\kappa_1-\kappa_2\|_{L^m}^\lambda. 
\end{align*}
Moreover 
\begin{align*}
    \|f(\cdot +\kappa_1)-f(\cdot+\kappa_2)\|_{\mathcal{C}^1}\leqslant 2 \|f\|_{\mathcal{C}^1}<\infty.
\end{align*}
Therefore all assumptions of Lemma~\ref{regulINT} are fulfilled and the result follows.
\end{proof}

\begin{corollary} \label{cor:6.2.3}
Let $\gamma \in (-1/(2H),0)$, $m \in [2,\infty)$ and $p \in [m,\infty]$. Let $\lambda, \lambda_1, \lambda_2 \in (0,1]$ and assume that $\gamma>-1/(2H)+\lambda$ and $\gamma>-1/(2H)+\lambda_1+\lambda_2$. There exists a constant $C>0$
 such that for any $f \in \mathcal{C}_b^\infty(\mathbb{R}) \cap \mathcal{B}_p^\gamma$, any $0\leqslant s \leqslant u \leqslant t \leqslant T$, any $\mathcal{F}_s$-measurable random variables $\kappa_1,\kappa_2 \in L^m$ and any $\mathcal{F}_u$-measurable random variables $\kappa_3, \kappa_4 \in L^m$, we have that
\begin{align}
    \Big\|\int_u^t &\left(f(B_r+\kappa_1)-f(B_r+\kappa_2)-f(B_r+\kappa_3)+f(B_r+\kappa_4) \right)dr \Big\|_{L^m} \nonumber\\
    &\leqslant C \|f\|_{\mathcal{B}_p^\gamma}\|\EE^s[|\kappa_1-\kappa_3|^m]^{1/m}\|_{L^\infty}^{\lambda_2}\|\kappa_1-\kappa_2\|_{L^m}^{\lambda_1}(t-u)^{1+H(\gamma-\lambda_1-\lambda_2-1/p)} \nonumber\\
    &\quad + C\|f\|_{\mathcal{B}_p^\gamma} \|\kappa_1-\kappa_2 - \kappa_3 +\kappa_4\|_{L^m}^\lambda (t-u)^{1+H(\gamma-\lambda-1/p)}.\label{eq:cor}
\end{align}
\end{corollary}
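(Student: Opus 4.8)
The plan is to reduce \eqref{eq:cor} to an application of Lemma~\ref{regulINT}, exactly as Corollary~\ref{cor:6.2.2} was, but now applied to the ``second difference'' function. First I would set up the scalar field
\[
F\bigl(z,(x_1,x_2,x_3,x_4)\bigr):=f(z+x_1)-f(z+x_2)-f(z+x_3)+f(z+x_4),
\]
and the random vector $\Xi=(\kappa_1,\kappa_2,\kappa_3,\kappa_4)$. The subtlety is that $\kappa_1,\kappa_2$ are $\mathcal{F}_s$-measurable while $\kappa_3,\kappa_4$ are only $\mathcal{F}_u$-measurable, so I would apply Lemma~\ref{regulINT} on the interval $[u,t]$ with $S=u$: then $\Xi$ is $\mathcal{F}_u=\mathcal{F}_S$-measurable as required, and the integral $\int_u^t F(B_r,\Xi)\,dr$ is precisely the object to be estimated. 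The output of Lemma~\ref{regulINT} (with $n=m$) then reads
\[
\Bigl\|\int_u^t F(B_r,\Xi)\,dr\Bigr\|_{L^m}\leqslant C\,\bigl\|\,\|F(\cdot,\Xi)\|_{\mathcal{B}_p^{\gamma'}}\bigr\|_{L^m}\,(t-u)^{1+H(\gamma'-1/p)}
\]
for any admissible $\gamma'\in(-1/(2H),0)$. So the whole problem is reduced to bounding the random Besov norm of $F(\cdot,\Xi)$ in two different ways, corresponding to the two terms on the right-hand side of \eqref{eq:cor}.

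The key estimates on $\|F(\cdot,\Xi)\|_{\mathcal{B}_p^{\gamma'}}$ come from Lemma~\ref{A.2}. For the \emph{second} term in \eqref{eq:cor} I would use the third inequality Lemma~\ref{A.2}\ref{A.6} in the natural grouping: writing $F(\cdot,\Xi)=f(\cdot+\kappa_1)-f(\cdot+\kappa_2)-f(\cdot+\kappa_3)+f(\cdot+\kappa_4)$ where $\kappa_4$ plays the role of $\kappa_3+\kappa_2-\kappa_1$ up to the error $\kappa_1-\kappa_2-\kappa_3+\kappa_4$; more directly, apply Lemma~\ref{A.2}\ref{A.5} to the pair
\[
\bigl(f(\cdot+\kappa_1)-f(\cdot+\kappa_2)\bigr)-\bigl(f(\cdot+\kappa_3)-f(\cdot+\kappa_4)\bigr),
\]
after first using the exact identity $f(\cdot+\kappa_1)-f(\cdot+\kappa_3)-f(\cdot+\kappa_2)+f(\cdot+\kappa_3+\kappa_2-\kappa_1)$ to isolate the displacement $\kappa_1-\kappa_2-\kappa_3+\kappa_4$, then bounding the remainder by $C|\kappa_1-\kappa_2-\kappa_3+\kappa_4|^\lambda\|f\|_{\mathcal{B}_p^{\gamma-\lambda}}$ via Lemma~\ref{A.2}\ref{A.5}; the translation-invariance Lemma~\ref{A.2}\ref{A.4} handles the leftover genuine second-difference term in $f$ which, being a translate of $0$ after the cancellation, actually vanishes — I would double-check that the algebra makes the bookkeeping clean, but morally this gives $\|F(\cdot,\Xi)\|_{\mathcal{B}_p^{\gamma-\lambda}}\leqslant C\|f\|_{\mathcal{B}_p^\gamma}|\kappa_1-\kappa_2-\kappa_3+\kappa_4|^\lambda$. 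Taking $\gamma'=\gamma-\lambda$ and $L^m$-norms (using Jensen to pass from $\||\cdot|^\lambda\|_{L^m}$ to $\|\cdot\|_{L^m}^\lambda$) yields the second term. For the \emph{first} term, I would instead use the full strength of Lemma~\ref{A.2}\ref{A.6}:
\[
\|F(\cdot,\Xi)\|_{\mathcal{B}_p^{\gamma-\lambda_1-\lambda_2}}\leqslant C\,|\kappa_1-\kappa_3|^{\lambda_2}\,|\kappa_1-\kappa_2|^{\lambda_1}\,\|f\|_{\mathcal{B}_p^\gamma},
\]
reading $\kappa_4$ as $\kappa_3+\kappa_2-\kappa_1$ up to a harmless relabelling (again the clean way is to split off the $\kappa_1-\kappa_2-\kappa_3+\kappa_4$ error and absorb it into the already-proven second bound, whose exponent is better).

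The step I expect to be the main obstacle is the $L^m$ versus $L^\infty$ bookkeeping in the first term: Lemma~\ref{regulINT} delivers an $L^m(\Omega)$ bound on the integral in terms of $\bigl\|\,\|F(\cdot,\Xi)\|_{\mathcal{B}_p^{\gamma'}}\bigr\|_{L^m}$, but the factor $\|\kappa_1-\kappa_3\|$ must come out as $\|\EE^s[|\kappa_1-\kappa_3|^m]^{1/m}\|_{L^\infty}$, not merely $\|\kappa_1-\kappa_3\|_{L^m}$. To get this I would, following the idea already used in the proof of Lemma~\ref{Hstzst}\ref{en:1} (which invokes exactly this corollary), first condition on $\mathcal{F}_s$: apply Lemma~\ref{regulINT} with $S=u$ but track things through the $\mathcal{F}_s$-conditional expectation, using that $\kappa_1-\kappa_3$ is $\mathcal{F}_u$-measurable so $\EE^s[|\kappa_1-\kappa_3|^{m\lambda_2}]$ can be pulled through Hölder's inequality in $\Omega$ against the $\kappa_1-\kappa_2$ factor (which is $\mathcal{F}_s$-measurable, hence constant under $\EE^s$). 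Concretely, inside $\EE^s$ one uses Lemma~\ref{A.2}\ref{A.6} pointwise in $\omega$ to bound $\EE^s[\|F(\cdot,\Xi)\|^m_{\mathcal{B}_p^{\gamma-\lambda_1-\lambda_2}}]^{1/m}\leqslant C\|f\|_{\mathcal{B}_p^\gamma}|\kappa_1-\kappa_2|^{\lambda_1}\EE^s[|\kappa_1-\kappa_3|^{m\lambda_2}]^{1/m}\leqslant C\|f\|_{\mathcal{B}_p^\gamma}|\kappa_1-\kappa_2|^{\lambda_1}\EE^s[|\kappa_1-\kappa_3|^m]^{\lambda_2/m}$ by Jensen; bounding the last conditional moment by its $L^\infty$ norm and then taking $L^m$ norms in $\Omega$ of what remains produces $\|\kappa_1-\kappa_2\|_{L^m}^{\lambda_1}$ (again via Jensen). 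Assembling the two cases and choosing the corresponding values of $\gamma'$ gives \eqref{eq:cor}; the admissibility conditions $\gamma>-1/(2H)+\lambda$ and $\gamma>-1/(2H)+\lambda_1+\lambda_2$ are exactly what is needed for $\gamma'=\gamma-\lambda$ and $\gamma'=\gamma-\lambda_1-\lambda_2$ to lie in $(-1/(2H),0)$ so that Lemma~\ref{regulINT} applies.
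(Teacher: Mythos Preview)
Your approach is correct and essentially identical to the paper's: both split the integrand as $F=h+g$ with $h(z,\Xi)=f(z+\kappa_1)-f(z+\kappa_2)-f(z+\kappa_3)+f(z+\kappa_3+\kappa_2-\kappa_1)$ the exact second difference and $g(z,\Xi)=f(z+\kappa_4)-f(z+\kappa_3+\kappa_2-\kappa_1)$ the remainder, then apply Lemma~\ref{regulINT} (with $S=u$, $n=m$) to $\int h$ using Lemma~\ref{A.2}\ref{A.6} and the $\mathcal{F}_s$-conditioning trick (pulling out the $\mathcal{F}_s$-measurable factor $|\kappa_1-\kappa_2|^{\lambda_1}$ and bounding $\EE^s[|\kappa_1-\kappa_3|^{m\lambda_2}]$ by its $L^\infty$ norm via Jensen), while Corollary~\ref{cor:6.2.2} handles $\int g$ and produces the second term. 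Your hedging about the ``bookkeeping'' is unnecessary: once you write down the decomposition $F=h+g$ explicitly, Lemma~\ref{A.2}\ref{A.6} applies verbatim to $h$ and Lemma~\ref{A.2}\ref{A.5} to $g$, with no relabelling or algebraic checking required.
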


\begin{proof}
Let $h: \mathbb{R} \times \R^4 \rightarrow \mathbb{R}$ be defined by 
\begin{align*}
    h: (z,(x_{1},x_{2},x_{3},x_{4})) \mapsto f(z+x_1)-f(z+x_2)-f(z+x_3)+ f(z+x_3+x_2-x_1).
\end{align*}
and $g: \mathbb{R} \times \R^4 \rightarrow \mathbb{R}$ by 
\begin{align*}
   g : (z,(x_{1},x_{2},x_{3},x_{4})) \mapsto f(z+x_4)-f(z+x_3+x_2-x_1).
\end{align*}
Let $\Xi = (\kappa_{1},\kappa_{2},\kappa_{3},\kappa_{4})$. Hence the integrand on the left hand side of \eqref{eq:cor} is 
$h(B_r,\Xi)+g(B_r ,\Xi)$. The proof will consist in applying Lemma 
\ref{regulINT} to the integral of $h$, and Corollary~\ref{cor:6.2.2} to the integral of $g$.
By Lemma~\ref{A.2}\ref{A.6}, we have
\begin{align} \label{normh}
    \|h(\cdot,\Xi) \|_{\mathcal{B}_p^{\gamma-\lambda_1-\lambda_2}}\leqslant \|f\|_{\mathcal{B}_p^\gamma}|\kappa_1-\kappa_2|^{\lambda_1}|\kappa_1-\kappa_3|^{\lambda_2}.
\end{align}
Using that $\kappa_1-\kappa_2$ is $\mathcal{F}_s$-measurable, by the tower property, 
Jensen's inequality for conditional expectation and \eqref{normh}, we get that
\begin{align*}
    \EE\|h(\cdot,\Xi)\|_{\mathcal{B}_p^{\gamma-\lambda_1-\lambda_2}}^m &\leqslant \|f\|_{\mathcal{B}_p^\gamma}\EE\left[|\kappa_1-\kappa_2|^{m\lambda_1}\EE^s[|\kappa_1-\kappa_3|^{m\lambda_2}]\right]\\
    &\leqslant \|f\|_{\mathcal{B}_p^\gamma}\EE\left[|\kappa_1-\kappa_2|^{m\lambda_1}\right]\left\|\EE^s[|\kappa_1-\kappa_3|^{m\lambda_2}]\right\|_{L^\infty}\\
    &\leqslant \|f\|_{\mathcal{B}_p^\gamma} \|\kappa_1-\kappa_2\|_{L^m}^{m\lambda_1}\|\EE^s[|\kappa_1-\kappa_3|^m]^{1/m}\|_{L^\infty}^{m\lambda_2}.
\end{align*}
Furthermore
\begin{align*}
\|h(\cdot,\Xi)\|_{\mathcal{C}^1}\leqslant 4 \|f\|_{\mathcal{C}^1}.
\end{align*}
Hence, we get the result by applying Lemma~\ref{regulINT} for $S=u$ to the integral of $h$
 and Corollary~\ref{cor:6.2.2} to the integral of $g$.
\end{proof}

As another consequence of the stochastic sewing Lemma, we finally prove 
Lemma~\ref{lem:mainregularisation}.
\begin{proof}[Proof of Lemma \ref{lem:mainregularisation}]
We assume that $[\psi]_{\COO{\alpha}{m}{n}{[s,t]}}<\infty$, otherwise 
\eqref{eq:lem5.2} trivially holds. For $(\tilde{s},{\tilde{t}}) \in \Delta_{[s,t]}$, let
\begin{align} \label{eq:A}
    A_{\tilde{s},{\tilde{t}}}:=\int_{\tilde{s}}^{\tilde{t}} f(B_r+\psi_{\tilde{s}}) dr \ \text{and } \mathcal{A}_t:=\int_s^{\tilde{t}} f(B_r+\psi_r) dr.
\end{align}
In the following, we check the necessary conditions in order to apply Lemma~\ref{sts}. In order to show that \eqref{sts1} and \eqref{sts2} hold true with $\alpha_2=0$, $\varepsilon_1=H(\gamma-1/p-1)+\alpha>0$ and $\varepsilon_2=1/2+H(\gamma-1/p)>0$, we show that there exists a constant $C>0$ independent of $s,t,\tilde{s}$ and $\tilde{t}$ such that
\begin{enumerate}[label=(\roman*)]
\item \label{en:(1)}$\|\EE^{\tilde{s}} \delta A_{{\tilde{s}},u,{\tilde{t}}}\|_{L^n}\leqslant C \|f\|_{\mathcal{B}_p^\gamma} [\psi]_{\COO{\alpha}{m}{n}{[s,t]}} ({\tilde{t}}-{\tilde{s}})^{1+H(\gamma-1-1/p)+\alpha}.$
\item \label{en:(2)}$\|\EE^{s}[|\delta A_{{\tilde{s}},u,{\tilde{t}}}|^m]^{1/m}\|_{L^n}\leqslant C \|f\|_{\mathcal{B}_p^\gamma} ({\tilde{t}}-{\tilde{s}})^{1+H(\gamma-1/p)}.$
\item \label{en:(3)}If \ref{en:(1)} and \ref{en:(2)} are verified, then \eqref{sts3} gives the 
convergence in probability of $\sum_{i=0}^{N_n-1} A_{t^k_i,t^k_{i+1}}$ along any sequence of 
partitions $\Pi_k=\{t_i^k\}_{i=0}^{N_k}$ of $[s,\tilde{t}]$ with mesh converging to $0$. We will show 
that the limit is the process $\mathcal{A}$ given in \eqref{eq:A}.
\end{enumerate}

Assume for now that \ref{en:(1)}, \ref{en:(2)} and \ref{en:(3)} hold. Applying Lemma~\ref{sts}, we obtain that
\begin{align*}
    \bigg\|\EE^s \bigg[\bigg| \int_{\tilde{s}}^{\tilde{t}} f(B_r+\psi_r) dr\bigg|^m\bigg]^{1/m}\bigg\|_{L^n} \leqslant &C \|f\|_{\mathcal{B}_p^\gamma}({\tilde{t}}-{\tilde{s}})^{1+H(\gamma-1/p)}\\
    &\quad +C \|f\|_{\mathcal{B}_p^\gamma}[\psi]_{\COO{\alpha}{m}{n}{[s,t]}}({\tilde{t}}-{\tilde{s}})^{1+H(\gamma-1-1/p)+\alpha}\\
    & \quad +\|\EE^s[|A_{{\tilde{s}},{\tilde{t}}}|^m]^{1/m}\|_{L^n}.
\end{align*}
In \eqref{eq:Ast} we will see that $\|\EE^{s}[|A_{{\tilde{s}},{\tilde{t}}}|^m]^{1/m}\|_{L^n}\leqslant C \|f\|_{\mathcal{B}_p^\gamma}({\tilde{t}}-{\tilde{s}})^{1+H(\gamma-1/p)}$. Then, choosing $(\tilde{s},\tilde{t})=(s,t)$ we get \eqref{eq:lem5.2}.

We now check that the conditions \ref{en:(1)}, \ref{en:(2)} and \ref{en:(3)} actually hold.

Proof of \ref{en:(1)}: For 
$s\leqslant {\tilde{s}} \leqslant u \leqslant {\tilde{t}} \leqslant t$, 
we have
\begin{equation*} %
    \delta A_{{\tilde{s}},u,{\tilde{t}}}= \int_u^{\tilde{t}} f(B_{r}+\psi_{\tilde{s}})-f(B_{r}+\psi_u) dr.
\end{equation*}
Hence, by the tower property of conditional expectation and Fubini's 
Theorem, we get 
\begin{align*}
    |\EE^{\tilde{s}} \delta A_{{\tilde{s}},u,{\tilde{t}}}|&=\left|\EE^{\tilde{s}} \int_u^{\tilde{t}} \EE^u 
    [f(B_{r}+\psi_{\tilde{s}})-f(B_{r}+\psi_u)]
    dr \right|.
\end{align*}
Now using Lemma~\ref{lem:Cs}\ref{(C.10)} with the $\mathcal{F}_{u}$-measurable variable $\Xi=(\psi_{\tilde{s}},\psi_{u})$, Lemma~\ref{A.2}\ref{A.5} for 
$\alpha=1$ and again Fubini's Lemma, we obtain that
\begin{align*}
    \Big|\EE^{\tilde{s}} \int_u^{\tilde{t}} \EE^u 
    [f(B_{r}+\psi_{\tilde{s}})-f(B_{r}&+\psi_u)]
    dr \Big|\\
    &\leqslant \EE^{\tilde{s}} \int_u^{\tilde{t}} \|
    f(\cdot+\psi_{\tilde{s}})-f(\cdot+\psi_u)
    \|_{\mathcal{B}_p^{\gamma-1}}(r-u)^{H(\gamma-1-1/p)} dr\\
    &\leqslant C \|f\|_{\mathcal{B}_p^\gamma} \int_u^{\tilde{t}} \EE^{\tilde{s}} [|\psi_u-\psi_{\tilde{s}}|] (r-u)^{H(\gamma-1-1/p)} dr.
\end{align*}
Hence we get
\begin{equation} \label{Mink}
    \|\EE^{\tilde{s}} \delta A_{{\tilde{s}},u,{\tilde{t}}}\|_{L^n}\leqslant C \|f\|_{\mathcal{B}_p^\gamma} \int_u^{\tilde{t}} \|\EE^{\tilde{s}} |\psi_u-\psi_{\tilde{s}}|\|_{L^n} (r-u)^{H(\gamma-1-1/p)} dr.
\end{equation}
By the conditional Jensen's inequality, we have  
\begin{equation*}
    \|\EE^{\tilde{s}}[|\psi_u-\psi_{\tilde{s}}|]\|_{L^n}\leqslant [\psi]_{\COO{\alpha}{m}{n}{[s,t]}}(u-{\tilde{s}})^\alpha.
\end{equation*}
Combining this with equation \eqref{Mink}, we get 
\begin{equation*}
    \|\EE^{\tilde{s}} \delta A_{{\tilde{s}},u,{\tilde{t}}}\|_{L^n}\leqslant C \|f\|_{\mathcal{B}_p^\gamma} [\psi]_{\COO{\alpha}{m}{n}{[s,t]}} ({\tilde{t}}-{\tilde{s}})^{1+H(\gamma-1-1/p)+\alpha}.
\end{equation*}
Proof of \ref{en:(2)}: Note that by Jensen's inequality for conditional expectation, tower property and Lemma~\ref{regulINT} we have that
\begin{align} \label{eq:Ast}
\| \EE^{s}[|A_{\tilde{s},\tilde{t}}|^m]^{1/m}\|_{L^n}=\Big(\EE\big[(\EE^{s}\EE^{\tilde{s}}|A_{\tilde{s},\tilde{t}}|^m)^{n/m}\big]\Big)^{1/n}
&\leqslant \Big(\EE\, \EE^{s}\big[\EE^{\tilde{s}}[|A_{\tilde{s},\tilde{t}}|^m]^{n/m}\big]\Big)^{1/n}\nonumber\\
&=\Big(\EE\big[ \EE^{\tilde{s}}[|A_{\tilde{s},\tilde{t}}|^m]^{n/m}\big]\Big)^{1/n}\nonumber\\
&\leqslant C\|f\|_{\mathcal{B}^{\gamma}_p}(\tilde{t}-\tilde{s})^{1+H(\gamma-1/p)}.
\end{align}
\red{After similarly controlling $\|\EE^{s}[|A_{{\tilde{s}},u}|^m]^{1/m}\|_{L^n}$ and $\|\EE^{s}[|A_{u,\tilde{t}}|^m]^{1/m}\|_{L^n}$, we get}
\begin{equation*}
    \|\EE^{s}[|\delta A_{{\tilde{s}},u,{\tilde{t}}}|^m]^{1/m}\|_{L^n}\leqslant C \|f\|_{\mathcal{B}_p^\gamma} ({\tilde{t}}-{\tilde{s}})^{1+H(\gamma-1/p)}.
\end{equation*}
\red{The proof of \ref{en:(3)} can be done by similar arguments as for \ref{enum:2ALT} in the proof of Proposition~\ref{prop:regularityALT}.}
\end{proof}

\end{appendices}

\providecommand{\bysame}{\leavevmode\hbox to3em{\hrulefill}\thinspace}
\providecommand{\MR}{\relax\ifhmode\unskip\space\fi MR }
\providecommand{\MRhref}[2]{%
	\href{http://www.ams.org/mathscinet-getitem?mr=#1}{#2}
}
\providecommand{\href}[2]{#2}

\begin{acks}
	 L.A. acknowledges the support of the Labex de Math\'ematique Hadamard. L.A. and A.R. acknowledge the support of the SIMALIN project ANR-19-CE40-0016 from the French National Research Agency.\\
	We would like to thank the anonymous referees for the careful reading and numerous 
	helpful suggestions to improve the manuscript. We also thank Lucio Galeati for discussions that led to correct an error in an estimate of Section 7.
	\end{acks}

\end{document}